\def\blfootnote{\gdef\@thefnmark{}\@footnotetext}
\renewcommand\mathcal{\mathscr}
\renewcommand{\emph}{\normalem}
\theoremstyle{plain}
\newtheorem{theorem}{Theorem}[section]
\newtheorem*{theorem*}{Theorem}
\newtheorem{lemma}[theorem]{Lemma}
\newtheorem*{lemma*}{Lemma}
\newtheorem{proposition}[theorem]{Proposition}
\theoremstyle{remark}
\newtheorem{remark}[theorem]{Remark}
\newtheorem*{remark*}{Remark}
\theoremstyle{definition}
\newtheorem{definition}[theorem]{Definition}
\newtheorem*{definition*}{Definition}
\newtheorem{Basic assumptions}[theorem]{Basic assumptions}
\numberwithin{equation}{section}
\newcommand\quant{\advance\quantno by1
                      \ifnum\quantno=1\qquad\else\quad\fi\forall }
\newcommand\itemno[1]{(\romannumeral #1)}
\renewcommand\Re{\operatorname{\mathrm{Re}}}
\newcommand\rest[1]{\kern-.1em
          \lower.5ex\hbox{$\scriptstyle #1$}\kern.05em}
\renewcommand\mod[1]{\vert{#1}\vert}
\newcommand\bigmod[1]{\bigl\vert{#1}\bigr|}
\newcommand\Bigmod[1]{\Bigl\vert{#1}\Bigr|}
\newcommand\upnorm[2]{{\Vert{#1}\Vert_{(#2)}}}
\newcommand\upbignorm[2]{{\big\Vert{#1}\big\Vert_{(#2)}}}
\newcommand\norm[2]{{\Vert{#1}\Vert_{#2}}}
\newcommand\bignorm[2]{\left.{\bigl\Vert{#1}\bigr\Vert_{#2}}\right.}
\newcommand\bignormto[3]{\left.{\bigl\Vert{#1}\bigr\Vert_{#2}^{#3}}\right.}
\newcommand\Bignorm[2]{\left.{\Bigl\Vert{#1}\Bigr\Vert_{#2}}\right.}
\newcommand\bigopnorm[2]{\big|\!\big|\!\big| {#1} \big|\!\big|\!\big|_{#2}}
\newcommand\bigopnormto[3]{\big|\!\big|\!\big| {#1} \big|\!\big|\!\big|_{#2}^{#3}}
\newcommand\prodo[2]{\left\langle#1,#2\right\rangle}
\newcommand\smallfrac[2]{\mbox{\small$\displaystyle\frac{#1}{#2}$}}
\newcommand\wrt{\,\text{\rm d}}
\newcommand\bS{\mathbf{S}}
\newcommand\BC{\mathbb{C}}
\newcommand\BR{\mathbb{R}}
\newcommand\cA{\mathcal{A}}   
\newcommand\cB{\mathcal{B}}
\newcommand\cD{\mathcal{D}}  
\newcommand\cE{\mathcal{E}}
\newcommand\cG{\mathcal{G}}   
\newcommand\cH{\mathcal{H}}   
\newcommand\frh{\mathfrak{h}} 
\newcommand\cI{\mathcal{I}}   
\newcommand\cJ{\mathcal{J}}
\newcommand\cK{\mathcal{K}}    
\newcommand\cL{\mathcal{L}}    
\newcommand\cN{\mathcal{N}}    
\newcommand\cO{\mathcal{O}}    
\newcommand\cP{\mathcal{P}}  \newcommand\fP{\mathfrak{P}}   
\newcommand\cQ{\mathcal{Q}}
\newcommand\cR{\mathcal{R}}
\newcommand\cV{\mathcal{V}}
\newcommand\cY{\mathcal{Y}}
\newcommand\al{\alpha}
\newcommand\be{\beta}
\newcommand\ga{\gamma}    \newcommand\Ga{\Gamma}
\newcommand\de{\delta}
  \newcommand\vep{\varepsilon}
\newcommand\ka{{\kappa}}
\newcommand\la{\lambda}   
\newcommand\om{\omega}    \newcommand\Om{\Omega}  
\newcommand\si{\sigma}    \newcommand\Si{\Sigma}
\newcommand\te{\theta}
\newcommand\vp{\varphi}
\newcommand\vr{\varrho}
\newcommand\OV{\overline}
\newcommand\funnyk{k\hbox to 0pt{\hss\phantom{g}}}
\newcommand\lu[1]{L^1(#1)}
\newcommand\lp[1]{L^p(#1)}
\newcommand\laq[1]{L^q(#1)}
\newcommand\ld[1]{L^2(#1)}
\newcommand\lr[1]{L^r(#1)}
\newcommand\ly[1]{L^\infty(#1)}
\newcommand\hu[1]{H^1(#1)}
\newcommand\huRtau[1]{H_{\cR_\tau}^1(#1)}
\newcommand\ghu[1]{{\frh}^1(#1)}
\newcommand\ghuH[1]{{\frh}_{\cH}^1(#1)}
\newcommand\ghuP[1]{{\frh}_{\cP}^1(#1)}
\newcommand\ghuat[1]{{\frh}_{\mathrm{at}}^1(#1)}
\newcommand\ghumax[1]{{\frh}_{\mathrm{max}}^1(#1)}
\newcommand\ghuRtau[1]{{\frh}_{\cR_\tau}^1(#1)}
\newcommand\ghuI[1]{{\frh}_{\mathrm{I}}^1(#1)}
\newcommand\wh{\widehat}
\newcommand\wt{\widetilde}
\newcommand\whH{\widehat{\phantom{G}}\hbox to 0pt{\hss $H$}}
\newcommand\emspace{\hbox to 6pt{\hss}}
\newcommand\ds{\displaystyle}
\newcommand\rmi{\hbox{\rm (i)}}
\newcommand\rmii{\hbox{\rm (ii)}}
\newcommand\rmiii{\hbox{\rm (iii)}}
\newcommand\rmiv{\hbox{\rm (iv)}}
\newcommand\rmv{\hbox{\rm (v)}}
\newcommand\ir{\int_{-\infty}^{\infty}}
\newcommand\ioty{\int_0^{\infty}}
\newcommand\dtt[1]{\,\frac{\mathrm {d} #1}{ #1}}
\newcommand\One{{\mathbf{1}}}
\newcommand\e{\mathrm{e}}
\newcommand\Ric{\mathop{\rm Ric}}
\newcommand\cRtau{\cR_{\tau}}
\newcommand\cRtauO{\cR_{\tau}^0}
\newcommand\cRtauOO{\cR_{\tau}^{0,0}}
\newcommand\cRtauinfty{\cR_{\tau}^{\infty}}
\newcommand\cRtauOinfty{\cR_{\tau}^{0,\infty}}
\newcommand\cJOOtau{\cJ_\tau^{0,0}}
\newcommand\cKO{\cJ_\tau^0}
\newcommand\cKOO{\cJ_\tau^{0,0}}
\newcommand\cKOinfty{\cJ_\tau^{0,\infty}}
\newcommand\cKinfty{\cJ_\tau^\infty}
\newcommand\Sisi{\Si}
\newcommand\Sisieta{\Si_{\eta}}
\newcommand\cPS{\cP^{{0}}}
\newcommand\cPSeta{\cP^{\eta}}
\newcommand\cPSvepk{\cP^{\vep_k}}
\newcommand\cGS{\cG_{\Si}}
\newcommand\Msi{M_{\si} }
\newcommand\Msieta{M_{\si}^{\eta} }
\newcommand\Mt{M_{t} }
\newcommand\Mteta{M_{t}^{\eta} }
\newcommand\dist{\mathrm{dist}}
\newcommand\Nabla{\nabla\!\!\!\!\nabla}
\newcommand\DDelta{\Delta\!\!\!\!\Delta}
\DeclareSymbolFont{EUEX}{U}{euex}{m}{n}
\DeclareSymbolFont{euexlargesymbols}{U}{euex}{m}{n}
\DeclareMathSymbol{\intop}{\mathop}{euexlargesymbols}{"52}
     \def\int{\intop\nolimits}
\DeclareSymbolFont{euexsymbols}     {U}{euex}{m}{n}
\DeclareMathSymbol{\smallint}{\mathop}{euexsymbols}{"52}
\begin{document}

\title[Riesz--Hardy spaces]{Local Riesz transform and local Hardy spaces \\
on Riemannian manifolds with bounded geometry}

\subjclass[2010]{42B30, 42B35, 58C99} 

\keywords{Local Hardy space, local Riesz transform, bounded geometry, locally doubling manifolds,
potential analysis on strips.}

%\thanks{??}

\author[S. Meda and G. Veronelli]
{Stefano Meda and Giona Veronelli}

\address{ 
Dipartimento di Matematica e Applicazioni - Universit\`a di Milano-Bicocca\\
via R.~Cozzi 55, I-20125 Milano, Italy
\break 
Stefano Meda: stefano.meda@unimib.it\\
Giona Veronelli:
giona.veronelli@unimib.it}

\begin{abstract}
We prove that if $\tau$ is a large positive number, then the atomic Goldberg-type space $\mathfrak{h}^1(N)$ and the space $\mathfrak{h}_{\mathcal R_\tau}^1(N)$ of all integrable functions on $N$ whose \textit{local Riesz transform} $\mathcal R_\tau$ is integrable are the same space on any complete noncompact Riemannian manifold $N$ with Ricci curvature bounded from below and positive injectivity radius. We also relate $\mathfrak{h}^1(N)$ to a space of harmonic functions on the slice $N\times (0,\delta)$ for $\delta>0$ small enough.  
\end{abstract}

%VERSIONE DI \today
\maketitle

%\tableofcontents

\setcounter{section}{0}
\section{Introduction} \label{s:Introduction}

The classical Hardy space $\hu{\BR^n}$ plays an important role in Euclidean Harmonic Analysis and has been
the object of a huge number of investigations.  Its theory, which is available also in book form (see, for instance,
\cite{St2,Gr}), is well understood, and has its roots in the 
seminal papers \cite{FS,SW}.  In the first, C.~Fefferman and E.M.~Stein proved, amongst other important results,
that $\hu{\BR^n}$ can be equivalently defined in terms of the Riesz transforms, of
various kinds of maximal operators and square functions.  In the second, Stein and G.~Weiss considered a space
of generalised conjugate harmonic functions that may be identified with $\hu{\BR^n}$.  
Their results were complemented by R.R.~Coifman \cite{Coi} and 
R.~Latter \cite{La}, who proved that $\hu{\BR^n}$ admits an atomic decomposition.  All these characterisations
corroborate the idea that the space $\hu{\BR^n}$ is central in Harmonic Analysis
and illustrate its flexibility, a feature of great importance in the applications.  This beautiful theory, or part of it,
has been extended in various directions: see, for instance, \cite{CW,FoS,CG,MPR,FK,DZ1,DZ2,DJ,GLY,Di,AMR,To,HLMMY,AM,ADH}
and the references therein.  We observe in passing that on certain 
examples of nondoubling measure spaces, such as the hyperbolic disc, a perhaps surprising phenomenon occurs:
the Hardy-type spaces defined in terms 
of the Riesz transform, the Poisson maximal operator and the heat maximal operator are different spaces
\cite{MaMVV}.  For more on the attempts to define an effective Hardy-type space on noncompact symmetric
spaces and generalisations thereof, see \cite{A1,I,Lo,CMM1,MMV2,MMV3,MaMV1} and the references therein.  

The major drawback of $\hu{\BR^n}$ is that it is not ``stable'' under localisation, i.e.,
multiplication by smooth functions of compact support does not preserve $\hu{\BR^n}$. 
This fact induced D.~Goldberg \cite{G} to introduce a variant of $\hu{\BR^n}$, denoted by $\ghu{\BR^n}$
and quite often termed ``local Hardy space''.  It is fair to recall that R.S.~Strichartz \cite{Str} 
had defined a suggestive predecessor of $\ghu{\BR^n}$ on any compact Riemannian manifold.  
Goldberg proved several characterisations of $\ghu{\BR^n}$, which are the natural ``local'' counterparts of many of
those known for $\hu{\BR^n}$.  They include characterisations via several different maximal operators, 
local Riesz transforms, and an atomic decomposition.  It includes also a characterisation of $\ghu{\BR^n}$
in terms of a generalised system of conjugate harmonic functions on the slice $\BR^n\times (0,1)$.    

A careful reading of \cite{G} reveals that most of the properties of $\ghu{\BR^n}$ depend only on the local
structure of the Euclidean space and not on its geometry at infinity.  Thus, it is natural to 
speculate whether one can define an analogue of $\ghu{\BR^n}$ on ``locally Euclidean spaces''.  

Interesting examples of such spaces are the so-called RD-spaces, i.e., homogeneous spaces $X$ in
the sense of Coifman and Weiss with the additional property that a reverse doubling condition holds in $X$. 
Following up previous works of various authors \cite{DZ1,DZ2,GLY}, Dachun Yang and Yuan Zhou \cite{YZ1,YZ2} constructed 
on such spaces an interesting and quite complete theory of ``local Hardy spaces'' associated to given admissible functions.
See also \cite{HMY} for results concerning Triebel--Lizorkin spaces on RD-spaces and their relationships with local Hardy
spaces. In particular, note that if $N$ is an RD-space, then the local Hardy space $\ghu{N}$ defined below 
reduces to the space $H_\ell^{1,2} (N)$ of \cite{YZ1}.

Further important examples of ``locally Euclidean spaces'' are Riemannian manifolds.  
A subclass thereof on which a satisfactory theory of local Hardy spaces can be developed
is that of manifolds $N$ with \textit{bounded geometry}.  By this we mean that $N$ is a
complete connected noncompact Riemannian manifolds with Ricci curvature bounded from below and positive injectivity radius.
Notice that the Riemannian measure on $N$ may very well be nondoubling.  
In analogy with the classical case \cite{G}, one can define a number of spaces on $N$, 
including $\ghumax{N}$, $\ghuH{N}$, $\ghuP{N}$, $\ghuI{N}$, $\ghuat{N}$: specifically, 
$\ghumax{N}$, $\ghuH{N}$, $\ghuP{N}$
are defined in terms of maximal functions (associated to a suitable grand maximal operator, to the local heat maximal operator
and to the local Poisson maximal operator, respectively), $\ghuI{N}$ and $\ghuat{N}$ are ionic and atomic spaces,
respectively.  It may be worth observing that $\ghuI{N}$ can be equivalently defined using various kinds of ions, 
and similarly for $\ghuat{N}$, but with atoms playing the role of ions.  
For the sake of simplicity, we do not insist on this point in the introduction.  

In the inspiring paper \cite{T}, M.~Taylor proved, under the additional assumption that 
all the derivatives of the metric tensor are bounded, that $\ghumax{N} = \ghuI{N}$.  
In a much wider context that includes Riemannian manifolds with bounded geometry in the sense specified above, 
Meda and S.~Volpi \cite{MVo} introduced the space $\ghuat{N}$, and proved that $\ghuat{N} = \ghuI{N}$.  
It is fair to say that both \cite{T} and \cite{MVo} contain many additional material, including duality and
interpolation results and boundedness criteria for relevant (pseudo-) differential operators on $N$.  
Quite recently, A.~Martini, Meda and M.~Vallarino \cite{MaMV2}, following up a profound result of A.~Uchiyama \cite{U}, 
showed that if $N$ has bounded geometry, then $\ghumax{N} = \ghuH{N} = \ghuP{N} = \ghuI{N}$ (see also 
\cite{YZ1,YZ2}  for related results in the setting of RD-spaces).  
Consequently, the five spaces listed above coincide (and their norms are equivalent); for the sake of
brevity, we denote simply by $\ghu{N}$ the resulting space, equipped with any of the corresponding norms.  

A further natural local Hardy space on $N$ may be defined as follows.  
Denote by $\nabla$ the covariant derivative on $N$,
and by $\cL$ (minus) the Laplace--Beltrami operator, which we think of as an unbounded nonnegative operator on $\ld{N}$.  
For each positive number~$\tau$, denote by $\cL_\tau$ the translated Laplacian $\tau\cI+\cL$.  
We consider the \textit{translated Riesz transform} $\cRtau := \nabla \cL_\tau^{-1/2}$, $\tau>0$, 
and the \textit{Riesz--Goldberg} space
\begin{equation} \label{f: ghuBRn}
\ghuRtau{N}
:= \big\{f \in \lu{N}: \mod{\cRtau f} \in \lu{N}\big\}.  
\end{equation} 
We equip $\ghuRtau{N}$ with the norm $\ds \bignorm{f}{\ghuRtau{N}} := \bignorm{f}{1} + \bignorm{\mod{\cR_\tau f}}{1}$.  
E.~Russ \cite[proof of Theorem 14]{Ru} (see also \cite[Theorem 8]{MVo}) proved that if $\tau$ is large enough, then 
$\ghuRtau{N} \supseteq \ghu{N}$ on a class of Riemannian manifolds that include those of bounded geometry. 
It is then natural to speculate whether $\ghuRtau{N}$ agrees with $\ghu{N}$ in this generality, thereby extending 
the result for $\BR^n$ proved by Goldberg via Fourier transform techniques.  We remark that the 
Riesz transform $\nabla\cL^{-1/2}$ (which corresponds to the limiting case where $\tau=0$)
is unbounded from $\ghu{\BR^n}$ to $\lu{\BR^n}$.  

In this paper, we answer to this deceptively simple question in the affirmative.  
Our main result, Theorem~\ref{t: char local Riesz}, states that if 
$N$ is a complete connected noncompact Riemannian manifold with bounded geometry, 
then $\ghuRtau{N}=\ghu{N}$ as long as $\tau$ is large enough.  
Our strategy of proof has its roots in an old and beautiful idea of 
Stein and Weiss (see, in particular, \cite[Theorem~A]{SW}), who realised that certain powers (slightly below $1$)
of the gradient of harmonic functions are subharmonic.  This idea is central in the classical proof that 
if $u$ is a harmonic function on $\BR_+^{n+1} := \big\{(x,t)\in \BR^n\times (0,\infty): t>0\big\}$, then 
\begin{equation} \label{f: equivalence harmonic}
\bignorm{\partial_t u_{\vert_{{\BR^n}\times\{0\}}}}{\hu{\BR^n}}
\asymp \, \bignorm{\mod{\Nabla u}^*}{\lu{\BR^n}}
\asymp \, \sup_{t>0}\,  \int_{\BR^n} \, \bigmod{\Nabla u(x,t)} \wrt x,  
\end{equation}
where $\Nabla$ denotes the gradient on $\BR^{n+1}$ and the superscript $*$ stands
for nontangential maximal function (see, for instance, \cite[Ch.~VII]{St1}).  This result has 
a natural counterpart for $\ghu{\BR^n}$ \cite{G}, where the slice $\BR^n\times (0,1)$
plays the role of $\BR_+^{n+1}$ in the classical case.    

There is a major problem in extending the latter result to Riemannian manifolds: if the curvature of $N$
is not nonnegative, then powers ($\leq 1$) of the gradient of harmonic functions on $N\times \BR$ may not be subharmonic.   
M.~Dindo\v s \cite[Chapter~6]{Di} was able to overcome this problem and 
to work out an effective strategy (modifying significantly that of Stein and Weiss) to prove an analogue
of \eqref{f: equivalence harmonic} on bounded domains of (compact) manifolds, endowed with a possibly nonsmooth metric.  
His strategy hinges on the observation, derived from the Bochner--Weitzenbock formula,
that if~$u$ is a harmonic function on an open set of an $(n+1)$-dimensional Riemannian manifold $M$ 
with Ricci curvature bounded from below by~$-\kappa^2$
and $(n-1)/n < q \leq 1$, then $\bigmod{\nabla u}^q$ is $q\kappa^2$-subharmonic, i.e., 
it satisfies an inequality of the form $\cL \mod{\nabla u}^q \leq q\kappa^2\, \mod{\nabla u}^q$.  

We adapt Goldberg's approach and extend Dindo\v s' strategy 
to the case of noncompact Riemannian manifolds $N$ of bounded geometry.
We consider the slice $\Si :=N\times (0,2\si)$, and  
prove that if $\si$ is small enough (see \eqref{f: sigma}), then a harmonic function in $\Si$
satisfies the maximal inequality $\ds \int_N \, \bigmod{\Nabla u(x)}^* \wrt\nu(x) <\infty$
if and only if $\ds \sup_{t\in (0,2\si)} \int_N \, \bigmod{\Nabla u(x,t)} \wrt\nu(x) <\infty$ and $\mod{\Nabla u}$
tends to $0$ at infinity, uniformly in each closed subslice of $\Si$ (see Theorem~\ref{t: analogue SW D}).  
Here $\nu$, $\Nabla$ and~${}^*$ denote the Riemannian density, the gradient of $N\times\BR$ and 
an appropriate nontangential maximal function (defined at the beginning of Section~\ref{s: Maximal}), 
respectively. 
Our strategy requires estimating the Poisson operator and powers of the Green operator associated to $\Si$.  
In particular, we show that if $\si$ is small enough, then the 
integral kernels of such operators are ``integrable at infinity in $\Si$'' (see Sections~\ref{s: Poisson} 
and \ref{s: Green}).  Their rate of decay at infinity is controlled 
by $\la_1 :=\pi/(2\si)$.  Notice that $-\la_1^2$ is the first eigenvalue of the Dirichlet Laplacian on the interval $[0,2\si]$.  
Clearly~$\la_1$ increases as~$\si$ decreases: this is the reason for which we choose $\si$ small.  

The last ingredient we need in the proof our main result is
a careful analysis of the kernel of the translated Riesz transform $\cR_\tau$.  This technical part
is confined in Section~\ref{s: The local Riesz transform}.   

The paper is organized as follows.  Section~\ref{s: Background material} contains some preliminary estimates
extensively used in the sequel.  
In Sections~\ref{s: Poisson} and \ref{s: Green} we establish some potential estimates on $\Si$.
Section~\ref{s: Maximal} contains some maximal estimates for certain potentials on $\Si$.
Section~\ref{s: G} is devoted to the analogue on slices of $\Si$ of certain results of Dindo\v s \cite{Di}.
The analysis of the local Riesz transform for Riemannian manifolds with 
bounded geometry, together with some basic information concerning the Goldberg-type space $\ghu{N}$, 
is contained in Section~\ref{s: The local Riesz transform}, where our main result concerning $\huRtau{N}$, 
Theorem~\ref{t: char local Riesz}, is proved.

We shall use the ``variable constant convention'', and denote by $C,$
possibly with sub- or superscripts, a constant that may vary from place to 
place and may depend on any factor quantified (implicitly or explicitly) 
before its occurrence, but not on factors quantified afterwards. 

Throughout the paper, given $p$ in $[1,\infty]$, we denote by $p'$ the conjugate exponent of $p$.

\section{Background material and preliminary estimates}
\label{s: Background material}

\subsection{Standing assumptions}
In this paper $N$ will always denote an $n$-dimensional complete connected noncompact 
Riemannian manifold with \textbf{bounded geometry}.  By this we mean that the Ricci curvature of $N$
satisfies ${\Ric_{N}} \geq - \kappa^2$ for some nonnegative number $\kappa$,  
and the injectivity radius is strictly positive.  
The Riemannian measure of $N$ will be denoted by $\nu$.
The operator norm of a bounded linear operator $T$ from $\lp{N}$ to $\laq{N}$ will be 
denoted by $\bigopnorm{T}{p;q}$. 

Denote by $\nabla$ and $\Delta$ the gradient and the (negative) Laplace--Beltrami operator on~$N$, respectively.
Set $\cL = -\Delta$.  The operator $\cL$, initially defined on smooth functions with compact support,
admits a unique self adjoint extension, still denoted by $\cL$, in $\ld{N}$.  For any nonnegative number $\tau$
denote by $\cL_\tau$ the operator $\tau\cI+\cL$, where $\cI$ denotes the identity operator.  In particular, 
$\cL_0 = \cL$. 
Denote by $\cH_t^N$ and $h_t^N$ the heat semigroup $\e^{-t\cL}$ and the corresponding heat kernel, respectively.  
The following are well known consequences of our assumptions:
\begin{enumerate}
	\item[\itemno1]
		$N$ is \textit{locally Ahlfors regular}. Indeed, by Bishop-Gromov's volume comparison theorems 
		and by a well known estimate due to C.B.~Croke \cite[Prop. 14]{Cr}, for each $R>0$
		there exist positive constants $C_1$ and $C_2$ such that
		\begin{equation}\label{f: Ahlfors}
		C_1\, r^n \leq \nu\big(B_r(x)\big) \leq C_2 \, r^n
		\quant x \in N \quant r \in (0,R].   
		\end{equation} 
		Thus, in particular, $\nu$ is \textit{locally doubling}.  
		Furthermore, there exist nonnegative constants $\al$ and $\be$ and $C$ such that 
		\begin{equation} \label{f: volume growth}
			\nu\big(B_r(x)\big) \leq C \, r^\al\, \e^{2\be r}
			\quant x \in N \quant r \in [1,\infty);  
		\end{equation}
	\item[\itemno2]
		there exist positive constants $c$ and $C$ such that  
		\begin{equation} \label{f: assumptions on ht}
		h_t^N(x,y) \leq C\, \ga(t) \,  \e^{-c d(x,y)^2/t}
			\quant x,y\in N \quant t>0,
		\end{equation} 
		where $\ga(t) := \max(t^{-n/2},t^{-1/2})$ (see, for instance, \cite[Theorem~3]{CF}).  
		Note that \eqref{f: assumptions on ht} directly implies the following \textit{ultracontractivity
		estimate} for $\cH_t^N$:
		\begin{equation} \label{f: ultracontractivity Ht}
		\bigopnorm{\cH_t^N}{1;\infty} \leq C \, \ga(t)  \quant t >0.
		\end{equation}
		Suppose now that  $1\leq q\leq r\leq \infty$.  Then there exists a constant $C$ such that 
		\begin{equation} \label{f: Ht LqLr}
		\bigopnorm{\cH_t^N}{q;r} \leq C \, \ga(t)^{1/q-1/r} \quant t >0.
		\end{equation}
		The estimate \eqref{f: Ht LqLr} follows from \eqref{f: ultracontractivity Ht},  
		the contractivity of $\cH_t^N$ on $\lp{N}$ for all $p$ 
		in $[1, \infty]$, duality and interpolation; 
	\item[\itemno3] 
		\textit{Bakry's condition} \cite{Ba}  
		\begin{equation} \label{f: Bakry}
		\bigmod{\nabla\cH_t^N f}
			\leq \e^{\kappa^2 t} \, \cH_{t}^N\big(\bigmod{\nabla f}\big)
		\quant t>0
		\end{equation} 
		holds.
\end{enumerate}

\subsection{Ultracontractivity estimates for generalised Bessel potentials}
Propositions~\ref{p: consequences GBE}, \ref{p: consequences GBE II} and \ref{p: consequences GBE III} 
contain some basic estimates for certain (families of) operators that will arise frequently in the sequel.
It is convenient to set $\cD := \sqrt{\cL}$. 

\begin{proposition} \label{p: consequences GBE}
For any pair of numbers $\tau\geq \kappa^2$ and $\rho>0$ 
$$
\bigmod{\nabla (\tau\cI+t^2\cD^2)^{-\rho} f} 
\leq \big((\tau-\kappa^2)\cI+t^2\cD^2\big)^{-\rho}\big(\bigmod{\nabla f}\big)
\quant t \in (0,1].
$$
\end{proposition}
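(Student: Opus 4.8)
The plan is to use the subordination formula to write the Bessel-type potential $(\tau\cI+t^2\cD^2)^{-\rho}$ as an average of the heat semigroup $\cH_s^N$ and then apply Bakry's condition \eqref{f: Bakry} pointwise under the integral. Concretely, for $\rho>0$ and $a>0$ one has the elementary identity
\begin{equation*}
(a\cI+t^2\cD^2)^{-\rho} = \frac{1}{\Gamma(\rho)} \int_0^\infty s^{\rho-1}\, \e^{-sa}\, \cH_{st^2}^N \wrt s,
\end{equation*}
valid on $\ld{N}$ (and extending to $\lu{N}$ by the estimates in \eqref{f: Ht LqLr}), since $\cD^2=\cL$ generates $\cH_\cdot^N$. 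First I would fix $f$ smooth enough for the manipulations to make sense (e.g.\ $f$ and $\nabla f$ in $\ld{N}$, or compactly supported), apply $\nabla$ under the integral sign, and use Bakry's inequality $\bigmod{\nabla\cH_{st^2}^N f}\leq \e^{\kappa^2 st^2}\,\cH_{st^2}^N(\bigmod{\nabla f})$.

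The key computation is then the following chain, with $a=\tau$: since $t\in(0,1]$ we have $st^2\le s$, hence $\e^{\kappa^2 st^2}\le \e^{\kappa^2 s}$, and therefore
\begin{align*}
\bigmod{\nabla (\tau\cI+t^2\cD^2)^{-\rho} f}
&\leq \frac{1}{\Gamma(\rho)} \int_0^\infty s^{\rho-1}\, \e^{-s\tau}\, \bigmod{\nabla\cH_{st^2}^N f} \wrt s \\
&\leq \frac{1}{\Gamma(\rho)} \int_0^\infty s^{\rho-1}\, \e^{-s\tau}\, \e^{\kappa^2 st^2}\, \cH_{st^2}^N\big(\bigmod{\nabla f}\big) \wrt s \\
&\leq \frac{1}{\Gamma(\rho)} \int_0^\infty s^{\rho-1}\, \e^{-s(\tau-\kappa^2)}\, \cH_{st^2}^N\big(\bigmod{\nabla f}\big) \wrt s \\
&= \big((\tau-\kappa^2)\cI+t^2\cD^2\big)^{-\rho}\big(\bigmod{\nabla f}\big),
\end{align*}
where the last equality is the same subordination identity run backwards with $a=\tau-\kappa^2\ge 0$. (If $\tau=\kappa^2$ one interprets $(t^2\cD^2)^{-\rho}$ via the same integral, which converges for $s\to\infty$ because of the decay of $\cH_{st^2}^N$ on $\lu{N}$ coming from \eqref{f: Ht LqLr}, but more simply one can first prove the inequality for $\tau>\kappa^2$ and then let $\tau\downarrow\kappa^2$ by monotone convergence.)

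The main obstacle is justificatory rather than conceptual: one must make sure the subordination integral converges in the relevant function space and that differentiation under the integral sign, as well as the interchange of $\nabla$ with the $s$-integral, is legitimate. For this I would exploit the ultracontractivity and $\lp{N}$-boundedness estimates \eqref{f: ultracontractivity Ht}--\eqref{f: Ht LqLr}, which control $\norm{\cH_{st^2}^N g}{r}$ for small $s$, together with analyticity of the heat semigroup and the spectral theorem to handle $\nabla\cH_{st^2}^N$ for $s$ bounded away from $0$; the factor $s^{\rho-1}$ with $\rho>0$ takes care of integrability near $s=0$. A clean way to avoid delicate domain issues is to first establish the identity and the inequality on a dense class (say $f\in C_c^\infty(N)$, or $f$ in the $\ld{N}$-domain of $\cD$), where everything is manifestly absolutely convergent, and then pass to the general case by density, using that both sides are continuous in the appropriate topology. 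Once the analytic bookkeeping is in place, the estimate itself is just Bakry's pointwise bound integrated against the positive kernel $s^{\rho-1}\e^{-s\tau}\wrt s$.
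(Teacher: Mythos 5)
Your proposal is correct and follows exactly the paper's argument: write $(\tau\cI+t^2\cD^2)^{-\rho}$ via the subordination formula $\frac{1}{\Ga(\rho)}\ioty s^{\rho}\e^{-\tau s}\cH_{st^2}^N\,{\dtt s}$, apply Bakry's inequality under the integral, and use $t\le 1$ to absorb $\e^{\kappa^2 st^2}\le \e^{\kappa^2 s}$ before resumming. The paper states the computation without the justificatory remarks you add, but the mathematical content is the same.
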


\begin{proof}
The subordination formula 
	\begin{equation} \label{f: subordination}
(\tau\cI+t^2\cD^2)^{-\rho} f
= \frac{1}{\Ga(\rho)} \, \ioty s^\rho \, \e^{-\tau s} \, \cH_{st^2}f {\dtt s}   
	\end{equation}
and Bakry's condition \eqref{f: Bakry} imply that 
$$
\begin{aligned}
	\bigmod{\nabla (\tau\cI+t^2\cD^2)^{-\rho} f}
	& \leq \frac{1}{\Ga(\rho)} \, \ioty s^\rho \, \e^{-\tau s} \, \bigmod{\nabla \cH_{st^2}f} {\dtt s} \\
	& \leq \frac{1}{\Ga(\rho)} \, \ioty s^\rho \, \e^{-(\tau-\kappa^2 t^2) s} \, \cH_{st^2}\bigmod{\nabla f} {\dtt s} \\
	& \leq \big((\tau-\kappa^2)\cI+t^2\cD^2\big)^{-\rho} \bigmod{\nabla f},
\end{aligned}
$$
as required.   
\end{proof}

\noindent  
Part of the proof of the next proposition is an adaptation of the proof of \cite[Proposition~2.2~\rmi]{MMV0}. 
Given a nonnegative number $\rho$ and a function $G:[0,\infty)\to \BC$, set
$$
\upbignorm{G}{\rho}
:= \sup_{\la\geq 0} \, (1+\la^2)^\rho\, \bigmod{G(\la)}
\quad\hbox{and}\quad
\Xi_\rho(G)
:= \sqrt{\upnorm{G}{\rho}\,\upnorm{G}{\rho+1}}.  
$$
In the next proposition $F$ and $\{F_t:t>0\}$ will denote functions on $[0,\infty)$.  It is straightforward
to check that if $F(\cD)$ is bounded from $\lu{N}$ to $\ld{N}$, then 
$F(\cD)$ is also bounded from $\ld{N}$ to $\ly{N}$, and $\bigopnorm{F(\cD)}{1;2} = \bigopnorm{F(\cD)}{2;\infty}$.
We shall use this observation without any further comment.  

\begin{proposition} \label{p: consequences GBE II}
There exists a positive constant $C$ such that the following hold for every $t>0$:
\begin{enumerate}
\item[\itemno1]
	if $1\leq q \leq r \leq \infty$, $\rho > n(1/q-1/r)/2$ and $\tau>0$, then
	$$
	\bigopnorm{(\tau\cI+t^2\cD^2)^{-\rho}}{q;r} \leq C \, \ga(t)^{2(1/q-1/r)};
	$$  
\item[\itemno2]
	if $\rho > n/4$, then 
	$\ds
	\bigopnorm{F(t\cD)}{1;2} = \bigopnorm{F(t\cD)}{2;\infty}   
	\leq C \, \ga(t)\, \upnorm{F}{\rho}  
	$
	and
	$\ds
	\bigopnorm{F(t\cD)}{1;\infty} 
	\leq C \, \ga(t)^2\, \upnorm{F}{2\rho};
	$
\item[\itemno3]
	if $\rho > n/4$, then     
	$\ds
	\bigopnorm{F_t(\cD)}{1;2} = \bigopnorm{F_t(\cD)}{2;\infty} 
	\leq C \, \upnorm{F_t}{\rho} 
	$
	and
	$\ds
	\bigopnorm{F_t(\cD)}{1;\infty} 
	\leq C \, \upnorm{F_t}{2\rho};
	$
\item[\itemno4]
	if $\rho>n/4$, then 
	$\ds
	\bignorm{\mod{\nabla F(t\cD) g}}{2} 
	\leq C\, t^{-1} \, \ga(t) \, \Xi_\rho\big(F\big) \bignorm{g}{1}
	$
	and
	$\ds
	\bignorm{\mod{\nabla F(t\cD) g}}{\infty} 
	\leq C \, \Xi_\rho\big(F(t\cdot)\big) \bignorm{g}{2};
	$
\item[\itemno5]
	if $\rho>n/4$, then $\bignorm{\mod{\nabla F_t(\cD) g}}{2} \leq C\,  \Xi_\rho\big(F_t\big)  \bignorm{g}{1}$ and
	$\bignorm{\mod{\nabla F_t(\cD) g}}{\infty} \leq C\,  \Xi_\rho\big(F_t\big)  \bignorm{g}{2}$.
\end{enumerate}
\end{proposition}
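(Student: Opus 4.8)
The plan is to obtain all five estimates from a handful of ingredients that are already available: the subordination formula~\eqref{f: subordination}, the heat bounds~\eqref{f: Ht LqLr}, the spectral theorem (including the remark recorded just above the statement, that for a function of $\cD$ an $\lu N\to\ld N$ bound and an $\ld N\to\ly N$ bound coincide), Bakry's inequality packaged as Proposition~\ref{p: consequences GBE}, and the elementary identity $\bignorm{\mod{\nabla v}}{2}=\bignorm{\cD v}{2}$ for $v$ in the form domain of $\cL$ (integrate by parts, $\cL=\cD^{2}=-\Delta$). The latter immediately gives, for any bounded Borel function $\Phi$ on $[0,\infty)$ for which $\la\mapsto\la\Phi(\la)$ is bounded and any $g\in\ld N$,
\[
\bignorm{\mod{\nabla\Phi(\cD)g}}{2}=\bignorm{\cD\,\Phi(\cD)g}{2}\leq\Bigl(\sup_{\la\geq0}\la\,\bigmod{\Phi(\la)}\Bigr)\bignorm g2 ,
\]
which is all I shall need about $\nabla$ in the two $\ld N$--valued estimates.

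First I would prove~\rmi. Inserting~\eqref{f: Ht LqLr} into~\eqref{f: subordination} and applying Minkowski's integral inequality bounds $\bigopnorm{(\tau\cI+t^{2}\cD^{2})^{-\rho}}{q;r}$ by $\frac{C}{\Ga(\rho)}\int_{0}^{\infty}s^{\rho-1}\e^{-\tau s}\ga(st^{2})^{\de}\wrt s$, where $\de:=1/q-1/r$. Since $\ga(u)^{\de}\leq u^{-n\de/2}+u^{-\de/2}$, this splits into two Gamma integrals; the one carrying the factor $s^{-n\de/2}$ converges at $s=0$ exactly because $\rho>n\de/2=n(1/q-1/r)/2$ — this is the only place the hypothesis on $\rho$ is spent — and the other converges because $\rho>\de/2$; together they contribute $C(t^{-n\de}+t^{-\de})\leq 2C\max(t^{-n},t^{-1})^{\de}=2C\,\ga(t)^{2\de}$, which is~\rmi.

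Next come \rmii and \rmiii, by the usual factorisation trick. For~\rmii write $F(t\cD)=\bigl[F(t\cD)(\cI+t^{2}\cD^{2})^{\rho}\bigr]\circ(\cI+t^{2}\cD^{2})^{-\rho}$: the bracketed operator is a function of $\cD$ with $\ld N$--operator norm $\sup_{\la\geq0}\bigmod{F(t\la)}(1+t^{2}\la^{2})^{\rho}=\upnorm F{\rho}$, while the second factor maps $\lu N$ into $\ld N$ with norm $\leq C\,\ga(t)$ by~\rmi (this is precisely where $\rho>n/4$ is used); by the self-adjointness remark one gets the same bound on $\bigopnorm{F(t\cD)}{2;\infty}$. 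Inserting one more factor $(\cI+t^{2}\cD^{2})^{-\rho}$ on the left and doubling the power in the bracket gives $\bigopnorm{F(t\cD)}{1;\infty}\leq C\,\ga(t)^{2}\upnorm F{2\rho}$. Statement~\rmiii is then just~\rmii at $t=1$ applied to $F_t$, since the constant there does not depend on the multiplier. For the $\ld N$--valued halves of~\rmiv and~\rmv I would factor $F(t\cD)$ as above, noting that the second factor sends $g\in\lu N$ to some $h$ with $\bignorm h2\leq C\,\ga(t)\bignorm g1$, and then apply the gradient identity of the first paragraph to the bracketed factor; after the substitution $\mu=t\la$, using $\mu(1+\mu^{2})^{\rho}\leq(1+\mu^{2})^{\rho+1/2}$ and $\upnorm F{\rho+1/2}\leq\Xi_\rho(F)$ (Cauchy--Schwarz in the definition of $\Xi_\rho$), this yields $\bignorm{\mod{\nabla F(t\cD)g}}{2}\leq C\,t^{-1}\ga(t)\,\Xi_\rho(F)\bignorm g1$, whose case $t=1$ (applied to $F_t$) is the first inequality of~\rmv. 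For the $\ly N$--valued halves, Bakry's inequality is unavoidable: with $\tau_{1}:=\ka^{2}+1$, write $F_t(\cD)=(\tau_{1}\cI+\cD^{2})^{-\rho}\circ\Psi(\cD)$ with $\Psi(\la):=(\tau_{1}+\la^{2})^{\rho}F_t(\la)$ (so $\Psi$ and $\la\Psi(\la)$ are bounded); Proposition~\ref{p: consequences GBE}, with its scaling parameter set equal to $1$, gives the pointwise bound $\mod{\nabla F_t(\cD)g}\leq(\cI+\cD^{2})^{-\rho}\bigl(\mod{\nabla\Psi(\cD)g}\bigr)$, whence $\bignorm{\mod{\nabla F_t(\cD)g}}{\infty}\leq\bigopnorm{(\cI+\cD^{2})^{-\rho}}{2;\infty}\bignorm{\mod{\nabla\Psi(\cD)g}}{2}\leq C\,\Xi_\rho(F_t)\bignorm g2$, using~\rmi ($\rho>n/4$ once more), the gradient identity, and $\la(\tau_{1}+\la^{2})^{\rho}\leq C(1+\la^{2})^{\rho+1/2}$; the second inequality of~\rmiv then follows by applying~\rmv to the rescaled family $\la\mapsto F(t\la)$.

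The step I expect to require genuine care is~\rmi, where the two separate homogeneities $t^{-n/2}$ and $t^{-1/2}$ of the heat bound must be reconciled with the single target $\ga(t)^{2(1/q-1/r)}$ and the Gamma integrals must be checked to converge precisely under $\rho>n(1/q-1/r)/2$. The remainder is essentially bookkeeping with the factorisation trick and the spectral theorem; the one non-formal point is that the $\ly N$--valued gradient estimates in~\rmiv and~\rmv cannot be extracted from the spectral theorem alone but really do need Bakry's inequality, which is why Proposition~\ref{p: consequences GBE} is invoked there (and why those estimates are phrased, for~\rmiv, in terms of $\Xi_\rho(F(t\cdot))$ rather than of $t^{-1}\ga(t)\,\Xi_\rho(F)$).
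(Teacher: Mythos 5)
Your proposal is correct and follows essentially the same route as the paper: subordination plus the heat-kernel ultracontractivity for \rmi, the factorisation through $(\cI+t^2\cD^2)^{-\rho}$ and the spectral theorem for \rmii--\rmiii, the identity $\bignorm{\mod{\nabla v}}{2}=\bignorm{\cD v}{2}$ (the paper's Green-formula step) for the $\ld{N}$-valued gradient bounds, and Proposition~\ref{p: consequences GBE} together with the $\ld{N}\to\ly{N}$ bound from \rmi\ for the $\ly{N}$-valued ones. The only differences are organisational (deriving \rmiii\ and parts of \rmiv--\rmv\ by rescaling rather than repeating the argument, and using $\upnorm{F}{\rho+1/2}\leq\Xi_\rho(F)$ in place of Cauchy--Schwarz on the inner product), and they do not affect correctness.
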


\begin{proof}
First we prove \rmi.  By \eqref{f: subordination} and the ultracontractivity estimate \eqref{f: Ht LqLr},
$$
\begin{aligned} 
\bigopnorm{(\tau\cI+t^2\cD^2)^{-\rho}}{q;r}
& \leq    \smallfrac{1}{\bigmod{\Ga(\rho)}} \,\,  \ioty s^{\rho}
         \, \e^{-\tau s} \, \bigopnorm{\e^{-st^2\cD^2}}{q;r} {\dtt s} \\
& \leq C \ioty  s^{\rho} \, \e^{-\tau s} \, \ga(st^2)^{1/q-1/r} {\dtt s}.
\end{aligned}
$$
The last integral is convergent because of the assumption $\rho>n(1/q-1/r)/2$.  
Now we write the last integral as the sum of the integrals over $(0,1/t^2)$ and $(1/t^2,\infty)$ and
observe that $\ga(st^2) = (st^2)^{-n/2}$ on $(0,1/t^2)$ and 
that $\ga(st^2) = (st^2)^{-1/2}$ on $(1/t^2,\infty)$.  It is straightforward to check that 
$$ 
\int_0^{1/t^2}  s^{\rho} \, \e^{-\tau s} \, (st^2)^{-n(1/q-1/r)/2} {\dtt s}
\leq C \min(t^{-n(1/q-1/r)}, t^{-2\rho})  
$$  
and that 
$\ds
\int_{1/t^2}^\infty  s^{\rho} \, \e^{-\tau s} \, (st^2)^{-(1/q-1/r)/2} {\dtt s}
\leq C \, \min(\e^{-(\tau-\vep)/t^2}, t^{-(1/q-1/r)})
$  
for $\vep$ small.  By combining the estimates above we get the required result. 

Next we prove \rmii.  By the spectral theorem 
$$
\sup_{t>0} \,
\bigopnorm{(\cI+t^2\cD^2)^{\rho} \, F(t\cD)}{2}
= \sup_{\la\geq 0} \, (1+\la^2)^{\rho} \, \mod{F(\la)}
= \upnorm{F}{\rho} < \infty.
$$
Thus, \rmi\ (with $q=1$ and $r=2$) yields  
\begin{equation} \label{1;2}  
\begin{aligned} 
\bigopnorm{F(t\cD)}{1;2}
& =    \bigopnorm{(\cI+t^2\cD^2)^{-\rho} \, (\cI+t^2\cD^2)^{\rho} \, F(t\cD)}{1;2} \\
& \leq \bigopnorm{(\cI+t^2\cD^2)^{\rho} \, F(t\cD)}{2} \, \bigopnorm{(\cI+t^2\cD^2)^{-\rho}}{1;2} \\
& \leq C \,\ga(t)
\quant t >0.  
\end{aligned}
\end{equation}
Furthermore, %in order to prove the bound for $\bigopnorm{F(t\cD)}{1;\infty}$, we write
\begin{align} \label{1;infty}   
\bigopnorm{F(t\cD)}{1;\infty}
& =    \bigopnorm{(\cI+t^2\cD^2)^{-\rho} \, (\cI+t^2\cD^2)^{2\rho} \, F(t\cD)(\cI+t^2\cD^2)^{-\rho}}{1;\infty} \nonumber\\
& \leq \bigopnorm{(\cI+t^2\cD^2)^{-\rho}}{2;\infty}\, \bigopnorm{(\cI+t^2\cD^2)^{2\rho} \, F(t\cD)}{2} 
	\, \bigopnorm{(\cI+t^2\cD^2)^{-\rho}}{1;2} \nonumber\\
& \leq C \, \ga(t)^2\, \upnorm{F}{2\rho}
\quant t >0.  
\end{align}

Next we prove \rmiii.  We argue much as in the proof of \rmii, but with a slight difference.  Instead
of composing $F(t\cD)$ with $\big(\cI+t^2\cD^2)^\rho$, as in \rmii, we write 
$$
F_t(\cD)
= (\cI+\cD^2)^{-\rho} \, (\cI+\cD^2)^{\rho} \, F_t(\cD),
$$
and then proceed as above, using the estimate $\bigopnorm{(\cI+\cD^2)^{\rho} \, F_t(\cD)}{2}=\upnorm{F_t}{\rho}$,
which follows from the spectral theorem.  We omit the details.  

To prove \rmiv, observe that, by the Green formula (see, for instance, \cite[Lemma~4.4]{Gri}, together with 
\cite[Theorem~3.1]{He}),
$$
\bignormto{\mod{\nabla F(t\cD)g}}{2}{2}
= \big(\cL F(t\cD)g,  F(t\cD)g\big)  
= \frac{1}{t^2} \, \big(F_1(t\cD)g,  F(t\cD)g\big),
$$
where $F_1(z) := z^2\, F(z)$.  Schwarz's inequality then implies that 
\begin{equation} \label{f: gradient intermediate}
\begin{aligned}
\bignormto{\mod{\nabla F(t\cD)g}}{2}{2}
& \leq  \frac{1}{t^2}  \bignorm{F_1(t\cD)g}{2}\,  \bignorm{F(t\cD)g}{2},
\end{aligned}
\end{equation}
By \rmii, applied to $F$, and a similar estimate applied to $F_1$, we see that 
$\bignorm{F(t\cD)g}{2}\leq C \,\ga(t) \, \upnorm{F}{\rho} \bignorm{g}{1}$ and that 
$\bignorm{F_1(t\cD)g}{2}\leq C \,\ga(t) \, \upnorm{F_1}{\rho} \bignorm{g}{1}$.  
By combining the estimates above and the trivial observation that $\upnorm{F_1}{\rho} \leq \upnorm{F}{\rho+1}$ , we obtain that 
$$
\bignorm{\mod{\nabla F(t\cD)g}}{2}
\leq C\, t^{-1} \, \ga(t) \, \Xi_\rho(F) \bignorm{g}{1}
\quant t>0,
$$
as required.   

It remains to prove the second gradient estimate.  For $\tau >\kappa^2$ we write
$$
\begin{aligned}
\bigmod{\nabla F(t\cD)g}
& =    \bigmod{\nabla(\tau\cI+\cD^2)^{-\rho} \, (\tau\cI+\cD^2)^{\rho} \, F(t\cD)g} \\
& \leq C\, \big((\tau-\kappa^2)\cI+\cD^2\big)^{-\rho} \bigmod{\nabla(\tau\cI+\cD^2)^{\rho} \, F(t\cD)g};
\end{aligned}
$$
we have used Proposition~\ref{p: consequences GBE} in the inequality above.  
By \rmi, $\big((\tau-\kappa^2)\cI+\cD^2\big)^{-\rho}$ is bounded from $\ld{N}$ to $\ly{N}$, so that 
$$
\begin{aligned}
\bignorm{\mod{\nabla F(t\cD)g}}{\infty} 
& \leq C \bignorm{\mod{\nabla(\tau\cI+\cD^2)^{\rho} \, F(t\cD)g}}{2}.
\end{aligned}
$$
By arguing much as in \eqref{f: gradient intermediate}, we see that 
\begin{equation} \label{f: much as}
\begin{aligned}
	& \bignorm{\mod{\nabla(\tau\cI+\cD^2)^{\rho} \, F(t\cD)g}}{2} \\
	&\leq C \, \bigopnormto{\cL( \tau\cI+\cD^2)^{\rho} \, F(t\cD)}{2}{1/2} 
\bigopnormto{(\tau\cI+\cD^2)^{\rho} F(t\cD)}{2}{1/2}\! \bignorm{g}{2} \\
	&\leq C\, \Xi_\rho\big(F(t\cdot)\big) \bignorm{g}{2};
\end{aligned}
\end{equation} 
the last inequality follows from the spectral theorem.  

The proof of \rmv\ is similar to that of \rmiv.   By arguing much as in \eqref{f: gradient intermediate}, we see that 
$$
\begin{aligned}
\bignormto{\mod{\nabla F_t(\cD)g}}{2}{2}
\leq  C  \bignorm{\cL F_t(\cD)g}{2} \bignorm{F_t(\cD)g}{2}.
\end{aligned}
$$
By \rmiii\ and its proof, $\bignorm{F_t(\cD)g}{2}\leq C \, \upnorm{F_t}{\rho} \bignorm{g}{1}$ and 
$\bignorm{\cL F_t(\cD)g}{2}\leq C \, \upnorm{F_t}{\rho+1} \bignorm{g}{1}$.  
By combining the estimates above, we obtain that 
$$
\bignorm{\mod{\nabla F_t(\cD)g}}{2}
\leq C\,\Xi_\rho\big(F_t\big)  \bignorm{g}{1}
\quant t>0,
$$
and the first gradient estimate in \rmv\ is proved. 
In order to prove the second gradient estimate we proceed as in \rmiv.  If $\tau >\kappa^2$, then 
$$
\begin{aligned}
	\bignorm{\mod{\nabla F_t(\cD)g}}{\infty} 
	& \leq C \bignorm{\mod{\nabla(\tau\cI+\cD^2)^{\rho} \, F_t(\cD)g}}{2}.
\end{aligned}
$$
By \eqref{f: much as} (with $F_t$ instead of $F(t\cdot)$), 
$
	\bignorm{\mod{\nabla(\tau\cI+\cD^2)^{\rho} \, F_t(\cD)g}}{2} 
	\leq C\, \Xi_\rho\big(F_t\big) \bignorm{g}{2}.
$
This concludes the proof of \rmv\ and of the proposition.  
\end{proof}

\subsection{Estimates for the Poisson semigroup}
We denote by $\cP_t^N$ the Poisson semigroup $\e^{-t\cD}$.  Recall the subordination formula  
\begin{equation} \label{f: Poisson}
\cP_t^N
= t \, \ioty h_s^\BR (t) \, \cH_s^N {\dtt s}, 
\end{equation}
where $h_s^\BR$ denotes the standard Gauss--Weierstrass kernel on the real line.  
Notice the estimate
\begin{equation} \label{f: Pt LqLr}
\bigopnorm{\cP_t^N}{q;r} \leq C \, \ga(t)^{2(1/q-1/r)} \quant t >0,
\end{equation}
which is a simple consequence of the subordination formula above and the corresponding estimate 
\eqref{f: Ht LqLr} for $\cH_t^N$; see for instance \cite[Corollary 1.5]{CoM}.   
It is sometimes convenient to write $\cP_t^N = \cQ_t^0 + \cQ_t^\infty$, where
\begin{equation}\label{f: decomp Poisson}
\cQ_t^0
:= t \, \int_0^1 h_s^\BR (t) \, \cH_s^N {\dtt s}
\qquad\hbox{and} \qquad 
\cQ_t^\infty
:= t \, \int_1^\infty h_s^\BR (t) \, \cH_s^N {\dtt s}.
\end{equation}

\begin{proposition} \label{p: consequences GBE III}
There exists a positive constant $C$, independent of $f$, such that 
	\begin{enumerate}
		\item[\itemno1]
			$\bigmod{\nabla\cQ_t^0 f} \leq \e^{\kappa^2} \, \cQ_{t}^0\big(\bigmod{\nabla f}\big)$,
		\item[\itemno2]
			$\mod{\nabla\cQ_t^\infty f} \leq C \,\min(t,t^{-3/2}) \bignorm{f}{1}$ 
		\item[\itemno3]
			$\bignorm{\cQ_t^\infty f}{\infty} \leq C \,\min(t,t^{-1}) \bignorm{f}{1}$ 
	\end{enumerate}
for every $t>0$.  
\end{proposition}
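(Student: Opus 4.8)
The plan is to prove the three estimates separately, exploiting in an essential way the decomposition $\cP_t^N = \cQ_t^0 + \cQ_t^\infty$ together with the Gaussian bound on $h_s^\BR$ and the known mapping properties of $\cH_s^N$ recorded in Section~\ref{s: Background material}.

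For \rmi, the key observation is that $\cQ_t^0$ is an \emph{average} of the heat operators $\cH_s^N$ over $s\in(0,1)$ with the nonnegative weight $t\, h_s^\BR(t)\, s^{-1}$. Applying Bakry's condition \eqref{f: Bakry} pointwise inside the integral gives
$$
\bigmod{\nabla \cQ_t^0 f}
\leq t\int_0^1 h_s^\BR(t)\, \bigmod{\nabla\cH_s^N f}\dtt s
\leq t\int_0^1 h_s^\BR(t)\, \e^{\kappa^2 s}\, \cH_s^N\bigl(\bigmod{\nabla f}\bigr)\dtt s,
$$
and since $s\le 1$ in the domain of integration, $\e^{\kappa^2 s}\le \e^{\kappa^2}$ can be pulled out, leaving exactly $\e^{\kappa^2}\,\cQ_t^0\bigl(\bigmod{\nabla f}\bigr)$. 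One minor point to handle carefully is the interchange of $\nabla$ with the integral, which is justified by the smoothing of $\cH_s^N$ and dominated convergence.

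For \rmii\ and \rmiii, the philosophy is that $\cQ_t^\infty$ only sees the heat semigroup at \emph{large} times $s\ge 1$, where $\ga(s) = s^{-1/2}$, so the operator is very regularizing. For \rmiii, I would write $\bignorm{\cQ_t^\infty f}{\infty}\le t\int_1^\infty h_s^\BR(t)\, \bigopnorm{\cH_s^N}{1;\infty}\, s^{-1}\,\bignorm{f}{1}\wrt s$ and use \eqref{f: ultracontractivity Ht}, i.e.\ $\bigopnorm{\cH_s^N}{1;\infty}\le C\,\ga(s) = C\,s^{-1/2}$ for $s\ge 1$; the resulting integral $t\int_1^\infty h_s^\BR(t)\, s^{-3/2}\wrt s$ is then estimated by hand, using $h_s^\BR(t) = (4\pi s)^{-1/2}\e^{-t^2/(4s)}\le C s^{-1/2}$ always, which gives the $t^{-1}$ bound after the substitution; and using instead $h_s^\BR(t)\le C t^{-1}$ (valid for all $s$, since the Gaussian is bounded by $1$ after rescaling, or more simply $h_s^\BR(t)\le C\, s^{1/2}t^{-2}$ for $s$ small relative to $t^2$, whichever gives the cleaner bound) gives the $t$ bound for small $t$; take the minimum. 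For \rmii, the same scheme applies but one controls $\bigmod{\nabla\cQ_t^\infty f}$ by inserting the gradient bound for $\cH_s^N$ from $\lu{N}$: from Proposition~\ref{p: consequences GBE II}\,\rmv (or directly from \rmii\ and \rmiv\ of that proposition applied to suitable functions of $\cD$) one has $\bignorm{\mod{\nabla\cH_s^N f}}{\infty}\le C\, s^{-1/2}\,\ga(s)\,\bignorm{f}{1} = C\, s^{-1}\,\bignorm{f}{1}$ for $s\ge 1$; inserting this and integrating $t\int_1^\infty h_s^\BR(t)\, s^{-2}\wrt s$ against the two bounds for $h_s^\BR(t)$ as above yields $\min(t,t^{-3/2})$.

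The main obstacle I anticipate is purely bookkeeping: getting the right power of $t$ in the two regimes ($t$ small versus $t$ large) for parts \rmii\ and \rmiii. One must split $\int_1^\infty h_s^\BR(t)(\cdots)\wrt s$ at $s\sim t^2$ (for large $t$) and use $\e^{-t^2/(4s)}$ to absorb powers of $t$ from the region $s\lesssim t^2$, while for small $t$ the factor $t$ in front of the subordination integral is already decisive and one just bounds $h_s^\BR(t)\le C s^{-1/2}$ crudely. None of this is deep, but care is needed so that the constants are genuinely independent of $f$ and $t$, as claimed. The gradient estimate in \rmii\ additionally requires that the $\lu\to\ly$ gradient bound for $\cH_s^N$ at large times be in hand; if one prefers to avoid citing Proposition~\ref{p: consequences GBE II}\,\rmv here, one can instead factor $\cH_s^N = \cH_{s/2}^N\cH_{s/2}^N$, apply Bakry to one factor and \eqref{f: ultracontractivity Ht} to the other, which gives $\bignorm{\mod{\nabla\cH_s^N f}}{\infty}\le \e^{\kappa^2 s/2}\bigopnorm{\cH_{s/2}^N}{1;\infty}\bignorm{\mod{\nabla\cH_{s/2}^N}}{\ldots}$ — but this reintroduces the $\e^{\kappa^2 s}$ growth, which is fatal for $s\to\infty$. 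Hence the cleaner route is genuinely to use the $\lp$-gradient bounds of Proposition~\ref{p: consequences GBE II}, which have no exponential factor, and that is the approach I would follow.
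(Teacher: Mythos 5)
Parts \rmi\ and \rmiii\ of your proposal coincide with the paper's proof: Bakry's condition inside the subordination integral with $\e^{\kappa^2 s}\le\e^{\kappa^2}$ for $s\le 1$, and ultracontractivity $\bigopnorm{\cH_s^N}{1;\infty}\le C s^{-1/2}$ for $s\ge 1$ followed by the change of variables $u=t^2/s$ in the Gaussian integral. (Your one-line description of which crude bound on $h_s^\BR(t)$ yields which regime is garbled --- for large $t$ neither $h_s^\BR(t)\le Cs^{-1/2}$ nor $h_s^\BR(t)\le Ct^{-1}$ alone suffices, one must keep the factor $\e^{-t^2/(4s)}$ --- but you say this yourself in the final paragraph, so \rmiii\ is fine.)

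The gap is in \rmii, in the intermediate estimate $\bignorm{\mod{\nabla\cH_s^N f}}{\infty}\le C\,s^{-1/2}\ga(s)\bignorm{f}{1}=Cs^{-1}\bignorm{f}{1}$ for $s\ge 1$, which you assert follows from Proposition~\ref{p: consequences GBE II}~\rmiv--\rmv. It does not. Those items give either an $L^1\to L^2$ gradient bound, $\bignorm{\mod{\nabla\cH_s f}}{2}\le C s^{-1/2}\ga(\sqrt s)\bignorm{f}{1}=Cs^{-3/4}\bignorm{f}{1}$ for $s\ge1$, or an $L^2\to L^\infty$ gradient bound $\bignorm{\mod{\nabla\cH_s g}}{\infty}\le C\bignorm{g}{2}$ with a constant that does \emph{not} decay in $s$ (since $\Xi_\rho\big(\e^{-s(\cdot)^2}\big)\asymp 1$ for $s\ge1$). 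Composing the latter with $\bigopnorm{\cH_{s/2}^N}{1;2}\le Cs^{-1/4}$ --- which is the only factorization route available without Bakry --- yields merely $\bignorm{\mod{\nabla\cH_s f}}{\infty}\le Cs^{-1/4}\bignorm{f}{1}$. That exponent is quantitatively insufficient: inserting $s^{-a}$ into $t\int_1^\infty h_s^\BR(t)\,s^{-a}\dtt s$ produces $Ct^{-2a}$ for large $t$, so $a=1/4$ gives only $\min(t,t^{-1/2})$, not the claimed $\min(t,t^{-3/2})$; one needs $a\ge 3/4$. Establishing precisely $a=3/4$ is the actual content of the paper's proof of \rmii: it writes $\cH_s=(\tau\cI+\cD^2)^{-\rho}(\tau\cI+\cD^2)^{\rho}\cH_s$, uses Proposition~\ref{p: consequences GBE} to dominate $\mod{\nabla(\tau\cI+\cD^2)^{-\rho}\,\cdot\,}$ pointwise by a Bessel potential (bounded from $\ld{N}$ to $\ly{N}$ by Proposition~\ref{p: consequences GBE II}~\rmi), then applies the Green-formula identity $\bignormto{\mod{\nabla u}}{2}{2}=(\cL u,u)$ with Cauchy--Schwarz and the spectral theorem to get $\bignorm{\mod{\nabla\cH_s f}}{\infty}\le C s^{-1/2}\max(s^{-\rho},1)\ga(s)^{1/2}\bignorm{f}{1}$, i.e.\ $Cs^{-3/4}\bignorm{f}{1}$ for $s\ge1$. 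You should supply this step (or an equivalent one); as written, the justification you give for the key bound in \rmii\ either proves too little or nothing at all.
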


\begin{proof}
First we prove \rmi. 
By Bakry's condition \eqref{f: Bakry},
$$
\begin{aligned}
	\mod{\nabla\cQ_t^0 f} 
	 \leq  t \, \int_0^1 h_s^\BR (t) \, \mod{\nabla \cH_s^Nf} {\dtt s} 
	 \leq  t \, \int_0^1 h_s^\BR (t) \, \e^{\kappa^2 s} \cH_s^N\mod{\nabla f} {\dtt s},
\end{aligned}
$$
which is clearly dominated by $\ds\e^{\kappa^2} \, t \, \int_0^1 h_{s}^\BR (t) \, \cH_s^N\mod{\nabla f} {\dtt s}
= \e^{\kappa^2} \, \cQ_{t}^0\big(\bigmod{\nabla f}\big)$,
as required.  

Next we prove \rmii.  Observe that, by Propositions~\ref{p: consequences GBE} and 
\ref{p: consequences GBE II}~\rmi\ (with $q=2$ and $r=\infty$), 
$$
\bignorm{\mod{\nabla \cH_s f}}{\infty}
\leq C \bignorm{\mod{\nabla (\tau\cI+ \cD^2)^\rho \cH_s f}}{2}.  
$$
By arguing much as in \eqref{f: gradient intermediate} (with $(\tau\cI+ \cD^2)^\rho \cH_s$ in place of $F(t\cD)$), we see that 
$$
\bignorm{\mod{\nabla (\tau\cI+ \cD^2)^\rho \cH_s f}}{2}
\leq \frac{1}{\sqrt s} \bignormto{(\tau\cI+ \cD^2)^\rho s \cD^2 \cH_s f}{2}{1/2} 
\bignormto{(\tau\cI+ \cD^2)^\rho \cH_s f}{2}{1/2}. 
$$
Now, set $\ds \om(s):= \sup_{\la\geq 0} \, (\tau + \la^2)^\rho\, s\la^2 \, \e^{-s\la^2/2}$.   By the spectral theorem, 
$$
\begin{aligned}
\bignorm{(\tau\cI+ \cD^2)^\rho s\cD^2\cH_s f}{2}
& \leq \bigopnorm{(\tau\cI+ \cD^2)^\rho s \cD^2 \cH_{s/2}}{2} \bignorm{\cH_{s/2}f}{2} \\ 
& \leq C \, \om(s) \, \bigopnorm{\cH_{s/2}}{1;2} \bignorm{f}{1} \\
& \leq C \, \max(s^{-\rho},1) \, \ga(s)^{1/2}  \bignorm{f}{1};
\end{aligned}
$$
the third inequality above follows from \eqref{f: Ht LqLr} and the fact that 
$$
\om(s)
= \sup_{v\geq 0}\, v^2 \, (\tau + v^2s^{-1})^\rho \, \e^{-v^2/2}
\leq \max(s^{-\rho},1).    
$$ 
Similarly, 
$
\ds\bignorm{(\tau\cI+ \cD^2)^\rho \cH_s f}{2}
\leq C \, \max(s^{-\rho},1) \, \ga(s)^{1/2}  \bignorm{f}{1}.
$
By combining the estimates above, we see that 
$
\ds\bignorm{\mod{\nabla \cH_s f}}{\infty}
\leq \frac{C}{\sqrt s} \, \max(s^{-\rho},1) \, \ga(s)^{1/2}  \bignorm{f}{1}.
$
Therefore 
$$
\begin{aligned}
\mod{\nabla\cQ_t^\infty f} 
\leq t \, \int_1^\infty \, h_s^\BR (t) \bignorm{\mod{\nabla \cH_s f}}{\infty} {\dtt s}
\leq  C \bignorm{f}{1} \, t \, \int_1^\infty \, h_s^\BR (t)\,s^{-3/4} \,{\dtt s}. 
\end{aligned}
$$
The last integral above is bounded above by $C\, \min(1,t^{-5/2})$.  Indeed, if $t\leq 1$, then 
$$
\int_1^\infty \, h_s^\BR (t)\,s^{-3/4} \,{\dtt s}
\leq C \int_1^\infty \,\e^{-t^2/(4s)} \,s^{-5/4} \,{\dtt s}
\leq C \int_1^\infty \,s^{-9/4} \wrt s,
$$
which is clearly finite, and, if $t\geq 1$, then 
$$
\int_1^\infty \, h_s^\BR (t)\,s^{-3/4} \,{\dtt s}
\leq C \int_1^\infty \,\e^{-t^2/(4s)} \,s^{-5/4} \,{\dtt s}
=    C \, t^{-5/2} \, \int_0^{t^2/4} \,u^{1/4}\e^{-u} \wrt u,
$$
which is bounded by $C\, t^{-5/2}$.  Therefore
$
\mod{\nabla\cQ_t^\infty f} 
\leq C \,\min(t,t^{-3/2}) \bignorm{f}{1},  
$
as required.  

Finally, we prove \rmiii.  We use the ultracontractivity of the heat semigroup, and estimate
$$
\begin{aligned}
\bignorm{\cQ_t^\infty f}{\infty} 
\leq C  \bignorm{f}{1} \, t\, \int_1^\infty h_s^{\BR} (t) \, s^{-1/2} {\dtt s} 
\leq C \bignorm{f}{1} \, t\, \int_1^\infty \e^{-t^2/(4s)} \, s^{-1} {\dtt s}. `
\end{aligned}
$$
Now, the change of variables $t^2/s=u$ transforms the last integral to 
$\ds t^{-2} \, \int_0^{t^2} \e^{-u/4} \, u {\dtt u}$.  This is bounded for $t$ small, and is bounded by $C \, t^{-2}$
for $t$ large.  By combining the estimates above, we get that 
$\bignorm{\cQ_t^\infty f}{\infty} 
\leq C \,\min(t,t^{-1}) \bignorm{f}{1},
$
as required.  
\end{proof}

\subsection{Estimates related to the wave propagator} 
Define the Fourier transform of an integrable function $\eta$ on the real line 
by $\ds\wh\eta(s) = \ir \eta(\la) \, \e^{-is\la}\wrt \la$.  
We analyse various operators by subordinating them to the wave propagator, an idea that
originates in \cite{CGT,T1}.  
At least formally, we may write $\ds\eta(\cD) = \frac{1}{2\pi} \ir \wh\eta(s) \, \cos(s\cD) \wrt s$,
whenever $\eta$ is even.  
Occasionally we need to integrate by parts in the integral above.  We do it
with the aid of \cite[Lemma~5.1]{MMV0}, which we restate for the reader's convenience.  
Hereafter~$\cO^\ell$ denotes the differential operator $s^\ell\partial_s^\ell$, acting on 
functions on the real line.  We set $\ds\cJ_\nu(v) = \frac{J_\nu(v)}{v^\nu}$, where
$J_\nu$ denotes the Bessel function of the first kind and of order $\nu$ (see, for instance, 
\cite[Section~5.3]{Le}).  

\begin{lemma}\label{l: P}
For every positive integer $J$ there exists a polynomial 
$P_{J}$ of degree $J$ without constant term, such that 
$$
\ir \wh\eta (t) \, \cos (vt) \wrt t
=   \ir  P_{J}(\cO)\wh\eta(t)\,  \cJ_{J-1/2} (t v)  \wrt t,
$$
for all functions $\eta$  such that $\cO^\ell \wh\eta\in \lu{\BR}\cap C_0(\BR)$ for all $\ell$ in $\{0,1,\ldots, J\}$.
\end{lemma}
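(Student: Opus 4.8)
\medskip
\noindent The plan is to reduce $\cos$ to a normalised half-integer Bessel function and then to induct on $J$, integrating by parts once at each step. Since $J_{-1/2}(z)=\sqrt{2/(\pi z)}\,\cos z$, one has $\cJ_{-1/2}(z)=\sqrt{2/\pi}\,\cos z$, so $\cos(vt)=\sqrt{\pi/2}\,\cJ_{-1/2}(tv)$ and the asserted identity is equivalent to
$$
\sqrt{\pi/2}\,\ir \wh\eta(t)\,\cJ_{-1/2}(tv)\wrt t
=\ir P_J(\cO)\wh\eta(t)\,\cJ_{J-1/2}(tv)\wrt t .
$$
The engine of the proof is the elementary recurrence
$$
\frac{d}{dt}\bigl[t^{2m+1}\,\cJ_{m+1/2}(tv)\bigr]=t^{2m}\,\cJ_{m-1/2}(tv),\qquad m=0,1,2,\dots,
$$
which follows from $\frac{d}{dz}\bigl[z^{2\mu}\,\cJ_\mu(z)\bigr]=z^{2\mu-1}\,\cJ_{\mu-1}(z)$ (a rewriting of $\frac{d}{dz}[z^\mu J_\mu(z)]=z^\mu J_{\mu-1}(z)$ by means of $J_\mu=z^\mu\cJ_\mu$) on taking $\mu=m+\tfrac12$ and substituting $z=tv$. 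Note that $t^{2m+1}\cJ_{m+1/2}(tv)$ is a smooth function of $t$ vanishing to order $2m+1$ at the origin.

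\noindent For the base case $J=1$ I would use the $m=0$ instance $\cJ_{-1/2}(tv)=\frac{d}{dt}\bigl[t\,\cJ_{1/2}(tv)\bigr]$ and integrate by parts once; the boundary term $\wh\eta(t)\,t\,\cJ_{1/2}(tv)=\sqrt{2/\pi}\,v^{-1}\wh\eta(t)\sin(tv)$ vanishes at $\pm\infty$ (because $\wh\eta=\cO^0\wh\eta\in C_0$) and trivially at $0$, so that
$$
\ir\wh\eta(t)\cos(vt)\wrt t=-\sqrt{\pi/2}\,\ir t\,\wh\eta'(t)\,\cJ_{1/2}(tv)\wrt t
=-\sqrt{\pi/2}\,\ir \cO^1\wh\eta(t)\,\cJ_{1/2}(tv)\wrt t ,
$$
and $P_1(\cO):=-\sqrt{\pi/2}\,\cO$ has degree one and no constant term. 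For the inductive step $J\to J+1$ I would set $g:=P_J(\cO)\wh\eta$, a linear combination of the functions $\cO^\ell\wh\eta$ with $1\le \ell\le J$, so $g\in\lu{\BR}\cap C_0$, substitute $\cJ_{J-1/2}(tv)=t^{-2J}\frac{d}{dt}\bigl[t^{2J+1}\,\cJ_{J+1/2}(tv)\bigr]$, and integrate by parts. The interior factor that results is $\bigl(t^{-2J}g\bigr)'\,t^{2J+1}=t\,g'-2J\,g=(\cO^1-2J)g$; by the operator identity $\cO^1\cO^\ell=\ell\,\cO^\ell+\cO^{\ell+1}$ (immediate from $\cO^\ell=s^\ell\partial_s^\ell$), this is again a linear combination of the $\cO^\ell\wh\eta$, now with $1\le\ell\le J+1$. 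Hence, modulo boundary terms,
$$
\ir\wh\eta(t)\cos(vt)\wrt t=\ir (2J-\cO^1)P_J(\cO)\wh\eta(t)\,\cJ_{J+1/2}(tv)\wrt t ,
$$
so one puts $P_{J+1}(\cO):=(2J-\cO^1)P_J(\cO)$; the same operator identity shows that $P_{J+1}$, expanded in the basis $\{\cO^\ell\}_{\ell\ge1}$, has degree exactly $J+1$ (its coefficient of $\cO^{J+1}$ is $-1$ times the leading coefficient of $P_J$) and no constant term.

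\noindent The step I expect to be the main obstacle is justifying the integration by parts in the inductive step, since $t^{-2J}g(t)$ is singular at $t=0$ when $J\ge1$. The remedy is to integrate by parts over $\{|t|>\varepsilon\}$ and let $\varepsilon\to0$. The key point is that $t^{-2J}\frac{d}{dt}\bigl[t^{2J+1}\,\cJ_{J+1/2}(tv)\bigr]$ is, identically, the bounded function $\cJ_{J-1/2}(tv)$, so $\int_{|t|>\varepsilon}g\,\cJ_{J-1/2}(tv)\wrt t\to\int_{\BR}g\,\cJ_{J-1/2}(tv)\wrt t$, and likewise $\int_{|t|>\varepsilon}(\cO^1-2J)g\,\cJ_{J+1/2}(tv)\wrt t\to\int_{\BR}(\cO^1-2J)g\,\cJ_{J+1/2}(tv)\wrt t$ because $(\cO^1-2J)g\in\lu{\BR}$ and $\cJ_{J+1/2}$ is bounded. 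The boundary terms at $|t|=\varepsilon$ are $O\bigl(\varepsilon\,\norm{g}{\infty}\bigr)=o(1)$ since $t^{-2J}g(t)\cdot t^{2J+1}\cJ_{J+1/2}(tv)=t\,g(t)\,\cJ_{J+1/2}(tv)$, and the boundary terms at $\pm\infty$ vanish because $g\in C_0$ while $t\,\cJ_{J+1/2}(tv)$ is bounded (indeed $O(|t|^{-J})$ at infinity, by the decay $|\cJ_\nu(z)|\le C|z|^{-\nu-1/2}$ of the normalised Bessel function). This is exactly where the hypotheses $\cO^\ell\wh\eta\in\lu{\BR}\cap C_0$ are used: the $\lu{\BR}$ bounds make every integral that appears absolutely convergent (all the kernels $\cJ_{m-1/2}(tv)$ being bounded), and the $C_0$ bounds annihilate the boundary terms at infinity. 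Iterating the inductive step from the base case then produces the polynomial $P_J$ and the identity.
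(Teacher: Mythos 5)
Your proof is correct. The paper itself gives no proof of this lemma — it is restated from \cite[Lemma~5.1]{MMV0} — and your argument (iterated integration by parts driven by the recurrence $\frac{d}{dt}\bigl[t^{2m+1}\cJ_{m+1/2}(tv)\bigr]=t^{2m}\cJ_{m-1/2}(tv)$, with the $C_0$ hypotheses killing the boundary terms, the $\lu{\BR}$ hypotheses and the boundedness of $\cJ_{m-1/2}$ ensuring absolute convergence, and the identity $\cO^1\cO^\ell=\ell\,\cO^\ell+\cO^{\ell+1}$ keeping $P_{J+1}$ a degree-$(J+1)$ polynomial without constant term) is precisely the standard one used there.
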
 

\noindent
Given a ``nice'' function $f$ on $N$, the formula above and the spectral theorem suggest to 
establish appropriate norm estimates of $\cJ_{J-1/2}\big(s\cD\big) f$.  This is done in the next lemma.  

\begin{lemma} \label{l: properties cJkmum}
Suppose that $\de >0$ and that $J$ is a positive integer.   There exists a constant~$C$ such that the following hold:
	\begin{enumerate}
		\item[\itemno1]
		if $J>n/2$, then
		$\bignorm{\cJ_{J-1/2}\big(s\cD\big)f}{1} \leq C\,s^{(\al-1)/2}\, \e^{\be s} \bignorm{f}{1}$ for every $s\geq \de$;
		\item[\itemno2]
		if $J>n/2+2$, then $\bignorm{\mod{\nabla \cJ_{J-1/2}\big(s\cD\big)f}}{1} 
		\leq C\, s^{(\al-3)/2}\, \e^{\be s} \bignorm{f}{1}$
		for every $s\geq \de$;
		\item[\itemno3]
		if $J>2+n$, then $\bignorm{\cJ_{J-1/2}\big(s\cD\big)f}{W^{1,\infty}(N)} 
		\leq C \bignorm{f}{1}$ for every $s\geq \de$.
	\end{enumerate}
\end{lemma}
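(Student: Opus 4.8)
The plan is to combine finite propagation speed of the wave equation with the spectral estimates of Section~\ref{s: Background material}. Poisson's integral representation of Bessel functions gives
\[
\cJ_{J-1/2}(s\cD) = c_J \int_{-1}^{1} (1-t^2)^{J-1}\,\cos(ts\cD)\wrt t, \qquad c_J := \frac{2^{1/2-J}}{\sqrt{\pi}\;\Ga(J)},
\]
the identity being understood in the strong operator topology on $\ld N$; since $\cos(ts\cD)$ propagates at unit speed \cite{CGT}, the integral kernel $k_s$ of $\cJ_{J-1/2}(s\cD)$ — hence also the kernel $\ell_s$ of $\nabla\cJ_{J-1/2}(s\cD)$ — is supported in the tube $\set{(x,y): d(x,y)\leq s}$. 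Besides this I shall use the Bessel bound $\bigmod{\cJ_{J-1/2}(z)}\leq C\,(1+z)^{-J}$ for $z\geq0$, the elementary estimates $\ga(s)\leq C_\de\, s^{-1/2}$ and $\nu(B_s(x))^{1/2}\leq C_\de\, s^{\al/2}\e^{\be s}$ for $s\geq\de$ (the latter from \eqref{f: Ahlfors} and \eqref{f: volume growth}). For \rmi, $\cJ_{J-1/2}(s\cD)$ is self adjoint, so Schwarz's inequality and the support of $k_s$ give $\bignorm{\cJ_{J-1/2}(s\cD)f}{1}\leq \big(\sup_y\nu(B_s(y))^{1/2}\,\bignorm{k_s(\cdot,y)}{2}\big)\bignorm{f}{1}$, and by duality $\bignorm{k_s(\cdot,y)}{2}\leq\bigopnorm{\cJ_{J-1/2}(s\cD)}{1;2}$. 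The key point is that this last norm decays in $s$: factoring $\cJ_{J-1/2}(s\cD)=\big[(\cI+s^2\cD^2)^{\rho}\cJ_{J-1/2}(s\cD)\big](\cI+s^2\cD^2)^{-\rho}$, Proposition~\ref{p: consequences GBE II}~\rmi\ (with $q=1$, $r=2$) bounds $\bigopnorm{(\cI+s^2\cD^2)^{-\rho}}{1;2}$ by $C\ga(s)$, while $\bigopnorm{(\cI+s^2\cD^2)^{\rho}\cJ_{J-1/2}(s\cD)}{2}=\sup_{u\geq0}(1+u^2)^{\rho}\bigmod{\cJ_{J-1/2}(u)}<\infty$ by the spectral theorem and the Bessel bound, provided $n/4<\rho\leq J/2$, i.e.\ $J>n/2$. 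Multiplying $\bigopnorm{\cJ_{J-1/2}(s\cD)}{1;2}\leq C_\de\, s^{-1/2}$ by $\nu(B_s(y))^{1/2}$ gives \rmi.

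For \rmii\ I would run the same argument for $\nabla\cJ_{J-1/2}(s\cD)$, so that (by the support of $\ell_s$, Schwarz's inequality and duality) it suffices to prove $\bignorm{\mod{\nabla\cJ_{J-1/2}(s\cD)g}}{2}\leq C_\de\, s^{-3/2}\bignorm{g}{1}$. For this I would invoke the Green formula identity $\bignormto{\mod{\nabla H(\cD)g}}{2}{2}=(\cL H(\cD)g,H(\cD)g)\leq\bignorm{\cL H(\cD)g}{2}\bignorm{H(\cD)g}{2}$ with $H=\cJ_{J-1/2}(s\,\cdot)$, noting that $\cL H(\cD)=s^{-2}G(s\cD)$, where $G(\la):=\la^2\cJ_{J-1/2}(\la)$ obeys $\bigmod{G(\la)}\leq C(1+\la)^{-(J-2)}$. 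Factoring $G(s\cD)$ as before, Proposition~\ref{p: consequences GBE II}~\rmi\ and the spectral theorem yield $\bigopnorm{G(s\cD)}{1;2}\leq C_\de\, s^{-1/2}$ provided $n/4<\rho\leq J/2-1$, i.e.\ $J>n/2+2$; combined with $\bigopnorm{\cJ_{J-1/2}(s\cD)}{1;2}\leq C_\de\, s^{-1/2}$ this gives $\bignormto{\mod{\nabla\cJ_{J-1/2}(s\cD)g}}{2}{2}\leq C_\de\, s^{-5/2}s^{-1/2}\bignormto{g}{1}{2}$, and \rmii\ follows after multiplying by $\nu(B_s(y))^{1/2}$.

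Part \rmiii\ does not need the wave representation. The estimate $\bignorm{\cJ_{J-1/2}(s\cD)f}{\infty}\leq C_\de\bignorm{f}{1}$ follows at once from Proposition~\ref{p: consequences GBE II}~\rmiii\ (the $\lu N\to\ly N$ bound) and the Bessel bound, since $\upnorm{\cJ_{J-1/2}(s\,\cdot)}{2\rho}$ is then finite uniformly in $s\geq\de$ whenever $n/4<\rho<J/4$ (so $J>n$ is enough). For the gradient I would factor $\cJ_{J-1/2}(s\cD)=A_s(\cD)B_s(\cD)$ with $B_s(\la):=(1+s^2\la^2)^{-m}$ and $A_s(\la):=(1+s^2\la^2)^{m}\cJ_{J-1/2}(s\la)$: by Proposition~\ref{p: consequences GBE II}~\rmi, $B_s(\cD)$ is bounded $\lu N\to\ld N$ uniformly in $s\geq\de$ provided $m>n/4$; by Proposition~\ref{p: consequences GBE II}~\rmv\ (the second gradient estimate), $\bignorm{\mod{\nabla A_s(\cD)h}}{\infty}\leq C\,\Xi_\rho(A_s)\bignorm{h}{2}$, and $\Xi_\rho(A_s)$ stays bounded uniformly in $s\geq\de$ — using $(1+s^2\la^2)^m\leq(1+s\la)^{2m}$ and the Bessel bound — precisely when $J>2\rho+2m+2$, which is compatible with $\rho,m>n/4$ because $J>n+2$. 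Composing, $\bignorm{\mod{\nabla\cJ_{J-1/2}(s\cD)f}}{\infty}\leq C_\de\bignorm{f}{1}$, and the two estimates give \rmiii.

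The step I expect to be the main obstacle is obtaining the sharp powers of $s$ in \rmi\ and \rmii. The crude argument — support of the kernel, Schwarz's inequality against the tube of radius $s$, and the $s$-uniform $\lu N\to\ld N$ bound of Proposition~\ref{p: consequences GBE II}~\rmiii\ — overshoots the claimed estimates by a factor $s^{1/2}$ in \rmi\ and $s^{3/2}$ in \rmii. Recovering this decay is exactly what the $s$-rescaled Bessel potential $(\cI+s^2\cD^2)^{-\rho}$ achieves, through the gain $\ga(s)\sim s^{-1/2}$ supplied by Proposition~\ref{p: consequences GBE II}~\rmi; what remains is careful bookkeeping of the exponents to meet the hypotheses on $J$, together with the observation that the $\Xi_\rho$-norms of $\cJ_{J-1/2}(s\,\cdot)$ do not themselves decay in $s$, so in \rmii\ the extra factor $s^{-2}$ has to be squeezed out of the identity $\cL\cJ_{J-1/2}(s\cD)=s^{-2}G(s\cD)$ using $\la^2\cJ_{J-1/2}(\la)=O\big((1+\la)^{-(J-2)}\big)$.
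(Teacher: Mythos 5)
Your argument is correct and follows essentially the same route as the paper: finite propagation speed to localise the kernel in the tube $\set{(x,y):d(x,y)\leq s}$, Schwarz's inequality against the volume of that tube via \eqref{f: Ahlfors}--\eqref{f: volume growth}, and the rescaled Bessel-potential factorisation $(\cI+s^2\cD^2)^{-\rho}$ to extract the decay $\ga(s)$ (which is exactly the content of Proposition~\ref{p: consequences GBE II}~\rmii\ and \rmiv, which the paper simply cites rather than re-derives), together with the Green-formula trick for the gradient. The only cosmetic differences are that the paper localises $f$ by a partition of unity instead of working with the kernel directly, and in \rmiii\ it commutes the gradient through a Bessel potential via Proposition~\ref{p: consequences GBE} before applying Proposition~\ref{p: consequences GBE II}~\rmv, whereas you compose the $L^1\to L^2$ smoothing first; the exponent bookkeeping and the thresholds on $J$ come out the same.
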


\begin{proof}
Observe preliminarily that we can reduce the problem to the case where the support of $f$ 
is contained in $B_{\de}(o)$, for some point $o$ in $N$.  This is done considering a
smooth partition of unity $\{\psi_j\}$ so that the support of $\psi_j$ is contained in $B_{\de}(x_j)$,
for an appropriate sequence $\{x_j\}$ of points in $N$.
Thus, in the rest of the proof we assume that the support of $f$ is contained in $B_{\de}(o)$, for some $o$ in $N$. 

The proof of \rmi\ proceeds along the lines of the proof of \rmii, and it is, in fact, simpler.   
We leave the details to the interested reader.  

Now we prove \rmii.  
Observe that the support of $\mod{\nabla \cJ_{J-1/2}\big(s\cD\big)f}$ is 
contained in the ball $B_{\de+s}(o)$ by finite propagation speed.  By Schwarz's inequality,
$$
\begin{aligned}
\bignorm{\mod{\nabla \cJ_{J-1/2}\big(s\cD\big)f}}{1}  
& \leq \nu\big(B_{\de+s}(o)\big)^{1/2}  \bignorm{\mod{\nabla \cJ_{J-1/2}\big(s\cD\big)f}}{2}. \\
\end{aligned}
$$
Observe that if $n/2<2\rho\leq J-2$, which is compatible with our assumptions, then by 
Proposition~\ref{p: consequences GBE II}~\rmiv\ 
there exists a constant $C$ such that 
\begin{equation} \label{f: est nabla cJ}
\begin{aligned}
	\bignorm{\mod{\nabla \cJ_{J-1/2}\big(s\cD\big)f}}{1}  
	& \leq \frac{C}{s} \, \nu\big(B_{\de+s}(o)\big)^{1/2} \, \, \ga(s) \bignorm{f}{1} \\
	& \leq C \, s^{(\al-3)/2} \, \e^{\be s} \bignorm{f}{1} 
	\quant s \in [\de,\infty),
\end{aligned}
\end{equation}
as required.  Notice that the last inequality is a consequence of \eqref{f: volume growth}.

Finally we prove \rmiii.  By Proposition~\ref{p: consequences GBE II}~\rmii\ (with $\rho = J/2$),
$$
\bigopnorm{\cJ_{J-1/2}\big(s\cD\big)}{1;\infty}
\leq C\, \ga(s)^2 \, \upnorm{\cJ_{J-1/2}}{J}
\leq C\, \upnorm{\cJ_{J-1/2}}{J}
\quant s\geq \de.
$$
Next we estimate the gradient of $\cJ_{J-1/2}\big(s\cD\big)$.  For $\tau > \kappa^2$ we write 
$$
\nabla \cJ_{J-1/2}\big(s\cD\big)f
= \nabla \big(\tau\cI+\cD^2\big)^{-\rho/2}\, \big(\tau\cI+\cD^2\big)^{\rho/2}\cJ_{J-1/2}\big(s\cD\big)f.  
$$
Then Proposition~\ref{p: consequences GBE} implies that 
$$
\bigmod{\nabla \cJ_{J-1/2}\big(s\cD\big)f}
\leq C \, \big((\tau-\kappa^2)\cI+\cD^2\big)^{-\rho/2}\, \bigmod{\nabla \big(\tau\cI+\cD^2\big)^{\rho/2}\cJ_{J-1/2}\big(s\cD\big)f}.  
$$
Now if $\rho > n/2$, then the operator 
$\big((\tau-\kappa^2)\cI+\cD^2\big)^{-\rho/2}$ is bounded from $\ld{N}$ to $\ly{N}$, 
by Proposition~\ref{p: consequences GBE II}~\rmi, whence 
$$
\bignorm{\mod{\nabla \cJ_{J-1/2}\big(s\cD\big)f}}{\infty}
\leq C \bignorm{\mod{\nabla \big(\tau\cI+\cD^2\big)^{\rho/2}\cJ_{J-1/2}\big(s\cD\big)f}}{2}.   
$$
We can apply Proposition~\ref{p: consequences GBE II}~\rmv\ (with $F_s(\la) = (\tau+\la^2)^{\rho/2} \, \cJ_{J-1/2}(s\la)$ and $\rho_1>n/4$), 
and conclude that 
$$
\bignorm{\mod{\nabla \big(\tau\cI+\cD^2\big)^{\rho/2}\cJ_{J-1/2}\big(s\cD\big)f}}{2}
\leq C\, \Xi_{\rho_1}(F_s)  \bignorm{f}{1},
$$
where $2\rho_1+2+\rho < J$.  
Standard estimates of Bessel functions imply that 
$$
\upnorm{F_s}{\rho_1}
= \sup_{\la\geq 0}\, (\tau+\la^2)^{\rho_1+\rho/2} \mod{\cJ_{J-1/2}(s\la)}
\leq \, \sup_{\la\geq 0}\, \frac{(\tau+\la^2)^{\rho_1+\rho/2}}{(1+s\la)^{J}}.
$$
Clearly for any $s\geq \de$ this is dominated by $\ds\sup_{\la\geq 0}\, \frac{(\tau+\la^2)^{\rho_1+\rho/2}}{(1+\de\la)^{J}}$
which is finite.  A similar estimate is satisfied by $\upnorm{F_s}{\rho_1+1}$, and the required bound follows.  

It is straightforward to check that the conditions $\rho>n/2$, $\rho_1>n/4$ 
and $J\geq 2\rho_1+2+\rho$ are compatible provided that $J>n+2$.  
\end{proof}

\subsection{Laplacian cut-off functions}

We need the following result, which will be used in Section \ref{s: The local Riesz transform}. 
In the rest of the paper for each $R>0$ we set 
\begin{equation} \label{f: Upsilon R}
\Upsilon_R := \big\{(x,y)\in N\times N: d(x,y) < R\big\}.
\end{equation}   	 

\begin{lemma}\label{l: Laplacian cut-offs}
Given $R>0$, there exists positive constants $Q$ and $Q'$, depending on $\ka$, $n$ and~$R$, such that:
\begin{enumerate}
	\item[\itemno1] 
	for every $x\in N$ there exists a function $\chi_x$ in $C^\infty_c(N)$ 
	such that $0\leq \chi_x \leq 1$, $\chi_x=1$ on $B_{R/4}(x)$, $\chi_x=0$ on $B_{R/2}(x)^c$,
	$\bignorm{|\nabla \chi_x|}{\infty}\leq Q$ and $\bignorm{\Delta \chi_x}{\infty} \leq Q$;  
	\item[\itemno2] 
	there exists a function $\varphi$ in $C^\infty_c(N\times N)$ such that $0\leq \varphi \leq 1$, $\varphi=1$ 
	in $\Upsilon_{R/4}$ and $\varphi=0$ in $\Upsilon_{R}^c$, $\bignorm{|\nabla \varphi|}{\infty}\leq Q'$ 
	and $\bignorm{\Delta \varphi}{\infty} \leq Q'$.
	\end{enumerate} 
\end{lemma}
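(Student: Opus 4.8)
The plan is to construct the required cut-offs directly, exploiting the bounded geometry assumptions — in particular that the injectivity radius is bounded below by some $r_0 > 0$ and that a fixed bound on $\Ric_N$ controls the Hessian comparison, hence the Laplacian, of the distance function on small balls. Fix a smooth function $\phi\colon [0,\infty)\to[0,1]$ with $\phi \equiv 1$ on $[0,1/4]$, $\phi \equiv 0$ on $[1/2,\infty)$, and $\|\phi'\|_\infty, \|\phi''\|_\infty$ under control. For \itemno1, the natural candidate is $\chi_x(y) := \phi\big(d(x,y)/R\big)$. This is clearly $1$ on $B_{R/4}(x)$, $0$ on $B_{R/2}(x)^c$, smooth away from the cut locus, and has $|\nabla\chi_x| = |\phi'(d(x,\cdot)/R)|\, R^{-1}$, uniformly bounded by a constant depending on $R$ and $\|\phi'\|_\infty$. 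The Laplacian is $\Delta\chi_x = \phi''(d/R)\,R^{-2}|\nabla d|^2 + \phi'(d/R)\,R^{-1}\,\Delta d$; since $|\nabla d| = 1$ and, by the Laplacian comparison theorem, $\Delta d \leq (n-1)\,\coth_\kappa(d)$ wherever $d$ is smooth, we get a bound on $\|\Delta\chi_x\|_\infty$ depending only on $\kappa$, $n$ and $R$.

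The one genuine subtlety — and the main obstacle — is that $d(x,\cdot)$ need not be smooth on all of $B_{R/2}(x)$ if $R/2$ exceeds the injectivity radius; near the cut locus $\chi_x$ as defined above is only Lipschitz, not $C^\infty_c$. I would handle this in one of two standard ways. The cleanest is a \emph{regularization at the source}: replace the condition "$\chi_x=1$ on $B_{R/4}(x)$" by first smoothing $\chi_x$ by convolution against a heat kernel at a tiny time, or — better — replace the distance function by a smoothed version. Concretely, using bounded geometry one fixes a scale $r_0>0$ below the injectivity radius and, up to shrinking $R$ or equivalently rescaling the profile $\phi$, arranges $R/2 < r_0$, so that $d(x,\cdot)$ is genuinely smooth on $B_{R/2}(x)\setminus\{x\}$ and $\chi_x$ is smooth there; smoothness at $x$ itself is automatic because $\phi$ is constant near $0$. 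If one insists on the stated $R$ with no smallness, one instead takes $\chi_x$ to be a smooth function of a mollified distance $\tilde d$ with $|\tilde d - d|$ small and $|\nabla\tilde d|$, $|\Delta\tilde d|$ controlled (such $\tilde d$ exists by the bounded-geometry smoothing results, e.g. via the distance to a slightly shifted level set or the heat-semigroup regularization of $d$); the constants $Q$ then absorb the regularization parameters. Either way the conclusion is as stated, with $Q$ depending on $\kappa$, $n$, $R$ (and the fixed injectivity-radius lower bound, which is a structural constant of $N$).

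For \itemno2, the plan is the same construction on the product, using the distance function $\delta(x,y):=d(x,y)$ on $N\times N$: set $\varphi(x,y) := \phi\big(d(x,y)/R\big)$ with $\phi$ as above but now with $\phi\equiv 1$ on $[0,1/4]$ and $\phi\equiv 0$ on $[1,\infty)$ so that $\varphi = 1$ on $\Upsilon_{R/4}$ and $\varphi = 0$ on $\Upsilon_R^c$. Here $\nabla$ and $\Delta$ are taken on $N\times N$ with the product metric. The gradient bound is immediate from $|\nabla \varphi| \le R^{-1}\|\phi'\|_\infty\,|\nabla d|_{N\times N}$ and the fact that the $N\times N$-gradient of $(x,y)\mapsto d(x,y)$ has norm $\le \sqrt 2$ (it is $(\!-\!\dot\gamma(0),\dot\gamma(1))$ for a minimizing geodesic $\gamma$ from $x$ to $y$, each factor a unit vector). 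For the Laplacian one computes $\Delta_{N\times N}\varphi = R^{-2}\phi''(d/R)\,|\nabla d|^2_{N\times N} + R^{-1}\phi'(d/R)\,\Delta_{N\times N} d$, and one needs an upper bound for $\Delta_{N\times N}\,d(x,y) = \Delta_{x} d(x,y) + \Delta_{y} d(x,y)$; by symmetry each term is bounded above by $(n-1)\coth_\kappa(d)$ wherever $d$ is smooth, which on the relevant region $\{d < R\}$ (again, after arranging $R$ below the injectivity radius, or passing to a mollified $d$) is controlled by a constant depending on $\kappa$, $n$, $R$. This gives $\|\Delta\varphi\|_\infty \le Q'$ with $Q'$ of the stated dependence, and completes the proof.

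Note that \itemno2 is not merely a special case of \itemno1 applied with $x$ fixed: the function $\varphi$ must be jointly smooth and its Laplacian on the \emph{product} must be bounded \emph{uniformly in both variables}, which is exactly what the uniform-in-$x$ bounds from bounded geometry provide. The two parts are proved by the same mechanism and I would present them together, doing the computation once for a function of the distance and then specializing.
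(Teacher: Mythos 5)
Your construction does not prove the lemma, and the obstruction is precisely the point that makes the statement non-trivial. For $\chi_x=\phi\big(d(x,\cdot)/R\big)$ the Laplacian comparison theorem under $\Ric_N\ge-\ka^2$ gives only the \emph{one-sided} bound $\Delta d\le (n-1)\coth_\ka(d)$; since $\phi'\le 0$, this yields a lower bound for $\Delta\chi_x$ but no upper bound. There is no lower bound for $\Delta d$ under the standing hypotheses: $\Delta d$ can be arbitrarily negative where $d$ is still smooth (Jacobi fields may come close to vanishing without the geodesic being minimizing past that point), and across the cut locus $\Delta d$ is a measure with a nontrivial negative singular part, so $\|\Delta\chi_x\|_\infty\le Q$ simply fails for this $\chi_x$. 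Your two repairs do not close the gap. Shrinking $R$ below the injectivity radius produces a cut-off equal to $1$ only on a smaller ball, which is not the assertion for the given $R$ (the paper applies the lemma with $R=1$, while the injectivity radius may be tiny); and even inside the injectivity radius one needs an \emph{upper} curvature bound, not a lower Ricci bound, to bound $\Delta d$ from below. The mollified-distance route assumes exactly what must be proved: producing a function close to $d$ with two-sided pointwise control on its Laplacian, using only a lower Ricci bound, is the content of the Cheeger--Colding theorem on Laplacian cut-offs, which is what the paper cites for part \rmi\ (\cite[Theorem~6.33]{CC}); it is not a routine smoothing statement.

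For part \rmii\ the same defect recurs in your direct formula $\varphi=\phi\big(d(x,y)/R\big)$, since $\Delta_{N\times N}d=\Delta_xd+\Delta_yd$ again admits only an upper bound. The paper's route is different and avoids differentiating $d(x,y)$ jointly altogether: it takes an $R/4$-discretization $\{p_j\}$ of $N$, forms the tensor products $\chi_{p_j}\otimes\chi_{p_j}$ of the part-\rmi\ cut-offs (whose product gradient and Laplacian are bounded automatically from the bounds in \rmi), checks $\Upsilon_{R/4}\subseteq\bigcup_j B_{R/2}(p_j)\times B_{R/2}(p_j)\subseteq\Upsilon_R$, and normalizes using the finite overlapping property. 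You would need to adopt both ingredients — the cited existence theorem for \rmi\ and a covering/tensoring argument for \rmii\ — for the proof to go through.
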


\begin{proof}
For the proof of \rmi, see \cite[Theorem~6.33]{CC}.
 
We now prove \rmii.  
Denote by $\fP$ an $R/4$-discretization of $N$, i.e., a set of points $\{p_j: j=1,2,3,\ldots\}$
in~$N$ that is maximal with respect to the property
$$
d(p_j,p_k) > R/8 \qquad\hbox{when $j\neq k$ and}\qquad d(x,\fP) < R/4 \quant x \in N.  
$$
We write $P_j$ instead of $p_j\times p_j$
and $Q_j^{R/2}$ instead of $B_{R/2}(p_j)\times B_{R/2}(p_j)$.
The family $\big\{B_{R}(p_j): j = 1,2,3,\ldots\big\}$
has the finite overlapping property (see, for instance, \cite[Lemma~1.1]{He}).  Hence the same is true of 
$\big\{Q_j^{{R/2}}: j = 1,2,3,\ldots\big\}$.  It is straightforward to check that 
$$
\ds \Upsilon_{R/4} \subseteq \bigcup_{j=1}^\infty \, Q_j^{R/2}
\subseteq \Upsilon_{R}.
$$  
Indeed, if $(x,y)$ is in $\Upsilon_{R/4}$, then $d(x,y) < R/4$.  Since $\fP$ is a $R/4$-discretization of $N$,
there exists an integer $j$ such that $d(x,p_j)<R/4$.  The triangle inequality then implies that $d(y,p_j)<R/2$,
whence $(x,y)$ belongs to $Q_j^{R/2}$, and the left inclusion above is proved.  
The right inclusion follows from the trivial fact that if $(x,y)$ belongs to $Q_j^{R/2}$, then $d(x,y) <R$.  

For each integer $j$ set $\phi_j := \chi_{p_j}\otimes \chi_{p_j}$,
where the $\chi_{p_j}$ are cut-offs on $N$ as in \rmi.  Notice that $\phi_j$
is a smooth function with compact support on $N\times N$, that $\phi_j = 1$ on $Q_j^{R/4}$, $\phi_j = 0$ on $\big[Q_j^{R/2}]^c$, 
$\ds \bignorm{|\nabla \phi_j|}{\infty} \leq 2Q$ and $\bignorm{\Delta \phi_j}{\infty} \leq 2Q$,
where $\nabla$ and $\Delta$ denote here the gradient and the Laplace--Beltrami operator on $N\times N$, respectively.  
Set   
$\ds
\vp_j 
:= {\phi_{P_j}}\Big/{\ds\sum_{k =1}^\infty \phi_{P_k}}$
and  
$
\ds\vp := \sum_{j=1}^\infty \, \vp_j.
$
It is straightforward to check that $\vp$ possesses the required properties.  We omit the details. 
\end{proof}

\section{Estimates for the Poisson operator on slices}
\label{s: Poisson}
In this section we consider the Riemannian manifold $N\times \BR$, endowed with the natural product metric.  
Here $N$ satisfies our standing assumptions (see the beginning of Section~\ref{s: Background material}). 
We shall often, but not always,
denote points in $N\times \BR$ by capital letters $X, Y, Z, \ldots$.  Usually, lower case latin letters
$x, y, z, \ldots$ will denote points in $N$.  Thus, a point $X$ in $N\times \BR$ will be often written
$(x,u)$, where $x$ is in $N$ and $u$ is a real number.   
Denote by $D$ the Riemannian distance on $N\times \BR$, i.e.,
\begin{equation} \label{f: dist prod}
D\big((x,u),(y,v) \big)
:= \sqrt{d(x,y)^2 + \mod{u-v}^2}
\quant x,y\in N \quant u,v\in \BR.
\end{equation}
The Riemannian measure on $N\times \BR$ will be denoted by $\cY$.  Thus, $\wrt \cY (Y) = \wrt \nu(y) \wrt v$
when $Y = (y,v)$.   
We shall denote by $\Nabla$ and $\DDelta$ the gradient on $N\times \BR$ and the (negative) 
Laplace--Beltrami operators on $N\times\BR$, respectively.  
When we choose the natural co-ordinate system $(x,t)$ on $N\times \BR$, where $x$ varies in an open chart of $N$, and $t$
is in $\BR$, we have that $\Nabla F = (\nabla F, \partial_t F)$, and $\DDelta F = \Delta F + \partial_t^2 F$.  

Throughout the paper $\si$ will denote a fixed positive number such that 
\begin{equation} \label{f: sigma}
\si < \frac{\pi}{4\be} \, \min\big(1-1/n, \sqrt c\big) 
\end{equation}
where $c$ and $\be$ are as in \eqref{f: assumptions on ht} and \eqref{f: volume growth}, respectively.
Set 
\begin{equation}\label{f: lambda1}
\la_1 := \pi/(2\si).
\end{equation}  
For any $\eta$ in $[0,\si)$, set $\Sisieta := N\times (\eta,2\si-\eta)$ in~$N\times \BR$. We write $\Si$ for $\Si_0$. 
Most of our analysis is concerned with functions defined on the open slice $\Sisi$.  
In particular, we shall need to consider $\Sisieta$ for some 
$\eta\neq 0$ only in Section~\ref{s: G}. 
We shall write $\norm{\cdot}{p}$ and $\norm{\cdot}{\lp{\Sisieta}}$ for the $L^p$ norms on $N$ and on $\Sisieta$, respectively.  
Given a function $F$ on $\Sisi $, we denote by $F^\flat$ the function on $N$, defined by 
\begin{equation} \label{f: F flat}
F^\flat (x)
= \int_0^{2\si} F(x,t) \wrt t
\quant x \in N,
\end{equation}
whenever the latter integral makes sense.  Observe that, by H\"older's inequality, 
\begin{equation} \label{f: F flat F}
\bignorm{F^\flat}{p}
= \Big[\int_{N} \wrt \nu(x)\, \Bigmod{\int_0^{2\si} F(x,u) \wrt u}^p\, \Big]^{1/p} 
\leq (2\si)^{1/p'} \bignorm{F}{\lp{\Sisi }}.
\end{equation}
For each $\eta$ in $[0,\si)$ and $t$ in $(\eta,2\si-\eta)$ consider the meromorphic function 
$$\ds
\Mteta  (\la)
= \frac{\cosh(t-\si)\la}{\cosh(\si-\eta) \la}.
$$
For the sake of simplicity, we write $\Mt$ instead of $M_t^0$.  Thus, 
$
\ds M_t(\la)
:= \frac{\cosh (t-\si)\la }{\cosh (\si\la)}.  
$
An elementary computation shows that 
$$
\begin{aligned}
	\Mteta  (\la) 
	= \e^{(\eta-t)\la} + \frac{1}{2} \, \big[\e^{(t-\si)\la}-\e^{(2\eta-t-\si)\la}\big]\, \Msieta (\la).  
\end{aligned}
$$
We shall often work with the special case of the formula above corresponding to $\eta=0$.
Set 
$
\cPSeta f(\cdot, t)
:= \Mteta  (\cD) f.
$
In the case where $\eta<t<\si$ it is sometimes convenient to use the expression above for $\Mteta  $ and write
\begin{equation} \label{f: formula cPS}
\cPSeta f(\cdot, t)
	= \cP_{t-\eta}^N f + \frac{1}{2} \,\big[\cP_{\si-t}^N-\cP_{\si+t-2\eta}^N\big] \Msieta \big(\cD\big) f.
\end{equation}
The operator $\cPSeta$ is called the \textit{Poisson operator for~$\Sisieta$ with periodic boundary conditions}. 
The following proposition partially justifies this terminology.

\begin{proposition} \label{p: Dirichlet problem slice}
Suppose that $f$ is in $C_0(N)$ (continuous functions on $N$ vanishing at infinity).  Then the function equal to 
$\cPSeta f$ in $\Sisieta $ and to $f$ on $\partial\Sisieta $,
is smooth on $\Sisieta $, continuous on $\OV{\Si}_{\eta}$, and solves the Dirichlet problem
$$
\DDelta u = 0 \quad\hbox{{\textrm{in}}\quad $\Sisieta $}  \qquad u(\cdot,\eta) = f = u(\cdot,2\si-\eta).
$$
\end{proposition}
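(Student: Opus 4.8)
The plan is to verify the three asserted properties of the function $u$ — smoothness in $\Sisieta$, harmonicity, and continuity up to the boundary with the prescribed boundary values — essentially by separation of variables, reducing everything to one-dimensional facts about the interval $(\eta, 2\si-\eta)$ together with the mapping properties of $\cP_t^N=\e^{-t\cD}$ and of $\Msieta(\cD)$ established in Section~\ref{s: Background material}.

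First I would address harmonicity and smoothness. By the formula $\DDelta F = \Delta F + \partial_t^2 F$ on $N\times\BR$, it suffices to check $\partial_t^2 \cPSeta f(\cdot,t) = -\Delta \cPSeta f(\cdot,t) = \cL\, \Mteta(\cD)f$. Since $\Mteta(\la)=\cosh((t-\si)\la)/\cosh((\si-\eta)\la)$ satisfies $\partial_t^2 \Mteta(\la) = \la^2 \Mteta(\la)$ for each fixed $\la$, the spectral theorem gives the desired identity, provided one can differentiate twice under the spectral integral; this is legitimate because $f\in C_0(N)\subseteq\ld{N}$ and, for $t$ in a compact subinterval of $(\eta,2\si-\eta)$, the multipliers $\Mteta(\la)$ and $\la^2\Mteta(\la)$ are bounded (indeed exponentially decaying in $\la$, since $|t-\si|<\si-\eta$). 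For smoothness in $\Sisieta$, the same exponential decay shows that all the operators $\cD^j\partial_t^k \Mteta(\cD)$ map $\ld{N}$ boundedly, locally uniformly in $t$; combined with the ultracontractivity and local Sobolev embedding on $N$ with bounded geometry (cf.\ Proposition~\ref{p: consequences GBE II}), this upgrades $L^2$ regularity in $x$ to smoothness, and joint smoothness in $(x,t)$ follows since $t\mapsto \Mteta(\cD)f$ is real-analytic in the strong $\ld{N}$ topology. Alternatively, and more cleanly, I would simply invoke elliptic regularity: $u$ is a weak solution of the elliptic equation $\DDelta u=0$ in $\Sisieta$, hence smooth there.

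Next, continuity up to $\partial\Sisieta$ with the right boundary data. Here I would use the decomposition \eqref{f: formula cPS} (and its reflected analogue near $t=2\si-\eta$). Writing, for $\eta<t<\si$,
$$
\cPSeta f(\cdot,t) = \cP_{t-\eta}^N f + \tfrac12\big[\cP_{\si-t}^N - \cP_{\si+t-2\eta}^N\big]\Msieta(\cD)f,
$$
the second summand is $\cP_s^N$ of a fixed function (namely $\tfrac12\Msieta(\cD)f$, which lies in $C_0(N)$ by Proposition~\ref{p: consequences GBE II} and the fact that $\Msieta(\la)$ decays like $\e^{-(\si-\eta)\la}$) evaluated at $s=\si-t$ and $s=\si+t-2\eta$; as $t\to\eta^+$ these two values of $s$ coincide (both $\to\si-\eta$), so the bracket tends to $0$ uniformly. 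Meanwhile $\cP_{t-\eta}^N f \to f$ uniformly as $t\to\eta^+$, since $\{\cP_s^N\}$ is a strongly continuous semigroup on $C_0(N)$ and $\cP_0^N=\cI$. Thus $\cPSeta f(\cdot,t)\to f$ uniformly on $N$ as $t\to\eta^+$, giving continuity at the boundary face $t=\eta$ with value $f$; the face $t=2\si-\eta$ is symmetric. Continuity of $(x,t)\mapsto\cPSeta f(x,t)$ at interior points, and joint continuity up to the closed slice, then follow from the uniform (in $x$) continuity in $t$ just established together with the interior smoothness.

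The main obstacle, and the only genuinely manifold-specific point, is establishing that $\cP_s^N=\e^{-s\cD}$ restricts to a strongly continuous semigroup on $C_0(N)$, and that $\Msieta(\cD)$ maps $C_0(N)$ into itself. Strong continuity on $C_0(N)$ follows from the subordination formula \eqref{f: Poisson}, the contractivity of the heat semigroup on $C_0(N)$ (a consequence of the Gaussian heat kernel bound \eqref{f: assumptions on ht} and stochastic completeness, which holds under $\Ric\geq-\kappa^2$), and dominated convergence; that $\Msieta(\cD)f\in C_0(N)$ when $f\in C_0(N)$ follows since $\Msieta(\la)=\e^{-(\si-\eta)\la}/(1-\tfrac12(\e^{(\si-\eta)\la}-\e^{-(\si-\eta)\la})\e^{-(\si-\eta)\la})^{-1}$ — more transparently, $\Msieta(\cD) = 2\sum_{j\geq 0}(-1)^j\,\cP_{2j(\si-\eta)}^N\cP_{(\si-\eta)}^N\cdots$ via the geometric series for $1/\cosh$ — expresses $\Msieta(\cD)$ as an absolutely (operator-norm on $C_0(N)$) convergent series of Poisson operators, each of which preserves $C_0(N)$. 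I would make this last point precise using the identity $\operatorname{sech}(r\la) = 2\sum_{k=0}^\infty (-1)^k \e^{-(2k+1)r\la}$, valid for $\la>0$, which exhibits $\Msieta(\cD)f = 2\sum_{k=0}^\infty(-1)^k \cP^N_{(2k+1)(\si-\eta)}f$ with the series converging uniformly on $N$.
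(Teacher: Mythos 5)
Your overall architecture matches the paper's: interior smoothness via harmonicity and elliptic regularity, the decomposition \eqref{f: formula cPS}, strong continuity of the Poisson semigroup on $C_0(N)$ to handle $\cP_{t-\eta}^N f\to f$, and a separate argument for the bracket $\big[\cP_{\si-t}^N-\cP_{\si+t-2\eta}^N\big]\Msieta(\cD)f$. For that last term you take a genuinely different (and in principle cleaner) route: set $g:=\Msieta(\cD)f$, show $g\in C_0(N)$, and conclude from the continuity of $s\mapsto\cP_s^N g$ at $s=\si-\eta$. The paper deliberately avoids ever proving that $\Msieta(\cD)$ preserves $C_0(N)$: it approximates $f$ uniformly by $\varphi_k\in C_c^\infty(N)$, controls the error using only the $L^\infty$-boundedness of $\Msieta(\cD)$ (Lemma~\ref{l: Mt sqrt cL}~\rmiv), and for the smooth compactly supported piece writes the bracket as $\int_{\si-t}^{\si+t-2\eta}\cP_s^N\cD\Msieta(\cD)\varphi_{k_0}\wrt s$, which is $O\big((t-\eta)\|\varphi_{k_0}\|_2\big)$ by Proposition~\ref{p: consequences GBE II}~\rmiii.

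The genuine gap sits exactly where your route leans hardest: the claim that $\Msieta(\cD)f=2\sum_{k\geq0}(-1)^k\cP^N_{(2k+1)(\si-\eta)}f$ is ``an absolutely (operator-norm on $C_0(N)$) convergent series.'' It is not: each summand $2\cP^N_{(2k+1)(\si-\eta)}$ has operator norm $2$ on $C_0(N)$ (the Poisson kernel is a probability density, by stochastic completeness), so the series of norms diverges. Nor do the partial sums of the multiplier converge uniformly on $[0,\infty)$: the tail equals $(-1)^{K+1}\e^{-2(K+1)(\si-\eta)\la}\operatorname{sech}\big((\si-\eta)\la\big)$, whose supremum over $\la\geq0$ is $1$ for every $K$ --- the obstruction is at $\la=0$, where $\operatorname{sech}$ equals $1$ but the series is $2(1-1+1-\cdots)$. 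One can show the partial sums converge strongly on a dense subspace (e.g.\ $C_c^\infty(N)$, using the $L^1\to L^\infty$ decay of $\cP_t^N$ for large $t$), but upgrading this to all of $C_0(N)$ requires a uniform bound on the partial sums, which is essentially the statement being proved. The same caveat applies to your parenthetical appeal to Proposition~\ref{p: consequences GBE II} for $\Msieta(\cD)f\in C_0(N)$: those are $L^1$-based bounds, and $f\in C_0(N)$ need not be integrable. The step is repairable --- combine the $L^\infty$-bound of Lemma~\ref{l: Mt sqrt cL}~\rmiv\ with the off-ball decay of Lemma~\ref{l: Mt sqrt cL}~\rmiii\ (which shows $\Msieta(\cD)$ sends compactly supported integrable functions into functions vanishing at infinity) and the density of $C_c^\infty(N)$ in $C_0(N)$ --- but as written this key step fails. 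The remainder of your argument, including the interior regularity discussion, is sound.
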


\noindent
We postpone the proof of Proposition \ref{p: Dirichlet problem slice} 
at the end of this section.
We analyse $\cPSeta$ by subordinating $\Mteta  (\cD)$ to the wave propagator. 
Denote by $K_t^{\eta}$ the Fourier transform of~$\Mteta  $. 
It is well known (see, for instance, \cite[formula 7.19, p.~34]{Ob}) that 
\begin{equation} \label{f: Kteta}
	K_t^{\eta}(s)
	= 4\pi\de  \,\, \frac{\sin{\pi\de(t-\eta)}
	\, \cosh\pi\de s}{\cosh{2\pi\de s}-\cos{2\pi\de(\eta-t)}},
\end{equation}
where $\de := 1/[2(\si-\eta)]$.  
We shall write $K_t$ instead of $K_t^0$.  Thus, 
\begin{equation} \label{f: Kt}
K_t(s)
= \frac{2\pi}{\si} \, \frac{\ds \sin\frac{\pi t}{2\si}\, 
\cosh\frac{\pi s}{2\si}}{\ds \cosh\frac{\pi s}{\si}-\cos\frac{\pi t}{\si}}.
\end{equation}
By spectral theory and Fourier inversion formula 
$$
\cPSeta f(\cdot,t)
=  \frac{1}{2\pi} \, \ir \, K_t^\eta(s)\, \cos(s\cD) f \wrt s  
$$
and, when $\eta<t<\si$,
$$
\cPSeta f(\cdot,t)
= \cP_{t-\eta}^Nf + \big[\cP_{\si-t}^N-\cP_{\si+t-2\eta}^N\big]\frac{1}{4\pi} \, \ir \, K_\si^{\eta}(s) \,  \cos\big(s\cD) f \wrt s.
$$
We are led to establish certain properties of $K_t^{\eta}$ and of their derivatives.
Most of our applications will involve only $K_t$.  Thus, we mainly concentrate on this special case.
Set $S := \BR\times (0,\si]$.  For each $\de$ in $(0,\si)$ denote by $D_\de$ 
the disc in the plane with radius $\de$ centred at the origin.  Set $S_\de := S\setminus D_\de$. 

\begin{lemma} \label{l: properties ktau}
Suppose that $\de\in(0,\si)$, $\vep\in(0,\la_1)$, $J$ is a positive integer and 
$\ell\in\{0,1\}$.  Then there exists a constant $C$ such that 
$\bigmod{\partial_t^\ell P_J(\cO) K_t(s)} \leq C\, \min\big(\mod{s}, \e^{(\vep-\la_1)\mod{s}}\big)$
for every $(s,t)$ in $S_\de$.
\end{lemma}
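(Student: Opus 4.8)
The plan is to estimate $K_t$ directly from the explicit formula \eqref{f: Kt}. Using $\cosh 2x=1+2\sinh^2 x$ and $\cos 2y=1-2\sin^2 y$, formula \eqref{f: Kt} can be rewritten as
$$
K_t(s)=\frac{\pi}{\si}\cdot\frac{\sin(\la_1 t)\,\cosh(\la_1 s)}{\sinh^2(\la_1 s)+\sin^2(\la_1 t)}.
$$
Setting $g(a,b):=\sin b\,\cosh a/\bigl(\sinh^2 a+\sin^2 b\bigr)$, this reads $K_t(s)=(\pi/\si)\,g(\la_1 s,\la_1 t)$. It is convenient to pass to the dimensionless variables $a:=\la_1 s\in\BR$ and $b:=\la_1 t\in(0,\pi/2]$, under which $\cO=s\partial_s=a\partial_a$ and $\partial_t=\la_1\partial_b$, so that $\partial_t^\ell P_J(\cO)K_t(s)$ equals a fixed positive constant times $\partial_b^\ell P_J(a\partial_a)g(a,b)$; moreover $(s,t)\in S_\de$ becomes $a^2+b^2\geq(\la_1\de)^2$, $b\in(0,\pi/2]$. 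Since $\la_1|s|=|a|$ and $\e^{(\vep-\la_1)|s|}=\e^{-(1-\vep/\la_1)|a|}$, it suffices to prove
$$
\bigmod{\partial_b^\ell P_J(a\partial_a)\,g(a,b)}\leq C\,\min\bigl(|a|,\ \e^{-(1-\vep/\la_1)|a|}\bigr)
$$
uniformly for $b\in(0,\pi/2]$ on $\{a^2+b^2\geq(\la_1\de)^2\}$. Note that, since $P_J$ has no constant term, $P_J(a\partial_a)$ is a linear combination of the operators $a^j\partial_a^j$ with $1\leq j\leq J$.

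For $|a|\geq 1$ one proves the exponential bound. On this range the denominator $\sinh^2 a+\sin^2 b$ is of the order of $\e^{2|a|}$ while the numerator $\sin b\,\cosh a$ is of the order of $\e^{|a|}$, uniformly in $b\in(0,\pi/2]$. Writing $\cosh$ and $\sinh$ in terms of $\e^{\pm a}$ and differentiating the resulting rational expressions (note that $\partial_b$ acts only on the factor $\sin b$ and on the denominator), one checks routinely that $\bigmod{\partial_a^j\partial_b^\ell g(a,b)}\leq C_j\,\e^{-|a|}$ uniformly in $b\in(0,\pi/2]$ for every $j$. Hence $\bigmod{\partial_b^\ell P_J(a\partial_a)g(a,b)}\leq C\,(1+|a|)^J\,\e^{-|a|}$, and for $|a|\geq 1$ the polynomial factor is absorbed into a small exponential loss:
$$
(1+|a|)^J\,\e^{-|a|}\leq C_{\vep,J}\,\e^{-(1-\vep/\la_1)|a|}\leq C_{\vep,J}\,\min\bigl(|a|,\ \e^{-(1-\vep/\la_1)|a|}\bigr).
$$

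For $|a|<1$ one proves the bound $C|a|$, splitting into two cases according to the size of $b$. Put $b_0:=\min\bigl(\la_1\de/\sqrt2,\,\pi/4\bigr)>0$. If $b\geq b_0$, the denominator $\sinh^2 a+\sin^2 b$ is bounded below by $\sin^2 b_0>0$ on the compact set $\{|a|\leq 1\}\times[b_0,\pi/2]$, so $g$, and hence $P_J(a\partial_a)g$ and $\partial_b P_J(a\partial_a)g$, is smooth there with bounded derivatives; moreover $P_J(a\partial_a)g$ and $\partial_b P_J(a\partial_a)g$ both vanish at $a=0$, since each $a^j\partial_a^j g$ with $j\geq 1$ does. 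The mean value theorem in $a$ then gives $\bigmod{\partial_b^\ell P_J(a\partial_a)g(a,b)}\leq C|a|$. If instead $b<b_0$, the constraint $a^2+b^2\geq(\la_1\de)^2$ forces $|a|\geq\la_1\de/\sqrt2=:a_0>0$; on the compact set $\{a_0\leq|a|\leq 1\}\times[0,b_0]$ the denominator is bounded below by $\sinh^2 a_0>0$, so $\bigmod{\partial_b^\ell P_J(a\partial_a)g}\leq C$, which is at most $(C/a_0)|a|$ there. In both cases $|a|<1$ also gives $|a|\leq C\,\e^{-(1-\vep/\la_1)|a|}$, so the claimed estimate follows.

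Combining the two ranges and returning to the variables $(s,t)$ yields the statement. The only delicate point is the behaviour near $(s,t)=(0,0)$, where $K_t(0)=\pi/\bigl(\si\sin(\la_1 t)\bigr)$ blows up as $t\to 0$; but the hypothesis $(s,t)\in S_\de$ keeps us away from the origin, which is precisely what allows the case $b<b_0$ above to be reduced to a compact set on which $g$ is smooth. I expect no genuine analytic difficulty beyond the bookkeeping of the uniformity in $t\in(0,\si]$.
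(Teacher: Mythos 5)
Your proof is correct and follows essentially the same route as the paper's: a direct estimate of the explicit kernel \eqref{f: Kt}, showing its $s$- and $t$-derivatives decay like $\e^{-\la_1|s|}$ away from the origin of $S$, and then exploiting the absence of a constant term in $P_J$ (your factor $a^j$ plus the mean value theorem) to get the $O(|s|)$ bound near $s=0$. The rewriting $K_t(s)=(\pi/\si)\sin(\la_1 t)\cosh(\la_1 s)/\bigl(\sinh^2(\la_1 s)+\sin^2(\la_1 t)\bigr)$ and the case analysis near the origin make explicit the "straightforward induction" the paper leaves to the reader.
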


\begin{proof}
A straightforward induction argument (using \eqref{f: Kt}) proves that 
$ \bigmod{\partial_t^\ell \partial_s^j K_t(s)} \leq C\, \min\big(1, \e^{-\pi\mod{s}/2\si}\big)$
for every $(s,t)$ in $S_\de$ and every nonnegative integer $j\leq J$.  The required estimate
then follows from the form of the differential operator $P_J(\cO)$.  In particular, 
the required estimate for $\mod{s}$ small follows from the fact that $P_J$ has no constant term.  
\end{proof}

\noindent\label{pp: Dunfors}
Recall that the \textit{extended Dunford class} $\cE(\bS_\psi)$ is defined as follows \cite[p.~28]{Haa}
$$
\cE(\bS_\psi)
= H_0^\infty(\bS_\psi) \oplus \big\langle (1+z)^{-1}\big\rangle \oplus \langle 1\rangle,
$$
where $H_0^\infty(\bS_\psi)$ denotes the class of all holomorphic functions $f$ in the sector 
$\bS_\psi := \{z\in \BC: \mod{\arg z} <\psi \}$ for which there exist positive constants $C$ and $s$ such that
$$
\bigmod{f(z)}
\leq C \, \frac{\mod{z}^s}{1+\mod{z}^{2s}} \quant z \in \bS_\psi.
$$
The space $\cE(\bS_\psi)$ is endowed with the uniform norm.   

\begin{lemma} \label{l: Mt sqrt cL}
Suppose that $0<\de<\si$.  The following hold:
\begin{enumerate}
	\item[\itemno1]
		for each positive even integer $J$ there exists a constant $C$ such that 
		$$
		\bignorm{\cL^J \Mt (\cD)f}{2}
		\leq C \bignorm{f}{1}
		\quant t \in [\de,2\si-\de] \quant f \in \lu{N}; 
		$$
	\item[\itemno2]
		$\Mt (\cD)f$ is smooth, and there exists a constant $C$ such that 
		$$
		\bignorm{\Mt (\cD)f}{C_b^1(N)}
		\leq C \bignorm{f}{1}
		\quant t \in [\de,2\si-\de] \quant f \in \lu{N};
		$$ 
	\item[\itemno3]
		for every $\vep>0$ and $R>0$ there exists a constant $C$ such that 
		$$
		\sup_{t\in (0,\si]} \,  \max\big(\mod{\Mt (\cD)f(x)},\mod{\cL\Mt (\cD)f(x)},\mod{\Nabla \Mt (\cD)f(x)} \big)
		\leq C \, \e^{(\vep-\la_1)d(x,o)} \bignorm{f}{1} 
		$$
		%and
		%$\ds
		%\sup_{t\in (0,\si]} \,  \bigmod{\Nabla \Mt (\cD)f(x)}
		%\leq C \, \e^{(\vep-\la_1)d(x,o)} \bignorm{f}{1} 
		%$
		for every $o$ in $N$, every $x$ in $B_{2R}(o)^c$ and 
		every $f$ in $\lu{N}$ with support contained in the ball $B_R(o)$;  
	\item[\itemno4]
		for each $\vp$ in $(\pi/4,\pi/2)$ the function $\Msi$ belongs to $\cE(\bS_{\vp})$ and 
		there exists a constant $C$ such that 
		$$
		\sup_{t\in (0,2\si)} \bigopnorm{\Mt (\cD)}{p} \leq 1+C \bignorm{\Msi }{\cE(\bS_{\vp})}
		$$
		for every $p$ in $[1,\infty]$;
	\item[\itemno5]
		for each $p$ in $(1,\infty]$ there exists a constant $C$ such that 
		$$
		\sup_{y\in N} \bignorm{k_{\Mt (\cD)}(\cdot,y)}{p} 
		= \bigopnorm{\Mt (\cD)}{1;p}
		\leq C \, t^{-n/p'}
		\quant t \in (0,\si].  
		$$
\end{enumerate}
\end{lemma}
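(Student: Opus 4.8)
The plan is to derive each of the five items from the multiplier estimates of Section~\ref{s: Background material}, using the wave‑propagator machinery (Lemmas~\ref{l: P}, \ref{l: properties cJkmum} and~\ref{l: properties ktau}) only for item~\rmiii, which is the one that feels the geometry at infinity. The elementary fact behind \rmi, \rmii\ and \rmiv\ is that for $t$ in a compact subinterval $[\de,2\si-\de]$ of $(0,2\si)$ one has $\cosh((t-\si)\la)\leq\cosh((\si-\de)\la)\leq C\,\e^{-\de\la}\cosh(\si\la)$ for $\la$ large, so that $\la\mapsto\la^m M_t(\la)$ and $\la\mapsto\la^m\partial_tM_t(\la)$ decay faster than any negative power of $1+\la$, uniformly in such~$t$; consequently all the seminorms $\upnorm{\cdot}{\rho}$ and the quantities $\Xi_\rho(\cdot)$ of these functions are bounded uniformly in $t\in[\de,2\si-\de]$. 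With this in hand \rmi\ is immediate: $\cL^J\Mt(\cD)=F_t(\cD)$ with $F_t(\la)=\la^{2J}M_t(\la)$, and Proposition~\ref{p: consequences GBE II}~\rmiii\ (with $\rho>n/4$) gives $\bignorm{\cL^J\Mt(\cD)f}{2}=\bignorm{F_t(\cD)f}{2}\leq C\,\upnorm{F_t}{\rho}\,\bignorm{f}{1}\leq C\,\bignorm{f}{1}$. For~\rmii\ the sup‑bound is the second estimate of Proposition~\ref{p: consequences GBE II}~\rmiii\ applied to $F_t=M_t$; for the gradient I would factor $M_t=M_t^{1/2}\cdot M_t^{1/2}$ (legitimate since $M_t>0$ and $M_t^{1/2}$ has the same rapid decay), write $\nabla\Mt(\cD)f=\nabla M_t^{1/2}(\cD)\bigl(M_t^{1/2}(\cD)f\bigr)$, and combine $\bignorm{M_t^{1/2}(\cD)f}{2}\leq C\bignorm{f}{1}$ (Proposition~\ref{p: consequences GBE II}~\rmiii) with $\bignorm{\mod{\nabla M_t^{1/2}(\cD)g}}{\infty}\leq C\,\Xi_\rho\bigl(M_t^{1/2}\bigr)\bignorm{g}{2}\leq C\bignorm{g}{2}$ (Proposition~\ref{p: consequences GBE II}~\rmv); smoothness of $\Mt(\cD)f$ then follows from~\rmi, since $\Mt(\cD)f\in\bigcap_J\Dom(\cL^J)\subset C^\infty(N)$ by local elliptic regularity.

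I expect item~\rmiii\ to be the main obstacle; the plan there is to exploit finite propagation speed. Fix $\vep,R>0$, a large integer~$J$, a point $o\in N$, a function $f$ supported in $B_R(o)$ and $x\in B_{2R}(o)^c$; set $\de:=\tfrac12\min(R,\si)$ and choose $\zeta\in C^\infty(\BR)$, even, $0\leq\zeta\leq1$, with $\zeta=0$ on $[-R/2,R/2]$ and $\zeta=1$ for $\mod s\geq R$. Since $M_t$ is even, $\Mt(\cD)=\tfrac1{2\pi}\ir K_t(s)\,\cos(s\cD)\wrt s$, and because $d(x,o)-R>R$ the term carrying $(1-\zeta)K_t$ is annihilated by finite propagation speed, so $\Mt(\cD)f(x)=\tfrac1{2\pi}\ir \zeta(s)K_t(s)\,\cos(s\cD)f(x)\wrt s$. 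Applying Lemma~\ref{l: P} to $\wh\eta=\zeta K_t$ — which is smooth with $\cO^\ell\wh\eta\in\lu{\BR}\cap C_0(\BR)$ by~\eqref{f: Kt} — turns this into $\tfrac1{2\pi}\ir P_J(\cO)[\zeta K_t](s)\,\cJ_{J-1/2}(s\cD)f(x)\wrt s$. Now: $\cJ_{J-1/2}(s\cD)f$ is supported in $B_{R+\mod s}(o)$ by finite propagation speed, so the integrand vanishes unless $\mod s\geq d(x,o)-R$; by Lemma~\ref{l: properties cJkmum}~\rmiii\ (with $J>n+2$), $\bignorm{\cJ_{J-1/2}(s\cD)f}{W^{1,\infty}(N)}\leq C\bignorm{f}{1}$ for $\mod s\geq\de$; and $P_J(\cO)[\zeta K_t]$ is supported in $\{\mod s\geq R/2\}$, coincides with $P_J(\cO)K_t$ for $\mod s\geq R$, and on $R/2\leq\mod s\leq R$ its extra terms involve bounded derivatives of $\zeta$ times $K_t$ and its $s$‑derivatives up to order~$J$, which are bounded there uniformly in $t\in(0,\si]$. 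Together with Lemma~\ref{l: properties ktau}, giving $\bigmod{P_J(\cO)K_t(s)}\leq C\,\e^{(\vep-\la_1)\mod s}$ on $S_\de$, this yields $\bigmod{P_J(\cO)[\zeta K_t](s)}\leq C\,\e^{(\vep-\la_1)\mod s}$ for $\mod s\geq R/2$, uniformly in $t\in(0,\si]$, whence
$$
\bigmod{\Mt(\cD)f(x)}\leq C\bignorm{f}{1}\int_{\mod s\geq d(x,o)-R}\e^{(\vep-\la_1)\mod s}\wrt s\leq C\bignorm{f}{1}\,\e^{(\vep-\la_1)\,d(x,o)}.
$$
The bounds for the components $\nabla\Mt(\cD)f$ and $\partial_t\Mt(\cD)f$ of $\Nabla\Mt(\cD)f$, and for $\cL\Mt(\cD)f$, go through in the same way: $\nabla$ is absorbed by the $W^{1,\infty}(N)$ part of Lemma~\ref{l: properties cJkmum}~\rmiii\ (and does not enlarge supports); $\partial_t\Mt(\cD)$ has even symbol $\la\mapsto\la\sinh((t-\si)\la)/\cosh(\si\la)$ with Fourier transform $\partial_tK_t$, covered by the $\ell=1$ case of Lemma~\ref{l: properties ktau}; and $\cL\Mt(\cD)$ has even symbol $\la\mapsto\la^2M_t(\la)$ with Fourier transform $-\partial_s^2K_t$, for which the bounds $\bigmod{\partial_s^jK_t(s)}\leq C\min(1,\e^{-\pi\mod s/2\si})$ from the proof of Lemma~\ref{l: properties ktau} give the analogous estimate after decreasing~$J$ by two. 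The delicate points are exactly the matching of the finite‑propagation supports with the exponential decay rate $\la_1=\pi/(2\si)$ of $P_J(\cO)K_t$, and the uniformity over $t\in(0,\si]$.

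For~\rmiv, $\Msi(\la)=\operatorname{sech}(\si\la)$: on $\bS_\vp$ with $\vp<\pi/2$ the poles of $\cosh(\si z)$ (all on $i\BR$) are avoided, $\operatorname{sech}(\si z)$ decays exponentially as $\mod z\to\infty$ because $\Re(\si z)\geq\si\mod z\cos\vp$, and $\operatorname{sech}(0)=1$; writing $\Msi(z)=(1+z)^{-1}+g(z)$ with $g(z):=\operatorname{sech}(\si z)-(1+z)^{-1}$ one checks $g(z)=\Omi(\mod z)$ as $z\to0$ and $g(z)=\Omi(\mod z^{-1})$ as $\mod z\to\infty$, so $g\in H_0^\infty(\bS_\vp)$ and $\Msi\in\cE(\bS_\vp)$. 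For the uniform operator‑norm bound I would use~\eqref{f: formula cPS} with $\eta=0$, that is $\Mt(\cD)=\cP_t^N+\tfrac12\bigl[\cP_{\si-t}^N-\cP_{\si+t}^N\bigr]\Msi(\cD)$ for $0<t<\si$, together with the symmetry $M_t=M_{2\si-t}$ (valid because $\cosh((t-\si)\la)=\cosh((\si-t)\la)$) for $\si\leq t<2\si$. Since $\{\cP_r^N\}_{r\geq0}$ is a contraction on every $\lp{N}$, $p\in[1,\infty]$ (by subordination and the contractivity of $\cH_t^N$), one gets $\bigopnorm{\Mt(\cD)}{p}\leq1+\bigopnorm{\Msi(\cD)}{p}\leq1+\bigopnorm{(\cI+\cD)^{-1}}{p}+\bigopnorm{g(\cD)}{p}$, where $\bigopnorm{(\cI+\cD)^{-1}}{p}\leq1$ (as $(\cI+\cD)^{-1}=\int_0^\infty\e^{-r}\cP_r^N\wrt r$) and $\bigopnorm{g(\cD)}{p}\leq C\bignorm{\Msi}{\cE(\bS_\vp)}$ is estimated through the absolutely convergent Dunford integral $\tfrac1{2\pi i}\int_{\partial\bS_{\vp'}}g(z)(z\cI-\cD)^{-1}\wrt z$ ($\vp'<\vp$ close to $\vp$), using the resolvent bounds for~$\cD$ on $\lp{N}$ available for every $p\in[1,\infty]$ because the Gaussian heat kernel bounds of Section~\ref{s: Background material} make $\cD$ sectorial on $\lp{N}$ of angle comfortably less than $\vp$.

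Finally, for~\rmv\ the identity $\sup_{y\in N}\bignorm{k_{\Mt(\cD)}(\cdot,y)}{p}=\bigopnorm{\Mt(\cD)}{1;p}$ is the standard one for integral operators, so it remains to bound $\bigopnorm{\Mt(\cD)}{1;p}$ for $t\in(0,\si]$. Using once more $\Mt(\cD)=\cP_t^N+\tfrac12\bigl[\cP_{\si-t}^N-\cP_{\si+t}^N\bigr]\Msi(\cD)$ and the contractivity of $\cP_r^N$ on $\ly{N}$, one has $\bigopnorm{\Mt(\cD)}{1;p}\leq\bigopnorm{\cP_t^N}{1;p}+\bigopnorm{\Msi(\cD)}{1;p}$; by~\eqref{f: Pt LqLr}, $\bigopnorm{\cP_t^N}{1;p}\leq C\,\ga(t)^{2/p'}\leq C\,t^{-n/p'}$ on $(0,\si]$, while $\bigopnorm{\Msi(\cD)}{1;p}=\bigopnorm{\operatorname{sech}(\si\cD)}{1;p}<\infty$ uniformly — for $p=\infty$ by the second estimate in Proposition~\ref{p: consequences GBE II}~\rmiii\ applied to $\Msi$, and for $p\in(1,\infty)$ by factoring $\Msi(\cD)=\cP_{\si/2}^N\circ\bigl[\e^{\si\cD/2}\Msi(\cD)\bigr]$, whose second factor has symbol $\la\mapsto\e^{\si\la/2}\operatorname{sech}(\si\la)\in\cE(\bS_\vp)$ (hence is $\lp{N}$‑bounded as above) and whose first factor satisfies $\bigopnorm{\cP_{\si/2}^N}{1;p}<\infty$ by~\eqref{f: Pt LqLr}. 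Since $t^{-n/p'}$ is bounded below on $(0,\si]$, the constant term is absorbed, and $\bigopnorm{\Mt(\cD)}{1;p}\leq C\,t^{-n/p'}$ for $t\in(0,\si]$, as claimed.
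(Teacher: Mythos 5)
Your proposal is correct and follows essentially the same route as the paper's proof: parts \rmi, \rmii\ and \rmv\ from the ultracontractive multiplier estimates of Proposition~\ref{p: consequences GBE II}, part \rmiii\ from finite propagation speed combined with Lemmata~\ref{l: P}, \ref{l: properties cJkmum}~\rmiii\ and \ref{l: properties ktau}, and part \rmiv\ from formula~\eqref{f: formula cPS}, the contractivity of the Poisson semigroup and the sectorial functional calculus of \cite{Haa}. The only deviations are cosmetic and harmless: the factorisation $\Mt = \Mt^{1/2}\cdot \Mt^{1/2}$ in \rmii\ in place of composing with $(\tau\cI+\cD^2)^{\pm\rho}$ and Bakry's inequality, the cut-off $\zeta$ in \rmiii\ (the paper instead applies Lemma~\ref{l: P} directly and then restricts the integral to $\mod{s}\geq d(x,o)-R$ by finite propagation speed), the two-term decomposition $\Msi(z)=(1+z)^{-1}+g(z)$ in \rmiv\ in place of the paper's three-term one, and the factorisation of $\Msi(\cD)$ through $\cP_{\si/2}^N$ in \rmv\ where the paper invokes Proposition~\ref{p: consequences GBE II}~\rmii.
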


\begin{proof}
Part \rmi\ follows from Proposition~\ref{p: consequences GBE II}~\rmiii\ and the trivial fact that 
for any $\rho_1>0$
\begin{equation} \label{f: de duesimde}
\sup_{t\in [\de,2\si-\de]} \, \sup_{\la\geq 0} \, (1+\la^2)^{\rho_1} \, \la^{2J} \Mt (\la) 
\leq  \sup_{\la\geq 0} \, (1+\la^2)^{\rho_1} \, \la^{2J} M_\de(\la) 
< \infty. 
\end{equation}

Now we prove \rmii.  The smoothness of $\Mt (\cD)f$ follows from \rmi\ and a local Sobolev's embedding theorem.  
The estimate $\bignorm{\Mt (\cD) f}{\infty} \leq C \bignorm{f}{1}$ is a direct consequence of 
Proposition~\ref{p: consequences GBE II}~\rmiii\ and of an estimate similar to \eqref{f: de duesimde}. 
Finally, Proposition~\ref{p: consequences GBE}~\rmi\ and Proposition~\ref{p: consequences GBE II}~\rmi,\rmv\
imply that for $\tau>\kappa^2$ and $\si_1>n/4$
$$
\bignorm{\mod{\nabla \Mt (\cD) f}}{\infty}
\leq C \bignorm{\mod{\nabla (\tau\cI+\cD^2)^{\si_1} \Mt (\cD) f}}{2}
\leq C\, \Xi_{\si_1} \big(F_t\big) \bignorm{f}{1},
$$  
where $F_t(\la) := (\tau+\la^2)^{\si_1} \Mt (\la)$.  It is straightforward to check that 
$\ds\sup_{t\in [\de,2\si-\de]}\, \Xi_{\si_1} \big(F_t\big)<\infty$, thereby concluding the proof of \rmii.   

Next, we prove the estimate in \rmiii\ concerning $\mod{\Nabla M_t(\cD)f}$.  The proofs of the estimates
for $\mod{\Mt (\cD)f}$ and $\mod{\cL\Mt (\cD)f}$ are similar, perhaps easier, 
and we leave the details to the interested reader.  Suppose that $d(x,o)\geq 2R$ and choose $J>n+2$.  
By finite propagation speed and Lemmata~\ref{l: P} and \ref{l: properties cJkmum}~\rmiii, 
$$
\begin{aligned}
\bigmod{\nabla \cPS f(x,t)}
& \leq C \, \int_{\mod{s} \geq d(x,o)-R} \, \bigmod{P_J(\cO) K_t(s)} \, 
	\bigmod{\nabla \cJ_{J-1/2}\big(s\cD\big)f}(x) \wrt s \\
& \leq C \bignorm{f}{1}\,\int_{\mod{s} \geq d(x,o)-R} \, \bigmod{P_J(\cO) K_t(s)} \wrt s.
\end{aligned}
$$
Since $(s,t)$ is in $S_R$, Lemma~\ref{l: properties ktau} ensures that there exists a constant $C$ such that 
$$
\sup_{t\in (0,\si)} \, \bigmod{P_J(\cO)K_t(s)} 
\leq C\, \e^{(\vep/2-\la_1) \mod{s}}.
$$  
This and the estimates above imply that 
$$
\bigmod{\nabla \cPS f(x,t)}
	\leq C \, \e^{(\vep-\la_1) d(x,o)} \bignorm{f}{1}
	     \, \int_{\mod{s} \geq d(x,o)-R} \, \e^{-\vep \mod{s}/2}\, \wrt s 
	\leq C \, \e^{(\vep-\la_1) d(x,o)} \bignorm{f}{1}.
$$
The right hand side does not depend on $t$ in $(0,1)$, and the required estimate (with $\nabla$ in place of $\Nabla$)
follows.
It remains to prove a similar estimate for $\partial_t \cPS f(x,t)$.
By finite propagation speed and Lemmata~\ref{l: P} and \ref{l: properties cJkmum}~\rmiii, 
$$
\begin{aligned}
\bigmod{\partial_t \cPS f(x,t)}
& \leq C \, \int_{\mod{s} \geq d(x,o)-R} \, \bigmod{\partial_t P_N(\cO) K_t(s)} \, 
	\bigmod{\cJ_{J-1/2}\big(s\cD\big)f(x)} \wrt s \\
& \leq C \bignorm{f}{1} \int_{\mod{s} \geq d(x,o)-R} \, \bigmod{\partial_t P_N(\cO) K_t(s)}\wrt s.
\end{aligned}
$$
The required estimate follows from this by arguing much as above.  

Next we prove \rmiv.  Since $\Mt  = M_{2\si-t}$ for each $t$ in $(0,2\si)$,
it suffices to prove the required estimate in the case where $0<t\leq \si$. 
The contractivity of the Poisson semigroup on $\lp{N}$ and \eqref{f: formula cPS} imply the estimate 
\begin{equation} \label{f: conditional Mt}
\sup_{t\in(0,2\si)} \bigopnorm{\Mt (\cD)}{p} 
\leq \big[1+\bigopnorm{\Msi (\cD)}{p} \big].  
\end{equation}
Since $\cL$ generates a contraction semigroup on $\lp{N}$, $\cL$ is a sectorial operator of angle $\pi/2$ on $\lp{N}$, 
by the easy part of the Hille--Yosida theorem.  By abstract nonsense, $\cD$ is a sectorial operator of 
angle $\pi/4$ on $\lp{N}$ \cite[Proposition~3.1.2]{Haa}.  It is straightforward to check that the functions
$\ds z \mapsto \frac{1-\e^{-2\si z}}{\e^{\si z} + \e^{-\si z}}$ 
and $z\mapsto \e^{-\si z} - (1+z)^{-1}$ are in $H_0^\infty(\bS_{\vp})$.  
%Observe that the function $z \mapsto \cosh (\sqrt z/2)$
%is entire and that $\Msi $ is a meromorphic function with poles on the negative half line and it 
%belongs to $H^\infty(\bS_{\pi/2})$ for any $\vp$ in $(0,\pi)$.  
%Notice also that $z\mapsto \e^{-\sqrt z/2}$ is in 
  %for any $\vp$ in $(0,\pi)$,
%because it is bounded on $\bS_\vp$ and $\e^{-\sqrt z/2} - (1+z)^{-1}$ is in $H_0^\infty(\bS_\vp)$.  
%Furthermore $\Msi $ is in $\cE(\bS_\vp)$, 
Furthermore
$$
\begin{aligned}
	\Msi (z)
	 = \frac{2}{\e^{\si z} + \e^{-\si z}} - \e^{-\si z} + \e^{-\si z} 
	 = \frac{1-\e^{-2\si z}}{\e^{\si z} + \e^{-\si z}} + \e^{-\si z} - \frac{1}{z+1} + \frac{1}{z+1},
\end{aligned}
$$
whence $\Msi$ belongs to the extended Dunford class $\cE(\bS_{\vp})$. 
%is in $H_0^\infty(\bS_\vp)$.  
Therefore 
$\ds
\bigopnorm{\Msi (\cD)}{p} 
\leq C \bignorm{\Msi }{\cE(\bS_{\vp})} 
$
\cite[Theorem~2.3.3]{Haa}, which, combined with \eqref{f: conditional Mt}, yields the required estimate.  

%This estimate, the contractivity of the semigroup generated by $\cL$ and formula
%\eqref{f: formula cPS} yield 
%$$
%\sup_{t\in(0,2\si)} \bigopnorm{\Mt (\cD)}{1}
%\leq C \, \big[1+\bignorm{\Msi }{\cE(\bS_\vp)}\big].  
%$$
%The symmetry of the kernel of $\Mt (\cD)$, and a straightforward duality and interpolation argument
%give the required estimate for all $p$.  

Finally we prove \rmv.  
The first equality follows from abstract nonsense (see, for instance, \cite[Theorem~VI.8.6, p.~508]{DS}).
Formula \eqref{f: formula cPS}, 
the contractivity of the Poisson semigroup on $\lp{N}$ and the  estimate $\bigopnorm{\cP_t}{1;p}
\leq C\, t^{-n/p'}$ (see \eqref{f: Pt LqLr}) yield
$$
\bigopnorm{\Mt (\cD)}{1;p} 
\leq \big[C\, t^{-n/p'} +\bigopnorm{\Msi (\cD)}{1;p} \big];  
$$
the required bound follows from Proposition~\ref{p: consequences GBE II}~\rmii\ and the fact that $t$ is small.
\end{proof}

\noindent
Given a function $f$ on $N$, we set
\begin{equation} \label{f: notation cN}
\cN f 
:= \bignorm{f}{1} + \bignorm{\mod{\nabla f}}{1} + \bignorm{\cD f}{1},
\end{equation}
whenever the right hand side makes sense.   
For each $p$ in $[1,\infty)$ we denote by $\cB^p$ the space of all measurable 
functions $F: \Si \to \BC$ such that 
$$
\bignorm{F}{\cB^p}
:= \sup_{t\in (0,2\si)} \, \bignorm{F(\cdot,t)}{p} 
< \infty,
$$
endowed with the ``norm'' $\bignorm{\cdot}{\cB^p}$.  

\begin{theorem} \label{t: est nabla cPS}
There exists a constant $C$ such that 
$
\bignorm{\mod{\Nabla \cPS f}}{\cB^1}
\leq C \, \cN f
$
for every function $f$ such that $\cN f$ is finite. 
\end{theorem}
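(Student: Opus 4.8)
The plan is to estimate the two components of $\Nabla\cPS f=\bigl(\nabla\cPS f,\ \partial_t\cPS f\bigr)$ separately, relying throughout on the representation \eqref{f: formula cPS}, which confines the behaviour that is singular as $t\downarrow 0$ to Poisson semigroups $\cP_r^N$ (contractions on $\lu N$) and leaves the smooth operator $\Msi(\cD)$. Since $\cosh$ is even, $\Mt=M_{2\si-t}$, so $\cPS f(\cdot,2\si-t)=\cPS f(\cdot,t)$ and $\bignorm{\mod{\Nabla\cPS f(\cdot,2\si-t)}}{1}=\bignorm{\mod{\Nabla\cPS f(\cdot,t)}}{1}$; hence I would only need to bound $\sup_{t\in(0,\si]}\bignorm{\mod{\Nabla\cPS f(\cdot,t)}}{1}$. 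The $t$-derivative is the easy part. All operators below are functions of $\cL$, hence commute, and $\cD f\in\lu N$ because $\cN f<\infty$; differentiating \eqref{f: formula cPS} (with $\eta=0$) in $t$ gives, for $0<t<\si$,
$$
\partial_t\cPS f(\cdot,t)=-\cP_t^N(\cD f)+\tfrac12\bigl(\cP_{\si-t}^N+\cP_{\si+t}^N\bigr)\Msi(\cD)(\cD f),
$$
and $\partial_t\cPS f(\cdot,\si)=0$ (indeed $\partial_t M_t\big|_{t=\si}=0$). By the $\lu N$-contractivity of the Poisson semigroup and Lemma~\ref{l: Mt sqrt cL}~\rmiv\ (which provides $\bigopnorm{\Msi(\cD)}{1}<\infty$), this yields $\bignorm{\partial_t\cPS f(\cdot,t)}{1}\le\bigl(1+\bigopnorm{\Msi(\cD)}{1}\bigr)\bignorm{\cD f}{1}\le C\,\cN f$ for every $t\in(0,\si]$.

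For the spatial gradient, \eqref{f: formula cPS} gives $\nabla\cPS f(\cdot,t)=\nabla\cP_t^N f+\tfrac12\bigl(\nabla(\cP_{\si-t}^N\Msi(\cD)f)-\nabla(\cP_{\si+t}^N\Msi(\cD)f)\bigr)$ for $0<t<\si$, so I would reduce everything to controlling, in $\lu N$, the gradient of $\cP_r^N$ applied to $f$ and to $\Msi(\cD)f$ (with $r\in(0,2\si)$). The first step is the auxiliary estimate: for every $r>0$ and every $g$ with $g,\,\mod{\nabla g}\in\lu N$,
$$
\bignorm{\mod{\nabla\cP_r^N g}}{1}\ \le\ \e^{\kappa^2}\,\bignorm{\mod{\nabla g}}{1}+C_0\,\bignorm{g}{1},
$$
where $C_0:=\bigopnorm{\mod{\nabla\cH_1^N}}{1;1}$; here $C_0<\infty$ because under bounded geometry the heat kernel satisfies a Gaussian gradient bound $\bigmod{\nabla_x h_1^N(x,y)}\le C\e^{-c\,d(x,y)^2}$, which together with \eqref{f: volume growth} makes $\sup_y\int_N\e^{-c\,d(x,y)^2}\wrt\nu(x)$ finite. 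To prove it I would split $\cP_r^N=\cQ_r^0+\cQ_r^\infty$ as in \eqref{f: decomp Poisson}: for $\cQ_r^0$, Proposition~\ref{p: consequences GBE III}~\rmi\ gives $\mod{\nabla\cQ_r^0 g}\le\e^{\kappa^2}\cQ_r^0(\mod{\nabla g})$, while $\bigopnorm{\cQ_r^0}{1;1}\le1$ (from $\cP_r^N\One=\One$, i.e.\ $r\int_0^\infty h_s^\BR(r){\dtt s}=1$, cf.\ \eqref{f: Poisson}); for $\cQ_r^\infty$, the semigroup property factors it as $\cQ_r^\infty=\cH_1^N\wt\cR_r$ with $\wt\cR_r:=r\int_1^\infty h_s^\BR(r)\,\cH_{s-1}^N{\dtt s}$ and $\bigopnorm{\wt\cR_r}{1;1}\le1$, so $\bignorm{\mod{\nabla\cQ_r^\infty g}}{1}=\bignorm{\mod{\nabla\cH_1^N(\wt\cR_r g)}}{1}\le C_0\bignorm{g}{1}$.

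The decisive point — and, I expect, the main obstacle — is the $\lu N$ bound $\bignorm{\mod{\nabla\Msi(\cD)f}}{1}\le C\bignorm{f}{1}$. The only uniform pointwise information available on $\nabla\Msi(\cD)f$ is of $L^\infty$ type (Lemma~\ref{l: Mt sqrt cL}~\rmii), which is useless on a manifold of exponential volume growth; instead one must exploit the \emph{quantitative decay at infinity} of $\nabla\Msi(\cD)f$ provided by Lemma~\ref{l: Mt sqrt cL}~\rmiii, whose rate $\la_1=\pi/(2\si)$ strictly exceeds $2\be$ precisely because of \eqref{f: sigma}. Concretely, I would fix a small $\de>0$ and a smooth partition of unity $\{\psi_j\}$ with $\sum_j\psi_j=1$ subordinate to a uniformly locally finite covering of $N$ by balls $B_\de(x_j)$ (such a partition exists under bounded geometry; cf.\ the construction in Lemma~\ref{l: Laplacian cut-offs}), put $f_j:=\psi_j f$ (supported in $B_\de(x_j)$), and split $N=B_{2\de}(x_j)\cup B_{2\de}(x_j)^c$. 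On $B_{2\de}(x_j)$, $\bignorm{\mod{\nabla\Msi(\cD)f_j}}{L^1(B_{2\de}(x_j))}\le\nu\bigl(B_{2\de}(x_j)\bigr)\bignorm{\mod{\nabla\Msi(\cD)f_j}}{\infty}\le C\bignorm{f_j}{1}$, using Lemma~\ref{l: Mt sqrt cL}~\rmii\ (at $t=\si$) and \eqref{f: Ahlfors}. On $B_{2\de}(x_j)^c$, Lemma~\ref{l: Mt sqrt cL}~\rmiii\ with $R=\de$, $o=x_j$ and $\vep\in(0,\la_1-2\be)$ gives $\mod{\nabla\Msi(\cD)f_j(x)}\le C\e^{(\vep-\la_1)d(x,x_j)}\bignorm{f_j}{1}$, and $\int_{B_{2\de}(x_j)^c}\e^{(\vep-\la_1)d(x,x_j)}\wrt\nu(x)<\infty$ with a bound uniform in $j$ (by \eqref{f: volume growth}, since $\vep-\la_1+2\be<0$). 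Thus $\bignorm{\mod{\nabla\Msi(\cD)f_j}}{1}\le C\bignorm{f_j}{1}$ uniformly in $j$, and summing over $j$ (using $\sum_j\bignorm{f_j}{1}=\bignorm{f}{1}$) gives the claim.

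Finally I would assemble the pieces: the auxiliary estimate with $g=f$ gives $\bignorm{\mod{\nabla\cP_t^N f}}{1}\le C\,\cN f$, and with $g=\Msi(\cD)f$ — together with the bound just proved and $\bignorm{\Msi(\cD)f}{1}\le C\bignorm{f}{1}$ (Lemma~\ref{l: Mt sqrt cL}~\rmiv) — gives $\bignorm{\mod{\nabla(\cP_{\si\pm t}^N\Msi(\cD)f)}}{1}\le C\bignorm{f}{1}$; hence $\bignorm{\mod{\nabla\cPS f(\cdot,t)}}{1}\le C\,\cN f$ for $0<t<\si$, while $t=\si$ is immediate since there $\nabla\cPS f(\cdot,\si)=\nabla\Msi(\cD)f$. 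Combined with the estimate for $\partial_t\cPS f$ and the reflection $t\mapsto2\si-t$, this yields $\bignorm{\mod{\Nabla\cPS f}}{\cB^1}\le C\,\cN f$.
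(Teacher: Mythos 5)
Your argument is correct in outline and reaches the same conclusion, but it reorganises the spatial‑gradient estimate in a genuinely different way from the paper. The paper keeps the $t$‑dependence throughout: it localises $f=\sum_j\psi_jf$ on balls of radius $1$, disposes of $N\setminus 2B_j$ for \emph{all} $t\in(0,\si]$ at once via the exponential decay of Lemma~\ref{l: Mt sqrt cL}~\rmiii, and on $2B_j$ splits $t\in[\de,\si]$ (Schwarz plus Proposition~\ref{p: consequences GBE II}~\rmv) from $t\in(0,\de)$ (where it uses \eqref{f: formula cPS}, Bakry for $\cQ_t^0$, and crucially only the $L^\infty$ bound of Proposition~\ref{p: consequences GBE III}~\rmii\ for $\cQ_t^\infty$, multiplied by the uniformly bounded volume $\nu(2B_j)$). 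You instead factor out the $t$‑dependence first, reducing everything to (i) a $t$‑uniform estimate $\bignorm{\mod{\nabla\cP_r^Ng}}{1}\le \e^{\kappa^2}\bignorm{\mod{\nabla g}}{1}+C_0\bignorm{g}{1}$ and (ii) the single bound $\bignorm{\mod{\nabla\Msi(\cD)f}}{1}\le C\bignorm{f}{1}$. Step (ii) is a correct specialisation of the paper's near/far localisation to $t=\si$, and the $\partial_t$ part coincides with the paper's. Your route is conceptually cleaner (the singular behaviour as $t\downarrow0$ is isolated entirely in the Poisson semigroup), at the price of needing a genuinely global $\lu N\to\lu N$ bound for $\mod{\nabla\cQ_r^\infty}$, which the paper's localisation deliberately avoids.

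That is the one point to flag: your constant $C_0=\bigopnorm{\mod{\nabla\cH_1^N}}{1;1}$, i.e.\ $\sup_y\int_N\bigmod{\nabla_xh_1^N(x,y)}\wrt\nu(x)<\infty$, is asserted via a pointwise Gaussian gradient bound for $h_1^N$ that is nowhere in the paper's preliminaries. Proposition~\ref{p: consequences GBE III}~\rmii\ and the machinery of Section~\ref{s: Background material} only give $L^1\to L^2$ or $L^1\to L^\infty$ gradient bounds, which are useless globally on a manifold of exponential volume growth, and Bakry's inequality cannot be applied to the kernel itself. The fact you need is true under the standing assumptions and is essentially the estimate of Davies cited later in Lemma~\ref{l: char local Riesz II} (combined with \eqref{f: assumptions on ht} and \eqref{f: volume growth}), so this is not a fatal gap, but it is an external input that must be stated and referenced explicitly; as written, the sentence ``which together with \eqref{f: volume growth} makes the integral finite'' carries the whole weight of the hardest step.
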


\begin{proof}
Since $\cPS f(\cdot,t) = \cPS f(\cdot,2\si-t)$, it suffices to restrict $t$ to $(0,\si]$.  
By \eqref{f: formula cPS} and the assumption $\cD f\in \lu{N}$,
\[\begin{aligned}
	\partial_t \cP^0 f(\cdot,t) 
& = -\cD \cP_t^N f +\frac 12 \cD \left[\cP_{\si-t}^N + \cP_{\si+t}^N\right]\Msi (\cD) f\\
& = -\cP_t^N \cD f +\frac 12  \left[\cP_{\si-t}^N + \cP_{\si+t}^N\right] \Msi (\cD)\cD f.
\end{aligned}
\]
The estimate $\bignorm{\partial_t \cP^0 f(\cdot,t)}{1} \leq C \bignorm{\cD f}{1}\leq C \,\cN f$, with $C$ independent of $f$
and $t$, follows by arguing as in the proof of Lemma~\ref{l: Mt sqrt cL}~\rmiv (with $\cD f$ in place of $f$).
Hence $\bignorm{\partial_t \cPS f}{\cB^1} \leq C \, \cN f$.

We now estimate $\bignorm{\mod{\nabla\cPS f(\cdot, t)}}{1}$ for $t$ in $(0,\si]$.  
Let $\{B_j\}$ be a covering of~$N$ with geodesic balls of radius $1$ and centre $p_j$ enjoying 
the finite overlapping property.  
Denote by $\{\psi_j\}$ a partition of unity subordinated to this covering with the property that 
$\{\nabla \psi_j\}$ are uniformly bounded with respect to $j$, 
and write $f = \sum_j f_j$, where $f_j := \psi_j \, f$.  Then
$$
\begin{aligned}
\bignorm{\mod{\nabla \cPS f(\cdot,t)}}{1}
\leq \sum_j  \bignorm{\mod{\nabla \cPS f_j(\cdot,t)}}{\lu{2B_j}} 
+  \sum_j  \bignorm{\mod{\nabla \cPS f_j}(\cdot,t)}{\lu{N\setminus 2B_j}},
\end{aligned}
$$
where $2B_j$ denotes the ball with centre $p_j$ and radius $2$. 
Recall that $\cPS f(\cdot, t)  = M_t(\cD)f$ By Lemma~\ref{l: Mt sqrt cL}~\rmiii, 
there exists a constant $C$, independent of~$j$ and of $t$ in $(0,\si]$, such that 
$$
\bigmod{\nabla \Mt (\cD)f_j (x)}
\leq C\, \e^{(\vep-\la_1)d(x,p_j)}  \bignorm{f_j}{1}
\quant x \in N\setminus 2B_j.  
$$
Since $\la_1 > 2\be$ (see \eqref{f: sigma}), the function $x\mapsto \e^{(\vep-\la_1)d(x,p_j)}$ is, for $\vep$ small
enough, in $\lu{N}$, with norm bound independent of $j$, so that
$$
\sup_{t\in (0,\si]} \, \sum_j  \bignorm{\mod{\nabla \Mt (\cD) f_j}}{\lu{N\setminus 2B_j}} 
\leq C\, \sum_j \bignorm{f_j}{1} 
\leq C \bignorm{f}{1}.
$$
%because, by ,  ($\be$ is as in \eqref{f: volume growth}).  

Schwarz's inequality and Proposition~\ref{p: consequences GBE II}~\rmv\ imply that for any $\rho_1 >n/4$
$$
\begin{aligned}
	\bignorm{\mod{\nabla M_t(\cD) f_j}}{\lu{2B_j}} 
	\leq      \sqrt{\nu\big(2B_j \big)} \bignorm{\mod{\nabla M_t(\cD) f_j}}{\ld{2B_j}} 
	\leq C \, \sqrt{\nu\big(2B_j \big)} \, \Xi_{\rho_1} \big(\Mt \big) \bignorm{f_j}{1}.
\end{aligned}
$$
Note that $\nu\big(2B_j \big)$ is uniformly bounded with respect to $j$, because $N$ has bounded geometry.  
Now, fix $\de$ in $(0,\si)$.  It is straightforward to check that 
$\ds\sup_{t\in [\de,\si]}\, \Xi_{\rho_1} \big(\Mt \big)$ is finite.  Thus, there exists a constant $C$ such that 
$$
\sup_{t\in [\de,\si]}\, \sum_j  \bignorm{\mod{\nabla \Mt (\cD) f_j}}{\lu{2B_j}} 
\leq C \, \sum_j \bignorm{f_j}{1}
\leq C \bignorm{f}{1}
\quant t \in [\de,\si]. 
$$
It remains to estimate $\ds \sup_{t\in (0,\de)} \, \sum_j  \bignorm{\mod{\nabla \Mt (\cD) f_j}}{\lu{2B_j}}$.
It is convenient to write $\Mt (\cD)$ as in \eqref{f: formula cPS}.  The triangle inequality and 
the decomposition $\cP_t^N = \cQ_t^0+\cQ_t^\infty$ (see \eqref{f: decomp Poisson}) imply that 
$$
\begin{aligned}
\mod{\nabla \Mt (\cD) f_j}
& \leq \mod{\nabla \cQ_t^0 f_j} +  \mod{\nabla \cQ_t^\infty f_j} 
 + \frac{1}{2} \, \bigmod{\nabla [\cP_{\si-t}^N-\cP_{\si+t}^N] \Msi (\cD) f_j}. \\ 
\end{aligned}
$$
Observe that $\mod{\nabla \cQ_t^0 f_j}\leq \e^{\kappa^2}\cQ_t^0 \mod{\nabla f_j}$ 
by Proposition~\ref{p: consequences GBE III}~\rmi, whence 
$$
\bignorm{\mod{\nabla \cQ_t^0 f_j}}{\lu{2B_j}}
\leq \e^{\kappa^2}\bignorm{\cQ_t^0 \mod{\nabla f_j}}{\lu{2B_j}}
\leq \e^{\kappa^2}\bignorm{\cP_t^N \mod{\nabla f_j}}{1}
\leq \e^{\kappa^2}\bignorm{\mod{\nabla f_j}}{1};
$$
we have used the contractivity of the Poisson semigroup on $\lu{N}$ in the last inequality.     
Note that $\mod{\nabla f_j} \leq C \, \mod{\psi_j\nabla f} + C \, \One_{B_j} \, f$, so that 
$$
\sup_{t\in (0,\de)} \,\sum_j \, \bignorm{\mod{\nabla \cQ_t^0 f_j}}{\lu{2B_j}} 
\leq C\,\Big[\sum_j \, \bignorm{\mod{\psi_j\nabla f}}{1}  
  +  \sum_j \, \bignorm{\One_{B_j} \, f}{1}\Big]
\leq C\, \cN f.  
$$
Furthermore, we have trivially 
$$
\begin{aligned}
\sup_{t\in (0,\de)} \sum_j \, \bignorm{\mod{\nabla \cQ_t^\infty f_j}}{\lu{2B_j}} 
	\leq \sum_j  \nu\big(2B_j\big) \, \sup_{t\in (0,\de)}\! \bignorm{\mod{\nabla \cQ_t^\infty f_j}}{\infty} \!\!
	\leq C \sum_j \norm{f_j}{1} 
	\leq C \norm{f}{1},  
\end{aligned}
$$
where the second inequality above follows from Proposition~\ref{p: consequences GBE III}~\rmii\ and
the fact that $N$ has bounded geometry.  
Finally, set $F_t (\la) :=  [\e^{(t-\si)\la}-\e^{-(\si+t)\la}]\, \Msi (\la)$.  It is straightforward to check that
if $\rho_1>n/4$, then $\ds\sup_{t\in (0,\de)} \, \Xi_{\rho_1}\big(F_t\big) < \infty$.   Then Schwarz's inequality
and Proposition~\ref{p: consequences GBE II}~\rmv\ imply that 
$$
\begin{aligned}
\sup_{t\in (0,\de)} \sum_j \, \bignorm{\mod{\nabla [F_t(\cD) f_j]}}{\lu{2B_j}} 
& \leq \sum_j \, \sqrt{\nu\big(2B_j\big)} \, \sup_{t\in (0,\de)} \bignorm{\mod{\nabla [F_t(\cD) f_j]}}{\ld{2B_j}}\\
& \leq C \, \sum_j \, \bignorm{f_j}{1}\\  
& \leq C \bignorm{f}{1}.
\end{aligned}
$$
The required conclusion follows by combining the estimates above.
\end{proof}

We complete this section by proving Proposition \ref{p: Dirichlet problem slice}.

\begin{proof}[Proof (of Proposition \ref{p: Dirichlet problem slice}).]
The function $\cPSeta f$ is harmonic in $\Sisieta$, hence smooth therein, by elliptic regularity.  
We prove the continuity at the boundary.
Note that $\cPSeta f(\cdot,t)= \cPSeta f(\cdot,2\si-t)$ for every $t$ in $(\eta,2\si-\eta)$.
Thus, it suffices to prove the continuity at $t=\eta$.  Fix $x$ in $N$, and write 
$$
\cPSeta f(y,t) - f(x) 
= \cPSeta f(y,t) - f(y) + f(y) - f(x).
$$
Since $f$ is in $C_0(N)$, it is uniformly continuous on $N$; hence for every $\vep>0$ there exists $\de>0$ such that 
$\bigmod{f(y)-f(x)} <\vep$ whenever $d(x,y) <\de$.  

By \eqref{f: formula cPS}, we can write $\ds\cPSeta f(\cdot,t) - f = \cP_{t-\eta}^N f - f +
\frac{1}{2} \,\big[\cP_{\si-t}^N-\cP_{\si+t-2\eta}^N\big] \Msieta \big(\cD\big) f$.  
The heat semigroup $\{\cH_t^N\}$ is strongly continuous on $C_0(N)$ \cite[Lemma~5.2.8]{Da1}.
A straightforward argument using the subordination formula \eqref{f: Poisson} shows that
the same holds for the Poisson  semigroup $\{\cP_t^N\}$.  
Hence $\ds \lim_{t\downarrow \eta} \bignorm{\cP_{t-\eta}^N f - f}{C_0(N)} = 0$.

It remains to prove that
\begin{equation}\label{f: claim pb Diric}
\lim_{t\downarrow\eta}	\bignorm{\big[\cP_{\si-t}^N-\cP_{\si+t-2\eta}^N\big] \Msieta \big(\cD\big) f}{\infty}=0.
	\end{equation}
To this end, fix $\vep>0$ and consider a sequence $\{\varphi_k\}\subset C^\infty_c(N)$ such that 
$\|\varphi_k-f\|_{\infty}\to 0$ as $k\to\infty$. 
The Poisson semigroup is contractive on $L^\infty(N)$, hence so is on $C_0(N)$. Thus, by 
a variant of Lemma~\ref{l: Mt sqrt cL}~\rmiv\ (with $\eta>0$),  
\begin{equation*}
\begin{aligned}
\bignorm{\big[\cP_{\si-t}^N-\cP_{\si+t-2\eta}^N\big] \Msieta \big(\cD\big) (f-\varphi_k)}{\infty}
&\leq 2\bignorm{ \Msieta \big(\cD\big) (f-\varphi_k)}{\infty} \\
&\leq 2\left.\bigopnorm{\Msieta (\cD)}{\infty}\right. \bignorm{ f-\varphi_k}{\infty}\\
&\leq C\,\big[1+\bignorm{\Msieta (\cD)}{\cE(\bS_{\vp})}\! \big]\bignorm{f-\varphi_k}{\infty}.
\end{aligned}
\end{equation*}
In particular, we can fix $k_0$ large enough so that 
\begin{equation}\label{f: Diric appr}
\bignorm{\big[\cP_{\si-t}^N-\cP_{\si+t-2\eta}^N\big] \Msieta \big(\cD\big) 
(f-\varphi_{k_0})}{\infty}<\vep/2 \quant t\in (\eta,\sigma). 
\end{equation}
 Furthermore
\[\begin{aligned}
\ds \left[\cP_{\si-t}^N-\cP_{\si+t-2\eta}^N\right]\Msieta \big(\cD\big) \varphi_{k_0} 
& = -\int_{\si-t}^{\si+t-2\eta} \frac{\textrm{d}}{\textrm{d} s}\cP_s^N \Msieta \big(\cD\big) \varphi_{k_0} \wrt s\\
& = \int_{\si-t}^{\si+t-2\eta} \cD\cP_s^N \Msieta \big(\cD\big) \varphi_{k_0} \wrt s,\\
& = \int_{\si-t}^{\si+t-2\eta} \cP_s^N\cD \Msieta \big(\cD\big) \varphi_{k_0} \wrt s,
\end{aligned}\]
so that, using also Proposition \ref{p: consequences GBE II} \rmiii,
\[\begin{aligned}
\ds \bignorm{\left[\cP_{\si-t}^N-\cP_{\si+t-2\eta}^N\right] \Msieta \big(\cD\big) \varphi_{k_0}}{\infty} &
\leq \int_{\si-t}^{\si+t-2\eta} \bignorm{\cP_s^N\cD \Msieta \big(\cD\big) \varphi_{k_0}}{\infty} \wrt s\\
&\leq \int_{\si-t}^{\si+t-2\eta} \bignorm{\cD \Msieta \big(\cD\big) \varphi_{k_0}}{\infty} \wrt s\\
&\leq C\,(t-\eta)\bignorm{\varphi_{k_0}}{2},
\end{aligned}\]
which is smaller than $\vep/2$ for $t$ close enough to $\eta$. Together with \eqref{f: Diric appr}, 
this proves \eqref{f: claim pb Diric} and concludes the proof of the proposition.
 \end{proof}

\section{Estimates for the Green function on slices} 
\label{s: Green}

The \textit{Dirichlet heat semigroup} for $\Sisi $ is given by 
$\cH_t^{\Sisi } = \cH_t^{N}\otimes \cH_t^{[0,2\si]}$,
where $\{\cH_t^{[0,2\si]}: t>0\}$ denotes the heat semigroup on $[0,2\si]$ with Dirichlet boundary conditions.
Recall that $\la_1 = \pi /(2\si)$ (see \eqref{f: lambda1}): the number $\la_1^2$ 
is the first eigenvalue of the operator $-\textrm{d}^2/\textrm{d} x^2$ 
with Dirichlet boundary conditions on $[0,2\si]$.  The associated eigenfunction is $\sin \la_1 u$.  Set 
\begin{equation}\label{f: definition rho}
\vr (u) = \mathrm{\dist}\big(u, \BR\setminus [0,2\si]\big)
\end{equation}
and observe that $\vr (u) \asymp \sin \la_1 u$ in $(0,2\si)$.   
Let the family $\{h_t^{[0,2\si]}: t>0\}$ denote the heat kernel on $[0,2\si]$ with Dirichlet boundary conditions 
and note the following well known estimate (see, for instance, \cite{Z} and the references therein)
\begin{equation} \label{f: est htO1}
h_t^{[0,2\si]}(u,v)
\leq 
\begin{cases}
\ds C\, \min\Big(\frac{\vr(u)\,\vr(v)}{t},1\Big)\,  t^{-1/2}\, \e^{-\mod{u-v}^2/(4t)}	& \quant t \in (0,1] \\
C\, \vr(u)\,\vr(v)\, \e^{-\la_1^2 t} 				& \quant t \in (1, \infty) \\
\end{cases}
\end{equation}
for every $u$ and $v$ in $[0,2\si]$ .	

The \textit{Green operator} $\cGS$ for the slice $\Sisi $ is defined by 
\begin{equation} \label{f: Green}
\cGS 
:= \ioty \cH_t^{\Sisi } \wrt t.  
\end{equation}
It is not hard to prove that given a reasonable function $B$ on $\Sisi $ (for instance 
$B\in C^0(\Sisi)\cap L^r(\Sisi)$ for some $r$ in $(1,\infty)$), the function $\cGS B$ solves the problem 
$$
-\DDelta u = B \quad\hbox{in $\Sisi $}  \qquad u(\cdot,0) = 0 = u(\cdot,2\si)
$$
in the sense of distributions.  
At least formally, off the diagonal of $\Sisi \times \Sisi$ the kernel of $\cGS$  is given by the formula
\begin{equation} \label{f: kernel cGS}
k_{\cGS}\big((x,u),(y,v)\big)
:= \ioty h_t^{N}(x,y) \, h_t^{[0,2\si]}(u,v) \wrt t;
\end{equation}
here $(x,u)$ and $(y,v)$ are in $\Sisi $, $(x,u) \neq (y,v)$.  
We shall consider the operators $\cGS^j$, $j=1,2,\ldots$, and their distributional kernels $k_{\cGS^j}$. 

We \textit{claim} that 
\begin{equation} \label{f: claim Gj}
k_{\cGS^j} \big((x,u),(y,v)\big)
= \frac{1}{(j-1)!} \, \ioty  h_t^N(x,y) \, h_t^{[0,2\si]}(u,v) \, t^{j-1} \wrt t.  
\end{equation}
We argue by induction.  If $j=1$, then \eqref{f: claim Gj} reduces to \eqref{f: kernel cGS}.
Assume that \eqref{f: claim Gj} holds for $j$, and consider $k_{\cGS^{j+1}}$.  
Clearly
$$
\begin{aligned}
	& k_{\cGS^{j+1}} \big((x,u),(z,w)\big)  \\
	& = \int_{\Sisi }  \, k_{\cGS^{j}} \big((x,u),(y,v)\big)\, k_{\cGS} \big((y,v),(z,w)\big) \wrt \nu(y)\wrt v \\
	& = \frac{1}{(j-1)!} \,\int_{\Sisi } \wrt \nu(y)\wrt v \, \ioty\ioty h_s^{[0,2\si]}(u,v) \, h_s^N(x,y) 
	   \, h_t^{[0,2\si]}(v,w) \, h_t^N(y,z) \, s^{j-1}  \wrt s \wrt t  \\
	& = \frac{1}{(j-1)!} \,\ioty\ioty h_{s+t}^{[0,2\si]}(u,w)  \, h_{s+t}^N(x,z) \, s^{j-1} \wrt s \wrt t;
\end{aligned}
$$
we have used the inductive hypothesis in the second equality above and the semigroup property of the heat kernel 
in the third.  Then we perform two subsequent changes of variables: we set $t=\tau s$ in the integral with respect to $t$
and obtain that 
$$
k_{\cGS^{j+1}} \big((x,u),(z,w)\big) 
= \frac{1}{(j-1)!} \,\ioty\ioty h_{s(1+\tau)}^{[0,2\si]}(u,w)  \, h_{s(1+\tau)}^N\big(x,z) \, s^{j} \wrt s \wrt \tau;
$$
then we set $s(1+\tau) = \si$ in the integral with respect to $s$, and the right hand side of the formula above
transforms to 
$$
\frac{1}{(j-1)!} \,\ioty\ioty h_{\si}^{[0,2\si]}(u,w)  \, h_{\si}^N(x,z) \, 
\frac{\si^{j}}{(1+\tau)^{j+1}} \wrt \si \wrt \tau.
$$
Integrating with respect to $\tau$ gives the required formula \eqref{f: claim Gj}, and concludes the proof of the claim.

In Proposition~\ref{p: estimates for cGS} below we establish pointwise estimates for $k_{\cGS^j}$.  
Preliminarily, we determine the order of magnitude of 
$$
J(d) 
:= \int_1^\infty  t^{j-3/2} \, \e^{-(\la_1 \sqrt t- d\sqrt {c/t})^2}  \wrt t
$$
as $d$ tends to infinity.  

\begin{lemma} \label{l: asymp integral}
If $d$ tends to infinity, then for every integer $j\geq 1$ one has $\ds J(d) \asymp d^{j-1}$ 
(i.e., there exist positive constants $C_1$ and $C_2$
such that $C_1\, d^{j-1} \leq J(d) \leq C_2 \, d^{j-1}$).  
\end{lemma}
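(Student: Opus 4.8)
The plan is to reduce $J(d)$ to a standard Laplace-type integral and then obtain matching upper and lower bounds by elementary estimates, without bothering to compute the exact asymptotic constant. First I would perform the substitution $t=ds$. Since $\la_1\sqrt{ds}-d\sqrt{c/(ds)}=\sqrt d\,\big(\la_1\sqrt s-\sqrt{c/s}\big)$, this turns the integral into
\[
J(d)=d^{\,j-1/2}\int_{1/d}^{\infty} s^{\,j-3/2}\,\e^{-d\,\phi(s)}\wrt s,\qquad
\phi(s):=\big(\la_1\sqrt s-\sqrt{c/s}\big)^2=\la_1^2 s+\frac cs-2\la_1\sqrt c .
\]
The function $\phi$ is smooth and strictly convex on $(0,\infty)$ (indeed $\phi''(s)=2c/s^3>0$), it vanishes together with its first derivative exactly at $s_0:=\sqrt c/\la_1$, and it is strictly positive elsewhere. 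Hence there are constants $0<m\le M$ with $m\,(s-s_0)^2\le\phi(s)\le M\,(s-s_0)^2$ on, say, $[s_0/2,3s_0/2]$. Everything then comes down to proving that $\int_{1/d}^\infty s^{\,j-3/2}\,\e^{-d\phi(s)}\wrt s\asymp d^{-1/2}$ as $d\to\infty$.

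For the lower bound I would simply restrict the integral to the interval $\{\,|s-s_0|\le d^{-1/2}\,\}$, which for $d$ large is contained in $(1/d,\infty)\cap[s_0/2,3s_0/2]$. On it $\phi(s)\le M/d$, so $\e^{-d\phi(s)}\ge \e^{-M}$, while $s^{\,j-3/2}$ is bounded below by a positive constant; thus the restricted integral is at least a constant times $d^{-1/2}$, which yields $J(d)\ge C_1\,d^{\,j-1}$.

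For the upper bound I would split $\int_{1/d}^\infty=\int_{1/d}^{s_0/2}+\int_{s_0/2}^{3s_0/2}+\int_{3s_0/2}^{\infty}$. On the middle interval the quadratic lower bound $\phi(s)\ge m(s-s_0)^2$ gives a contribution at most $C\int_{\BR}\e^{-dm r^2}\wrt r=C'd^{-1/2}$. On $[3s_0/2,\infty)$ one uses $\phi(s)\ge\tfrac12\la_1^2 s$ for $s$ large (together with a fixed positive lower bound for $\phi$ on the intermediate compact piece), so that part of the integral is exponentially small in $d$. The only delicate term is $\int_{1/d}^{s_0/2}$, where $\phi$ blows up near the lower endpoint: here I would use $\phi(s)\ge c/(2s)$ for $s\le s_0/4$ and the reciprocal change of variables $s=1/u$, which turns $\int_{1/d}^{s_0/4}s^{\,j-3/2}\,\e^{-dc/(2s)}\wrt s$ into $\int_{4/s_0}^{d}u^{-j-1/2}\,\e^{-dcu/2}\wrt u$, a quantity that is again exponentially small in $d$ (the remaining compact piece $[s_0/4,s_0/2]$ being handled as the other compact pieces). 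Summing the contributions gives $\int_{1/d}^\infty s^{\,j-3/2}\,\e^{-d\phi(s)}\wrt s\le C_2'\,d^{-1/2}$ for $d$ large, hence $J(d)\le C_2\,d^{\,j-1}$.

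The one genuine obstacle is precisely this small-$s$ tail: because the lower limit of integration is $1/d$ and $\phi(s)\to\infty$ as $s\downarrow 0$, one cannot naively extend the integral down to $0$ and invoke Laplace's method verbatim, and it is the substitution $s=1/u$ that makes this term manifestly negligible. The remaining work is a routine instance of Laplace's method applied to the strictly convex phase $\phi$.
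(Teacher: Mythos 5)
Your proof is correct and follows essentially the same route as the paper: a rescaling substitution that extracts the factor $d^{\,j-1/2}$, followed by Laplace's method at the unique interior zero of the convex phase and exponential smallness of both tails (the paper substitutes $t=\sqrt{c}\,d\,\tau^2/\la_1$ and cites Erd\'elyi for the local $d^{-1/2}$ asymptotics, whereas you use $t=ds$ and prove the two-sided $d^{-1/2}$ bound by hand). The elementary matching upper and lower bounds you give near $s_0$, and the explicit treatment of the endpoint $s=1/d$, are sound and make the argument self-contained.
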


\begin{proof}
We write $\ds{-(\la_1 \sqrt t- d\sqrt {c/t})^2} = - \sqrt{c}\la_1 d \, \Big(\sqrt{\frac{\la_1 t}{\sqrt{c}d}} 
-\sqrt{\frac{\sqrt{c}d}{\la_1 t}}\Big)^2$, change variables in the integral ($\sqrt{\la_1 t/\sqrt{c}d} = \tau$), and see that 
$$
J(d)
	= 2\, \Big(\frac{\sqrt{c}d}{\la_1}\Big)^{j-1/2}  \, \int_{\sqrt{\la_1/(\sqrt{c}d)}}^\infty  \tau^{2(j-1)} \,
	\e^{d\psi(\tau)}  \wrt \tau, 
$$
where $\psi(\tau) := {-\sqrt{c}\la_1 (\tau- 1/\tau)^2}$. Note that the phase $\psi(\tau)$ has just one critical point at $1$.  
Fix $0<\tau_1<1<\tau_2$. By applying the Laplace method (see, for instance, \cite[formula (2),
p.~37]{E}), one checks that 
$$
\int_{\tau_1}^{\tau_2} \tau^{2(j-1)} \, \e^{d \psi(\tau)}  \wrt \tau
\asymp d^{-1/2}  
$$
as $d$ tends to infinity. Moreover, for $\de > 0$ small enough $\psi(\tau) \leq -\de \tau^2$ in $[\tau_2,\infty)$, so that
$$
\int_{\tau_2}^{\infty} \tau^{2(j-1)} \, \e^{d \psi(\tau)}  \wrt \tau 
\leq\e^{-\de d \tau_2^2/2} \int_{\tau_2}^{\infty} \tau^{2(j-1)} \, \e^{-\de d \tau^2/2}  \wrt \tau 
\leq C\e^{-\de d \tau_2^2/2}.   
$$
Similarly, for $\ga > 0$ small enough $\psi(\tau) \leq -\ga \tau^{-2}$ in $(0,\tau_1)$, so that
$$
\int_{\sqrt{\la_1/(\sqrt{c}d)}}^{\tau_1} \tau^{2(j-1)} \, \e^{d \psi(\tau)}  \wrt \tau 
\leq\e^{-\ga d \tau_1^{-2}/2} \int_{0}^{\tau_1} \tau^{2(j-1)} \, \e^{-\ga d \tau_1^{-2}/2}  \wrt \tau 
\leq C\e^{-\ga d \tau_1^{-2}/2}.   
$$
By combining the estimates above, we see that $J$ has the required asymptotic behaviour at infinity.  
\end{proof}

\begin{proposition} \label{p: estimates for cGS}
Suppose that $j$ is a positive integer and that $n\geq 2$. 
There exists a positive constant $C$ such that the following hold:
\begin{enumerate}
\item[\itemno1]
if $D\big((x,u),(y,v)\big)\geq 2$, then 
$
k_{\cGS^j}\big((x,u),(y,v)\big)
\leq C \, d(x,y)^{j-1} \, \e^{-2\la_1 d(x,y)\sqrt c}$;
\item[\itemno2]
if $D\big((x,u),(y,v)\big)\leq 2$, then 
$$
k_{\cGS^j}\big((x,u),(y,v)\big)
\leq
\begin{cases}
	\ds C\, D^{2\ga}  				& \textrm{if $\ga<0$} \\
	\ds C \,\log \frac{4}{D}		       	& \hbox{if $\ga=0$} \\ 
	\ds C 						& \textrm{if $\ga>0$}, 
\end{cases}
$$
where $\ga := j-(n+1)/2$.  
\end{enumerate}
\end{proposition}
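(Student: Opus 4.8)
The starting point is the explicit formula \eqref{f: claim Gj}, which represents $k_{\cGS^j}\big((x,u),(y,v)\big)$ as a constant multiple of $\int_0^\infty h_t^N(x,y)\,h_t^{[0,2\si]}(u,v)\,t^{j-1}\wrt t$. The plan is to split this integral at $t=1$ and to insert the Gaussian bound \eqref{f: assumptions on ht} for $h_t^N$ (which on $(0,1]$ reads $h_t^N(x,y)\le C\,t^{-n/2}\,\e^{-cd(x,y)^2/t}$ and on $[1,\infty)$ reads $h_t^N(x,y)\le C\,t^{-1/2}\,\e^{-cd(x,y)^2/t}$) together with the two-regime estimate \eqref{f: est htO1} for $h_t^{[0,2\si]}$, using the crude bounds $\min\big(\vr(u)\vr(v)/t,1\big)\le 1$ on $(0,1]$ and $\vr(u)\vr(v)\le \si^2$ on $(1,\infty)$. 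Throughout, write $d:=d(x,y)$ and $D':=\big(cd^2+\mod{u-v}^2/4\big)^{1/2}$; since $\mod{u-v}\le 2\si$ one has $D'\asymp D$, and also $D'^2\ge cd^2$.

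For \rmi\ the dominant contribution is $\int_1^\infty$. Inserting the bounds above, this piece is at most $C\,\si^2\int_1^\infty t^{j-3/2}\,\e^{-cd^2/t-\la_1^2 t}\wrt t$, and the elementary identity $cd^2/t+\la_1^2 t=\big(\la_1\sqrt t-d\sqrt{c/t}\,\big)^2+2\la_1\sqrt c\,d$ turns it into $C\,\si^2\,\e^{-2\la_1\sqrt c\,d}\,J(d)$. Since $D\ge 2$ and $\mod{u-v}\le 2\si$ force $d\ge d_0$ for a suitable $d_0>0$ (here the smallness of $\si$ enters), Lemma~\ref{l: asymp integral}, together with the continuity and strict positivity of $J$ on $[d_0,\infty)$, yields $J(d)\le C\,d^{j-1}$ on that range; hence $\int_1^\infty\ldots\le C\,d^{j-1}\,\e^{-2\la_1\sqrt c\,d}$. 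For $\int_0^1$, the same bounds give $C\int_0^1 t^{\ga-1}\,\e^{-D'^2/t}\wrt t$, where the exponent is computed from $t^{j-1}\,t^{-n/2}\,t^{-1/2}=t^{\ga-1}$ with $\ga=j-(n+1)/2$; as $D'\ge\sqrt c\,d_0>0$ this is at most $C\,\e^{-D'^2/4}\le C\,\e^{-cd^2/4}$, which is in turn bounded by $C\,d^{j-1}\,\e^{-2\la_1\sqrt c\,d}$ on $[d_0,\infty)$ because $-cd^2/4+2\la_1\sqrt c\,d-(j-1)\log d\to-\infty$. Adding the two estimates gives \rmi.

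For \rmii\ the tail is now innocuous: $\int_1^\infty t^{j-1}\,h_t^N\,h_t^{[0,2\si]}\wrt t\le C\,\si^2\int_1^\infty t^{j-3/2}\,\e^{-\la_1^2 t}\wrt t=C$. The singularity sits in $\int_0^1$, and the key computation is that, after the substitution $t=D'^2/w$,
\[
\int_0^1 t^{j-1}\,h_t^N(x,y)\,h_t^{[0,2\si]}(u,v)\wrt t
\;\le\; C\int_0^1 t^{\ga-1}\,\e^{-D'^2/t}\wrt t
\;=\; C\,D'^{2\ga}\int_{D'^2}^\infty w^{-\ga-1}\,\e^{-w}\wrt w,
\]
with $\ga=j-(n+1)/2$. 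Since $D'\asymp D\le 2$ is bounded, the right-hand side is governed by the incomplete Gamma integral as $D\to 0$: if $\ga<0$ it converges to $\Ga(-\ga)$, giving $C\,D'^{2\ga}\asymp C\,D^{2\ga}$; if $\ga=0$ it equals $\log(1/D'^2)+O(1)\le C\,\log(4/D)$ for $D\le 2$; and if $\ga>0$ it is at most $\ga^{-1}D'^{-2\ga}+C$, so multiplying by $D'^{2\ga}$ leaves a bounded quantity. Finally the constant produced by the tail is $\le C\,D^{2\ga}$ when $\ga<0$ and $\le C\,\log(4/D)$ when $\ga=0$ (since $D\le 2$ keeps these bounded below) and trivially $\le C$ when $\ga>0$; combining proves \rmii.

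Everything above is elementary once the decomposition is in place. The steps that need some care are the completing-the-square identity in \rmi\ and the correct use of Lemma~\ref{l: asymp integral} --- one needs both its asymptotic conclusion and the continuity of $J$ to get $J(d)\le C\,d^{j-1}$ uniformly on $[d_0,\infty)$ --- and, in \rmii, the bookkeeping of the three regimes of the incomplete Gamma function together with the verification that the bounded tail term is dominated by the stated right-hand side in each case. The only genuinely structural point, as opposed to routine computation, is that \rmi\ really does use that $D\ge 2$ forces $d$ to be bounded away from $0$.
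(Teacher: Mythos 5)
Your proof is correct and follows essentially the same route as the paper's: the same splitting of \eqref{f: claim Gj} at $t=1$, the same insertion of \eqref{f: assumptions on ht} and \eqref{f: est htO1}, the same completing-the-square identity combined with Lemma~\ref{l: asymp integral} for the tail, and the same incomplete-Gamma analysis of $\int_0^1 t^{\ga-1}\e^{-cD^2/t}\wrt t$ for the local part. The one point where you are more explicit than the paper is the observation that \rmi\ needs $D\geq 2$ to force $d(x,y)$ bounded away from $0$ (i.e.\ $\si<1$), an assumption the paper's proof also uses tacitly when it restricts to ``$d$ large''.
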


\begin{proof}
We estimate $k_{\cGS^{j}}$ from above by inserting in the integral in \eqref{f: claim Gj}
the estimates for $h_{t}^{[0,2\si]}$ in \eqref{f: est htO1} and the upper bound \eqref{f: assumptions on ht} 
for $h_{t}^N$ (observing that the constant $c$ in \eqref{f: assumptions on ht} is smaller or equal than $1/4$).  Thus, 
$$
k_{\cGS^j} \big((x,u),(z,w)\big) \leq C\,\left[ I \big((x,u),(z,w)\big) + J (x,z)\right], 
$$
where
$$
I \big((x,u),(z,w)\big) 
= \int_0^1 t^{\ga-1}  \e^{-c D^2/t} \wrt t
$$
and
$$
J (x,z) 
= \int_1^\infty  t^{j-3/2} \e^{-\la_1^2 t- c d(x,z)^2/t}  \wrt t.
$$
We estimate $I$ and $J$ separately.  
First, changing variables ($D^2/t = \tau$), we see that 
$$
\begin{aligned}
I
= D^{2\ga} \, \int_{D^2}^{\infty}  \tau^{-\ga} \,  \e^{-c\tau} {\dtt \tau} 
 \asymp 
\begin{cases}
	C \,  D^{2\ga}    		& \hbox{if $\ga<0$} \\ 
	\ds C \log \frac{1}{D}       	& \hbox{if $\ga=0$} \\ 
	C 				& \hbox{if $\ga>0$} 
\end{cases}
\end{aligned}
$$
as $D$ tends to $0$. Furthermore, 
$$
I
\leq C\,D^{2\ga}  \e^{-cD^2/2}  
\leq C \big[1+d(x,y)\big]^{j-1} \, \e^{-2\la_1 d(x,y)\sqrt c}  
$$
when $D\geq 2$.
Concerning $J$,  clearly it tends to a constant as $d$ tends to $0$.  
Now assume that $d$ is large and write 
$\ds -\la_1^2 t- cd^2/t = -\big(\la_1\sqrt t - d \sqrt{c/t}\big)^2 - 2\la_1 d\sqrt c$.  Then, 
$$
\begin{aligned}
J
= \e^{-2\la_1 d\sqrt c} \int_1^\infty  t^{j-3/2} \, \e^{-(\la_1 \sqrt t- d\sqrt{c/t})^2}  \wrt t 
\leq C \, d^{j-1} \, \e^{-2\la_1 d\sqrt c};
\end{aligned}
$$
the inequality above follows from Lemma~\ref{l: asymp integral}.   

The estimates in \rmi\ and \rmii\ follow directly from the analysis above.  
\end{proof}  

\begin{remark} \label{rem: pointwise}
The estimates for $k_{\cGS}$ in Proposition~\ref{p: estimates for cGS} are not best possible.  
In particular, they do not capture the asymptotic behaviour of $k_{\cGS}$ near the boundary of $\Si$.  
We do not insist on this point because such behaviour is not needed in the sequel.
However, for later purposes (see the proof of Lemma~\ref{l: Geta}), we need the following straightforward estimate:
for each $\de >0$ there exists a positive constant $C$ such that 
\begin{equation} \label{f: easy estimate}
k_{\cGS} (X,Y)
\leq C \, \min\big(\vr(u) \, \vr(v), \e^{-2\la_1 d(x,y)\sqrt c}\big)
\quant X,Y\in \Si: D(X,Y) \geq \de.
\end{equation}
Here $X = (x,u)$ and $Y = (y,v)$ and $\vr$ is defined in \eqref{f: definition rho}.

The estimates in Proposition~\ref{p: estimates for cGS} imply that $k_{\cGS} (X,Y) \leq C \, \e^{-2\la_1 d(x,y)\sqrt c}$ for 
every $X$ and $Y$ in $\Si$ such that $D(X,Y) \geq \de$. 

To prove that $k_{\cGS} (X,Y) \leq C \, \vr(u) \, \vr(v)$ in the same range of $X$ and $Y$, 
we insert in the integral in \eqref{f: claim Gj} (with $j=1$)
the estimates for $h_{t}^{[0,2\si]}$ in \eqref{f: est htO1} and the upper bound \eqref{f: assumptions on ht} 
for $h_{t}^N$.  Observe that the assumption $D(X,Y) \geq \de$ implies that 
$
k_{\cGS} (X,Y)
\leq C \,  \big(I_1+I_2+I_3\big), 
$
where
$$
I_1 
:= \int_0^{\vr(u)\vr(v)}   t^{-(n+1)/2} \, \e^{-c\de^2/t}  \wrt t,
\qquad 
I_2
:= \vr(u) \, \vr(v) \, \int_{\vr(t)\vr(u)}^1   t^{-(n+3)/2} \, \e^{-c\de^2/t}  \wrt t 
$$
and 
$$
I_3
:= \vr(u)\vr(v)\, \int_1^\infty  t^{j-3/2} \e^{-\la_1^2 t- c d(x,z)^2/t}  \wrt t.
$$
The required estimate follows directly from this and a straightforward calculation.  
\end{remark}

Next we establish some mapping properties of $\cGS^j$. 
For simplicity, in the sequel we write $\Upsilon$ instead of $\Upsilon_2$ (see \eqref{f: Upsilon R}).  
Denote by $K_j^0: \Sisi \times\Sisi  \to [0,\infty)$ the function defined by
\begin{equation} \label{f: KjO}
K_j^0
= 
\begin{cases}
	\One_{\Upsilon} \, D^{2j-1-n} 	     	& \hbox{if $j <  (n+1)/2$} \\ 
	\ds\One_{\Upsilon} \, \log \frac{4}{D} 	     	& \hbox{if $j =  (n+1)/2$} \\ 
	\One_{\Upsilon}  	     	  			& \hbox{if $j >  (n+1)/2$},
\end{cases}
\end{equation}
and by $\cK_j^0$ the integral operator  with kernel $K_j^0$ acting on functions defined on $N\times \BR$.
For each $\de >0$ denote by $K_\de^\infty:N\times N \to[0,\infty)$ the function defined by 
\begin{equation} \label{f: Kvepinfty}
K_\de^\infty(x,y)
	= \e^{-\de d(x,y)}
\quant (x,y) \in N\times N,
\end{equation}
and by $\cK_\de^\infty$ the integral operator  with kernel $K_\de^\infty$ acting on functions defined on $N$.
Notice that, by Proposition~\ref{p: estimates for cGS}, for every $\de<2\la_1\sqrt c$ there exists a constant
$C$ such that 
\begin{equation} \label{f: reduction}
\begin{aligned}
	\bigmod{\cGS^j F(x,u)}
	& \leq  C\,  \big[\cK_j^0 \mod{F} (x,u) + \cK_\de^\infty \mod{F^\flat}(x)\big] 
	\quant (x,u)\in \Sisi , \\
\end{aligned}
\end{equation} 
where $\ds F^\flat(x)$ is as in \eqref{f: F flat}.   
This observation reduces the proof of estimates for $\cGS^j$
to the proof of similar estimates for $\cK_j^0$ and $\cK_\de^\infty$.  We study the mapping properties of these operators 
in the next proposition.  

\begin{proposition} \label{p: estimates for cGS II}
Suppose that $j$ is a positive integer and that $n\geq 2$.  
The following hold:
\begin{enumerate}
\item[\itemno1]
	if $\cK_j^0$ is bounded from $\lp{\Sisi }$ to $\lr{\Sisi }$  
	and $\cK_\de^\infty$ is bounded from $\lp{N}$ to $\lr{N}$ for some $\de < 2\la_1\sqrt c$, then $\cGS^j$
	is bounded from $\lp{\Sisi }$ to $\lr{\Sisi }$;
\item[\itemno2]
	$\cGS^j$ is bounded on $\lp{\Sisi }$ for all $p$ in $[1,\infty]$;
\item[\itemno3]
	$\cGS$ is bounded from $\lu{\Sisi }$ to weak-$L^{(n+1)/(n-1)}(\Sisi )$ and
	from $\lp{\Sisi }$ to $\lr{\Sisi }$ when $1<p<(n+1)/2$ and $1/r = 1/p- 2/(n+1)$; 
\item[\itemno4]
	if $r>1$, $F$ is in $C^0(\Sisi)\cap\lr{\Sisi }$ and $J > (n+1)/2$, then $\cGS^JF$ is a bounded 
	continuous function on $\Sisi$ and
	$$
	\bignorm{\cGS^JF}{C_b({\Sisi})}
	\leq C \bignorm{F}{\lr{\Sisi }};
	$$
\item[\itemno5]
	$\cGS$ is bounded on $\cB^p$ for each $p$ in $[1,\infty)$ and 
	$\ds \lim_{t\to\partial (0,2\si)} \bignorm{\cG_\Si F(\cdot,t)}{p} = 0$ for every $F$ in $\cB^p$.
\end{enumerate}
\end{proposition}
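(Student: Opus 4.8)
The strategy is to derive all five statements from the pointwise bound \eqref{f: reduction}, which dominates $\mod{\cGS^j F}$ by a constant times the sum of the local operator $\cK_j^0$ applied to $\mod F$ on $\Sisi$ and of the tail operator $\cK_\de^\infty$ applied to $\mod{F^\flat}$ on $N$. Part \rmi\ is then immediate: the $\cK_j^0$-term is controlled by the hypothesis on $\cK_j^0$, and, since $\cK_\de^\infty\mod{F^\flat}$ does not depend on the vertical variable, $\bignorm{\cK_\de^\infty\mod{F^\flat}}{\lr{\Sisi}} = (2\si)^{1/r}\bignorm{\cK_\de^\infty\mod{F^\flat}}{\lr N} \leq C\bignorm{F^\flat}{\lp N} \leq C\bignorm{F}{\lp\Sisi}$, using the hypothesis on $\cK_\de^\infty$ and \eqref{f: F flat F}. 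To exploit Part \rmi\ I would record two facts about the model operators. First, \eqref{f: sigma} forces $\la_1\sqrt c > 2\be$, so one may fix $\de$ with $2\be < \de < 2\la_1\sqrt c$; for such a $\de$ the growth bound \eqref{f: volume growth} and the local bound \eqref{f: Ahlfors} give $\sup_x\bignorm{\e^{-\de d(x,\cdot)}}{L^s(N)} < \infty$ for every $s \in [1,\infty]$, whence by Schur's test (the kernel is symmetric) $\cK_\de^\infty$ is bounded on $\lp N$ for all $p$ and, by the generalised Young inequality, from $\lp N$ to $\laq N$ whenever $q \geq p$. Second, $N\times\BR$ with the product metric is locally $(n+1)$-Ahlfors regular ($\cY(B_r(X)) \asymp r^{n+1}$ for $r \leq 2$, again from \eqref{f: Ahlfors}--\eqref{f: volume growth}), and since the kernel $K_j^0$ of \eqref{f: KjO} is supported where $D < 2$, its integral over either variable is bounded uniformly; so Schur's test shows $\cK_j^0$ is bounded on $\lp\Sisi$ for all $p \in [1,\infty]$. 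Combining both facts with Part \rmi\ yields Part \rmii.

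For Part \rmiii\ I take $j = 1$; as $n \geq 2$ the relevant case of \eqref{f: KjO} is $K_1^0 = \One_\Upsilon D^{1-n}$, which is a truncated Riesz potential of order $2$ on the locally $(n+1)$-Ahlfors regular space $N\times\BR$ (extend $F$ by zero off $\Sisi$). Running Hedberg's argument --- split $\cK_1^0\mod F(X)$ into its integrals over $D(X,Y) < \vep$ and over $\vep \leq D(X,Y) < 2$, bound the first by $C\vep^2 MF(X)$, with $M$ the local Hardy--Littlewood maximal operator on $N\times\BR$ (which is of weak type $(1,1)$ and bounded on $\lp{}$ for $p > 1$ by local doubling), and the second by $C\vep^{2-(n+1)/p}\bignorm F{\lp\Sisi}$ via H\"older, and optimise in $\vep$ --- gives $\cK_1^0\colon L^1 \to L^{(n+1)/(n-1),\infty}$ and $\cK_1^0\colon \lp{} \to \lr{}$ for $1 < p < (n+1)/2$, $1/r = 1/p - 2/(n+1)$. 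The $\cK_\de^\infty$ contribution is harmless: by the generalised Young inequality above $\cK_\de^\infty\colon \lp N \to \lr N$ (since $r \geq p$) and $\cK_\de^\infty\colon L^1(N) \to L^{(n+1)/(n-1)}(N) \subseteq L^{(n+1)/(n-1),\infty}(N)$, after which one returns to $\Sisi$ as in Part \rmi.

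For Part \rmiv\ I note that $J > (n+1)/2$ makes $K_J^0 = \One_\Upsilon$, so $\cK_J^0\mod F(X) = \int_{D(X,Y)<2}\mod{F(Y)}\wrt\cY(Y) \leq C\bignorm F{\lr\Sisi}$ by H\"older (the set has $\cY$-measure bounded uniformly in $X$), while $\cK_\de^\infty\mod{F^\flat}(x) \leq \bignorm{\e^{-\de d(x,\cdot)}}{r'}\bignorm{F^\flat}{r} \leq C\bignorm F{\lr\Sisi}$; by \eqref{f: reduction} this gives the norm bound. Continuity I would obtain by approximation: $C_c(\Sisi)$ is dense in $\lr\Sisi$, for $G \in C_c(\Sisi)$ the potential $\cGS^J G$ is continuous (its kernel is continuous off the diagonal and locally integrable, so the usual near/far splitting applies), and $\cGS^J$ maps boundedly into $C_b(\Sisi)$, hence $\cGS^J F$ is a uniform limit of continuous functions. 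For Part \rmv\ I would use $\cH_s^{\Sisi} = \cH_s^N\otimes\cH_s^{[0,2\si]}$ and \eqref{f: Green} to write, for $F \geq 0$, $\cGS F(\cdot,u) = \int_0^{2\si}\int_0^\infty h_s^{[0,2\si]}(u,v)\,\cH_s^N[F(\cdot,v)]\wrt s\wrt v$; since $\cH_s^N$ is $\lp N$-contractive, $\bignorm{\cGS F(\cdot,u)}{p} \leq \bignorm F{\cB^p}\,w(u)$, where $w$ solves $-w'' = 1$ on $(0,2\si)$ with zero boundary values, i.e.\ $w(u) = \tfrac12 u(2\si - u)$. Since $w \leq \si^2/2$, $\cGS$ is bounded on $\cB^p$; since $w(u) \to 0$ as $u \to 0^+$ or $u \to (2\si)^-$, the claimed boundary vanishing follows.

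The step I expect to cost the most is Part \rmiii: one has to verify that the truncation built into $K_j^0$ confines all the estimates to scales where $N\times\BR$ is doubling, so that the classical Hardy--Littlewood--Sobolev machinery (Hedberg's inequality and the maximal theorem) is legitimately available on this merely locally doubling space. Everything else is either a direct consequence of \eqref{f: reduction} or, for Part \rmv, an explicit one-dimensional computation.
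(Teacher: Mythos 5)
Your argument is correct, and for parts \rmi--\rmiii\ it follows the same route as the paper: everything is funnelled through the pointwise domination \eqref{f: reduction}, the tail operator $\cK_\de^\infty$ is handled by Schur's test for $\de\in(2\be,2\la_1\sqrt c)$, the local operator $\cK_j^0$ by local $(n+1)$-Ahlfors regularity of $N\times\BR$, and the Sobolev-type mapping in \rmiii\ by the fractional-integration argument (your explicit Hedberg splitting is exactly what ``adapting \cite[Theorem~1, p.~119]{St1}'' amounts to, and your care in confining the maximal function to radii $\leq 2$ is the right point to worry about). The genuine divergences are in \rmiv\ and \rmv. For the continuity in \rmiv\ the paper works in harmonic coordinates and bootstraps through elliptic regularity and local Sobolev embeddings; you instead observe that $k_{\cGS^J}$ is bounded and continuous when $J>(n+1)/2$, prove continuity for $F\in C_c(\Sisi)$ directly, and pass to general $F$ by density and the uniform bound $\bignorm{\cGS^J(\cdot)}{\infty}\leq C\bignorm{\cdot}{r}$ (so you need $r<\infty$, which is the only case used later). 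This is more elementary and, as a bonus, does not use the continuity of $F$ at all -- a fact the paper only records in a remark, and there at the price of enlarging $J$. For \rmv\ the paper proves the $\cB^p$ bound through the $\cK^0$/$\cK_\de^\infty$ reduction and the boundary decay through the heat-kernel estimates \eqref{f: est htO1}, obtaining the rate $\vr(t)^{1/2}$; your single observation that $u\mapsto\int_0^{2\si}\ioty h_s^{[0,2\si]}(u,v)\wrt s\wrt v$ is the Dirichlet solution of $-w''=1$, i.e.\ $w(u)=u(2\si-u)/2$, yields both conclusions at once and with the sharper rate $\vr(t)$. Both of your variants are fully adequate for the way the proposition is used in Sections~\ref{s: Maximal} and \ref{s: G}.
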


\begin{proof}
First we prove \rmi.  Formula \eqref{f: reduction} and the assumptions on $\cK_j^0$ and $\cK_\vep^\infty$ imply that
$$
\begin{aligned}
	\bignorm{\cGS^j F}{\lr{\Sisi }} 
	& \leq  C\,  \big[\bignorm{\cK_j^0 F}{\lr{\Sisi }} + \bignorm{\cK_\vep^\infty F^\flat}{r} \big] \\
	& \leq  C\,  \big[\bignorm{F}{\lp{\Sisi }} + \bignorm{F^\flat}{p} \big]  \\
	& \leq  C\bignorm{F}{\lp{\Sisi }};
\end{aligned}
$$
we have used \eqref{f: F flat F} in the last inequality.  

Now we prove \rmii. 
By interpolation, it suffices to prove that $\cGS^j$ is bounded on $\lu{\Sisi }$
and on $\ly{\Sisi }$.  Since $k_{\cGS^j}$ is symmetric, a duality argument shows that it suffices to prove 
that $\cGS^j$ is bounded on $\lu{\Sisi }$.  Now, the boundedness of $\cGS^j$ on $\lu{\Sisi }$ follows from \rmi\
and the boundedness of $\cK_j^0$ on $\lu{\Sisi }$ and of $\cK_\de^\infty$ on $\lu{N}$ for some $\de$ in $(2\be,2\la_1\sqrt c)$.  
To prove this, it suffices to show that for such values of $\de$
\begin{equation} \label{f: luly}
\sup_{Y\in \Sisi } \, \int_{\Sisi } \cK_j^0(X,Y) \wrt \cY(X) < \infty  
\quad\hbox{and}\quad
\sup_{y\in N} \, \int_{N} \cK_\de^\infty(x,y) \wrt \nu(x) < \infty.    
\end{equation}
These estimates can be obtained easily by integrating in polar co-ordinates centred at $Y$ and at $y$, respectively.
We omit the details.  

Now \rmiii\ follows from \rmi\ and the boundedness of
$\cK_j^0$ from $\lp{\Sisi }$ to $\lr{\Sisi }$ and of $\cK_\de^\infty$ from $\lr{N}$ to
$\laq{N}$ for all $q$ in $[r,\infty]$ and $\de$ in $(2\be,2\la_1\sqrt c)$.  
Specifically, $K_\de^\infty$ is bounded, whence $\cK_\de^\infty$
is bounded from $\lu{N}$ to $\ly{N}$.  We have proved in \rmii\ that $\cK_\de^\infty$
is bounded on $\lu{N}$.   Since $K_\de^\infty$ is symmetric, $\cK_\de^\infty$ is also 
bounded on $\ly{N}$.  By interpolation and duality, it follows that $\cK_\de^\infty$
maps $\lp{N}$ to $\laq{N}$ for all $1\leq p\leq q\leq \infty$.  

The proof that $\cK_j^0$ maps $\lu{\Sisi }$ to weak-$L^{(n+1)/(n-1)}(\Sisi )$ and
$\lp{\Sisi }$ to $\lr{\Sisi }$ when $1<p<(n+1)/2$ and $1/r = 1/p- 2/(n+1)$ can be obtained by 
adapting the proof of \cite[Theorem~1, p.~119]{St1}.  We omit the details.  

Now we prove \rmiv.  Notice that for each positive integer $k\leq J$
$$
\bignorm{\DDelta^k \cGS^J F}{\lr{\Sisi }} 
= \bignorm{\cGS^{J-k} F}{\lr{\Sisi }} 
\leq  C \bignorm{F}{\lr{\Sisi }};
$$
the last inequality follows from \rmii.  

For the sake of completeness we give a proof of the continuity of $\cGS^J F$ on $\Si$, which is, we believe, quite standard. 
Suppose that $X\in \Sisi$. Recall that $\cGS^J F$ is a distributional solution of $\DDelta^J V = F$ on $\Sisi$. 
Choose a harmonic coordinate system $(U,\phi_U)$ with $U\subset\Sisi$ open set containing $X$ and 
$\phi_U:U\to\mathbb R^{n+1}$.  For any function $V$ on $U$, set $\wt V := V\circ \phi_U^{-1}$.  Then 
$L^J(\wt{\cGS^J F}) = (\DDelta^J\cGS^J F)\wt{\phantom a} = \wt F$
in the sense of distributions on $U$, 
where $L$ is the elliptic operator defined in $\phi_U(U)$ by $\ds\sum_{i,j} (g\circ \phi_U^{-1})^{ij}\partial^2_{ij}$. 
Since $\wt F$ is continuous, it is in $L^2_{loc}(U)$.  
By elliptic theory (see, for instance, \cite[Theorem 6.33]{Folland}),
$\wt{\cGS F}\in W^{2J,2}_{loc}(U)$.  The latter inclusion, with $n'=n+1$, is a consequence of
local Sobolev embeddings. 
Now, if $2J>(n+1)/2$, then $W^{2J,2}_{loc}(U)$ is contained in $C(U)$, as required to conclude the proof
of the continuity of $\cGS^J F$.  

It remains to prove that $\cGS^JF$ is bounded on $\Sisi$.   
By \rmi, in order to prove that $\cGS^JF$ is bounded, it suffices to prove that $\cK_J^0$ maps $\lr{\Sisi }$
to $\ly{\Sisi }$, and that $\cK_\de^\infty$ maps $\lr{N}$ to $\ly{N}$.  In the proof of \rmiii, we have
already shown that $\cK_\de^\infty$ maps $\lr{N}$ to $\laq{N}$ for all $q$ in $[r,\infty]$ 
(and $\de\in (2\be,2\la_1\sqrt c)$). 
Thus, it remains to consider $\cK_J^0$.  
The kernel $K_J^0$ of $\cK_J^0$ is supported in a neighbourhood of the diagonal in $\Sisi \times\Sisi $, 
and it is bounded (see \eqref{f: KjO}).  Thus, $\cK_J^0$ maps $\lr{\Sisi }$ to $\laq{\Sisi }$
for all $q$ in $[p,\infty]$.  This concludes the proof of \rmiv.  

Finally we prove \rmv.  We already know that $\bignorm{\cK_\de^\infty F^\flat}{p} \leq C \bignorm{F^\flat}{p}$
for every $\de$ in $(2\be,2\la_1\sqrt c)$.
Trivially, $\bignorm{F^\flat}{p} \leq \bignorm{F}{\cB^p}$, whence 
$\bignorm{\cK_\de^\infty F^\flat}{p} \leq C \bignorm{F}{\cB^p}$.

Thus, arguing as in \rmi, it suffices to show that $\cK^0$ is bounded on 
$\cB^p$ for each $p$ in $[1,\infty)$.  Notice that
$$
\begin{aligned}
\bignorm{\cK^0F}{\cB^p}
	& \leq \sup_{t\in (0,2\si)} \Big[\int_N \!\!\wrt \nu(x) \Bigmod{\int_{\Sisi } 
		(\One_{\Upsilon}D^{1-n})\big((x,t), (y,v)\big)\, F(y,v) \wrt\nu(y)\wrt v}^p\Big]^{1/p} \\
	& \leq C\,  \sup_{t\in (0,2\si)} \Big[\int_N \!\!\wrt \nu(x) \Bigmod{\int_{B_2(x)} d^{1-n}(x,y)\, 
    		F^\flat(y) \wrt\nu(y)}^p\Big]^{1/p} \\
	& \leq C\,  \Big[\int_N \bigmod{F^\flat(x)}^p \wrt \nu(x) \Big]^{1/p} \\
	& \leq C \bignorm{F}{\cB^p}:
\end{aligned}
$$
the penultimate inequality follows from the fact that the kernel 
$\One_{V}\, d^{1-n}$, where $V = \big\{(x,y)\in N\times N: d(x,y) \leq 2\big\}$ is symmetric and satisfies 
$\ds \sup_{y\in N} \, \int_N \big[\One_{V}\, d^{1-n}\big](x,y) \wrt\nu(x) <\infty$, whence the 
corresponding integral operator is bounded on $\lp{N}$ for every $p$ in $[1,\infty]$.  

Suppose now that $F$ is in $\cB^p$.  By \eqref{f: Green},
$$
\begin{aligned}
\bignorm{\cG_\Si F(\cdot,t)}{p}
	& \leq \ioty  \bignorm{\cH_s^{\Si} F(\cdot,t)}{p} \wrt s\\
	& \leq \ioty \wrt s\, \int_0^{2\si} h_s^{[0,2\si]}(t,u) \bignorm{\cH_s^{N} F(\cdot,u)}{p} \wrt u.
\end{aligned} 
$$
Now, $\bignorm{\cH_s^{N} F(\cdot,u)}{p} \leq \bignorm{F(\cdot,u)}{p}$, by the contractivity of $\cH_s^N$ on 
$\lp{N}$, whence 
$$
\bignorm{\cG_\Si F(\cdot,t)}{p}
\leq  \bignorm{F}{\cB^p}\, \int_0^{2\si} \ioty h_s^{[0,2\si]}(t,u) \wrt s \wrt u.     
$$
Now, the pointwise estimates \eqref{f: est htO1} imply that 
$$
\int_1^\infty  h_s^{[0,2\si]}(t,u) \wrt s     
\leq C\, \vr(t)\, \vr(u) \, \int_1^\infty \e^{-\la_1^2 s} \wrt s 
\leq C\, \vr(t) 
\quant u \in (0,2\si). 
$$
and
$$
\begin{aligned}
\int_0^1  h_s^{[0,2\si]}(t,u) \wrt s     
	& \leq C \,  \int_0^{\vr(t)\vr(u)}   \e^{-\mod{t-u}^2/(4s)}  \frac{\wrt s}{\sqrt s}  
	 + C\, \vr(t) \, \vr(u) \, \int_{\vr(t)\vr(u)}^1    \e^{-\mod{t-u}^2/(4s)}  \frac{\wrt s}{s^{3/2}} \\
	& \leq C \,  \int_0^{\vr(t)\vr(u)}   s^{-1/2} \wrt s  
	 + C\, \vr(t) \, \int_{\vr(t)\vr(u)}^1   s^{-3/2}  \wrt s \\
	& \leq C\, \Big(\frac{\vr(t)}{\vr(u)}\Big)^{1/2} 
\quant u \in (0,2\si). 
\end{aligned}
$$
By combining the estimates above, we see that $\bignorm{\cG_\Si F(\cdot,t)}{p} \leq C\, \vr(t)^{1/2} \, \bignorm{F}{\cB^p}$,
which tends to $0$ as $\vr(t)$ tends to $0$, as required.  

This concludes the proof of \rmv, and of the proposition.
\end{proof}

\begin{remark}
Using \rmiii, it is straightforward to see that the assumption $F\in C^0(\Sisi)$ in \rmiv\ can be skipped up to 
choosing larger $J$.
\end{remark}

\section{Maximal inequalities}
\label{s: Maximal}

The purpose of this section is to prove Theorem~\ref{t: maximal estimate}, which contains an analogue for the slice $\Sisi $
of certain maximal inequalities that Dindo\v s \cite[Section~10]{Di} proved in a compact setting.  
We emphasize that our result, Theorem~\ref{t: maximal estimate}~\rmi, is concerned with maximal operators of \textit{harmonic}
functions on $\Sisi$, whereas Dindo\v s proved a similar estimate for generic functions.  
Our additional assumption of harmonicity allows us to use Harnack's inequality in our proof,
thereby simplifying Dindo\v s' argument.  

We need the following notation.
Suppose that $\al$ is a (small) real number.  For $z$ in~$N$, denote by $\Ga_\al(z)$
the subset of $\Sisi $, symmetric with respect to the ``line'' $t=\si$ and whose restriction to the slice
$N\times (0,\si]$ is the cone $\{(x,u)\in N\times (0,\si]: d(x,z) \leq \al \, u\}$.  
We say that a function $F$ on $\Sisi $ is \textit{symmetric} if $F(\cdot, u) = F(\cdot, 2\si-u)$
for every $u$ in $(0,2\si)$.  
Given a nonnegative symmetric function $F$ on $\Sisi $, denote by $F^*$ its nontangential maximal function, defined by 
\begin{equation} \label{f: nontangential max F}
F^*(z)
:= \sup_{(x,u) \in \Ga_\al(z)} \, F(x,u)
\quant z \in N.  
\end{equation} 

\begin{theorem} \label{t: maximal estimate}
Suppose that $p$ is in $(1,\infty)$, $\al$ is small enough and $j$ is a positive integer.  
Then there exists a constant $C$ such that the following hold:
	\begin{enumerate}
		\item[\itemno1]
			$\!\bignorm{\big(\cGS^j H\big)^*}{p}
			\leq C \!\bignorm{H^*}{p}$
			for every positive symmetric harmonic function $H$ on~$\Sisi $;   
		\item[\itemno2]
			if $J> (n+1)/2$, then
			$\bignorm{\big(\cGS^J S\big)^*}{p} \leq C \bignorm{S}{\lp{\Sisi }}$
			for every nonnegative symmetric function~$S$ on $\Sisi $.
	\end{enumerate}
\end{theorem}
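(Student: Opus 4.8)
The plan is to establish \rmi\ and \rmii\ separately. Both are reduced to the mapping properties of $\cK_\de^\infty$ from Proposition~\ref{p: estimates for cGS II}, to the local Ahlfors regularity \eqref{f: Ahlfors}, and --- for \rmi\ --- to the boundedness on $L^p(N)$, $p\in(1,\infty)$, of the local Hardy--Littlewood maximal operator $\cM$ of the locally doubling space $(N,\nu)$. Two elementary remarks will be used repeatedly: (a) since the cone $\Ga_\al(z)$ contains the whole fibre $\{z\}\times(0,2\si)$, every nonnegative symmetric $F$ on $\Sisi$ satisfies $F(y,v)\leq F^*(y)$ for all $(y,v)\in\Sisi$; (b) if $(x,u)\in\Ga_\al(z_0)$ with $\al$ small enough, then $d(x,z_0)\leq\al u\leq 2\si\al<1$, so that $B_2(x)\subset B_3(z_0)$ and $\cK_\de^\infty g(x)\leq\e^{\de}\,\cK_\de^\infty g(z_0)$ for every $g\geq 0$.

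For \rmi, fix a positive symmetric harmonic $H$ on $\Sisi$ and write $\cGS^j H(x,u)=A(x,u)+B(x,u)$, where $A$ is the integral of $k_{\cGS^j}\big((x,u),\,\cdot\,\big)\,H$ over $\big\{D\big((x,u),\,\cdot\,\big)<2\big\}$ and $B$ the integral over $\big\{D\big((x,u),\,\cdot\,\big)\geq 2\big\}$. In $B$, Proposition~\ref{p: estimates for cGS}~\rmi\ bounds the kernel by $C\,d(x,y)^{j-1}\e^{-2\la_1\sqrt c\,d(x,y)}$; estimating $H(y,v)\leq H^*(y)$ by (a), integrating $v$ over $(0,2\si)$, and absorbing the polynomial factor into the exponential yields $B(x,u)\leq C\,\cK_\de^\infty H^*(x)$ for any $\de\in(2\be,2\la_1\sqrt c)$ --- an interval that is nonempty because $2\la_1\sqrt c>4\be$, by \eqref{f: sigma} and \eqref{f: lambda1}. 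Hence, by (b), $B^*(z_0)\leq C\,\cK_\de^\infty H^*(z_0)$, and $\bignorm{B^*}{p}\leq C\bignorm{H^*}{p}$, since $\cK_\de^\infty$ is bounded on $L^p(N)$ (see the proof of Proposition~\ref{p: estimates for cGS II}~\rmii). For $A$, decompose $\big\{D\big((x,u),\,\cdot\,\big)<2\big\}$ into the dyadic annuli $A_k=\big\{2^{-k}\leq D\big((x,u),\,\cdot\,\big)<2^{-k+1}\big\}$, $k\geq 0$. On $A_k$, Proposition~\ref{p: estimates for cGS}~\rmii\ bounds $k_{\cGS^j}$ by $C\,2^{-2\ga k}$ when $\ga<0$ (by $C(1+k)$ when $\ga=0$, by $C$ when $\ga>0$), while $A_k\cap\Sisi$ projects into $B_{2^{1-k}}(x)$ with $v$-fibres of length $\lesssim 2^{-k}$; so, using (a) together with $\int_{B_{2^{1-k}}(x)}H^*(y)\wrt\nu(y)\leq\nu\big(B_{2^{1-k}}(x)\big)\,\cM H^*(x)$ and \eqref{f: Ahlfors}, the contribution of $A_k$ to $A(x,u)$ is at most $C\,2^{-k(n+1+2\ga)}\cM H^*(x)=C\,2^{-2jk}\cM H^*(x)$ when $\ga<0$, and a smaller quantity when $\ga\geq 0$. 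Summing over $k$ (the series converges: its decay exponent equals $2j$ when $\ga<0$ and $n+1$ when $\ga\geq 0$, hence is positive) gives $A(x,u)\leq C\,\cM H^*(x)$ uniformly in $u$; Harnack's inequality for the positive harmonic $H$ may be invoked here, as announced in the preamble of this section, to compare values of $H$ on Whitney-sized pieces, which streamlines the bookkeeping. Finally $A^*(z_0)\leq C\sup_{d(x,z_0)\leq 2\si\al}\cM H^*(x)\leq C\,\cM H^*(z_0)$ after replacing $\cM$ by the equivalent uncentred maximal operator, so $\bignorm{A^*}{p}\leq C\bignorm{H^*}{p}$ by the maximal theorem. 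Adding the two estimates proves \rmi.

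For \rmii, let $S\geq 0$ be symmetric on $\Sisi$ and $J>(n+1)/2$. By \eqref{f: reduction} and \eqref{f: KjO} --- which, for $J>(n+1)/2$, furnishes the bounded kernel $K_J^0=\One_{\Upsilon}$ --- one has $\bigmod{\cGS^J S(x,u)}\leq C\big[\cK_J^0 S(x,u)+\cK_\de^\infty S^\flat(x)\big]$; since $S\geq 0$, integrating $v$ over $(0,2\si)$ gives $\cK_J^0 S(x,u)\leq\int_{B_2(x)}S^\flat(y)\wrt\nu(y)$, independent of $u$. For $(x,u)\in\Ga_\al(z_0)$ with $\al$ small, (b) gives $B_2(x)\subset B_3(z_0)$ and $\cK_\de^\infty S^\flat(x)\leq\e^{\de}\,\cK_\de^\infty S^\flat(z_0)$, so
\[
\big(\cGS^J S\big)^*(z_0)\leq C\Big[\int_{B_3(z_0)}S^\flat(y)\wrt\nu(y)+\cK_\de^\infty S^\flat(z_0)\Big].
\]
The integral operator on $N$ with kernel $\One_{\{d(z,y)<3\}}$ is symmetric and, by \eqref{f: Ahlfors}, satisfies $\sup_z\nu\big(B_3(z)\big)<\infty$, hence is bounded on $L^p(N)$ by Schur's test; $\cK_\de^\infty$ is bounded on $L^p(N)$ by the proof of Proposition~\ref{p: estimates for cGS II}~\rmii; and $\bignorm{S^\flat}{p}\leq(2\si)^{1/p'}\bignorm{S}{\lp{\Sisi}}$ by \eqref{f: F flat F}. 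Combining these estimates yields $\bignorm{\big(\cGS^J S\big)^*}{p}\leq C\bignorm{S}{\lp{\Sisi}}$.

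The step I expect to be the main obstacle is the local part of \rmi, i.e.\ the estimate for $A^*$ up to the boundary of $\Sisi$: one has to organise the dyadic (Whitney-type) decomposition of $\big\{D<2\big\}$ so that each piece stays away from both the pole $(x,u)$ and $\partial\Sisi$, handle the three regimes $\ga<0$, $\ga=0$, $\ga>0$ of the kernel, and keep careful track of centred versus uncentred (and iterated) maximal functions in passing from the pole $x$ to the cone vertex $z_0$. This is precisely the place where we follow --- and, thanks to harmonicity and Harnack's inequality, simplify --- the argument of Dindo\v s.
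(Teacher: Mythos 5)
Your part \rmii\ and the ``far'' part ($D\geq 2$) of part \rmi\ are correct and essentially match the paper's argument (the paper merges the two regimes of \rmii\ into a single bound $k_{\cGS^J}\leq C\e^{-\de D}$, but the content is the same). The problem is the local part of \rmi, and it is exactly where you predicted: the passage from the pole $x$ to the cone vertex $z_0$. Your dyadic computation correctly yields $A(x,u)\leq C\,\cM H^*(x)$ with the maximal function \emph{centred at $x$}, but the final step $\sup_{d(x,z_0)\leq 2\si\al}\cM H^*(x)\leq C\,\cM H^*(z_0)$ is false, even for the uncentred maximal operator. The balls $B_{2^{1-k}}(x)$ coming from the annuli with $2^{-k}<d(x,z_0)$ do not contain $z_0$, and enlarging them to balls through $z_0$ costs a factor $\big((2^{-k}+d(x,z_0))/2^{-k}\big)^n$ which is unbounded as $k\to\infty$. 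Concretely, if $H^*=M\One_{B_\ep(w)}$ with $d(w,z_0)\asymp\si\al$ and $\ep\ll\si\al$, then $\sup_{d(x,z_0)\leq 2\si\al}\cM H^*(x)\gtrsim M$ while $\cM H^*(z_0)\asymp M(\ep/\si\al)^n$; integrating in $z_0$ shows that the operator $z\mapsto\sup_{x\in B_{2\si\al}(z)}\cM H^*(x)$ is not bounded by $C\bignorm{H^*}{p}$ in $L^p$. So the argument as written does not close, and the appeal to Harnack ``to streamline the bookkeeping'' is not a substitute: harmonicity has to enter in a specific, quantitative way.

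Two repairs are available. The paper's route splits the local integral over $Q(z_0)=B_3(z_0)\times(0,2\si)$ into the inner cone $\Ga_{2\al}(z_0)$, where Harnack's inequality for the positive harmonic $H$ gives $H(y,v)\leq C\,H(z_0,v)\leq C\,H^*(z_0)$ directly at the vertex (so only $\int_{\Ga_{2\al}(z_0)}D(X,Y)^{2\ga}\wrt\cY(Y)\leq C$ is needed, with no maximal function at all), and the complement $Q(z_0)\setminus\Ga_{2\al}(z_0)$, where the elementary geometric inequality $D(X,Y)\geq c_\al D(Z,Y)$ with $Z=(z_0,u)$ lets one replace the pole by a point on the axis, reducing the contribution to $\int_{B_3(z_0)}d(y,z_0)^{2\ga}H^\flat(y)\wrt\nu(y)$, an $L^p$-bounded convolution-type operator applied to $H^\flat$. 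Alternatively, your dyadic scheme can be salvaged by splitting the annuli at scale $2^{-k}\asymp\al u$: for $2^{-k}\leq\al u$ the points $(y,v)\in A_k$ lie in a cone $\Ga_{\be}(z_0)$ of slightly larger aperture, so $H(y,v)\leq H^{*,\be}(z_0)$ pointwise; for $2^{-k}>\al u\geq d(x,z_0)$ the balls $B_{2^{1-k}}(x)$ have radius exceeding $d(x,z_0)$ and are genuinely comparable to balls centred at $z_0$, so the $\cM H^*(z_0)$ bound is legitimate there. This gives $A^*(z_0)\leq C\big[H^{*,\be}(z_0)+\cM H^*(z_0)\big]$, but you then need the aperture-independence of $\bignorm{H^{*,\be}}{p}$ and $\bignorm{H^{*,\al}}{p}$, an additional (standard but nontrivial) covering lemma that your write-up does not supply. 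Either way, something beyond the Hardy--Littlewood maximal theorem is required, and that missing ingredient is the substance of the paper's proof.
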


\begin{proof} 
We prove \rmi\ in the case where $j\leq \lfloor (n+1)/2\rfloor$.  The modifications
needed to cover the case where $j > \lfloor (n+1)/2\rfloor$ are 
straightforward and are left to the interested reader.   Simply, one needs to use different local estimates
for $k_{\cGS^j}$, depending on the dimension $n$ (see Proposition~\ref{p: estimates for cGS}~\rmii).  

By \eqref{f: reduction} it is enough to estimate $(\cK_j^0 H)^\ast$ and 
$\ds\sup_{(x,u)\in \Ga_\al(z)} \cK_\de^\infty H^\flat(x)$ when $\de$ is in $(2\be,2\la_1\sqrt c)$.
Notice that, for $z$ in $N$
$$
\begin{aligned}
\sup_{X\in \Ga_\al(z)} \cK_\de^\infty H^\flat(x)
&\leq  C\, \sup_{x\in B_{\al\si}(z)}\, \int_{N} \, \e^{-\de d(x,y)} \,H^\flat(y) \wrt \nu(y)\\
&\leq  C\, \int_{N} \, \e^{-\de d(z,y)} \,H^\flat(y) \wrt \nu(y)\\
&= C \, \cK_{\de}^\infty H^\flat(z),
\end{aligned}
$$
where $X = (x,u)$.
Since $\cK_{\de}^\infty$ is bounded on $\lp{N}$ for every $p$ in $[1,\infty]$,
\begin{equation} \label{f: cGSinfty}
\bignorm{\sup_{X\in \Ga_\al(\cdot)} \cK_\de^\infty H^\flat(x)}{p} 
\leq C \bignorm{H^\flat}{p} 
\leq C \bignorm{H}{\lp{\Sisi }}.
\end{equation}
We now estimate the maximal operator $ (\cK_j^0 H)^\ast$.  In this proof for notational convenience we shall write
$\ga$ instead of $j-(n+1)/2$.
For $z$ in $N$, consider the set $Q(z) := B_3(z) \times (0,2\si)$, which is contained in $\Sisi $.  
By \eqref{f: KjO}, for any $X\in \Ga_\al(\cdot)$ with $\alpha$ small enough, 
\begin{equation}\label{f: est Kj0H}
\cK_0^j H(X)
\leq C \, \int_{Q(z)} \, D(X,Y)^{2\ga} \, H(Y) \wrt \cY(Y).  
\end{equation}
It is convenient to split the integral above as the sum of the integrals over $\Ga_{2\al}(z)$ (which is contained
in $Q(z)$ as long as $\al$ is small enough), and $Q(z) \cap \Ga_{2\al}(z)^c$.  

First we estimate $\ds \int_{\Ga_{2\al}(z)} \, D(X,Y)^{2\ga} \, H(Y) \wrt \cY(Y)$.  Recall that $X$ is in $\Ga_\al(z)$.  
Since $Y=(y,v)$ belongs to $\Ga_{2\al}(z)$, the point $Y$ is in the ball with centre $(z,v)$ and radius $3\al v$.   
We choose $\al$ so small that $B_{6\al v}(z,v)$ (this ball is in $N\times \BR$) is contained in~$\Sisi $.  
Since~$H$ is harmonic, by Harnack's inequality (apply, for instance, \cite[Theorem~5.4.3]{SC} with 
$M = N\times \BR$ and $\de= 1/2$.  Note that, under our assumptions, $N$ supports a local Poincar\'e inequality; 
see for instance \cite[Theorems 1.1]{MSC}), there exists a constant~$C$, independent of $Y$ in 
$\Ga_{2\al}(z)$ and of $z$ in $N$, such that $H(Y) \leq C\, H(z,v)$, whence  
$$ 
\begin{aligned}
	\int_{\Ga_{2\al}(z)} \, D(X,Y)^{2\ga} \, H(Y) \wrt \cY(Y)
	& \leq C\, \int_{\Ga_{2\al}(z)} \, D\big(X,Y\big)^{2\ga} \, H(z,v) \wrt \nu(y) \wrt v \\
	& \leq C\, H^*(z)\, \int_{\Ga_{2\al}(z)} \, D\big(X,Y\big)^{2\ga}  \wrt \cY(Y).
\end{aligned}
$$
Observe that $\Ga_{2\al}(z)$ is contained in $B_R(X)$ for $R$ big enough (depending on $\si$ and $\al$).  
Therefore the last integral
is dominated by $\ds \int_{B_R(X)} \, D\big(X,Y\big)^{2\ga}  \wrt \cY(Y)$, which is bounded with respect to 
$X$ in $\Ga_\al(z)$ as a straightforward integration in polar coordinates shows.  This implies that 
\begin{equation} \label{f: est on Gadueal}
\sup_{X\in \Ga_{\al}(z)} \,\, \int_{\Ga_{2\al}(z)} \, D(X,Y)^{2\ga} \, H(Y) \wrt \cY(Y)
\leq C\, H^*(z).  
\end{equation}

Next we estimate $\ds \int_{Q(z) \setminus \Ga_{2\al}(z)} \, D(X,Y)^{2\ga} \, H(Y) \wrt \cY(Y)$,
where $X=(x,u)$ is in $\Ga_\al(z)$.   
Set $Z := (z,u)$.  We \textit{claim} that, for every $Y\in Q(z) \setminus \Ga_{2\al}(z)$, 
\begin{equation} \label{f: DXY DYZ}
	D(X,Y) \geq \Big(1-\frac{\sqrt{4\al^2+1}}{2} \Big) \, \, D(Y,Z).
\end{equation}
To prove the claim, first observe that, by the triangle inequality, 
$$
\begin{aligned}
D(X,Y) 
	& \geq D(Y,Z) - D(X,Z) \\
	& =    D(Y,Z) \, \Big(1-\frac{\sqrt{4\al^2+1}}{2} \Big) +  D(Y,Z) \,\frac{\sqrt{4\al^2+1}}{2} - D(X,Z).   
\end{aligned}
$$
Thus, in order to prove the claim it suffices to show that 
$$
D(Y,Z) \geq \,\frac{2}{\sqrt{4\al^2+1}} \,\,  D(X,Z).   
$$
Denote by $\OV{W}$ any of the points on $\partial \Ga_{2\al}(z)$ that realises the distance from $Z$ to $\Ga_{2\al}(z)^c$.  
Elementary geometric considerations show that $D(\OV{W},Z) = u \sin \te'$, where $\te'$ denotes half the aperture
of $\Ga_{2\al}(z)$, i.e., $\tan \te' = 2\al$.  It is straightforward to check that $\ds \sin \te' = \frac{2\al}{\sqrt{4\al^2+1}}$.  
By combining these formulae, we get that 
$$
D(Y,Z)
\geq D(\OV{W},Z) 
=    \al\, u\,\, \frac{2}{\sqrt{4\al^2+1}} 
\geq  D(X,Z) \,\, \frac{2}{\sqrt{4\al^2+1}},
$$
as required to complete the proof of the claim.  
Notice that $\ds\frac{2}{\sqrt{4\al^2+1}}>1$, provided that~$\al$ is small enough ($\al< \sqrt 3/2$ will do).    

The claim implies that 
$$
\begin{aligned}
	\int_{Q(z) \setminus \Ga_{2\al}(z)} \, D(X,Y)^{2\ga} \, H(Y) \wrt \cY(Y)
	& \leq C \, \int_{Q(z) \setminus \Ga_{2\al}(z)} \, D(Z,Y)^{2\ga} \, H(Y) \wrt \cY(Y) \\
	& \leq C \, \int_{B_3(z)} \, d(y,z)^{2\ga} \, H^\flat(y)\wrt \nu(y),
\end{aligned}
$$
where the constant $C$ depends on $\al$ and $n$.  Therefore 
$$
\sup_{X\in \Ga_\al(z)} \int_{Q(z) \setminus \Ga_{2\al}(z)} \, D(X,Y)^{2\ga} \, H(Y) \wrt \cY(Y) 
\leq C  \int_{B_3(z)} d(y,z)^{2\ga} \, H^\flat(y)\wrt \nu(y).
$$
By combining this and \eqref{f: est on Gadueal}, recalling also \eqref{f: est Kj0H}, we see that  
$$
\big(\cK_0^j H\big)^* (z)
\leq C\, \Big[H^*(z) + \int_{B_3(z)} \, d(y,z)^{2\ga} \, H^\flat(y)\wrt \nu(y) \Big],
$$
so that 
$$
\begin{aligned}
\bignorm{\big(\cK_0^j H\big)^*}{p}
	& \leq C \bignorm{H^*}{p} + C \Bignorm{\int_{B_3(\cdot)} \, d(\cdot,y)^{2\ga} 
	   \, H^\flat(y)\wrt \nu(y)}{p} \\
	& \leq C\, \big[\bignorm{H^*}{p} +  \bignorm{H^\flat}{p}\big] \\
	& \leq C \bignorm{H^*}{p},
\end{aligned}
$$
as required.   

Next we prove \rmii.  By Proposition~\ref{p: estimates for cGS II}~\rmiv\ and Proposition~\ref{p: estimates for cGS}~\rmi-\rmii,
the function $\cGS^JS$ is continuous and for each $\de$ in $(2\be,2\la_1\sqrt c)$ there exists a constant $C$ such that
$k_{\cGS^J} (X,Y) \leq C\, \e^{-\de D(X,Y)}$ for every $X$ and $Y$ in $\Sisi $.
It is straightforward to check that there exists a constant $C$ such that
$$
\sup_{X\in \Ga_\al(z)} \, \e^{-\de D(X,Y)}
\leq C\,  \e^{-\de d(z,y)}
\quant z\in N \quant Y \in \Sisi .   
$$
Here $y$ is the component in $N$ of the point $Y$ in $\Sisi $.
Consequently,  $(\cGS^J S)^* \leq C\, \cK_{\de}^\infty S^\flat$, whence 
$$
\bignorm{(\cGS^J S)^*}{p}
\leq C \bignorm{\cK_{\de}^\infty S^\flat}{p}
\leq C \bignorm{S^\flat}{p}
\leq C \bignorm{S}{\lp{\Sisi }},
$$
as required.  
\end{proof}  

\section{The function $G$}  
\label{s: G}

In this section we adapt some ideas of Dindo\v s to our case
(see \cite[Chapter~6]{Di}, especially Proposition~6.4 therein).  
The main result of this section is Theorem~\ref{t: analogue SW D} below, 
which is a counterpart in our setting of a classical result of Stein and Weiss. 
First we need a of technical lemma.  Recall the space~$\cB^p$, introduced just above Proposition~\ref{p: estimates for cGS II}.   

\begin{lemma} \label{l: Geta}
Suppose that $p$ is in $(1,\infty)$, and that $F$ is a nonnegative continuous function in $\cB^p$ 
satisfying $\ds \lim_{d(x,o) \to \infty}\, \sup_{t\in [\eta,2\si-\eta]} F(x,t) = 0$ for every $\eta\in (0,\si)$.  
Suppose further that for some constant $\al$ the function $G := F-\al \, \cGS F$ is subharmonic in $\Si$.  Then
the following hold: 
\begin{enumerate}
	\item[\itemno1]
		there exists a sequence $\{\vep_k\}$ such that $\vep_k \to 0$ 
		as $k$ tends to infinity, and a nonnegative function $h$ in $L^p(N)$ such that 
		$\ds w-\lim_{k\to \infty}G(\cdot,\vep_k) = h$ (weak limit in $L^p(N)$) and 
		$\bignorm{h}{p} \leq \min \big(\bignorm{F}{\cB^p},\bignorm{G}{\cB^p}$\big);
	\item[\itemno2]
		$G \leq \cPSeta \big[G(\cdot,\eta)\big]$ in $\Sisieta$;  
	\item[\itemno3]
		$G \leq \cPS h$ in $\Sisi$,  where $h$ is as in \rmi. 
\end{enumerate}
\end{lemma}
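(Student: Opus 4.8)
The plan is to establish \rmi, \rmii, \rmiii\ in this order, each feeding the next: \rmi\ gives a weak boundary limit, \rmii\ is a maximum-principle comparison on the interior strips $\Sisieta$, and \rmiii\ follows by letting $\eta\downarrow0$ in \rmii. For \rmi\ I would first observe that $G\in\cB^p$, since $\bignorm{G(\cdot,t)}{p}\leq\bignorm{F(\cdot,t)}{p}+\abs{\al}\,\bignorm{\cGS F(\cdot,t)}{p}$ and $\cGS$ is bounded on $\cB^p$ by Proposition~\ref{p: estimates for cGS II}~\rmv; that proposition also gives $\bignorm{\cGS F(\cdot,t)}{p}\to0$ as $t\downarrow0$, so $\limsup_{t\downarrow0}\bignorm{G(\cdot,t)}{p}=\limsup_{t\downarrow0}\bignorm{F(\cdot,t)}{p}\leq\min\big(\bignorm{F}{\cB^p},\bignorm{G}{\cB^p}\big)$. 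Pick $\vep_k\downarrow0$ with $\bignorm{G(\cdot,\vep_k)}{p}\to\limsup_{t\downarrow0}\bignorm{G(\cdot,t)}{p}$; as $\lp{N}$ is reflexive ($p\in(1,\infty)$), passing to a subsequence we have $G(\cdot,\vep_k)\rightharpoonup h$ weakly in $\lp{N}$, and by weak lower semicontinuity of the norm $\bignorm{h}{p}\leq\min\big(\bignorm{F}{\cB^p},\bignorm{G}{\cB^p}\big)$. Finally $h\geq0$: since $F(\cdot,\vep_k)=G(\cdot,\vep_k)+\al\,\cGS F(\cdot,\vep_k)$ and the second summand converges to $0$ in $\lp{N}$ norm, also $F(\cdot,\vep_k)\rightharpoonup h$, and a weak limit of nonnegative functions is nonnegative.

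For \rmii\ I would fix $\eta\in(0,\si)$ and use the minimum principle on the non-compact strip $\Sisieta$. One first checks that $G(\cdot,\eta)\in C_0(N)$: $F(\cdot,\eta)$ vanishes at spatial infinity by hypothesis, and $\cGS F(\cdot,\eta)$ does too because, by \eqref{f: reduction}, $\bigabs{\cGS F(x,\eta)}\leq C\big[\cK_1^0\abs{F}(x,\eta)+\cK_\de^\infty\abs{F^\flat}(x)\big]$ for some $\de\in(2\be,2\la_1\sqrt c)$ (such $\de$ exists by \eqref{f: sigma}): the local term $\cK_1^0\abs{F}$ is handled by splitting the $t$-integration, using the boundary decay \eqref{f: easy estimate} of $k_{\cGS}$ to make the part near $t\in\{0,2\si\}$ uniformly small and the hypothesis on $F$ for the rest, while for $\cK_\de^\infty\abs{F^\flat}$ one uses $F^\flat\in\lp{N}\cap C_0(N)$. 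By Proposition~\ref{p: Dirichlet problem slice}, $\cPSeta\big[G(\cdot,\eta)\big]$ is then harmonic in $\Sisieta$, continuous on $\OV{\Sisieta}$, and equal to $G(\cdot,\eta)$ on $\partial\Sisieta$ (here using the symmetry of $G$ about $t=\si$); the bounds of Lemma~\ref{l: Mt sqrt cL} (and their $\eta>0$ analogues), together with the estimates just used, show moreover that both $G$ and $\cPSeta\big[G(\cdot,\eta)\big]$ vanish at spatial infinity uniformly for $t\in[\eta,2\si-\eta]$. Hence $w:=\cPSeta\big[G(\cdot,\eta)\big]-G$ is superharmonic in $\Sisieta$, continuous on $\OV{\Sisieta}$, nonnegative on $\partial\Sisieta$, and vanishing at spatial infinity, so any negative sublevel set of $w$ is relatively compact in $\Sisieta$; the minimum principle then forces $w\geq0$, which is \rmii.

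For \rmiii\ I would apply \rmii\ along the sequence $\{\vep_k\}$ of \rmi. Given $X=(x,u)\in\Si$, for every $k$ with $\vep_k<\min(u,2\si-u)$ one has $G(X)\leq\cPSvepk\big[G(\cdot,\vep_k)\big](X)=\int_N k_{M_u^{\vep_k}(\cD)}(x,y)\,G(y,\vep_k)\wrt\nu(y)$. As $k\to\infty$ the symbols $M_u^{\vep_k}$ converge to $M_u$ pointwise, with $\bigabs{M_u^{\vep_k}(\la)-M_u(\la)}\leq C\,\e^{-\de_0\la}$ uniformly in large $k$ (since $\abs{u-\si}<\si-\vep_k$), so $\upnorm{M_u^{\vep_k}-M_u}{\rho}\to0$ for every $\rho$; by Proposition~\ref{p: consequences GBE II}~\rmiii\ (interpolated) and the symmetry of the kernel of $M_u(\cD)$, this yields $k_{M_u^{\vep_k}(\cD)}(x,\cdot)\to k_{M_u(\cD)}(x,\cdot)$ in $L^{p'}(N)$. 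Pairing this with $G(\cdot,\vep_k)\rightharpoonup h$ in $\lp{N}$ gives $\cPSvepk\big[G(\cdot,\vep_k)\big](X)\to\int_N k_{M_u(\cD)}(x,y)\,h(y)\wrt\nu(y)=\cPS h(X)$, whence $G(X)\leq\cPS h(X)$ for every $X\in\Si$. I expect the main obstacle to be precisely this limit: getting the convergence $k_{M_u^{\vep_k}(\cD)}(x,\cdot)\to k_{M_u(\cD)}(x,\cdot)$ in $L^{p'}(N)$ sufficiently quantitatively to pair against the merely weakly convergent $G(\cdot,\vep_k)$; a secondary technicality is the uniform-in-$t$ spatial decay of $G$ and of $\cPSeta[G(\cdot,\eta)]$ that underpins the non-compact minimum principle in \rmii.
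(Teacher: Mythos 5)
Your argument for \rmi\ and \rmii\ is essentially the paper's: weak compactness in the reflexive space $\lp{N}$ plus Proposition~\ref{p: estimates for cGS II}~\rmv\ for \rmi, and for \rmii\ the verification that $G(\cdot,\eta)\in C_0(N)$ (via \eqref{f: reduction} and \eqref{f: easy estimate}), the boundary behaviour from Proposition~\ref{p: Dirichlet problem slice}, the uniform spatial decay of $\cPSeta\big[G(\cdot,\eta)\big]$, and a maximum/minimum principle on the noncompact strip obtained by exhausting with the compact sets $\OV{B}_R(o)\times[\eta,2\si-\eta]$. The only place where you genuinely diverge is the limit in \rmiii. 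Both you and the paper split $M_u^{\vep_k}(\cD)G(\cdot,\vep_k)-M_u(\cD)h$ into $m_u^{\vep_k}(\cD)G(\cdot,\vep_k)$ plus $M_u(\cD)\big[G(\cdot,\vep_k)-h\big]$ (the second term being handled identically, by pairing the weak convergence against $k_{M_u(\cD)}(x,\cdot)\in L^{p'}$ uniformly in $x$), but for the first term the paper estimates $\bigopnorm{m_u^{\vep_k}(\cD)}{p}$ via the Phragm\'en--Lindel\"of principle on a sector and the $H^\infty_0(\bS_\vp)$ functional calculus, obtaining $L^p$-norm convergence to $0$ and then passing to an a.e.\ convergent subsequence; you instead use the real-axis estimates of Proposition~\ref{p: consequences GBE II}~\rmiii\ (interpolated) to get $\sup_x\bignorm{k_{m_u^{\vep_k}(\cD)}(x,\cdot)}{p'}\leq C\,\upnorm{m_u^{\vep_k}}{2\rho}\to 0$, which gives convergence at every point and avoids the holomorphic calculus altogether. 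Two small caveats on your version: first, ``pointwise convergence plus a uniform exponential majorant'' does not by itself yield $\upnorm{m_u^{\vep_k}}{\rho}\to 0$; you should quote the explicit mean-value-theorem bound $\bigmod{m_u^\vep(\la)}\leq C\vep\la\,\e^{(\vep-u)\la}$ (the real-axis case of \eqref{f: mteps}), which does. Second, the interpolation of Proposition~\ref{p: consequences GBE II}~\rmiii\ between $L^1\to L^2$ and $L^1\to L^\infty$ only reaches $p'\geq 2$, i.e.\ $p\leq 2$; this covers the application ($p=1/q<n/(n-1)\leq 2$), but to get the lemma in its stated generality $p\in(1,\infty)$ you would additionally need a uniform $L^\infty\to L^\infty$ bound on $m_u^{\vep_k}(\cD)$ to interpolate downwards, and for that one is back to the sectorial calculus the paper uses.
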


\begin{proof}
First we prove \rmi.  
By the weak compactness of the unit sphere of $L^{p}(N)$, there exists a sequence $\vep_k$,
which tends to $0^+$ as $k$ tends to infinity, such that $F(\cdot,\vep_k)$ is weakly convergent in $\lp{N}$ 
to a function, $h$ say.  
By abstract nonsense
$
\ds\bignorm{h}{p} 
\leq \sup_k \bignorm{F(\cdot,\vep_k)}{p}.  
$
Furthermore, $h$ is nonnegative, because so is~$F$
by assumption.   

By Proposition~\ref{p: estimates for cGS II}~\rmv, $\bignorm{\cG_\Si F(\cdot,\vep_k)}{p}$
tends to $0$ as $k$ tends to infinity.  \textit{A fortiori}
$\{\cG_\Si F(\cdot,\vep_k)\}$ tends to $0$ weakly in $\lp{N}$.  Thus, 
$w$-$\ds\lim_{k\to \infty} \, G(\cdot,\vep_k)=h$ in $\lp{N}$, whence, by abstract nonsense, 
$
\ds\bignorm{h}{p} 
\leq \sup_k \bignorm{G(\cdot,\vep_k)}{p}.  
$

Next we prove \rmii.
By elliptic regularity, $\cGS F$ is continuous on $\Sisi$, for $F$ is continuous therein by assumption.  
Consequently so is $G$.  
For the sake of completeness we give a proof of the continuity of $\cGS F$, which is, we believe, quite standard. 
Suppose that $X\in \Sisi$.  Recall that $\cGS F$ is a distributional solution of $\DDelta V = F$ on $\Sisi$. 
Choose a harmonic coordinate system $(U,\phi_U)$ with $U\subset\Sisi$ open set containing 
$X$ and $ \phi_U:U\to\mathbb R^{n+1}$.  For any
function $V$ on $U$, set $\wt V := V\circ \phi_U^{-1}$.  Clearly, $V$ is continuous if and only if $\wt V$ is. Then 
\[
L\wt{\cGS F} = \wt F
\]
in the sense of distributions on $U$, 
where $L$ is the elliptic operator defined in $\phi_U(U)$ by $\ds\sum_{i,j} (g\circ \phi_U^{-1})^{ij}\partial^2_{ij}$. 
Since $\wt F$ is continuous, it is in $L^p_{loc}(U)$ for every $p\in[1,\infty)$. 
By elliptic theory (see, for instance, \cite[Theorem 6.33]{Folland}),
$\wt{\cGS F}\in W^{2,2}_{loc}(U)\subset W^{1,\frac{2n'}{n'-2}}(U)$. 
The latter inclusion, with $n'=n+1$, is a consequence of
local Sobolev embeddings. Moreover the Euclidean Laplacian $\DDelta_0$ of $\wt{\cGS F}$ satisfies
\[
|\DDelta_0\wt{\cGS F}|\leq C\, |L\wt{\cGS F}| = C\,\wt F \in L^{\frac{2n'}{n'-2}}_{loc}(U).
\]
By a local Euclidean Calder\'on--Zygmund inequality, 
we obtain that $\wt{\cGS F}\in W^{2,\frac{2n'}{n'-2}}_{loc}(U)$. Indeed, from \cite[Theorem 9.9]{GT} there exists a function 
$w\in W^{2,\frac{2n'}{n'-2}}_{loc}$ solving $Lw = \wt F$ in a neighbourhood of $X$. 
Since $w-\wt{\cGS F}$ solves $L(w-\wt{\cGS F}) = 0$ and is thus smooth, 
we get that also $\wt{\cGS F}\in W^{2,\frac{2n'}{n'-2}}_{loc}$. 
In particular, $\wt{\cGS F}\in W^{1,\frac{2n'}{n'-4}}_{loc}(U)$ by a local 
Sobolev embedding. Since $\DDelta_0\wt{\cGS F}\in L^{\frac{2n'}{n'-4}}_{loc}(U)$, we can iterate the argument, 
thus obtaining that $\wt{\cGS F}\in W^{2,\frac{2n'}{n'-2k}}_{loc}(U)$ 
for every positive integer $k$ such that $2k<n'$. As soon as $2k>n'-4$, 
by a local Sobolev embedding $W^{2,\frac{2n'}{n'-2k}}_{loc}\subset C^0$,
thereby concluding the proof of the continuity of $\cGS F$. 

To prove \rmii, first notice that both sides of the desired inequality are continuous on $N\times [\eta,2\si-\eta]$ 
(the continuity of the right hand side follows from Proposition~\ref{p: Dirichlet problem slice}), the left hand side
and the right hand side are subharmonic and harmonic in $N\times(\eta, 2\si-\eta)$, respectively.   

We claim that $G(\cdot,\eta)$ is in $C_0(N)$.   
By assumption, $F(\cdot,\eta)$ is in $C_0(N)$.  Thus, it remains to prove that $\cGS F(\cdot,\eta)$ is in $C_0(N)$.  
Suppose that $\vep>0$.  Choose $\ga$ in $(0,\eta)$
so that $\vr(\ga) < \vep$ (recall that $\vr(\ga) = \sin \la_1\ga$, see the beginning of Section~\ref{s: Green}; 
clearly it suffices to choose $\ga<\vep/\la_1$).  
The estimate \eqref{f: easy estimate} implies that there exists a constant $C$ such that 
$k_{\cGS}(X,Y) \leq C \, \min\big(\vr(v), \e^{-2\la_1d(x,y)\sqrt c} \big)$ whenever 
$X=(x,\eta)$, $Y:= (y,v)$ and $D(X,Y) \geq \eta-\ga$.   Thus, in particular, $k_{\cGS}(X,Y) < C\, \vep$ 
if $Y:= (y,v)$ belongs either to $N\times (0,\ga]$ or to $N\times [2\si-\ga,2\si)$ (this just because 
$D(X,Y) \geq \eta-\ga >0$).  
Therefore
$$
k_{\cGS}(X,Y)
= k_{\cGS}(X,Y)^\de \, k_{\cGS}(X,Y)^{1-\de}
\leq C\, \vep^\de\, \e^{-2(1-\de)\la_1d(x,y)\sqrt c}
$$
for any $\de$ in $(0,1)$.  Therefore, if $\de$ is small enough, then $\tau:=2(1-\de)\la_1\sqrt c>2\be$ and  
\begin{equation}  \label{f: CO I}
\begin{aligned}
\int_{\Si\setminus \Si_\ga} k_{\cGS}(X,Y)\, F(Y) \wrt \cY(Y)
	& \leq C\, \vep^\de\,  \int_{\Si\setminus\Si_\ga} \e^{-\tau d(x,y)} \, F(Y) \wrt \cY(Y) \\
	& \leq C\, \vep^\de\,  \Big[\int_{\Si} \e^{-\tau p'd(x,y)}\wrt \cY(Y)\Big]^{1/p'} \bignorm{F}{\lp{\Si}} \\
	& \leq C\, \vep^\de \bignorm{F}{\cB^p}.  
\end{aligned}
\end{equation}
Furthermore, by assumption, there exists $R>0$ such that $F(y,u) < \vep$ when $(y,u)$ belongs to 
$B_R(o)^c\times [\ga,2\si-\ga]$.  Hence
\begin{equation}  \label{f: CO II}
	\int_{B_R(o)^c\times [\ga,2\si-\ga]} k_{\cGS}(X,Y)\, F(Y) \wrt \cY(Y)
	\leq C\, \vep;   
\end{equation}
we have used Proposition~\ref{p: estimates for cGS II}~\rmii\ in the last inequality. 
Finally, if $Y$ belongs to $B_R(o)\times [\ga,2\si-\ga] =: Q_{\ga,R}$ and $d(x,o)$ is big enough, then 
there exists a constant $C$ such that $k_{\cGS}(X,Y) \leq C\, \e^{-2\la_1d(x,y)\sqrt c }$ 
(see Proposition~\ref{p: estimates for cGS}~\rmi).  This and the fact that  
$d(x,y) \geq d(x,o) - R$ whenever $d(x,o)$ is large and $y$ belongs to $B_R(o)$ imply that 
$$
\int_{Q_{\ga,R}} k_{\cGS}(X,Y)\, F(Y) \wrt \cY(Y)
\leq C\, \e^{C R}\, \e^{-2\la_1d(x,o)\sqrt c } \int_{Q_{\ga,R}}F(Y) \wrt\cY(Y).
$$
By H\"older's inequality 
$$
\int_{Q_{\ga,R}}F(Y) \wrt\cY(Y) 
	\leq C\, \nu\big(B_R(o)\big)^{1/p'} \,
	   \Big[\int_{Q_{\ga,R}}F(Y)^p \wrt\cY(Y)\Big]^{1/p}  
	\leq C\, \e^{C R}\, \bignorm{F}{\cB^p}.
$$
Thus, we may conclude that 
\begin{equation}  \label{f: CO III}
	\begin{aligned}
		\int_{Q_{\ga,R}} k_{\cGS}(X,Y)\, F(Y) \wrt \cY(Y)
		& \leq C\, \e^{C R}\, \e^{-2\la_1d(x,o)\sqrt c } \bignorm{F}{\cB^p},
	\end{aligned}
\end{equation}
By combining \eqref{f: CO I}, \eqref{f: CO II} and \eqref{f: CO III}, we see that  
$$
\cGS F(x,\eta)
\leq C\, \vep \, \big(\bignorm{F}{\cB^p} + 1\big) +  C\, \e^{C R}\, \e^{-2\la_1d(x,o)\sqrt c } \bignorm{F}{\cB^p}. 
$$
Now we take the limit of both sides as $d(x,o)$ tends to infinity, and obtain that 
$$
\lim_{d(x,o)\to \infty} \, \cGS F(x,\eta)
\leq C\, \vep \, \big(\bignorm{F}{\cB^p} + 1\big),
$$
from which the claim follows directly.  

Note that 
\begin{equation} \label{f: claim O}
\lim_{d(x,o) \to \infty}\,  \sup_{t\in [\eta, 2\si-\eta]}
\cPSeta \big[G(\cdot,\eta)\big](x,t)
= 0.
\end{equation} 
Indeed, since $G(\cdot,\eta)$ is in $C_0(N)$, for every $\vep>0$ there exists $R$ such that 
$G(x,\eta) < \vep$ for every $x$ such that $d(x,o) >R$.  Then 
$$
\bigmod{\big[\cPSeta G(\cdot,\eta)\big](x,t)}
\leq \Bigmod{\int_{{B}_R(o)} \!\! k_{\Mteta  (\cD)}(x,y) \, G(y,\eta) \wrt \nu(y)} 
	+ \vep \int_{{B}_R(o)^c} \!\! \bigmod{k_{\Mteta  (\cD)}(x,y)} \wrt \nu(y).
$$
The operator $\Mteta  (\cD)$ satisfies on the slice $\Sisieta$ estimates similar to those of
$\Mt (\cD)$ on $\Sisi$.  The proofs of such estimates for 
$\Mteta  (\cD)$ are almost \textit{verbatim} the same as the corresponding proofs for $\Mt (\cD)$.   
In particular, for each $\vep>0$, there exists a positive constant $C$ such that for every function $f$ in $\lu{N}$
with support contained in $B_R(o)$ 
$$
\sup_{t\in (\eta,2\si-\eta)} \bigmod{\Mteta  (\cD) f(x)}
\leq C\, \e^{(\vep-\la_1^\eta) d(x,o)}  \bignorm{f}{1}
$$
for every $x$ in $B_{2R}(o)^c$, where $\ds \la_1^\eta = \frac{\pi}{2(\si-\eta)}$ (see the proof of 
Lemma \ref{l: Mt sqrt cL} \rmiii).  Therefore
for every $x$ in $B_{2R}(o)^c$ we have the estimate 
$$
\sup_{t\in (\eta,2\si-\eta)} \, \Bigmod{\int_{{B}_R(o)} \!\! k_{\Mteta  (\cD)}(x,y) \, G(y,\eta) \wrt \nu(y)}
\leq C\, \e^{(\vep-\la_1^\eta) d(x,o)} \bignorm{\One_{{B}_R(o)}\,G(\cdot, \eta)}{1}, 
$$
which tends to $0$ as $d(x,o)$ tends to infinity.  Furthermore, 
$$
\int_{{B}_R(o)^c} \!\! \bigmod{k_{\Mteta  (\cD)}(x,y)} \wrt \nu(y)
\leq \bignorm{k_{\Mteta  (\cD)}(x,\cdot)}{1} 
\leq C\, \big[1+\bignorm{M^\eta_{\si}}{\cE(\bS_{\vp})}\big];
$$
the last inequality follows from Lemma~\ref{l: Mt sqrt cL}~\rmiv\ (with $p=\infty$).  
The right hand side is independent of $t$ in $(\eta,2\si-\eta)$.  By combining the estimates above, we get that 
$$
\lim_{\mod{x} \to \infty} \sup_{t\in [\eta,2\si-\eta]}
\bigmod{\big[\cPSeta G(\cdot,\eta)\big](x,t)}
\leq \vep,
$$
which, of course, implies the required estimate \eqref{f: claim O}.  

Now, consider the function $\Xi (x,t) := G(x,t) - \big[\cPSeta G(\cdot,\eta)\big](x,t)$, which is continuous
on $\OV{\Si}_{\eta}$ (by Proposition~\ref{p: Dirichlet problem slice}).   
Notice that $\Xi(x,\eta) = 0 = \Xi(x,2\si-\eta)$ for every $x$ in $N$.
Since $G$ is subharmonic on $\Sisi $ and $\cPSeta \big[G(\cdot,\eta)\big]$ is harmonic on $\Sisieta$, $\Xi$
is subharmonic on $\Sisieta$.   
For $R>0$ consider the compact set $K_R := \OV{B}_R(o) \times [\eta,2\si-\eta]$.  Fix $\vep >0$. 
Our assumptions and \eqref{f: claim O} yield  
$
\ds \sup_{(x,t) \in \partial K_R} \Xi(x,t) 
< \vep
$
for $R$ large enough.  By the maximum principle for subharmonic functions (see, for instance, 
\cite[Corollary 1, p. 479]{GGW}) applied to $\Xi$ and $K_R$, we have the estimate $\Xi \leq \vep$
on $K_R$.  By letting $\vep$ tend to $0$ (and $R$ to infinity), we may conclude that $\Xi \leq 0$ on $\Sisieta$,
as required.  

Finally, we prove \rmiii.  It suffices to show that 
\begin{equation} \label{f: limit III}
\lim_{k\to \infty} \, \big[\cPSvepk G(\cdot,\vep_k)\big](x,t)
= \cPS h(x,t) 
\end{equation}
for almost every $(x,t)$ in $\Sisi $, where $\{\vep_k\}$ denotes (possibly a subsequence of)
the sequence whose existence is established in \rmi.  
Indeed, this and \rmii\ would imply that $G(x,t) \leq \cPS h(x,t)$ for almost
every $(x,t)$ in $\Sisi$.  Since both $G(x,t)$ and $\cPS h(x,t)$ are continuous functions 
on $\Sisi$, the latter equality would hold everywhere, as required.  

In order to prove~\eqref{f: limit III}, 
we consider preliminarily the function $m_t^\vep (z) := \Mt ^{\vep} (z) - \Mt (z)$ for $\vep >0$ and $t$ in $(0,\si]$.
We \textit{claim} that for each $\vp$ in $(0,\pi/2)$    
\begin{equation} \label{f: claim limit}
\lim_{\vep\downarrow 0} \sup_{z\in \OV{\bS}_\vp} \, \bigmod{m_t^\vep (z)} = 0.
\end{equation}
Here $\bS_\vp$ and $\OV{\bS}_\vp$ denote the sector $\{z\in \BC: \mod{\arg z} <\vp \}$ and its closure,
respectively. 

A straightforward computation shows that given $\vp$ in $(0,\pi/2)$
and $t$ in $(0,\si]$ there exists a constant $C$ such that  $\ds \sup_{z\in \OV{\bS}_\vp} \, \bigmod{m_t^\vep (z)} \leq C$
for every $\vep\leq t/2$.
By the Phragmen--Lindel\"of principle 
$\ds \sup_{z\in \OV{\bS}_\vp} \, \bigmod{m_t^\vep (z)} \leq \sup_{z\in \partial\OV{\bS}_\vp} \, \bigmod{m_t^\vep (z)}$.  
Observe that 
\begin{equation}
\label{f: mteps}
\begin{aligned}
\bigmod{m_t^\vep (r\e^{i\vp})}
	& = \bigmod{\cosh[(\si-t)r\e^{i\vp}]} \,\,  \Bigmod{\frac{1}{\cosh[\si r\e^{i\vp}]} 
	     - \frac{1}{\cosh[(\si-\vep)r\e^{i\vp}]}} \\
	& \leq \bigmod{\cosh[(\si-t)r\e^{i\vp}]} \,\, r\vep\, 
	     \sup_{u\in (\si-\vep,\si)} \frac{\bigmod{\sinh(ur\e^{i\vp})}}{\bigmod{\cosh^2[ur\e^{i\vp}]}} \\ 
	& \leq C \vep r \, \e^{(\vep -t)r\cos\vp};
\end{aligned}
\end{equation}
the first inequality follows from the mean value theorem, applied to the function $u \mapsto 1/\cosh[ur\e^{i\vp}]$.   
Now we take the supremum of both sides with respect to $r$ in $(0,\infty)$, and obtain
$$
\sup_{r>0} \, \bigmod{m_t^\vep (r\e^{i\vp})} 
\leq \frac{C\vep}{(t-\vep) \cos \vp}
\to 0
$$ 
as $\vep$ tends to $0$. 
Since $m_t^\vep (\OV{z}) = \OV{m_t^\vep (z)}$, we can conclude that 
$\ds \lim_{\vep\downarrow 0} \sup_{z\in \partial\OV{\bS}_\vp} \, \bigmod{m_t^\vep (z)} = 0$, thereby concluding the proof
of the claim.

By using \eqref{f: mteps}, it is easy to check that the function 
$m_t^\vep$ belongs to the algebra $H_0^\infty(\bS_\vp)$, which is included 
in the Dunford class $\cE(\bS_\vp)$ (see page \pageref{pp: Dunfors} for the definitions).  Thus, if $\pi/4<\vp<\pi/2$,
then the natural functional calculus \cite[Theorem~2.3.3]{Haa} implies that   
\begin{equation} \label{f: est mtvepk}
\bigopnorm{m_t ^{\vep_k} (\cD)}{p} 
\leq C \bignorm{m_t^\vep}{\cE(\bS_\vp)}
=    C \bignorm{m_t^\vep}{H_0^\infty(\bS_\vp)},  
\end{equation}
because $\cD$ is sectorial of angle $\pi/4$.  Write  
\begin{equation} \label{f: limit IV}
\begin{aligned}
\Mt ^{\vep_k} (\cD)G(\cdot, \vep_k) - \Mt (\cD) h 
= m_t ^{\vep_k} (\cD) G(\cdot,\vep_k) + \Mt (\cD) \big[G(\cdot,\vep_k)-h\big]. 
\end{aligned}
\end{equation}
In order to prove \eqref{f: limit III},
%We \textit{claim} that 
%$\ds \lim_{k\to\infty} \bigopnorm{\Mt ^{\vep_k} (\cD) - \Mt (\cD)\big]}{1/q} = 0$.
it suffices to show that both 
summands on the right hand side of \eqref{f: limit IV} tend to $0$ pointwise a.e.
Since $\{G(\cdot,\vep_k)\}$ is  uniformly bounded in $\lp{N}$, 
$$
\bignorm{m_t^{\vep_k}(\cD) G(\cdot,\vep_k)}{p}
\leq C \, \bigopnorm{m_t^{\vep_k}(\cD)}{p},
$$
which, by \eqref{f: est mtvepk} and \eqref{f: claim limit},
tends to $0$ as $k$ tends to infinity.  Hence, by abstract nonsense, a suitable subsequence
of $m_t^{\vep_k}(\cD) G(\cdot,\vep_k)$ is pointwise convergent a.e. to $0$.  
Next,
$$
\Mt (\cD) \big[G(\cdot,\vep_k)-h\big](x)
= \int_N k_{\Mt (\cD)}(x,y) \, \big[G(\cdot,\vep_k)-h\big](y) \wrt \nu(y),
$$
which tends to $0$ as $k$ tends to infinity, because
$G(\cdot,\vep_k)-h$ is weakly convergent to $0$ in $L^{p}(N)$, and $\bignorm{k_{\Mt (\cD)}(x,\cdot)}{p'}$
is uniformly bounded with respect to $x$ in $N$ by Lemma~\ref{l: Mt sqrt cL}~\rmv\ (and the symmetry of $\Mt (\cD)$).  

This concludes the proof of \rmiii, and of the lemma.
\end{proof}

We shall apply Lemma~\ref{l: Geta} to the case where $F = \mod{\Nabla u}^q$ and $u$ is a harmonic function on $\Si$.   
Suppose that $\be$ is a positive number.  We say that a function $S$ is \textit{$\be$-subharmonic} 
provided that $\DDelta S \geq -\be\, S$ in the sense of distributions.  
The following result, which generalises old ideas of Stein and Weiss 
(see for instance \cite[pp. 217--220]{St1}), is due to Dindo\v s \cite[Section~6.3]{Di}.  

\begin{proposition} \label{p: Dindos}
Suppose that $\Ric_{N\times\BR} \geq -\kappa^2$.  If $\ds (n-1)/{n} \leq q \leq 1$
and $u$ is a harmonic function on an open subset $\Om$ of $N\times\BR$, then $\mod{\Nabla u}^q$
is $q\kappa^2$-subharmonic in $\Om$.  
\end{proposition}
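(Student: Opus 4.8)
The plan is to combine the Bochner--Weitzenb\"ock formula on $M:=N\times\BR$ with a refined Kato inequality for harmonic functions, after regularising $|\Nabla u|$ near its zero set. Since $u$ is harmonic it is smooth by elliptic regularity, so every pointwise computation below is classical. Fix $\vep>0$, put $\phi:=|\Nabla u|^2$ and $\phi_\vep:=\phi+\vep^2$; this is a smooth strictly positive function on $\Om$ with $\Nabla\phi_\vep=\Nabla\phi$ and $\DDelta\phi_\vep=\DDelta\phi$. As $\phi_\vep^{q/2}$ is smooth, it suffices to prove the pointwise inequality $\DDelta\phi_\vep^{q/2}\geq -q\kappa^2\,\phi_\vep^{q/2}$ on $\Om$ for each $\vep>0$. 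Indeed, as $\vep\downarrow 0$ one has $\phi_\vep^{q/2}\to|\Nabla u|^q$ pointwise on $\Om$ and, for $\vep\le 1$, dominated on compacta by $\phi_1^{q/2}$; testing against an arbitrary nonnegative $\psi\in C^\infty_c(\Om)$, using that $\DDelta$ is formally self-adjoint, and passing to the limit gives $\DDelta|\Nabla u|^q\geq -q\kappa^2\,|\Nabla u|^q$ in the sense of distributions.

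For the pointwise step, Bochner's formula gives $\tfrac12\DDelta\phi=|\Nabla^2 u|^2+\langle\Nabla u,\Nabla\DDelta u\rangle+\Ric_{N\times\BR}(\Nabla u,\Nabla u)$; since $\DDelta u=0$ and $\Ric_{N\times\BR}\geq -\kappa^2$ this yields $\DDelta\phi\geq 2\,|\Nabla^2 u|^2-2\kappa^2\,\phi$. Next I would establish the refined Kato inequality $|\Nabla\phi|^2\leq \tfrac{4n}{n+1}\,\phi\,|\Nabla^2 u|^2$ on all of $\Om$. Where $\Nabla u=0$ both sides vanish; where $\Nabla u\neq 0$, choose an orthonormal frame $\{e_i\}_{i=1}^{n+1}$ with $e_1=\Nabla u/|\Nabla u|$ and set $H_{ij}:=\Nabla^2 u(e_i,e_j)$. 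A short computation gives $\Nabla\phi=2\,\Nabla^2 u(\Nabla u,\cdot)$, hence $|\Nabla\phi|^2=4\,\phi\sum_jH_{1j}^2$; and since $\sum_iH_{ii}=\DDelta u=0$, expanding $|\Nabla^2 u|^2=\sum_{i,j}H_{ij}^2$ and applying Cauchy--Schwarz to $H_{11}^2=\big(\sum_{i\geq 2}H_{ii}\big)^2\leq n\sum_{i\geq 2}H_{ii}^2$ leads, after regrouping, to $\sum_jH_{1j}^2\leq\tfrac{n}{n+1}\sum_{i,j}H_{ij}^2$, which is the claimed bound. (This is the point where the dimension $n+1$ of $M=N\times\BR$ enters.)

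Finally I would apply the chain rule $\DDelta\phi_\vep^{q/2}=\tfrac q2\,\phi_\vep^{q/2-1}\DDelta\phi+\tfrac q2\big(\tfrac q2-1\big)\phi_\vep^{q/2-2}|\Nabla\phi|^2$. Here the step requiring care is that $q\le 1<2$, so $\tfrac q2(\tfrac q2-1)<0$: inserting the Kato bound for $|\Nabla\phi|^2$ therefore \emph{decreases} the second summand, while inserting the Bochner bound (multiplied by the nonnegative factor $\tfrac q2\phi_\vep^{q/2-1}$) together with $\phi\,\phi_\vep^{q/2-1}\le\phi_\vep^{q/2}$ and $\phi\leq\phi_\vep$ decreases the first, so that
\[
\DDelta\phi_\vep^{q/2}\geq q\,|\Nabla^2 u|^2\,\phi_\vep^{q/2-1}\Big[1+\tfrac{n}{n+1}(q-2)\Big]-q\kappa^2\,\phi_\vep^{q/2}.
\]
Since $1+\tfrac{n}{n+1}(q-2)=\tfrac{n}{n+1}\big(q-\tfrac{n-1}{n}\big)\geq 0$ precisely when $q\geq(n-1)/n$, and $|\Nabla^2 u|^2\,\phi_\vep^{q/2-1}\geq 0$, the bracketed term is nonnegative under the hypothesis, and we obtain $\DDelta\phi_\vep^{q/2}\geq -q\kappa^2\,\phi_\vep^{q/2}$, as needed. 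The main obstacles are this sign bookkeeping in the chain-rule step and the \emph{sharp} constant $\tfrac{n}{n+1}$ in the refined Kato inequality, which is exactly what forces the threshold $(n-1)/n$; the behaviour on the (possibly large) zero set of $\Nabla u$ is then handled automatically by the $\vep$-regularisation.
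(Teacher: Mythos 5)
Your proof is correct: the Bochner--Weitzenb\"ock formula combined with the refined Kato inequality $|\Nabla|\Nabla u||^2\le\frac{n}{n+1}|\Nabla^2u|^2$ for harmonic functions on the $(n+1)$-dimensional manifold $N\times\BR$, the $\vep$-regularisation of $|\Nabla u|$, and the sign bookkeeping in the chain rule all check out, and the threshold $q\ge (n-1)/n$ emerges exactly as you compute. The paper itself gives no proof of this proposition, citing Dindo\v s instead, but the route it describes (Bochner--Weitzenb\"ock, going back to Stein--Weiss) is precisely the one you follow, so your argument is a correct self-contained version of the cited proof rather than a genuinely different approach.
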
 

\begin{theorem} \label{t: analogue SW D}  
Suppose that $u$ is a harmonic function on $\Si$.  Then the following are equivalent:
	\begin{enumerate}
		\item[\itemno1]
			$\mod{\Nabla u}^*$ is in $\lu{N}$; 
		\item[\itemno2]
			$\mod{\Nabla u}$ is in $\cB^1$, and $\ds \lim_{d(x,o) \to\infty} \sup_{t\in [\eta,2\si-\eta]} 
			\,\mod{\Nabla u(x,t)} = 0$ for every $\eta$ in $(0,2\si)$.
	\end{enumerate}
Furthermore, there exists a constant $C$, independent of $u$, such that 
\begin{equation} \label{f: equiv Nabla u}
\bignorm{\mod{\Nabla u}}{\cB^1}
\leq \bignorm{\mod{\Nabla u}^*}{1} 
\leq C \bignorm{\mod{\Nabla u}}{\cB^1}.
	\end{equation} 
\end{theorem}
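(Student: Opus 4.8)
The implication $\rmi\Rightarrow\rmii$, together with the left-hand inequality in \eqref{f: equiv Nabla u}, is elementary, so I would dispose of it first. For every $t$ in $(0,2\si)$ and every $x$ in $N$ the point $(x,t)$ lies in $\Ga_\al(x)$, whence $\mod{\Nabla u(x,t)}\leq\mod{\Nabla u}^*(x)$; integrating over $N$ gives $\bignorm{\mod{\Nabla u(\cdot,t)}}{1}\leq\bignorm{\mod{\Nabla u}^*}{1}$ and therefore $\bignorm{\mod{\Nabla u}}{\cB^1}\leq\bignorm{\mod{\Nabla u}^*}{1}$. For the decay in $\rmii$, fix $\eta$ in $(0,\si)$; if $t\in[\eta,2\si-\eta]$ and $z\in B_{\al\eta}(x)$, then $(x,t)\in\Ga_\al(z)$, so by the local Ahlfors property \eqref{f: Ahlfors}
$$
\mod{\Nabla u(x,t)}\leq\frac{1}{\nu\big(B_{\al\eta}(x)\big)}\int_{B_{\al\eta}(x)}\mod{\Nabla u}^*\wrt\nu\leq\frac{C}{(\al\eta)^n}\int_{B_{\al\eta}(x)}\mod{\Nabla u}^*\wrt\nu ,
$$
and the right-hand side tends to $0$ uniformly in $t$ as $d(x,o)\to\infty$ because $\mod{\Nabla u}^*\in\lu{N}$.

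The substance of the theorem is the converse $\rmii\Rightarrow\rmi$, with the right-hand inequality in \eqref{f: equiv Nabla u}; I assume $n\geq2$ throughout, the case $n=1$ (so that $N=\BR$) being classical. Fix $q:=(n-1)/n\in[1/2,1)$ and set $p:=1/q\in(1,\infty)$. The plan is to run the Dindo\v s-type machinery developed above on the \textbf{symmetrised} function
$$
F(x,t):=\bigmod{\Nabla u(x,t)}^q+\bigmod{\Nabla u(x,2\si-t)}^q,\qquad(x,t)\in\Si .
$$
Since both $u$ and $(x,t)\mapsto u(x,2\si-t)$ are harmonic on $\Si$, the function $F$ is symmetric, nonnegative and continuous (by elliptic regularity, as $q>0$), it belongs to $\cB^p$ with $\bignorm{F}{\cB^p}\leq2\bignorm{\mod{\Nabla u}}{\cB^1}^q$, it lies in $\lp{\Si}$ because $\Si$ is a slab of finite width, and by hypothesis $\rmii$ it satisfies $\lim_{d(x,o)\to\infty}\sup_{t\in[\eta,2\si-\eta]}F(x,t)=0$ for every $\eta$ in $(0,\si)$; moreover $F\geq\mod{\Nabla u}^q$ pointwise. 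Because $\Ric_{N\times\BR}\geq-\kappa^2$ and $(n-1)/n\leq q\leq1$, Proposition~\ref{p: Dindos} shows that $F$ is $q\kappa^2$-subharmonic, and hence, using $\DDelta\cGS F=-F$, that $G:=F-q\kappa^2\,\cGS F$ is subharmonic on $\Si$; it is also symmetric, continuous and in $\cB^p$, since $\cGS$ is bounded on $\cB^p$ and on $\lp{\Si}$ by Proposition~\ref{p: estimates for cGS II}.

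I would then apply Lemma~\ref{l: Geta} to this $F$ with $\al=q\kappa^2$: it produces a nonnegative $h\in\lp{N}$ with $\bignorm{h}{p}\leq\bignorm{F}{\cB^p}$ such that $G\leq\cPS h$ in $\Si$, and therefore $F=G+q\kappa^2\cGS F\leq\cPS h+q\kappa^2\cGS F$ pointwise. Iterating this inequality $J$ times, where $J$ is the smallest integer with $J>(n+1)/2$, and using that $\cGS$ has a nonnegative kernel, one obtains
$$
F\leq\sum_{j=0}^{J-1}(q\kappa^2)^j\,\cGS^j\cPS h+(q\kappa^2)^J\,\cGS^JF\qquad\text{on }\Si .
$$
Passing to nontangential maximal functions and using $(g_1+g_2)^*\leq g_1^*+g_2^*$ gives $F^*\leq\sum_{j=0}^{J-1}(q\kappa^2)^j(\cGS^j\cPS h)^*+(q\kappa^2)^J(\cGS^JF)^*$. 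Now $\cPS h$ is nonnegative, symmetric and harmonic on $\Si$ (positive if $h\not\equiv0$; if $h\equiv0$ only the remainder term survives and the argument simplifies), so Theorem~\ref{t: maximal estimate}~\rmi\ yields $\bignorm{(\cGS^j\cPS h)^*}{p}\leq C\bignorm{(\cPS h)^*}{p}$ for $1\leq j\leq J-1$, while the local Hardy--Littlewood maximal bound for the Poisson operator on the slab — which follows from the kernel estimates of Lemma~\ref{l: Mt sqrt cL}~\rmv\ together with the local doubling property of $\nu$ — gives $\bignorm{(\cPS h)^*}{p}\leq C\bignorm{h}{p}\leq C\bignorm{F}{\cB^p}$. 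For the remainder term, $F$ is nonnegative and symmetric and $J>(n+1)/2$, so Theorem~\ref{t: maximal estimate}~\rmii\ applies and $\bignorm{(\cGS^JF)^*}{p}\leq C\bignorm{F}{\lp{\Si}}\leq C\bignorm{F}{\cB^p}$. Combining these bounds yields $\bignorm{F^*}{p}\leq C\bignorm{F}{\cB^p}<\infty$. Finally, from $\mod{\Nabla u}^q\leq F$ one gets $\mod{\Nabla u}^*\leq(F^*)^{1/q}$, hence, using $pq=1$,
$$
\bignorm{\mod{\Nabla u}^*}{1}\leq\int_N(F^*)^{1/q}\wrt\nu=\bignorm{F^*}{p}^{p}\leq C^p\,\bignorm{F}{\cB^p}^{p}\leq C'\,\bignorm{\mod{\Nabla u}}{\cB^1},
$$
which proves $\rmi$ and the right-hand inequality in \eqref{f: equiv Nabla u}.

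The main obstacle — and the reason both the symmetrisation and the two-part structure of Theorem~\ref{t: maximal estimate} are needed — is to arrange that every intermediate object (the iterated Green potentials $\cGS^jF$ and the Poisson extension $\cPS h$) is simultaneously nonnegative, symmetric, and either positive harmonic (so that part~\rmi\ applies) or in $\lp{\Si}$ (so that part~\rmii\ applies). A secondary technical point is to recognise that a \textit{finite} number of iterations suffices, so that no smallness of $q\kappa^2$ is required; this works precisely because $\cGS^J$ with $J>(n+1)/2$ maps $\lp{\Si}$-functions to functions with $\lp{N}$-bounded nontangential maximal function.
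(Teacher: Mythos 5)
Your argument reproduces the architecture of the paper's proof: the easy implication via the trivial pointwise bound plus an $L^1$ disjoint-balls argument, and the hard implication via Proposition~\ref{p: Dindos}, Lemma~\ref{l: Geta}, iteration of $\cGS$ up to order $J>(n+1)/2$, and the two parts of Theorem~\ref{t: maximal estimate}. Your symmetrisation $F(x,t)=\mod{\Nabla u(x,t)}^q+\mod{\Nabla u(x,2\si-t)}^q$ is a harmless (indeed slightly more scrupulous) variant, since the paper feeds $\mod{\Nabla u}^q$ itself into machinery whose statements formally require symmetry; and the endpoint choice $q=(n-1)/n$ is compatible with Proposition~\ref{p: Dindos} as stated.

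There is, however, one step whose justification does not hold up: the estimate $\bignorm{(\cPS h)^*}{p}\leq C\bignorm{h}{p}$. You derive it from ``the kernel estimates of Lemma~\ref{l: Mt sqrt cL}~\rmv\ together with local doubling'', but Lemma~\ref{l: Mt sqrt cL}~\rmv\ is only an operator-norm statement, $\sup_y\bignorm{k_{\Mt(\cD)}(\cdot,y)}{r}\leq C\,t^{-n/r'}$. Combined with H\"older this gives $\bigmod{\Mt(\cD)h(x)}\leq C\,t^{-n/p}\bignorm{h}{p}$, which blows up as $t\to 0$ and says nothing about a maximal function. To dominate $\sup_{(x,t)\in\Ga_\al(z)}\bigmod{\Mt(\cD)h(x)}$ by a local Hardy--Littlewood maximal function of $h$ at $z$ one would need a pointwise kernel bound of the type $k_{\Mt(\cD)}(x,y)\leq C\,t^{-n}\big(1+d(x,y)/t\big)^{-n-\de}$, i.e.\ quantitative concentration of the kernel at scale $t$ near the diagonal; an $L^{r}$ bound on the kernel slices is consistent with kernels that do not concentrate at all, and no such pointwise bound is established in the paper. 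The estimate you need is true, but the paper proves it differently (see \eqref{f: max est Poisson}): decompose $\cPS h$ via \eqref{f: formula cPS} into terms of the form $\cP_t^N$ applied to $h$, $\mod{\Msi(\cD)h}$ and $\mod{\e^{-\si\cD}\Msi(\cD)h}$ (each in $\lp{N}$ by the $H^\infty$ functional calculus bound of Lemma~\ref{l: Mt sqrt cL}~\rmiv); control the vertical maximal function $\sup_t\mod{\cP_t^Nf}$ by $\sup_t\mod{\cH_t^Nf}$ through subordination and the Littlewood--Paley--Stein maximal theorem; and pass from the vertical to the nontangential supremum by Harnack's inequality applied to the resulting positive harmonic majorant. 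With that substitution (and a word on why $\cPS h\geq 0$ for $h\geq 0$, which you assert only parenthetically but which Theorem~\ref{t: maximal estimate}~\rmi\ requires), the rest of your proof goes through and yields \eqref{f: equiv Nabla u} exactly as you describe.
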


\begin{proof}
We prove that \rmi\ implies \rmii, and that the left hand inequality in \eqref{f: equiv Nabla u} holds.
Observe that $\bigmod{\Nabla u}^*(x) \geq \bigmod{\Nabla u(x,t)}$
for every $t$ in $(0,2\si)$, so that 
$$
\bignorm{\mod{\Nabla u}}{\cB^1} \leq \int_N \, \bigmod{\Nabla u}^*(x) \wrt\nu(x).
$$  
It remains to prove that $\ds\sup_{t\in [\eta,2\si-\eta]} \bigmod{\Nabla u(\cdot,t)}$ 
vanishes at infinity for each $\eta$ in $(0,2\si)$.  
We argue by contradiction.  Suppose that 
$ 
\ds\limsup_{d(x,o) \to \infty} \,\sup_{t\in [\eta,2\si-\eta]}  \bigmod{\Nabla u(x,t)} =: \be>0
$ 
for some $\eta$ in $(0,2\si)$.  
Then there exists a sequence $\{x_k\}$ such that $d(x_k,o)$ tends to infinity as $k$ does and 
$\ds \limsup_{k \to \infty} \, \sup_{t\in [\eta,2\si-\eta]} \bigmod{\Nabla u(x_k,t)} = \be$.  
Clearly $\bigmod{\Nabla u}^* (x) \geq \be/2$ for all $x$ in $B_{\al \eta}(x_k)$ and $k$ large enough
(here $\al$ denotes the aperture
of the cone $\Ga_\al$ in \eqref{f: nontangential max F}).  By possibly passing to a subsequence, we may assume that
the balls $B_{\al \eta}(x_k)$ are mutually disjoint.  Therefore
$$
\int_N \, \bigmod{\Nabla u}^*(x) \wrt\nu(x) 
\geq \sum_k\, \int_{B_{\al \eta}(x_k)}  \!\bigmod{\Nabla u}^*(x) \wrt\nu(x)   
= \infty,
$$
which clearly contradicts our assumption.  

Next we prove that \rmii\ implies \rmi\ and the right hand inequality in \eqref{f: equiv Nabla u} holds.
Choose $q$ in $\big((n-1)/n,1\big)$.
Since $\bigmod{\Nabla u}$ is in $\cB^1$, $\bigmod{\Nabla u}^q$ is in $\cB^{1/q}$.
Furthermore $\bigmod{\Nabla u}^q$ is $q\kappa^2$-subharmonic in $\Si$ by Proposition~\ref{p: Dindos}, whence 
$G:=\bigmod{\Nabla u}^q - q\kappa^2 \cGS \bigmod{\Nabla u}^q$ is subharmonic therein.  We may apply 
Lemma~\ref{l: Geta}~\rmiii\ (with $\bigmod{\Nabla u}^q$, $q\kappa^2$ and $1/q$ in place of $F$, $\al$ and $p$,
respectively), and conclude that 
\begin{equation} \label{f: conseq limit ineq G}
\mod{\Nabla u}^q
\leq \cPS h + q\kappa^2\cGS \big(\mod{\Nabla u}^q\big).  
\end{equation}
Fix an integer $J>(n+1)/2$.  The inequality \eqref{f: conseq limit ineq G} can be iterated $J$ times, to wit
$$
\mod{\Nabla u}^q 
\leq \cPS h + C\, \Big(\sum_{j=1}^{J-1} \, \cGS^{j}\cPS h  
	+ \cGS^J \big(\mod{\Nabla u}^q\big)\Big). 
$$
We raise both sides of the last inequality to the power $1/q$, and obtain that 
$$
\mod{\Nabla u} 
\leq C\, \Big\{(\cPS h)^{1/q} + \sum_{j=1}^{J-1} \, \big(\cGS^{j} \cPS h\big)^{1/q} 
	+ \big(\cGS^J \mod{\Nabla u}^q\big)^{1/q} \Big\}. 
$$
This implies the following inequality for the associated nontangential maximal functions
\begin{equation} \label{f: nontangential Nabla u}
\mod{\Nabla u}^* 
\leq C\, \Big\{\big[\big(\cPS h\big)^*\big]^{1/q} + \sum_{j=1}^{J-1} \, \big[\big(\cGS^{j} \cPS h\big)^*\big]^{1/q} 
	+ \big[\big(\cGS^J \mod{\Nabla u}^q\big)^*\big]^{1/q} \Big\}. 
\end{equation}
Suppose that $p$ is in $(1,\infty)$.  
Observe that there exists a constant $C$ such that 
\begin{equation} \label{f: max est Poisson}
\bignorm{\big(\cPS h\big)^*}{p}
\leq C \bignorm{h}{p}
\quant h \in \lp{N}.
\end{equation}
Indeed, a straightforward argument, using the subordination formula \eqref{f: Poisson}, shows that 
$$
	\sup_{t\in(0, \si]} \, \bigmod{\cP_t^N f} 
	\leq \sup_{0<t} \, \bigmod{\cH_t^N f}. 
$$
By the Littlewood--Paley--Stein theory \cite[p.~73]{St3} (see also \cite[Theorem~7]{Co}), 
for each $p$ in $(1,\infty]$, there exists a constant $A_p$ such that 
$$
	\bignorm{\, \sup_{0<t} \,\bigmod{\cH_t^N f}}{p}
	\leq A_p \bignorm{f}{p} 
	\quant f \in \lp{N}.   
$$
The subordination formula \eqref{f: Poisson} implies that 
a similar estimate holds for the Poisson maximal operator.  Since $\cPS h(\cdot, t) = \cPS h(\cdot,2\si-t)$,
$$
\sup_{X \in \Ga_\al(z)} \, \bigmod{\cPS h(X)} 
= \sup_{X \in \Ga_\al(z)'} \, \bigmod{\cPS h(X)},
$$ 
where $\Ga_\al(z)' := \big\{(x,t)\in \Ga_\al(z): 0<t<\si\big\}$.  In the case where $X=(x,t)$ is in $\Ga_\al(z)'$,
formula \eqref{f: formula cPS} and the Markovianity of the Poisson semigroup imply that 
\begin{equation} \label{f: formula maximal}
\bigmod{\cPS h(x, t)}
\leq \cP_t^N h(x) + \frac{1}{2} \,\Big[\cP_{\si-t}^N\mod{\Msi \big(\cD\big) h}(x)
+\cP_{t}^N\mod{\e^{-\si\cD}\Msi \big(\cD\big) h}(x)\Big] .
\end{equation}
Clearly, the right hand side of the above inequality is a positive harmonic function on $N\times (0,\infty)$.  
If the aperture $\al$ is small enough and $(x,t)$ is in $\Ga_\al(z)'$, then the ball in $N\times \BR$
with centre $(z,t)$ and radius $2 d(x,z)$ is contained in $N\times (0,\infty)$.  Therefore, by Harnack's principle
(see, for instance, \cite[Theorem~5.4.3]{SC}), there exists a constant $C$ such that 
$$
\cP_t^N h(x) 
\leq C \, \cP_t^N h(z) 
\quant (x,t) \in \Ga_\al(z)' \quant z \in N,  
$$
and a similar estimate holds for the other summands on the right hand side of \eqref{f: formula maximal}.  
Set $h_0 := h$, $h_1 := \bigmod{\Msi \big(\cD\big) h}$ and $h_2 := \bigmod{\e^{-\si\cD} \Msi \big(\cD\big) h}$.    
Then
$$
\bignorm{\big(\cPS h\big)^*}{p}
\leq C\,  \sum_{j=0}^2 \, \bignorm{\, \sup_{0<t} \,\bigmod{\cP_t^N h_j}}{p} 
\leq C\,  \sum_{j=0}^2 \, \bignorm{h_j}{p} 
\leq C \bignorm{h}{p};
$$
the last inequality follows from the boundedness of the operators $\Msi \big(\cD\big)$ and $\e^{-\si\cD} \Msi \big(\cD\big)$
on $\lp{N}$, which follows from Lemma~\ref{l: Mt sqrt cL}~\rmiv\ and the contractivity of the Poisson semigroup.  
This proves \eqref{f: max est Poisson}.  

Similarly, by Theorem~\ref{t: maximal estimate}~\rmi, for every positive integer $j$
there exists a constant $C$ such that  
\begin{equation} \label{f: max est Poisson II}
\bignorm{\big(\cGS^j \cPS h\big)^*}{p}
\leq C \bignorm{\big(\cPS h\big)^*}{p}.
\end{equation}
By combining \eqref{f: max est Poisson} and 
\eqref{f: max est Poisson II} 
we obtain that 
$\bignorm{\big(\cGS^j \cPS h\big)^*}{p}
\leq C \bignorm{h}{p}$
In particular, the last estimate holds for $p=1/q$.  This and \eqref{f: nontangential Nabla u} imply that 
$$
	\begin{aligned}
		\bignorm{\mod{\Nabla u}^*}{1} 
		& \leq C \, \Big\{\bignormto{\big(\cPS h\big)^*}{1/q}{1/q} 
		+ \sum_{j=1}^{J-1} \bignormto{\big(\cGS^{j} \cPS h\big)^*}{1/q}{1/q} 
		+ \bignormto{\big(\cGS^J \mod{\Nabla u}^q\big)^*}{1/q}{1/q} \Big\} \\
		& \leq C\, \Big\{ \bignormto{h}{1/q}{1/q} 
		+ \bignormto{\big(\cGS^J \mod{\Nabla u}^q\big)^*}{1/q}{1/q} \Big\}.
	\end{aligned}
$$
Since $J>(n+1)/2$, Theorem~\ref{t: maximal estimate}~\rmii\ yields
$$
\bignormto{\big(\cGS^J \mod{\Nabla u}^q\big)^*}{1/q}{1/q} 
\leq C \bignormto{\mod{\Nabla u}^q}{L^{1/q}(\Si)}{1/q} 
\leq C \bignorm{\mod{\Nabla u}}{\cB^1}. 
$$
By combining the estimates above and using the estimate for $\bignorm{h}{1/q}$ in 
Lemma~\ref{l: Geta}~\rmi, we get that 
$\bignorm{\mod{\Nabla u}^*}{1} \leq C \bignorm{\mod{\Nabla u}}{\cB^1}$, as required. 
\end{proof}

\section{Analysis of the local Riesz transform}
\label{s: The local Riesz transform}

\subsection{Goldberg-type spaces}
We introduce the \textit{Goldberg-type} space $\ghu{N}$ (also referred to as \textit{local Hardy space}), 
which generalises the Goldberg space $\ghu{\BR^n}$ and plays a fundamental role in our analysis.  

\begin{definition} \label{d: atom}
Fix a positive number $s$.  
Suppose that $p$ is in $(1,\infty]$. 
A \textit{standard $p$-atom} at scale $s$ is a function $a$ in $\lu{N}$ supported in a ball $B$ of radius at most $s$
satisfying the following conditions:
\begin{enumerate}
\item[\itemno1] \textit{size condition}: $\norm{a}{p}  \leq \nu (B)^{-1/p'}$;
\item[\itemno2] \textit{cancellation condition}:
$\ds \int_B a \wrt \nu  = 0$.
\end{enumerate}
A \textit{global $p$-atom} at scale $s$ is a function $a$
in $\lu{N}$ supported in a ball $B$ of radius \textit{exactly equal to} $s$
satisfying the size condition above (but possibly not the cancellation condition).
Standard and global $p$-atoms will be referred to simply as $p$-\textit{atoms}.
\end{definition}

\begin{definition} \label{d: Goldberg}
Suppose that $s$ is a positive number.  The \textit{local atomic Hardy space} $\frh_s^{1,p}({N})$ is the
space of all functions~$f$ in $\lu{N}$ that admit a decomposition of the form
$
f = \sum_{j=1}^\infty \lambda_j \, a_j,
$
where $\la_j \in \BC$, the $a_j$'s are $p$-atoms at scale $s$ and $\sum_{j=1}^\infty \mod{\lambda_j} < \infty$.
The norm $\norm{f}{\frh_s^{1,p}}$ is the infimum of $\sum_{j=1}^\infty \mod{\lambda_j}$
over all decompositions of $f$ as above.  
\end{definition}

\noindent
It is well known that $\frh_s^{1,p}(N)$ is independent of $p$ and of $s$ and the corresponding norms 
$\norm{\cdot}{\frh_s^{1,p}}$ are pairwise equivalent (see \cite[Proposition~1]{MVo}); henceforth, the space
$\frh_s^{1,2}(N)$ will be denoted simply by $\ghu{N}$.  
The fact that $\frh_s^{1,2}(N)$ is independent of $s$ and $p$ will be used without further comment
in the sequel.  Hereafter, atomic decompositions of functions in $\ghu{N}$ will consist of atoms at scale $1$. 

The definition of the space $\ghu{N}$ is similar to that of the atomic Hardy space $\hu{N}$, introduced by
A. Carbonaro, G. Mauceri and Meda \cite{CMM1,CMM2}, the only difference
being that atoms in $\hu{N}$ are just standard atoms in $\ghu{N}$, and there are no global atoms.
As a consequence, the integral of functions in $\hu{N}$ vanishes, a property not enjoyed by all the
functions in $\ghu{N}$.
Thus, trivially, $\hu{N}$ is properly and continuously contained in $\ghu{N}$.

We need the following result, which is one of the main contributions of \cite{MaMV2}.

\begin{theorem}\label{t: MaMV2}
Suppose that $\si>0$. Under our geometric assumptions, $\ghu{N}$ agrees with the space $\ghuP{N}$ of all functions $f$ in 
$L^1(N)$ such that $\ds \cP_\ast^Nf:=\sup_{s\in(0,\si)}|\cP_s^Nf|$ is in $L^1(N)$. 
Furthermore, there exist positive constants $C_1$ and $C_2$ such that
$$
C_1\bignorm{f}{\ghu N}\leq \bignorm{\cP_\ast^Nf}{1} %+ \bignorm{f}{1}
\leq C_2\bignorm{f}{\ghu N}.
$$ 
\end{theorem}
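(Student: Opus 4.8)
The statement is essentially one of the main results of \cite{MaMV2}, which builds on the constructive method of Uchiyama \cite{U}; here I only describe the plan. Two estimates must be proved: the \textit{easy} bound $\bignorm{\cP_\ast^N f}{1}\le C_2\bignorm{f}{\ghu{N}}$ and the \textit{hard} bound $C_1\bignorm{f}{\ghu{N}}\le\bignorm{\cP_\ast^N f}{1}$.

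\textit{The easy bound.} First I would reduce, by the atomic definition of $\ghu{N}$ and the sublinearity of $f\mapsto\cP_\ast^N f$, to a uniform estimate $\bignorm{\cP_\ast^N a}{1}\le C$ over all $2$-atoms $a$ at scale $\si$, say supported in a ball $B=B_r(x_0)$. On the dilated ball $2B$ I would invoke the $L^2$-boundedness of $f\mapsto\sup_{s>0}|\cP_s^N f|$ (which follows, exactly as in the proof of Theorem~\ref{t: analogue SW D}, from the subordination formula \eqref{f: Poisson} and the Littlewood--Paley--Stein maximal estimate for $\{\cH_s^N\}$), together with H\"older's inequality and the local Ahlfors regularity \eqref{f: Ahlfors}, to obtain $\bignorm{\cP_\ast^N a}{\lu{2B}}\le\nu(2B)^{1/2}\bignorm{\cP_\ast^N a}{2}\le C$. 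Outside $2B$ I would treat standard and global atoms separately. For a standard atom I would use the cancellation: for $y\notin 2B$ and $s\in(0,\si)$, $\cP_s^N a(y)=\int_B\big(p_s^N(y,z)-p_s^N(y,x_0)\big)\,a(z)\wrt\nu(z)$, where $p_s^N$ denotes the kernel of $\cP_s^N$; a H\"older-regularity estimate for $p_s^N$, obtained by subordination from the Gaussian-type bound \eqref{f: assumptions on ht} and Bakry's condition \eqref{f: Bakry}, then gives the tail contribution after summation over the dyadic annuli $2^{k+1}B\setminus 2^{k}B$, and it is here that the truncation $s<\si$ makes the integrals against $\wrt\nu$ converge in spite of the possibly exponential volume growth \eqref{f: volume growth}. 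For a global atom there is no cancellation, and the crux is to bound $\int_{(2B)^{c}}\sup_{0<s<\si}\big|\int_{B}p_s^N(y,z)\,a(z)\wrt\nu(z)\big|\wrt\nu(y)$ uniformly in $x_0$; this rests on the interplay between the heat kernel decay \eqref{f: assumptions on ht} and the volume growth \eqref{f: volume growth}, and is precisely the point carefully handled in \cite{MaMV2}. Summing over an atomic decomposition of $f$ then yields $\bignorm{\cP_\ast^N f}{1}\le C_2\bignorm{f}{\ghu{N}}$.

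\textit{The hard bound.} This is the substantial direction, and the plan is to adapt Uchiyama's constructive argument to our locally doubling manifold: first pass from the Poisson maximal function to a grand maximal function $\cM f$, showing that $\cP_\ast^N f\in\lu{N}$ forces $\cM f\in\lu{N}$, i.e.\ $f\in\ghumax{N}$; then perform a \textit{local} Whitney--Calder\'on--Zygmund decomposition of each superlevel set $\{\cM f>\la\}$ using balls of radius at most $\si$, writing $f=g_\la+\sum_j b^\la_j$; and finally telescope over the dyadic levels $\la=2^{k}$ and recognise, through Uchiyama's iteration, the resulting building blocks as a convergent series of standard atoms (supported in small balls) and global atoms (supported in balls of radius comparable to $\si$), with $\sum_j|\lambda_j|\le C\bignorm{\cM f}{1}\le C\bignorm{\cP_\ast^N f}{1}$. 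The main obstacle throughout is that $\nu$ is only \textit{locally} doubling --- the volume may grow exponentially, see \eqref{f: volume growth} --- so the classical Coifman--Weiss real-variable toolkit (covering lemmas, maximal estimates, telescoping of level sets) is not available globally; every such step has to be localised at scale $\si$, and carrying this localisation through Uchiyama's delicate iteration is the technical heart of \cite{MaMV2}.

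Finally I would note that the hypothesis $\si>0$ is immaterial to the identity of the spaces: $\ghu{N}$ is independent of the scale of its atoms by \cite[Proposition~1]{MVo}, and, using \eqref{f: Poisson} and the semigroup property, one checks that truncating $\sup_{s}|\cP_s^N\cdot|$ at two different thresholds yields comparable quantities on $\lu{N}$. Hence $\ghuP{N}$ is the same space for every $\si>0$, and the stated norm equivalence holds with constants depending on $\si$.
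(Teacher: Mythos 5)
The paper offers no proof of this statement at all: it is quoted as one of the main results of \cite{MaMV2}, exactly as you observe at the outset, so your proposal takes essentially the same approach (a citation) as the paper. Your accompanying sketch of the two directions --- the atomic estimate for $\cP_\ast^N$ and the localised Uchiyama-type constructive argument --- is consistent with the strategy of that reference and with the discussion in the introduction here, so there is nothing further to compare.
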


We need also the following simple result.  

\begin{proposition} \label{p: ghu decay}
Suppose that $\vep>0$.  Then there exists a constant $C$ such that 
if $h$ is a measurable function on $N$ satisfying $\mod{h(x)} \leq A\, \e^{-(2\be+\vep) d(x,o)}$
for some $o$ in $N$, then $\bignorm{h}{\ghu{N}}\leq C\, A$.
\end{proposition}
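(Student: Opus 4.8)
The plan is to decompose $h$ into disjoint pieces, each supported in a ball of radius $1$ and hence a multiple of a global atom at scale $1$, and then to bound the sum of the corresponding coefficients by means of the volume growth estimate \eqref{f: volume growth}. The only fact of substance needed is the integrability of the majorant: by \eqref{f: Ahlfors}, \eqref{f: volume growth} and the layer-cake formula,
\[
\int_N \e^{-(2\be+\vep)d(x,o)}\wrt\nu(x)
= (2\be+\vep)\int_0^\infty \e^{-(2\be+\vep)r}\,\nu\big(B_r(o)\big)\wrt r,
\]
and splitting the last integral over $(0,1]$ and $(1,\infty)$, using $\nu(B_r(o))\le C_2$ for $r\le 1$ and $\nu(B_r(o))\le C\,r^\al\,\e^{2\be r}$ for $r\ge 1$, bounds it by $C + C\int_1^\infty r^\al\,\e^{-\vep r}\wrt r<\infty$, with a bound uniform in $o$. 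In particular the hypothesis forces $h\in\lu N$.

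Next I would fix a covering $\{B_1(p_j)\}_j$ of $N$ by geodesic balls of radius $1$ enjoying the finite overlapping property (its existence follows from bounded geometry; see, for instance, \cite[Lemma~1.1]{He} or the proof of Theorem~\ref{t: est nabla cPS}), set $E_1:=B_1(p_1)$ and $E_j:=B_1(p_j)\setminus\bigcup_{i<j}B_1(p_i)$ for $j\ge 2$, so that $\{E_j\}$ is a Borel partition of $N$ with $E_j\subseteq B_1(p_j)$, and write $f_j:=h\,\One_{E_j}$, whence $h=\sum_j f_j$ pointwise and in $\lu N$. For $x\in B_1(p_j)$ one has $d(x,o)\ge d(p_j,o)-1$, so that
\[
\bignorm{f_j}{2}\le \bignorm{h}{\ly{B_1(p_j)}}\,\nu\big(B_1(p_j)\big)^{1/2}
\le A\,\e^{2\be+\vep}\,\e^{-(2\be+\vep)d(p_j,o)}\,\nu\big(B_1(p_j)\big)^{1/2}.
\]
Discarding the indices with $f_j=0$ a.e.\ and putting $\la_j:=A\,\e^{2\be+\vep}\,\e^{-(2\be+\vep)d(p_j,o)}\,\nu\big(B_1(p_j)\big)$ and $a_j:=\la_j^{-1}f_j$, each $a_j$ is a global $2$-atom at scale $1$ supported in $B_1(p_j)$, and $h=\sum_j\la_j a_j$ is an atomic decomposition (it converges in $\lu N$ since $\bignorm{a_j}{1}\le\nu(B_1(p_j))^{1/2}\bignorm{a_j}{2}\le 1$). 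By Definition~\ref{d: Goldberg} it then suffices to estimate $\sum_j\la_j$.

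Finally, using the finite overlapping property in the form $\sum_j\One_{B_1(p_j)}\le C$ together with $d(p_j,o)\ge d(x,o)-1$ for $x\in B_1(p_j)$,
\[
\sum_j\la_j
= A\,\e^{2\be+\vep}\sum_j\int_{B_1(p_j)}\e^{-(2\be+\vep)d(p_j,o)}\wrt\nu(x)
\le C\,A\int_N\e^{-(2\be+\vep)d(x,o)}\wrt\nu(x)
\le C\,A
\]
by the first step, with constants depending only on $\be$, $\vep$, $\al$, $n$ and the geometry of $N$ (not on $h$ or $o$); hence $\bignorm{h}{\ghu N}\le C\,A$. The only real obstacle is the integrability estimate in the first paragraph — exactly where the strict gain $\vep>0$ over the volume exponent $2\be$ is used — the remainder being the routine packaging of a rapidly decaying function into global atoms, which requires no cancellation.
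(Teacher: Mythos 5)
Your proof is correct, and it reaches the conclusion by a somewhat different route than the paper. The paper decomposes $h$ over the annuli $A_j := B_{j+1}(o)\setminus B_j(o)$ and then invokes the inequality $\bignorm{f}{\ghu{N}} \leq C\,\nu(B)^{1/p'}\bignorm{f}{p}$ for $f$ in $\lp{N}$ supported in an \textit{arbitrary} ball $B$ (a consequence of \cite[proof of Lemma~2]{MVo}), so that the whole burden falls on showing that $\nu\big(B_{j+1}(o)\big)\,\e^{-(2\be+\vep)j}$ is summable --- the same interplay between \eqref{f: volume growth} and the strict gain $\vep$ that drives your integrability estimate for $\int_N\e^{-(2\be+\vep)d(x,o)}\wrt\nu(x)$. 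You instead chop $h$ along a disjointification of a finite-overlap covering by unit balls, so that each piece is directly a multiple of a global $2$-atom at scale $1$ and no auxiliary embedding for functions supported on large balls is needed; the price is the (routine) bookkeeping with the covering and the observation $d(p_j,o)\geq d(x,o)-1$. Both arguments are sound and of comparable length: yours is more self-contained, using only the atomic definition and a finite-overlap unit-ball covering already employed elsewhere in the paper (e.g.\ in the proof of Theorem~\ref{t: est nabla cPS}), whereas the paper's is slightly slicker at the cost of an external citation.
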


\begin{proof}
A corollary of \cite[proof of Lemma 2]{MVo} is that for any $p$ in $(1,\infty]$ there exists a constant~$C$
such that every function $f$ in $\lp{N}$ supported in a ball $B$ is in $\ghu{N}$ and
\begin{equation} \label{f: normah1}
\bignorm{f}{\ghu{N}}
\leq C \, \nu(B)^{1/p'} \bignorm{f}{p}.
\end{equation} 
Consider an exhaustion of $N$ with $B_1(o)$ and annuli $A_j:= B_{j+1}(o) \setminus B_j(o)$, where $j=1,2, \ldots$.
Correspondingly, write 
$$
h
= \One_{B_1(o)} \, h + \sum_{j=1}^\infty \One_{A_j} \, h.  
$$
Clearly $\One_{B_1(o)} \, h$ is in $\ghu{N}$, for it is a multiple of a global
$\ghu{N}$ atom.  Next, by \eqref{f: normah1} (with $p=2$),
$$
\bignorm{\One_{A_j} \, h}{\ghu{N}}
\leq C \, \nu\big(B_{j+1}(o)\big)^{1/2}\bignorm{h}{\ld{A_j}}  
$$
The pointwise estimate of $h$ implies that 
$\bignorm{h}{\ld{A_j}}  \leq CA\, \e^{-(2\be+\vep)j}  \, \nu(A_j)^{1/2}$, whence 
$$
\bignorm{\One_{A_j} \, h}{\ghu{N}}
\leq CA\,  \nu\big(B_{j+1}(o)\big) \,  \e^{-(2\be+\vep)j}  
\leq CA\,  \e^{-\vep j/2}.
$$
The required estimate follows by summing the estimates above with respect to $j$.  
\end{proof}

\subsection{Analysis of the Riesz transform}
For any $\tau>0$ consider the operator $\cD_\tau := \sqrt{\cL_\tau}$, 
obtained by analytic continuation from the analytic family of operators $\{\cL_\tau^{-\al/2}: \Re \al >0\}$.  
We write $\cD_\tau^{-1} = \cKO + \cKinfty$, where $\cKO$ and $\cKinfty$
are the operators associated to the kernels
\begin{equation} \label{f: kernel cKO and cKinfty}
	k_{\cKO} = \vp \, k_{\cD_\tau^{-1}}
	\qquad\hbox{and}\qquad
	k_{\cKinfty} = (1-\vp) \, k_{\cD_\tau^{-1}},  
\end{equation}
where $\vp:N\times N\to [0,1]$\label{pp: vp} is the smoooth function introduced in Lemma \ref{l: Laplacian cut-offs}~\rmii\
(with $R=1$).  We further decompose $\cKO$ as $\cKOO + \cKOinfty$, where 
$$
k_{\cKOO} = \frac{\vp}{\sqrt\pi} \, \int_0^1 t^{-1/2}\, \e^{-\tau t} \, h_t^N \wrt t 
\qquad\hbox{and} \qquad
k_{\cKOinfty} =  \frac{\vp}{\sqrt\pi} \, \int_1^\infty t^{-1/2}\, \e^{-\tau t} \, h_t^N \wrt t.  
$$
Recall the definition of $\Upsilon_R$ (see\eqref{f: Upsilon R}).   

\begin{lemma} \label{l: Riesz Bessel}
There exists a positive constant $C$ such that the following hold:
\begin{enumerate}
\item[\itemno1]
$
\ds k_{\cD_\tau^{-1}}
\leq C\, \big[d^{1-n} \, \One_{\Upsilon_1} +  \e^{-2 d\sqrt{\tau c}}  \, \One_{\Upsilon_1^c} \big],
$
where $c$ is as in \eqref{f: assumptions on ht};  
\item[\itemno2]
	$\cD_\tau^{-1}$ is bounded from $\lu{N}$ to $\ghu{N}$ 
provided that $\tau > \be^2/c$.    
\end{enumerate}
\end{lemma}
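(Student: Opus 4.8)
The plan is to prove the pointwise bound \rmi\ by a direct computation with the subordinated heat kernel, and then to deduce \rmii\ by splitting $\cD_\tau^{-1}$ into a singular near--diagonal part, a bounded near--diagonal part, and an exponentially decaying far--diagonal part, and estimating the $\ghu{N}$ norm of each column of each piece separately.

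For \rmi\ I would start from the subordination formula \eqref{f: subordination} (with $\rho=1/2$ and $t=1$), which gives $\cD_\tau^{-1}=\frac1{\sqrt\pi}\int_0^\infty t^{-1/2}\,\e^{-\tau t}\,\cH_t^N\wrt t$, hence $k_{\cD_\tau^{-1}}(x,y)=\frac1{\sqrt\pi}\int_0^\infty t^{-1/2}\,\e^{-\tau t}\,h_t^N(x,y)\wrt t$. Inserting the Gaussian bound \eqref{f: assumptions on ht} and writing $d=d(x,y)$, everything reduces to estimating $\int_0^\infty t^{-1/2}\,\e^{-\tau t}\,\ga(t)\,\e^{-cd^2/t}\wrt t$. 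When $d<1$ I would split this integral at $t=1$: on $(0,1]$ one has $\ga(t)=t^{-n/2}$, and the change of variable $s=d^2/t$ turns $\int_0^1 t^{-(n+1)/2}\,\e^{-cd^2/t}\wrt t$ into $d^{1-n}\int_{d^2}^\infty s^{(n-3)/2}\,\e^{-cs}\wrt s\le C\,d^{1-n}$, the last integral converging near the origin precisely because $n\ge2$; on $(1,\infty)$ the contribution is $O(1)\le C\,d^{1-n}$, since $d^{1-n}\ge1$. When $d\ge1$ I would not split, but bound $t^{-1/2}\ga(t)\le t^{-(n+1)/2}+t^{-1}$ and use the classical identity $\int_0^\infty t^{\mu-1}\,\e^{-at-b/t}\wrt t=2(b/a)^{\mu/2}K_\mu(2\sqrt{ab})$ together with the asymptotics $K_\mu(z)\le C\,z^{-1/2}\,\e^{-z}$ (for every fixed $\mu$ and $z$ bounded away from $0$) to get $\int_0^\infty t^{-1/2}\,\e^{-\tau t}\,\ga(t)\,\e^{-cd^2/t}\wrt t\le C\,\e^{-2d\sqrt{\tau c}}$; the point is that keeping the factor $\e^{-\tau t}$ also for small $t$ is what produces the sharp rate $2\sqrt{\tau c}$.

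For \rmii\ recall the decomposition $\cD_\tau^{-1}=\cKOO+\cKOinfty+\cKinfty$ of \eqref{f: kernel cKO and cKinfty} and the lines following it. The strategy is to show that each of these three operators has kernel $k(\cdot,y)$ lying in $\ghu{N}$ with $\sup_{y\in N}\bignorm{k(\cdot,y)}{\ghu{N}}<\infty$, and then to conclude by a superposition argument: from \rmi\ a Schur test (using \eqref{f: volume growth}, \eqref{f: Ahlfors}, and the fact that $2\sqrt{\tau c}>2\be$ since $\tau>\be^2/c$, so that $\sup_y\int_N\e^{-2\sqrt{\tau c}d(x,y)}\wrt\nu(x)<\infty$) shows that $\cD_\tau^{-1}$ is bounded on $\lu{N}$, whence $\cD_\tau^{-1}f=\int_N f(y)\,k_{\cD_\tau^{-1}}(\cdot,y)\wrt\nu(y)$ may be read as a Bochner integral valued in the Banach space $\ghu{N}$ and $\bignorm{\cD_\tau^{-1}f}{\ghu{N}}\le\bignorm{f}{1}\,\sup_y\bignorm{k_{\cD_\tau^{-1}}(\cdot,y)}{\ghu{N}}$. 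The three column bounds: for $\cKinfty$, part \rmi\ gives $\bigmod{k_{\cKinfty}(x,y)}\le C\,\e^{-2\sqrt{\tau c}\,d(x,y)}$ (the kernel vanishes for $d(x,y)<1/4$), and writing $2\sqrt{\tau c}=2\be+\vep$ with $\vep>0$, Proposition~\ref{p: ghu decay} with $o=y$ gives $\bignorm{k_{\cKinfty}(\cdot,y)}{\ghu{N}}\le C$ uniformly; for $\cKOinfty$, the cut-off $\vp$ (Lemma~\ref{l: Laplacian cut-offs}~\rmii, $R=1$) confines the support to $\Upsilon_1$ and $h_t^N(x,y)\le Ct^{-1/2}$ for $t\ge1$, so $k_{\cKOinfty}(\cdot,y)$ is bounded with support in $B_1(y)$ and \eqref{f: normah1} with $p=\infty$, together with \eqref{f: Ahlfors}, gives $\bignorm{k_{\cKOinfty}(\cdot,y)}{\ghu{N}}\le C\,\nu(B_1(y))\,\bignorm{k_{\cKOinfty}(\cdot,y)}{\infty}\le C$; for $\cKOO$, the computation in \rmi\ gives $\bigmod{k_{\cKOO}(x,y)}\le C\,\One_{\Upsilon_1}(x,y)\,d(x,y)^{1-n}$, and fixing $p\in\big(1,n/(n-1)\big)$ a dyadic decomposition of $B_1(y)$ using \eqref{f: Ahlfors} gives $\bignorm{k_{\cKOO}(\cdot,y)}{p}\le C$ uniformly (the series converges because $n-(n-1)p>0$), so \eqref{f: normah1} gives $\bignorm{k_{\cKOO}(\cdot,y)}{\ghu{N}}\le C\,\nu(B_1(y))^{1/p'}\bignorm{k_{\cKOO}(\cdot,y)}{p}\le C$. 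Summing the three contributions yields \rmii.

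\textbf{Main obstacle.} Two points need genuine care. The first is the sharp exponential rate $2\sqrt{\tau c}$ in \rmi\ and its interplay with the hypothesis $\tau>\be^2/c$: the factor $\e^{-\tau t}$ must be kept throughout, and the (otherwise routine) modified--Bessel bookkeeping is where the constant $\sqrt{\tau c}$ is produced and where the threshold $\tau=\be^2/c$ becomes the natural one. The second is the singular near--diagonal piece $\cKOO$: to place $k_{\cKOO}(\cdot,y)$ in $\ghu{N}$ one is forced to use the local $L^p\to\ghu{N}$ estimate \eqref{f: normah1} with $p>1$, which requires $d(\cdot,y)^{1-n}$ to be locally $p$-integrable and hence the restriction $n\ge2$, and the uniformity in $y$ of the resulting bound is exactly where bounded geometry (local Ahlfors regularity) is used. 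Once the three uniform column bounds are established, the passage to the operator bound is standard.
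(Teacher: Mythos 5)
Your argument is correct, and for part \rmi\ it is essentially the paper's computation: both proofs insert the Gaussian bound \eqref{f: assumptions on ht} into the subordination formula, extract $d^{1-n}$ from the small-time integral by the substitution $s=d^2/t$, and obtain the rate $\e^{-2d\sqrt{\tau c}}$ from the large-$d$ regime by completing the square $\tau t+cd^2/t=2d\sqrt{\tau c}+(\sqrt{\tau t}-d\sqrt{c/t})^2$; your use of the modified Bessel identity and the asymptotics of $K_\mu$ is just a packaged form of the Laplace-method estimate the paper carries out in Lemma~\ref{l: asymp integral}.

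For part \rmii\ your route is genuinely different in organisation, though it runs on the same fuel. The paper decomposes the \emph{input}: it writes $g=\sum_j g\,\One_{B_j}$ over a finitely overlapping cover by unit balls and shows, using \rmi, that each $\cD_\tau^{-1}g_j$ splits pointwise into a compactly supported $L^p$ piece (a multiple of a global atom, via \eqref{f: normah1}) plus a tail dominated by $\e^{-2\sqrt{\tau c}\,d(\cdot,c_j)}\norm{g_j}{1}$, handled by Proposition~\ref{p: ghu decay}. You instead decompose the \emph{operator} into $\cKOO+\cKOinfty+\cKinfty$ and prove the uniform column bound $\sup_y\bignorm{k(\cdot,y)}{\ghu{N}}<\infty$ for each piece, concluding by superposition. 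The ingredients (local Ahlfors regularity, \eqref{f: normah1} with $p\in(1,n/(n-1))$, the threshold $2\sqrt{\tau c}>2\be$) are identical, so the two proofs are morally dual. What the paper's version buys is that it sidesteps the one point you pass over lightly: reading $\cD_\tau^{-1}f=\int_N f(y)\,k(\cdot,y)\wrt\nu(y)$ as a Bochner integral with values in $\ghu{N}$ requires the strong measurability of $y\mapsto k(\cdot,y)$ as a $\ghu{N}$-valued map (separability of $\ghu{N}$ plus weak measurability against $bmo$ functionals, say), and one must also check that the Bochner integral coincides with the pointwise-defined function. This is standard and fixable in a few lines, but it is an extra layer that the row decomposition avoids entirely, since there one only ever sums a numerical series of $\ghu{N}$ norms. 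Conversely, your column bounds are slightly more transparent and reusable (they isolate exactly which piece of the kernel is singular, which is bounded, and which decays), at the cost of that measurability remark.
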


\begin{proof}
First we prove \rmi.  
Fr{}om the estimates \eqref{f: assumptions on ht} for $h_t^N$, we deduce that 
$$
k_{\cD_\tau^{-1}}
\leq C\, \int_0^1 t^{(1-n)/2} \, \e^{-\tau t-cd^2/t} {\dtt t}
+ C\, \int_1^\infty \e^{-\tau t-cd^2/t} {\dtt t}.  
$$
Changing variables ($d^2/t=u$) we see that the first integral above is $\asymp d^{1-n}$
as $d$ tends to $0$ and decays superexponentially as $d$ tends to infinity.

Clearly, the second integral above is bounded as $d$ tends to $0$.  For $d$ large
we write $\tau t+cd^2/t = 2d\sqrt{\tau c} + \big(\sqrt{\tau t} - d\sqrt{c/t})^2$, and the
second integral above may be written as $\e^{-2d\sqrt{\tau c}}\, J(d) \asymp d^{-1/2} \, \e^{-2d\sqrt{\tau c}}$, 
where $J(d)$ is as in Lemma~\ref{l: asymp integral} (with $j=1/2$ and $\sqrt{\tau}$ instead of $\lambda_1$).  
The required estimate follows by combining the estimates above.  

To prove \rmii,  
write $g = \sum_j \, g_j$, where $g_j := g \, \One_{B_j}$ and $\{B_j\}$ is a covering of $N$ by balls of radius $1$  
with the finite overlapping property.  Choose $p$ in $\big(1, n/(n-1)\big)$. 
We \textit{claim} that there exist a constant $C$, independent of $j$, and a function $h_j$, 
with support contained in $2B_j$, such that 
\begin{equation} \label{f: lu ghu}
	\bigmod{\cD_\tau^{-1} g_j(x)}
	\leq h_j + C\, \e^{-2\sqrt{\tau c}\, d(x,c_j)}\, \bignorm{g_j}{1}
\end{equation}
and $\bignorm{h_j}{p} \leq C \bignorm{g_j}{1}$.  Here $c_j$ denotes the centre of $B_j$.

Note that $h_j/\big(\nu(2B_j)^{1/p'}\bignorm{h_j}{p}\big)$ is a global $\frh_2^{1,p}(N)$ atom.  
Hence $\bignorm{h_j}{\ghu{N}} \leq \nu(2B_j)^{1/p'}\bignorm{h_j}{p} \leq C \bignorm{g_j}{1}$. 
Notice also that, by Proposition~\ref{p: ghu decay}, each of the functions 
$x\mapsto \e^{-2\sqrt{\tau c}\, d(x,c_j)}$ is in $\ghu{N}$ with norm uniformly bounded with respect to $j$.
Thus, given \eqref{f: lu ghu}, we may conclude that  
$$
	\bignorm{\cD_\tau^{-1} g}{\ghu{N}}
	\leq  \sum_j\, \bignorm{\cD_\tau^{-1} g_j}{\ghu{N}}
	\leq  C\, \sum_j\, \bignorm{g_j}{1}
	\leq C \bignorm{g}{1},  
$$
as required.  

Thus, it remains to prove \eqref{f: lu ghu}.  
Note that, by \rmi,
$$
\bigmod{\cD_\tau^{-1} g_j(x)} 
\leq C\, h_j (x) +C\, \One_{(2B_j)^c} (x) \int_{B_j} \e^{-2\sqrt{\tau c}\, d(x,y)}\, g_j(y) \wrt \nu(y),
$$ 
where $\ds h_j (x) :=  \One_{2B_j} (x)  \int_{B_j} d(x,y)^{1-n} \, g_j(y) \wrt \nu(y)$. 
Since the integral operator with kernel $(x,y) \mapsto \One_{2B_j} (x)  \,  d(x,y)^{1-n} \, \One_{B_j}(y)$ 
is bounded from $\lu{N}$ to $\lp{N}$, the required estimate for $\bignorm{h_j}{p}$ follows.  
By the triangle inequality $d(x,y) \geq d(x,c_j) - d(y,c_j) \geq d(x,c_j)-1$, so that  
$$
\Bigmod{\int_{B_j} \e^{-2\sqrt{\tau c}\, d(x,y)}\, g_j(y) \wrt \nu(y)}
\leq C \, \e^{-2\sqrt{\tau c}\, d(x,c_j)}\,  \bignorm{g_j}{1}.
$$
This concludes the proof of the claim and of \rmii.
\end{proof}

\begin{lemma} \label{l: difference Riesz Bessel}
Suppose that $\tau>\be^2/c$, where $c$ is as in \eqref{f: assumptions on ht}.  
Then there exists a constant~$C$ such that 
$$
\bignorm{\cD_\tau\cKOinfty f}{\ghu{N}}\leq C \bignorm{f}{1} 
\quad\hbox{and}\quad 
\bignorm{\cD_\tau\cKinfty f}{\ghu{N}}\leq C \bignorm{f}{1}
$$
for every function $f$ in $\lu{N}$ with compact support in a ball of radius $\leq 1$.
\end{lemma}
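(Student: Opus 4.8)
The plan is to deduce both estimates from the single operator identity $\cD_\tau\cK=\cD_\tau^{-1}\,\cL_\tau\cK$, valid for $\cK\in\{\cKOinfty,\cKinfty\}$ applied to an $f$ with compact support in a ball of radius $\le 1$. Indeed, for such $f$ the function $\cK f$ is smooth, lies in $\ld N$ (it decays exponentially at infinity, at a rate exceeding $\be$, by Lemma~\ref{l: Riesz Bessel}~\rmi\ and the volume growth bound \eqref{f: volume growth}), and lies in $\Dom(\cL_\tau)=\Dom(\cD_\tau^2)$; since $\cL_\tau\ge\tau>0$, the operator $\cD_\tau^{-1}$ is bounded on $\ld N$ and $\cD_\tau=\cD_\tau^{-1}\cL_\tau$ on $\Dom(\cL_\tau)$ by the spectral theorem. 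By Lemma~\ref{l: Riesz Bessel}~\rmii, the hypothesis $\tau>\be^2/c$ also guarantees that $\cD_\tau^{-1}$ maps $\lu N$ boundedly into $\ghu N$. Hence it suffices to prove that $\cL_\tau\cK$ is bounded on $\lu N$ for $\cK\in\{\cKOinfty,\cKinfty\}$, i.e.\ that $\ds\sup_{y\in N}\int_N\bigmod{k_{\cL_\tau\cK}(x,y)}\wrt\nu(x)<\infty$, where $k_{\cL_\tau\cK}(x,y)=\cL_{\tau,x}\big[k_{\cK}(x,y)\big]$; differentiating under the integral is licit because $\cL_\tau$ is local and $k_\cK$ is smooth (for $\cKOinfty$ because the heat kernels $h_t^N$, $t\ge1$, are smooth; for $\cKinfty$ because the factor $1-\vp$ removes the diagonal singularity of $k_{\cD_\tau^{-1}}$).

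For $\cKOinfty$, whose kernel is $\vp(x,y)\,g^\infty(x,y)$ with $g^\infty(x,y):=\pi^{-1/2}\int_1^\infty t^{-1/2}\e^{-\tau t}h_t^N(x,y)\wrt t$ and $\vp$ the function of Lemma~\ref{l: Laplacian cut-offs}~\rmii\ (with $R=1$), I would first note that $g^\infty$ and its first two derivatives in $x$ are bounded: integrating by parts once in $t$ and using $\Delta_x h_t^N=\partial_t h_t^N$ removes the Laplacian at the cost of the harmless endpoint $t=1$ and of convergent $t$-integrals of $t^{-3/2}\e^{-\tau t}h_t^N$ and $t^{-1/2}\e^{-\tau t}h_t^N$, while $\nabla_x h_t^N$ for $t\ge1$ is controlled by Bakry's condition \eqref{f: Bakry} and \eqref{f: assumptions on ht}. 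Expanding $\cL_{\tau,x}(\vp g^\infty)$ by the Leibniz rule then yields a kernel that is bounded and supported in $\{d(x,y)\le1\}$, so by local Ahlfors regularity \eqref{f: Ahlfors} the integral in $x$ is $\le C\,\nu\big(B_1(y)\big)\le C$, uniformly in $y$; this settles the first estimate.

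For $\cKinfty$, whose kernel is $(1-\vp)\,k_{\cD_\tau^{-1}}$ — a globally smooth function since $1-\vp$ vanishes near the diagonal — the Leibniz rule gives $\cL_{\tau,x}\big[(1-\vp)k_{\cD_\tau^{-1}}\big]=(1-\vp)\,\cL_{\tau,x}k_{\cD_\tau^{-1}}+\big[\cL_x,1-\vp\big]k_{\cD_\tau^{-1}}$. The commutator is a first order operator in $x$ whose coefficients ($\Delta_x\vp$, $\nabla_x\vp$) are supported in the annular region $\{1/4\le d(x,y)\le1\}$, and there both $k_{\cD_\tau^{-1}}$ and $\nabla_x k_{\cD_\tau^{-1}}$ are bounded (by Lemma~\ref{l: Riesz Bessel}~\rmi, and by the same subordination argument differentiated once, using \eqref{f: Bakry}); so this term contributes a bounded kernel of bounded support, hence integrable in $x$ uniformly in $y$. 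For the other term, $\cL_\tau\cD_\tau^{-1}=\cD_\tau$ gives $\cL_{\tau,x}k_{\cD_\tau^{-1}}(x,y)=k_{\cD_\tau}(x,y)$ for $x\ne y$, and the subordination formula $k_{\cD_\tau}(x,y)=-\tfrac{1}{2\sqrt\pi}\int_0^\infty t^{-3/2}\e^{-\tau t}h_t^N(x,y)\wrt t$, combined with \eqref{f: assumptions on ht} and the Laplace-type asymptotics behind Lemma~\ref{l: asymp integral} (now applied to $\int_1^\infty t^{-2}\e^{-(\sqrt{\tau t}-d\sqrt{c/t})^2}\wrt t$), yields, for every $\vep>0$, $\bigmod{k_{\cD_\tau}(x,y)}\le C_\vep\,\e^{-(2\sqrt{\tau c}-\vep)d(x,y)}$ whenever $d(x,y)\ge1/4$. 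Since $\tau>\be^2/c$ forces $2\sqrt{\tau c}>2\be$, choosing $\vep$ small and invoking \eqref{f: volume growth} makes $\ds\sup_y\int_{\{d(x,y)\ge1/4\}}\bigmod{(1-\vp)k_{\cD_\tau}}\wrt\nu(x)$ finite. Adding the two contributions shows $\cL_\tau\cKinfty$ is bounded on $\lu N$, and the second estimate follows as above.

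The main obstacle is the rigorous bookkeeping around the identity $\cD_\tau\cK=\cD_\tau^{-1}\cL_\tau\cK$: one must verify $\cK f\in\ld N\cap\Dom(\cL_\tau)$, that the kernels may be differentiated under the integral, and the consistency of the $\ld N$- and $\lu N$-realizations of $\cD_\tau^{-1}$ applied to $\cL_\tau\cK f\in\lu N\cap\ld N$; these are routine but require care. A secondary technical point, not made explicit in the excerpt, is the off-diagonal control of $k_{\cD_\tau}$ and of $\nabla_x k_{\cD_\tau^{-1}}$, obtained by exactly the subordination-plus-heat-kernel scheme of Lemma~\ref{l: Riesz Bessel}~\rmi, with \eqref{f: Bakry} furnishing the gradient bound. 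The hypothesis $\tau>\be^2/c$ enters precisely where an exponentially decaying kernel is integrated against the volume \eqref{f: volume growth}: in controlling $(1-\vp)k_{\cD_\tau}$, and implicitly through Lemma~\ref{l: Riesz Bessel}~\rmii.
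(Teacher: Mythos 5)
Your proposal is correct and follows essentially the same route as the paper: reduce via $\cD_\tau\cK=\cD_\tau^{-1}\cL_\tau\cK$ and Lemma~\ref{l: Riesz Bessel}~\rmii, then bound $\cL_\tau\cKOinfty$ and $\cL_\tau\cKinfty$ on $\lu{N}$ by the Leibniz rule, integration by parts in $t$ using $\cL h_t^N=-\partial_t h_t^N$, and the Gaussian bound \eqref{f: assumptions on ht}, with $\tau>\be^2/c$ making the off-diagonal exponential decay integrable against \eqref{f: volume growth}. The only loose point is your appeal to Bakry's condition for $\nabla_x h_t^N$, $t\geq 1$ (it controls $\nabla\cH_t f$, not the kernel directly); the paper's device of Schwarz's inequality plus $\bignormto{\mod{\nabla_x h_t^N(\cdot,y)}}{2}{2}=\big(\cL_x h_t^N,h_t^N\big)$ supplies the needed bound.
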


\begin{proof}
We assume that the support of $f$ is contained in $B_R(o)$, with $R\leq 1$.
Since $\cD_\tau \cKOinfty f = \cD_\tau^{-1}\cL_\tau \cKOinfty f$ and 
$\cD_\tau\cKinfty f = \cD_\tau^{-1} \cL_\tau\cKinfty f$, Lemma~\ref{l: Riesz Bessel}~\rmii\ implies that 
it suffices to show that there exists a constant $C$, independent of $f$, such that  
\begin{equation} \label{f: alternative statement}
\bignorm{\cL_\tau \cKOinfty f}{1} \leq C \bignorm{f}{1} 
\quad\hbox{and}\quad 
\bignorm{\cL_\tau\cKinfty f}{1} \leq C \bignorm{f}{1}. 
\end{equation}
The first inequality above will follow from the fact that $\cL_\tau \cKOinfty f$
is a bounded function with support contained in $B_2(o)$ and the estimate $\bignorm{\cL_\tau \cKOinfty f}{\infty}
\leq C \bignorm{f}{1}$.
Since $\cL_\tau \cKOinfty f = \cL \cKOinfty f + \tau \cKOinfty f$,
it suffices to show that both $\cKOinfty f$ and $\cL\cKOinfty f$ are
bounded functions with compact support contained in $B_2(o)$
and the estimates $\bignorm{\cKOinfty f}{\infty}\leq C \bignorm{f}{1}$ and 
$\bignorm{\cL \cKOinfty f}{\infty}\leq C \bignorm{f}{1}$ hold. 
Observe that 
$$
\cKOinfty f(x)
= \frac{1}{\sqrt\pi} 
\, \int_{B_R(o)} \wrt\nu(y) \,\vp(x,y) \, f(y) \, \int_1^\infty t^{-1/2} \, \e^{-\tau t} \, h_t^N(x,y) \wrt t.  
$$ 
By our choice of $\vp$ and the fact that the support of $f$ is contained in $B_1(o)$,
the support of $\cKOinfty f$ is contained in $B_2(o)$.  
The upper estimate \eqref{f: assumptions on ht} for $h_t^N(x,y)$ implies that the inner integral above is dominated by 
a constant independent of $x$ and $y$.  Therefore 
$\bignorm{\cKOinfty f}{1} \leq C\bignorm{f}{1}$, as required. 
We now prove that the same is true of $\cL \cKOinfty f$.    Notice that for each $y$ in $N$
$$
\cL\big[\vp h_t^N\big](\cdot,y) 
= \cL\vp(\cdot,y) \, h_t^N(\cdot,y) - 2\prodo{\nabla \vp(\cdot,y)}{\nabla h_t^N(\cdot,y)} + 
\vp(\cdot,y) \,\cL h_t^N(\cdot,y);
$$
note also that, by our choice of $\vp$, the first and the second summand on the right hand side 
vanish when $d(\cdot,y) \leq 1/4$.  
Correspondingly, $\cL\cKOinfty f$ may be written as $\cB_1 f - \cB_2 f + \cB_3 f$, where
$$
\cB_1 f
= \frac{1}{\sqrt\pi} 
\, \int_{B_R(o)} \!\!\wrt\nu(y) \,\cL\vp(\cdot,y) \, f(y) \, \int_1^\infty t^{-1/2} \, \e^{-\tau t} \, h_t^N(\cdot,y) \wrt t,  
$$
$$
\cB_2 f
= \frac{2}{\sqrt\pi} 
\, \int_{B_R(o)} \!\!\wrt\nu(y) \,\prodo{\nabla\vp(\cdot,y)f(y)}{\int_1^\infty t^{-1/2} \, \e^{-\tau t} 
\, \nabla h_t^N(\cdot,y) \wrt t}  
$$
and 
$$
\cB_3 f
= \frac{1}{\sqrt\pi} 
\, \int_{B_R(o)} \!\!\wrt\nu(y) \,\vp(\cdot,y) \, f(y) \, \int_1^\infty t^{-1/2} \, \e^{-\tau t} 
\, \cL h_t^N(\cdot,y) \wrt t.  
$$ 
We estimate $\cB_3 f$.  
The estimates of $\cB_1 f$ and $\cB_2 f$ are easier, for the kernels of these operators are
supported in $\Upsilon_1\setminus \Upsilon_{1/4}$ (recall also that $\mod{\nabla\vp(\cdot,y)}$ and $\mod{\cL \vp(\cdot,y)}$ 
are uniformly bounded; see Lemma~\ref{l: Laplacian cut-offs}~\rmii), and are left to the interested reader. 
Notice that $\cL h_t^N(\cdot,y) = -\partial_t h_t^N(\cdot,y)$.  Then, by integrating by parts in the inner integral,
we find that 
$$
\cB_3 f
= \frac{1}{2\sqrt\pi} 
\, \int_{B_R(o)} \!\!\wrt\nu(y) \,\vp(\cdot,y) \, f(y) \, \Big[2 \e^{-\tau} \, h_1^N 
- \int_1^\infty t^{-3/2} \, \e^{-\tau t} \, (1+2\tau t) \, h_t^N(\cdot,y) \wrt t\Big].  
$$ 
The upper estimate in \eqref{f: assumptions on ht} and simple considerations show that 
$\bigmod{\cB_3f}$ is dominated by $\ds\int_{B_R(o)} k(\cdot,y) \, \bigmod{f(y)} \wrt\nu(y)$, 
where $k$ is bounded, nonnegative and supported in $\Upsilon_1$ (see \eqref{f: Upsilon R} for the notation).
Consequently, $\bigmod{\cB_3f}$ is a bounded function with support contained in $B_2(o)$ and 
$\bignorm{\cB_3f}{\infty} \leq C \bignorm{f}{1}$.   This concludes the 
proof of the first inequality in \eqref{f: alternative statement}. 

Next we prove the second inequality in \eqref{f: alternative statement}. 
Recall that 
$$
\begin{aligned} 
\cKinfty f
	& = \frac{1}{\sqrt\pi} 
              \, \int_{B_R(o)} \!\!\wrt\nu(y) \,\big[1-\vp(\cdot,y)\big] \, f(y) \, 
	      \ioty t^{-1/2} \, \e^{-\tau t} \, h_t^N(\cdot,y) \wrt t \\
	& = \frac{1}{\sqrt\pi} \, \int_{B_R(o)} \,\big[1-\vp(\cdot,y)\big] \,k_{\cD_\tau^{-1}}(x,y) \, f(y)\wrt\nu(y).  
\end{aligned}
$$
This and the estimates for $k_{\cD_\tau^{-1}}$ in \rmi\ imply that there 
exists a constant $C$ such that $\bigmod{\cKinfty f(x)} \leq C\, \e^{-2 d(x,o)\sqrt{\tau c}} \bignorm{f}{1}$ for every $x$ in $N$.
Thus, $\bignorm{\cKinfty f}{1} \leq C \bignorm{f}{1}$, because, by assumption, $\tau > \be^2/c$.
Since $\cL_\tau \cKinfty f = \cL \cKinfty f + \tau \cKinfty f$,
it remains to show that $\cL \cKinfty f$ satisfies a similar estimate.   
Now,
$$
\cL\big[(1-\vp) h_t^N\big](\cdot,y) 
= -\cL\vp(\cdot,y) \, h_t^N(\cdot,y) + 2\prodo{\nabla \vp(\cdot,y)}{\nabla h_t^N(\cdot,y)} + 
\big[1-\vp(\cdot,y)\big] \,\cL h_t^N(\cdot,y);
$$
note that each of the summands on the right hand side vanishes in $\Upsilon_{1/4}$.
Correspondingly, $\cL\cKinfty f$ may be written as $\cA_1 f + \cA_2 f + \cA_3 f$, where
$$
\cA_1 f
= -\frac{1}{\sqrt\pi} 
\, \int_{B_R(o)} \!\!\wrt\nu(y) \,\cL\vp(\cdot,y) \, f(y) \, \ioty t^{-1/2} \, \e^{-\tau t} \, h_t^N(\cdot,y) \wrt t,  
$$
$$
\cA_2 f
= -\frac{2}{\sqrt\pi} 
\, \int_{B_R(o)} \!\!\wrt\nu(y) \,\prodo{\nabla\vp(\cdot,y)f(y)}{\ioty t^{-1/2} \, \e^{-\tau t} 
\, \nabla h_t^N(\cdot,y) \wrt t}  
$$
and 
$$
\cA_3 f
= \frac{1}{\sqrt\pi} 
\, \int_{B_R(o)} \!\!\wrt\nu(y) \,\big[1-\vp(\cdot,y)\big] \, f(y) \, \ioty t^{-1/2} \, \e^{-\tau t} 
\, \cL h_t^N(\cdot,y) \wrt t.  
$$ 
We estimate $\cA_3 f$.  The estimates of $\cA_1 f$ and $\cA_2 f$ are easier, for the kernel of these operators are
supported in $\Upsilon_1\setminus\Upsilon_{1/4}$ (recall also that $\mod{\nabla\vp(\cdot,y)}$ and $\mod{\cL \vp(\cdot,y)}$ 
are uniformly bounded; see Lemma~\ref{l: Laplacian cut-offs}~\rmii), and are left to the interested reader. 

Notice that $\cL h_t(\cdot,y) = -\partial_t h_t(\cdot,y)$.  Then, by integrating by parts in the inner integral,
we find that 
$$
\cA_3 f
= -\frac{1}{2\sqrt\pi} 
\, \int_{B_R(o)} \!\!\wrt\nu(y) \,\big[1-\vp(\cdot,y)\big] \, f(y) \, \ioty t^{-3/2} \, \e^{-\tau t} \, (1+2\tau t)
\, h_t(\cdot,y) \wrt t.  
$$ 
The inner integral is dominated by 
$$
C \, \int_0^1 t^{-(n+3)/2} \, \e^{-cd^2/t} \wrt t 
+ C \, \int_1^\infty t^{-1} \, \e^{-(\tau t+cd^2/t)} \wrt t.
$$
We need to estimate these integrals in the case where $d$ is large (because of the cutoff $1-\vp$).  
The first is bounded above by 
$$
C \,\e^{-cd^2/2}  \int_0^1 t^{-(n+3)/2} \, \e^{-cd^2/(2t)} \wrt t 
=     C \,\frac{\e^{-cd^2/2}}{d^{n+1}}  \int_{d^2}^\infty u^{(n-1)/2} \, \e^{-cu/2} \wrt u 
\leq  C \,\frac{\e^{-cd^2/2}}{d^{n+1}}, 
$$
and the second by $C\, \e^{-2d\sqrt{\tau c}}$ (see the proof of \rmi), which is integrable at infinity 
because $\tau>\beta^2/c$.  
These estimates imply that $\bignorm{\cA_3 f}{1} \leq C \bignorm{f}{1}$. 
A similar conclusion applies also to $\cA_1 f$
and $\cA_2 f$.  Thus, $\bignorm{\cL \cKinfty f}{1} \leq C \bignorm{f}{1}$, as required to conclude the proof of
the second inequality in \eqref{f: alternative statement}.
\end{proof}

Denote by $k_{\cRtau}$ the distributional kernel of~$\cRtau$, and write $k_{\cRtau}$
as the sum of $\vp \, k_{\cRtau}$ and $(1-\vp) \, k_{\cRtau}$, 
where $\vp$ is the smooth function on $N\times N$ given by Lemma \ref{l: Laplacian cut-offs} (with $R=1$).
Denote by $\cRtauO$ and by $\cRtauinfty$ the operators associated to the kernels 
$\vp \, k_{\cRtau}$ and $(1-\vp) \, k_{\cRtau}$, respectively.   Obviously,
\begin{equation} \label{f: splitting cRtau I}
\cRtau
= \cRtauO + \cRtauinfty.  
\end{equation}
Observe that 
$$
\begin{aligned}
k_{\cRtauO} (x,y) 
& = \frac{\vp (x,y)}{\sqrt \pi} \, \ioty t^{-1/2} \, \e^{-\tau t}\, \nabla_x h_t^N(x,y) \wrt t \,.
\end{aligned}
$$
It is convenient to further decompose the operator $\cRtauO$ as the sum of the operators 
$\cRtauOO$ and $\cRtauOinfty$,  which are associated to the kernels $k_{\cRtauOO}$
and $k_{\cRtauOinfty}$, defined by 
$$
k_{\cRtauOO}(x,y)
= \frac{\vp (x,y)}{\sqrt \pi} \, \int_0^1 t^{-1/2} \, \e^{-\tau t}\, \nabla_x h_t^N(x,y) \wrt t 
$$
and
$$
k_{\cRtauOinfty}(x,y)
= \frac{\vp (x,y)}{\sqrt \pi} \, \int_1^\infty t^{-1/2} \, \e^{-\tau t}\, \nabla_x h_t^N(x,y) \wrt t.  
$$
Notice that $k_{\cRtauOO}(x,y) = \nabla_x k_{\cJOOtau}(x,y) - k_{\cV}(x,y)$, where $\ds
k_{\cV} 
:= \frac{\nabla_x \vp}{\sqrt\pi} \, \int_0^1 t^{-1/2}\, \e^{-\tau t} h_t^N \wrt t. 
$

\begin{lemma}\label{l: pointwise Msi}
For each $\vep>0$ there exists a constant $C$ such that for every $p$ in $N$ and every 
function $f\in L^1(N)$ with support contained in a ball with centre $p$ and radius $\leq 1$ the following hold:
\begin{enumerate}
	\item[\itemno1] $\ds \Big|\Msi (\cD)\cJOOtau f(x)\Big|\leq C\, \e^{(\vep-\la_1) d(x,p) }\bignorm{f}1$;
	\item[\itemno2] $\ds \Big|\cL \Msi (\cD)\cJOOtau f(x)\Big|\leq C\, \e^{(\vep-\la_1) d(x,p) }\bignorm{f}1$. 
\end{enumerate}
Consequently, $\Msi (\cD)\cJOOtau f$ and $\cL \Msi (\cD)\cJOOtau f$ are in 
$\ghu N$ and their norms in $\ghu N$ are controlled by $C\bignorm{f}1$.
\end{lemma}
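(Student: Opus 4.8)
The plan is to reduce both pointwise estimates to Lemma~\ref{l: Mt sqrt cL}~\rmiii, applied not to $f$ itself but to $g:=\cJOOtau f$, and then to deduce membership in $\ghu N$ from Proposition~\ref{p: ghu decay}.

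First I would record two elementary facts about $\cJOOtau=\cKOO$. Since its kernel satisfies $k_{\cJOOtau}=\frac{\vp}{\sqrt\pi}\int_0^1 t^{-1/2}\,\e^{-\tau t}\,h_t^N\wrt t\leq \vp\,k_{\cD_\tau^{-1}}$ and $\vp$ vanishes off $\Upsilon_1$, Lemma~\ref{l: Riesz Bessel}~\rmi\ gives $k_{\cJOOtau}(x,y)\leq C\,d(x,y)^{1-n}\,\One_{\Upsilon_1}(x,y)$, a kernel that is locally integrable. Integrating in polar coordinates and using the local Ahlfors regularity \eqref{f: Ahlfors}, both $\sup_{y}\int_N k_{\cJOOtau}(x,y)\wrt\nu(x)$ and $\sup_{x}\int_N k_{\cJOOtau}(x,y)\wrt\nu(y)$ are finite, so by Schur's test $\cJOOtau$ is bounded on $\lp N$ for every $p$; in particular $\bignorm{\cJOOtau f}{1}\leq C\bignorm{f}{1}$. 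Moreover, since $\vp$ is supported in $\Upsilon_1$, if $f$ is supported in a ball of centre $p$ and radius $\leq 1$, then $g:=\cJOOtau f$ is supported in $B_2(p)$.

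Next I would prove \rmi\ and \rmii\ simultaneously. With $g:=\cJOOtau f$, so that $\supp g\subseteq B_2(p)$ and $\bignorm{g}{1}\leq C\bignorm{f}{1}$, Lemma~\ref{l: Mt sqrt cL}~\rmiii\ applied with $o=p$, $R=2$ and evaluated at $t=\si$ gives, for every $x$ with $d(x,p)\geq 4$,
$$
\max\big(\bigmod{\Msi(\cD)g(x)},\,\bigmod{\cL\Msi(\cD)g(x)}\big)
\leq C\,\e^{(\vep-\la_1)d(x,p)}\bignorm{g}{1}
\leq C\,\e^{(\vep-\la_1)d(x,p)}\bignorm{f}{1}.
$$
For $x$ with $d(x,p)<4$ I would instead use the crude global bounds $\bigopnorm{\Msi(\cD)}{1;\infty}<\infty$ (Lemma~\ref{l: Mt sqrt cL}~\rmv\ with $p=\infty$ and $t=\si$) and $\bigopnorm{\cL\Msi(\cD)}{1;\infty}<\infty$ (the symbol $\la\mapsto\la^2/\cosh(\si\la)$ is bounded and rapidly decreasing, so Proposition~\ref{p: consequences GBE II}~\rmiii\ applies with a $t$-independent $F_t$), which yield $\max\big(\bigmod{\Msi(\cD)g(x)},\bigmod{\cL\Msi(\cD)g(x)}\big)\leq C\bignorm{g}{1}\leq C\bignorm{f}{1}$; since $\e^{(\vep-\la_1)d(x,p)}\geq \e^{-4\la_1}$ on this region, the same exponential bound follows there too. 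Combining the two ranges of $x$ proves \rmi\ and \rmii.

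For the last assertion, \eqref{f: sigma} forces $\la_1=\pi/(2\si)>2\be$, so one may fix $\vep\in(0,\la_1-2\be)$ and set $\vep':=\la_1-\vep-2\be>0$; then \rmi\ and \rmii\ read $\bigmod{\Msi(\cD)\cJOOtau f(x)}\leq C\bignorm{f}{1}\,\e^{-(2\be+\vep')d(x,p)}$, and likewise for $\cL\Msi(\cD)\cJOOtau f$. Proposition~\ref{p: ghu decay} (with $o=p$ and $A=C\bignorm{f}{1}$) then gives that both functions lie in $\ghu N$ with $\ghu N$-norm at most $C\bignorm{f}{1}$. The argument is essentially bookkeeping on top of Lemma~\ref{l: Mt sqrt cL}~\rmiii; the only point requiring care is to transfer the support and $\lu N$ bounds from $f$ to $g=\cJOOtau f$ before invoking that lemma, taking the centre to be $p$ and $R=2$ so that the exponential decay is available precisely for $d(x,p)\geq 4$, with the bounded region $d(x,p)<4$ absorbed by the global smoothing estimates. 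I do not expect a substantial obstacle here: the deep input — the decay rate $\la_1$, obtained from finite propagation speed and the bound on $K_t$ of Lemma~\ref{l: properties ktau} — is already packaged inside Lemma~\ref{l: Mt sqrt cL}~\rmiii.
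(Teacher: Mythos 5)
Your proposal is correct and follows essentially the same route as the paper: a crude $L^1\to L^\infty$ bound for $\Msi(\cD)$ and $\cL\Msi(\cD)$ handles the region $d(x,p)<4$, the exponential decay for $d(x,p)\geq 4$ comes from Lemma~\ref{l: Mt sqrt cL}~\rmiii\ applied to $g=\cJOOtau f$ (supported in $B_2(p)$, with $\bignorm{g}{1}\leq C\bignorm{f}{1}$), and the $\ghu N$ membership follows from Proposition~\ref{p: ghu decay} since $\la_1>2\be$ by \eqref{f: sigma}. The only cosmetic difference is that the paper obtains the uniform $L^1\to L^\infty$ bounds directly from Proposition~\ref{p: consequences GBE II}~\rmii--\rmiii\ rather than from Lemma~\ref{l: Mt sqrt cL}~\rmv, which changes nothing of substance.
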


\begin{proof}
By Proposition~\ref{p: consequences GBE II}~\rmii,  $\Msi (\cD)$ and $\cL \Msi (\cD)$ 
are bounded from $L^1(N)$ to $L^\infty(N)$. Moreover, clearly $\cJOOtau$ is bounded in $L^1(N)$. Therefore, 
\begin{equation}\label{f: Linfty estimate MsicDcJOOtau}
\begin{aligned}
\bignorm{\Msi (\cD)\cJOOtau f}{\infty} 
\leq C\, \bignorm{\cJOOtau f}{1}
\leq  C\,\bignorm{f}{1}
\end{aligned}
\end{equation}
and a similar estimate holds for $\cL \Msi (\cD)\cJOOtau f$.
Furthermore, since $\cJOOtau f$ is supported in $B_{2}(p)$ for some $p\in N$, Lemma \ref{l: Mt sqrt cL}~\rmiii\ gives that
\begin{equation}\label{f: Linfty estimate MsicDcJOOtau off balls}
\begin{aligned}
\big|\Msi (\cD)\cJOOtau f(x)\big| 
& \leq C\,\e^{(\vep-\la_1) d(x,p) }\bignorm{\cJOOtau f}1\\
&\leq  C\,\e^{(\vep-\la_1) d(x,p) }\bignorm{f}{1}\quant x\in B_{4}(p)^c.
\end{aligned}
\end{equation}
Now \rmi\ follows by combining \eqref{f: Linfty estimate MsicDcJOOtau} and 
\eqref{f: Linfty estimate MsicDcJOOtau off balls}.
	
The assertion in \rmii\ follows in a similar way.  
The last statement of the lemma is a direct consequence of \rmi, \rmii\ and Proposition~\ref{p: ghu decay}. 
\end{proof}

\begin{theorem} \label{t: claim}
There exists a constant $C$ such that  
$$
\bignorm{f}{\ghu{N}}
\leq C \, \big[\bignorm{\mod{\cRtauOO f}}{1} + \bignorm{f}{1}\big].
$$
for every function $f$ with support contained in a ball of radius $\leq 1$ for which the right hand side is finite.
\end{theorem}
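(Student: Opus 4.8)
The plan is to recover $f$ from its most local piece $\cRtauOO f$ by combining the factorisation $f=\cD_\tau\cD_\tau^{-1}f$, the splitting $\cD_\tau^{-1}=\cKOO+\cKOinfty+\cKinfty$ of \eqref{f: kernel cKO and cKinfty}, and the Stein--Weiss type estimate of Theorem~\ref{t: analogue SW D}, applied to the harmonic extension to $\Si$ of the localised Bessel potential $\cKOO f$. By a standard approximation argument we may assume $f\in C_c^\infty(N)$ with $\supp f\subseteq B_R(o)$, $R\le1$. Since $f=\cD_\tau\cKOO f+\cD_\tau\cKOinfty f+\cD_\tau\cKinfty f$ and, by Lemma~\ref{l: difference Riesz Bessel}, $\bignorm{\cD_\tau\cKOinfty f}{\ghu{N}}+\bignorm{\cD_\tau\cKinfty f}{\ghu{N}}\le C\bignorm{f}{1}$, it suffices to bound $\bignorm{\cD_\tau\cKOO f}{\ghu{N}}$; moreover, because $\ghu{N}\hookrightarrow\lu{N}$, the same identity gives the \emph{a priori} bound $\bignorm{\cD_\tau\cKOO f}{1}\le C\bignorm{f}{1}$, which is the device that bypasses the unboundedness of $\cL^{1/2}$ on $\lu{N}$.

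Set $G:=\cKOO f=\cJOOtau f$, a smooth function supported in $B_{R+1}(o)$. We claim $\cN G\le C\big(\bignorm{\mod{\cRtauOO f}}{1}+\bignorm{f}{1}\big)$ (in particular $\cN G<\infty$). Indeed $\bignorm{G}{1}\le C\bignorm{f}{1}$ by the bound $k_{\cKOO}\lesssim d^{1-n}\One_{\Upsilon_1}$ and local Ahlfors regularity; the identity $k_{\cRtauOO}=\nabla_x k_{\cJOOtau}-k_{\cV}$ gives $\nabla G=\cRtauOO f+\cV f$, so $\bignorm{\mod{\nabla G}}{1}\le\bignorm{\mod{\cRtauOO f}}{1}+C\bignorm{f}{1}$ since $k_{\cV}$ is bounded and supported off the diagonal; and $\cD G=\cD_\tau G-(\cD_\tau-\cD)G$, where the symbol $(\la^2+\tau)^{1/2}-\la$ is holomorphic in $\set{\mod{\Im z}<\sqrt\tau}$ with $\sqrt\tau>2\be$ (as $\tau>\be^2/c\ge4\be^2$), so $(\cD_\tau-\cD)G$ decays exponentially at a rate exceeding $2\be$ and lies in $\ghu{N}$ with norm $\le C\bignorm{G}{1}$ by Proposition~\ref{p: ghu decay}; together with the \emph{a priori} bound on $\bignorm{\cD_\tau G}{1}$ this yields $\bignorm{\cD G}{1}\le C\bignorm{f}{1}$.

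Let $u:=\cPS G$ be the harmonic extension of $G$ to $\Si$ (Proposition~\ref{p: Dirichlet problem slice}), symmetric in $t$. By Theorem~\ref{t: est nabla cPS}, $\bignorm{\mod{\Nabla u}}{\cB^1}\le C\cN G$; since $G$ has compact support, Lemma~\ref{l: Mt sqrt cL}~\rmiii\ shows $\sup_t\mod{\Nabla u(\cdot,t)}$ decays exponentially at infinity, so Theorem~\ref{t: analogue SW D} applies and $\bignorm{\mod{\Nabla u}^*}{1}\le C\bignorm{\mod{\Nabla u}}{\cB^1}\le C\cN G$. Now put $h:=-\partial_t u(\cdot,0^+)$; a computation with $M_t(\la)$ gives $h=\cD\tanh(\si\cD)G=\cD G-BG$, with $B:=\cD\cP_\si^N\Msi(\cD)$ (symbol $2\la/(\e^{2\si\la}+1)$, holomorphic in $\set{\mod{\Im z}<\la_1}$, $\la_1=\pi/(2\si)>2\be$ by \eqref{f: sigma}), so $BG\in\ghu{N}$ with norm $\le C\bignorm{f}{1}$ by Proposition~\ref{p: ghu decay}; hence $\cD_\tau G=h+BG+(\cD_\tau-\cD)G$ gives $\bignorm{\cD_\tau G}{\ghu{N}}\le\bignorm{h}{\ghu{N}}+C\bignorm{f}{1}$.

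It remains to bound $\bignorm{h}{\ghu{N}}$, and by Theorem~\ref{t: MaMV2} this reduces to bounding $\bignorm{\cP_\ast^N h}{1}$. The function $-\partial_t u$ is harmonic in $\Si$ with boundary values $h$ at $t=0$ and $-h$ at $t=2\si$ (by symmetry of $u$); solving the interval Dirichlet problem, $-\partial_t u(\cdot,s)=\frac{\sinh((\si-s)\cD)}{\sinh(\si\cD)}\,h$, so for $0<s<\si$ we may write $\cP_s^N h=\e^{-s\cD}h=-\partial_t u(\cdot,s)+r_s(\cD)h$ with $r_s(\la)=\frac{\e^{(s-2\si)\la}(1-\e^{-2s\la})}{1-\e^{-2\si\la}}$. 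The family $\set{r_s}_{0<s<\si}$ is uniformly bounded and holomorphic in $\set{\mod{\Im z}<2\la_1}$, and $h$ is a sum of a compactly supported function and terms decaying exponentially at a rate exceeding $2\be$ (read off from $h=\cD G-BG$ and $\cD_\tau G=f-\cD_\tau\cKOinfty f-\cD_\tau\cKinfty f$ via Lemma~\ref{l: Riesz Bessel}~\rmi\ and Lemma~\ref{l: difference Riesz Bessel}); hence, by a finite-propagation-speed argument as in the proof of Lemma~\ref{l: Mt sqrt cL}~\rmiii, $\sup_{0<s<\si}\mod{r_s(\cD)h}$ decays exponentially at a rate exceeding $2\be$ and so lies in $\ghu{N}$ with norm $\le C\bignorm{f}{1}$ by Proposition~\ref{p: ghu decay}. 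On the other hand $\mod{\partial_t u(\cdot,s)}\le\mod{\Nabla u(\cdot,s)}\le\mod{\Nabla u}^*$, so $\cP_\ast^N h\le\mod{\Nabla u}^*+\sup_{0<s<\si}\mod{r_s(\cD)h}$ and $\bignorm{\cP_\ast^N h}{1}\le\bignorm{\mod{\Nabla u}^*}{1}+C\bignorm{f}{1}$; chaining the inequalities above proves the theorem. (An equivalent organisation decomposes $\cP_s^N h$ by \eqref{f: formula cPS}, producing the term $\cD\Msi(\cD)h=-\cL\Msi(\cD)\cJOOtau f+\cD B\Msi(\cD)G$, controlled by Lemma~\ref{l: pointwise Msi}.) The main obstacle throughout is keeping \emph{all} the exponential decay rates and holomorphy-strip widths above $2\be$ simultaneously — precisely what the smallness of $\si$ ($\la_1>2\be$) and the largeness of $\tau$ ($\sqrt\tau,\,2\sqrt{\tau c}>2\be$) are for — while correctly identifying $h$ with the boundary $t$-derivative of the \emph{anti}-periodic harmonic extension rather than of $\cPS h$ itself.
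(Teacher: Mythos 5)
Your overall architecture coincides with the paper's: reduce to $G=\cJOOtau f$ via the splitting $\cD_\tau^{-1}=\cKOO+\cKOinfty+\cKinfty$ and Lemma~\ref{l: difference Riesz Bessel}, control $\cN G$ by $\bignorm{\mod{\cRtauOO f}}{1}+\bignorm{f}{1}$, pass to the harmonic extension $\cPS G$ on $\Si$, apply Theorem~\ref{t: est nabla cPS} and Theorem~\ref{t: analogue SW D}, and recover the $\ghu{N}$ norm from the Poisson maximal characterization of Theorem~\ref{t: MaMV2} after peeling off spectral-multiplier corrections that are holomorphic in a strip of width exceeding $2\be$. Your identification of $h=\cD\tanh(\si\cD)G$ as the boundary normal derivative and the identity $\cP_s^Nh=-\partial_tu(\cdot,s)+r_s(\cD)h$ is an equivalent reorganisation of the paper's formula \eqref{f: partialt H}, and the algebra there checks out.

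There is, however, one step that fails as written: the claim that $(\cD_\tau-\cD)G$ ``decays exponentially \ldots and lies in $\ghu{N}$ \ldots by Proposition~\ref{p: ghu decay}.'' That proposition requires a \emph{global} pointwise bound $\mod{(\cD_\tau-\cD)G(x)}\leq A\,\e^{-(2\be+\vep)d(x,o)}$, in particular boundedness of $(\cD_\tau-\cD)G$ on the support of $G$. But $\cD_\tau-\cD=-\tau(\cD+\cD_\tau)^{-1}$ is an operator of order $-1$, whose kernel has a $d^{1-n}$-type singularity on the diagonal, and $G=\cJOOtau f$ is only in $\lp{N}$ for $p<n/(n-1)$; the composition is then generally unbounded for $n\geq 3$, so the hypothesis of Proposition~\ref{p: ghu decay} is not met and the estimate $\bignorm{\cD G}{1}\leq C\bignorm{f}{1}$ is not obtained this way. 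The paper handles exactly this point differently: the symbol $\sqrt{\la}-\sqrt{\tau+\la}$ of $\cL$ belongs to the extended Dunford class $\cE(\bS_\vp)$, and $\cL$ is sectorial on $\ghu{N}$ by \cite[Theorem~3.1]{MaMV1}, so $\cD-\cD_\tau$ is bounded on $\ghu{N}$ and one concludes via $\bignorm{\cJOOtau f}{\ghu{N}}\leq C\bignorm{f}{1}$. (A repair within your framework is also possible by splitting $(\cD_\tau-\cD)G$ into a compactly supported $L^p$ piece, handled by \eqref{f: normah1}, and a far piece where the exponential kernel decay holds, but that decay itself must be proved via the wave-propagator subordination, not read off from the holomorphy strip.) A second, milder imprecision of the same flavour: your finite-propagation-speed argument for $\sup_{0<s<\si}\mod{r_s(\cD)h}$ is stated for a function $h$ that is not compactly supported, only exponentially decaying; the exponentially decaying tail must be treated separately, as in Lemma~\ref{l: Mt sqrt cL}~\rmiii\ and Lemma~\ref{l: pointwise Msi}. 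Both issues concern justification rather than strategy, and the first is the only one where the cited tool genuinely does not apply.
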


\begin{proof}
Let $o$ be the centre of the ball of radius $R\leq 1$ which contains the support of~$f$.   
Then the support of $\cJOOtau f$ is contained in a ball of radius $R+1\leq 2$.
Define $H := \cPS (\cJOOtau f)$.   
By Theorem~\ref{t: est nabla cPS}, there exists a constant $C$ such that 
$$
\bignorm{\mod{\Nabla H}}{\cB^1} \leq C \, \cN \big(\cJOOtau f\big).
$$
Furthermore, by Lemma~\ref{l: Mt sqrt cL}~\rmiii, for every $\vep>0$
there exists a constant $C$ such that 
$$
	\sup_{t\in (0,2\si)}\, \mod{\Nabla H (x,t)} 
	\leq C \, \e^{(\vep-\la_1)d(x,o)} \bignorm{\cJOOtau f}{1}
	\leq C \, \e^{(\vep-\la_1)d(x,o)} \bignorm{f}{1}
$$
for every $x$ in $B_{4}(o)^c$.  Hence $\mod{\Nabla H}$ satisfies 
the assumptions of Theorem~\ref{t: analogue SW D}~\rmii, whence there exists a constant such that 
$\bignorm{\mod{\Nabla H}^*}{1} \leq C \bignorm{\mod{\Nabla H}}{\cB^1}$.  By combining the estimates above
we see that 
$$
\bignorm{\mod{\Nabla H}^*}{1}
\leq C \, \cN (\cJOOtau f),  
$$
provided that the right hand side is finite (as we shall prove below). 
The required norm estimate will follow from the following two facts:
	\begin{enumerate}
		\item[(a)] 
			there exists a constant $C$ such that 
			$\cN (\cJOOtau f) \leq C \, \big[\bignorm{\mod{\cRtauOO f}}{1} + \bignorm{f}{1}\big]$; 
		\item[(b)] 
			there exists a constant $C$ such that 
			$\bignorm{f}{\ghu{N}} \leq C \,\big[\bignorm{\mod{\Nabla H}^*}{1}+\bignorm{f}{1}\big]$.  
	\end{enumerate}
First we prove (a). 
Recall that $\cN (\cJOOtau f) := \bignorm{\cJOOtau f}{1} + \bignorm{\mod{\nabla \cJOOtau f}}{1} + \bignorm{\cD \cJOOtau f}{1}$.  
Clearly $\bignorm{\cJOOtau f}{1} \leq C \bignorm{f}{1}$.  Notice that 
$$
\nabla (\cJOOtau f) = \cRtauOO f + \cV (\cJOOtau f),  
$$
where $\cV$ is the operator with kernel 
$\ds
k_{\cV} 
:= \frac{\nabla_x \vp}{\sqrt\pi} \, \int_0^1 t^{-1/2}\, \e^{-\tau t} h_t^N \wrt t. 
$
It is straightforward to check that there exists a constant $C$ such that 
$$
\bignorm{\mod{\cV (\cJOOtau f)}}{1}
\leq C \bignorm{\cJOOtau f}{1}
\leq C \bignorm{f}{1}.
$$
We leave the verification of this fact to the interested reader.  Therefore 
$$
	\bignorm{\mod{\nabla (\cJOOtau f)}}{1}  
	\leq C\, \big[\bignorm{\mod{\cRtauOO f}}{1} + \bignorm{f}{1}\big]. 
$$
The proof of (a) will be complete, once the following \textit{claim} will be proved.  
There exists a constant $C$ such that 
\begin{equation}  \label{f: claim a} 
\bignorm{\cD \cJOOtau f}{1} 
\leq C \bignorm{f}{1}.  
\end{equation}
Write
\begin{equation} \label{f: formula I}
\cD\cJOOtau f = \big[\cD- \cD_\tau\big]\cJOOtau f + \cD_\tau\cJOOtau f.
\end{equation}
The operator $\cD-\cD_\tau$ corresponds to the spectral multiplier $\sqrt{\la}-\sqrt{\tau+\la}$ of $\cL$.
The latter function is in $\cE(\bS_\vp)$ for every $\vp$ in $(0,\pi)$.  Indeed, 
$$
\sqrt{\la}-\sqrt{\tau+\la}
= -\frac{\tau}{\sqrt{\la}+\sqrt{\tau+\la}}
= \tau\, \Big[\frac{1}{\sqrt{\tau+\la}}-\frac{1}{\sqrt{\la}+\sqrt{\tau+\la}}\Big] - \frac{\tau}{\sqrt{\tau+\la}}.
$$
It is straightforward to check that the function within square brackets is in $H_0^\infty(\bS_\vp)$
and $(\tau+\la)^{-1/2}$ is in $\cE(\bS_\vp)$ by \cite[Lemma~2.2.3]{Haa}. 
The operator $\cL$ is sectorial of angle $\pi/2$ on $\ghu{N}$ \cite[Theorem~3.1]{MaMV1}.  
Therefore the natural functional calculus
\cite[Theorem~2.3.3]{Haa} implies that $\cD-\cD_\tau$ is bounded on $\ghu{N}$.  
It is straightforward to check that $\cJOOtau f$ is a function in
$\lp{N}$ for each $p\in[1,n/(n-1))$, and that its support is contained in $B_{2}(o)$. 
Furthermore, 
$$
\bignorm{\cJOOtau f}{\ghu N}\leq C\,\nu \big(B_{2}(o)\big)^{1/p'}\bignorm{f}{1}.
$$
Thus,
$$
\bignorm{[\cD- \cD_\tau]\cJOOtau f}{\ghu{N}} \leq C \bignorm{\cJOOtau f}{\ghu{N}}\leq C\bignorm{f}{1}.
$$
In particular, $\cD\cJOOtau f- \cD_\tau\cJOOtau f$ is in $\lu{N}$.
By \eqref{f: formula I}, $\cD\cJOOtau f$ is in $\lu{N}$ if and only if $\cD_\tau\cJOOtau f$ is.  
Recall that $\cKOO f + \cKOinfty f + \cKinfty f = \cD_\tau^{-1} f$.  Thus,
\begin{equation} \label{f: formula II}
\cD_\tau\cJOOtau f 
=  \cD_\tau\cD_\tau^{-1} f - \cD_\tau\cKOinfty f - \cD_\tau\cKinfty f.
\end{equation}
By Lemma~\ref{l: difference Riesz Bessel}, the second and the third summands on the right hand side are in $\ghu{N}$, 
hence in $\lu{N}$.   Furthermore, $f$ belongs to $\lu{N}$ by assumption,
whence so does $\cD_\tau\cJOOtau f$, equivalently so does $\cD\cJOOtau f$.  Thus, the $L^1$ norm
of each of the summands is dominated by $C \bignorm{f}{1}$.  This implies the claim \eqref{f: claim a},
and concludes the proof of (a).  
	
Next we prove (b).  Notice that 
\begin{equation}  \label{f: partialt H}
\begin{aligned}
\partial_t H 
=- \cP_t^N (\cD \cJOOtau f) +\frac 12  \left[\cP_{\si-t}^N + \cP_{\si+t}^N\right] \cD \Msi (\cD) (\cJOOtau f).
\end{aligned}
\end{equation}
We \textit{claim} that there exists a constant $C$, independent of $f$, such that
\begin{equation} \label{f: second claim}
\bignorm{\cD \Msi (\cD) (\cJOOtau f)}{\ghu N}\leq C \bignorm f1.
\end{equation}
Given the claim, Theorem \ref{t: MaMV2} implies that 
$$
\begin{aligned}
\Bignorm{\sup_{s\in (0,2\si)}\big|\cP_s^N \cD \Msi (\cD) (\cJOOtau f)\big|}{1} 
\leq C\, \bignorm{\cD \Msi (\cD) (\cJOOtau f)}{\ghu N}
\leq C\,  \bignorm{f}{1}.
\end{aligned}
$$
This and \eqref{f: partialt H} imply that   
$$
\begin{aligned}
\Bignorm{\sup_{t\in (0,\si)}\big|\cP_t^N (\cD  \cJOOtau f)\big|}{1} 
& \leq C\, \big[ \bignorm{ |\partial_t H |^\ast}{1}+ \bignorm{f}{1}\big].
\end{aligned}
$$
A further application of Theorem \ref{t: MaMV2} 
and the trivial inequality $\bignorm{\mod{\partial_t H}^*}{1} \leq \bignorm{\mod{\Nabla H}^*}{1}$ yield %\eqref{f: claim a} 
$$
\bignorm{\cD  \cJOOtau f}{\ghu N}
\leq C \bignorm {\cP_\ast^N\cD  \cJOOtau f}1   %+ \bignorm{\cD  \cJOOtau f}1\big]\\
\leq C \, \big[\bignorm{\mod{\Nabla H}^*}{1} +  \bignorm{f}{1}\big].
$$
as required.

Thus it remains to prove \eqref{f: second claim}. In the proof of fact (a) above 
we have shown that $\cD-\cD_\tau$ is bounded on $\ghu N$. 
Then Lemma \ref{l: pointwise Msi} implies that 
$$
\bignorm{(\cD-\cD_\tau) \Msi (\cD) (\cJOOtau f)}{\ghu N}
\leq C \bignorm{f}{1}.
$$  
Thus, in order to prove the claim it suffices to prove that 
$\bignorm{\cD_\tau \Msi (\cD) (\cJOOtau f)}{\ghu N} \leq C \bignorm{f}{1}$. Write
$$
\cD_\tau \Msi (\cD) (\cJOOtau f)= \cD_\tau^{-1}[(\cL+\tau) \Msi (\cD) (\cJOOtau f)].
$$
By Lemma \ref{l: pointwise Msi}~\rmi-\rmii, for each $\vep>0$ there exists a constant $C$ such that
$$ 
\Big|(\cL+\tau) \Msi (\cD)\cJOOtau f(x)\Big|\leq C\, \e^{(\vep-\la_1) d(x,o) }\bignorm{f}1
\quant x \in N.
$$
We use the estimate for the kernel of $\cD_\tau^{-1}$ contained in Lemma \ref{l: Riesz Bessel}, and obtain that
$$
\begin{aligned}
\big|\cD_\tau[ \Msi (\cD) (\cJOOtau f)](x)\big|
& \leq C\,\bignorm f1 \Big[\int_{B_1(x)}d(x,y)^{1-n}\e^{(\vep-\la_1)d(y,o)}\wrt\nu(y) \\
& \quad + \int_{B_1(x)^c}\e^{-2d(x,y)\sqrt{\tau c}+(\vep-\la_1)d(y,o)}\wrt\nu(y)\Big]
\end{aligned}
$$
By the triangle inequality, the sum of the last two integrals is dominated by
$$
\e^{(\vep-\la_1)d(x,o)}\Big[\int_{B_1(x)}d(x,y)^{1-n}\e^{(\la_1-\vep)d(y,x)}\wrt\nu(y)
+ \int_{B_1(x)^c}\e^{(\la_1-\vep-2\sqrt{\tau c})d(x,y)}\wrt\nu(y)\Big]
$$
By integrating in polar coordinates centred at $x$, it is straightforward to see that the integral above are convergent, 
provided that $\tau>\la_1^2/(4c)$ and $\vep$ is small enough. Thus, we may conclude that
$$
\big|\cD_\tau[ \Msi (\cD) (\cJOOtau f)](x)\big|
\leq C\,\e^{(\vep-\la_1)d(x,o)}\bignorm f1 \quant x\in N.
$$
Then Proposition \ref{p: ghu decay} yields 
$\bignorm{\cD_\tau \Msi (\cD) (\cJOOtau f)}{\ghu{N}} \leq C \bignorm{f}{1}$. This concludes the proof of 
\eqref{f: second claim} and of the theorem.
\end{proof}

Recall that the \textit{local Riesz--Hardy space} $\ghuRtau{N}$ is defined in \eqref{f: ghuBRn}.
The main result of the paper is the following.  

\begin{theorem} \label{t: char local Riesz}
Suppose that $N$ is an $n$-dimensional complete, connected noncompact Riemannian manifold with 
Ricci curvature bounded from below and positive injectivity radius. 
Assume that $\tau$ is a large positive number.
Then $\ghuRtau{N}=\ghu{N}$ and their norms are equivalent.
\end{theorem}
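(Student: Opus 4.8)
The plan is to deduce the equality $\ghuRtau{N}=\ghu{N}$ from Theorem~\ref{t: claim} by a partition-of-unity argument supplemented by a commutator estimate for $\cRtau$. One of the two inclusions is essentially known: $\ghu{N}\subseteq\ghuRtau{N}$, together with $\bignorm{\mod{\cRtau f}}{1}\leq C\bignorm{f}{\ghu{N}}$ for $\tau$ large, is (a particular case of) a theorem of Russ \cite[proof of Theorem~14]{Ru} (see also \cite[Theorem~8]{MVo}); should one prefer a self-contained argument, it can be recovered by testing $\cRtau$ on atoms at scale~$1$ and writing $\cRtau=\cRtauOO+\cRtauOinfty+\cRtauinfty$, the local part $\cRtauOO$ being handled by the $L^2$-boundedness of the Riesz transform and bounded geometry, and the two far parts by the exponential decay of their kernels. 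Hence the substance of the proof is the reverse inclusion and the bound $\bignorm{f}{\ghu{N}}\leq C\bignorm{f}{\ghuRtau{N}}$. Throughout I would assume $n\geq 2$ (for $n=1$, $N$ is isometric to $\BR$ and the statement reduces to Goldberg's theorem \cite{G}), and I would take $\tau$ larger than every threshold occurring in Lemmata~\ref{l: Riesz Bessel}, \ref{l: difference Riesz Bessel}, \ref{l: pointwise Msi} and Theorem~\ref{t: claim}, and large enough that $2\sqrt{\tau c}>2\be+1$, with $c,\be$ as in \eqref{f: assumptions on ht}, \eqref{f: volume growth}.

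First I would fix $f\in\ghuRtau{N}$ and choose a partition of unity $\{\psi_j\}$ on $N$ subordinated to a locally finite covering by balls of radius $1/2$, with the finite overlapping property and with $\{\mod{\nabla\psi_j}\}$ uniformly bounded (such a partition exists because $N$ has bounded geometry). Set $f_j:=\psi_j\,f$, so that $f=\sum_j f_j$ in $\lu{N}$, each $f_j$ is supported in a ball of radius~$\leq 1$, and $\sum_j\bignorm{f_j}{1}\leq C\bignorm{f}{1}$. Applying Theorem~\ref{t: claim} to each $f_j$ and summing, one gets
$$
\bignorm{f}{\ghu{N}}\leq\sum_j\bignorm{f_j}{\ghu{N}}\leq C\sum_j\bignorm{\mod{\cRtauOO f_j}}{1}+C\bignorm{f}{1},
$$
so everything is reduced to showing $\sum_j\bignorm{\mod{\cRtauOO f_j}}{1}\leq C\big[\bignorm{\mod{\cRtau f}}{1}+\bignorm{f}{1}\big]$ (finiteness of the right-hand sides will emerge from the estimates below). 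Using $\cRtauOO=\cRtau-\cRtauOinfty-\cRtauinfty$ from \eqref{f: splitting cRtau I} and the decomposition of $\cRtauO$ following it, I would split this sum into the three corresponding pieces. The pieces built from $\cRtauOinfty$ and $\cRtauinfty$ should be routine: by standard gradient heat kernel estimates (valid under our assumptions) and the fact that $k_{\cRtauOinfty}$ carries the integration $\int_1^\infty$ while $k_{\cRtauinfty}$ carries the cut-off $1-\vp$ confining it to $\Upsilon_{1/4}^c$, one obtains $\mod{k_{\cRtauOinfty}(x,y)}+\mod{k_{\cRtauinfty}(x,y)}\leq C\,\e^{-c'd(x,y)}$ with $c'>2\be$ once $\tau$ is large; then \eqref{f: volume growth}, \eqref{f: Ahlfors} give $\sup_y\int k\wrt\nu(x)<\infty$, so these operators are bounded on $\lu{N}$ and $\sum_j\bignorm{\mod{\cRtauOinfty f_j}}{1}+\sum_j\bignorm{\mod{\cRtauinfty f_j}}{1}\leq C\sum_j\bignorm{f_j}{1}\leq C\bignorm{f}{1}$ by finite overlap.

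The hard part will be the estimate $\sum_j\bignorm{\mod{\cRtau f_j}}{1}\leq\bignorm{\mod{\cRtau f}}{1}+C\bignorm{f}{1}$, since $\cRtau$ is \textbf{not} bounded on $\lu{N}$. The plan here is to write $\cRtau f_j=\cRtau(\psi_j f)=\psi_j\,\cRtau f+[\cRtau,\psi_j]f$; then $\sum_j\bignorm{\psi_j\,\cRtau f}{1}=\bignorm{\mod{\cRtau f}}{1}$ because $\psi_j\geq 0$ and $\sum_j\psi_j=1$, so it remains to prove $\sum_j\bignorm{[\cRtau,\psi_j]f}{1}\leq C\bignorm{f}{1}$. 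For this one uses the absolutely convergent representation $[\cRtau,\psi_j]f(x)=\int_N\big(\psi_j(y)-\psi_j(x)\big)\,k_{\cRtau}(x,y)\,f(y)\wrt\nu(y)$, together with the Calder\'on--Zygmund type bound $\mod{k_{\cRtau}(x,y)}\leq C\big[d(x,y)^{-n}\,\One_{\Upsilon_1}(x,y)+\e^{-c'd(x,y)}\,\One_{\Upsilon_1^c}(x,y)\big]$ --- establishing this bound from the gradient heat kernel estimates is the point requiring most care, since so far the paper has estimated only the kernel of $\cL_\tau^{-1/2}$, not that of $\cRtau$, off the diagonal. The Lipschitz factor $\psi_j(y)-\psi_j(x)$ gains a power of $d(x,y)$, and the crucial bookkeeping observation is that $\sum_j\mod{\psi_j(y)-\psi_j(x)}\leq C\min(1,d(x,y))$ (it is $\leq 2$ always, being $\leq\sum_j(\psi_j(x)+\psi_j(y))$, and $\leq C\,d(x,y)$ when $d(x,y)\leq 1$ as only boundedly many summands are then nonzero). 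Fubini then yields
$$
\sum_j\bignorm{[\cRtau,\psi_j]f}{1}\leq C\int_N\mod{f(y)}\Big[\int_N\big(d(x,y)^{1-n}\One_{\Upsilon_1}+\e^{-c'd(x,y)}\One_{\Upsilon_1^c}\big)\wrt\nu(x)\Big]\wrt\nu(y)\leq C\bignorm{f}{1},
$$
the inner integral being finite uniformly in $y$ by polar coordinates, \eqref{f: Ahlfors} and \eqref{f: volume growth}. Combining the three pieces gives $\sum_j\bignorm{\mod{\cRtauOO f_j}}{1}\leq C\big[\bignorm{\mod{\cRtau f}}{1}+\bignorm{f}{1}\big]<\infty$, hence $\bignorm{f}{\ghu{N}}\leq C\big[\bignorm{\mod{\cRtau f}}{1}+\bignorm{f}{1}\big]=C\bignorm{f}{\ghuRtau{N}}$; together with Russ's inclusion this establishes $\ghuRtau{N}=\ghu{N}$ with equivalent norms.
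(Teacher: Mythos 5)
Your proposal follows essentially the same route as the paper: reduce to Theorem~\ref{t: claim} via a partition of unity, and control $\cRtauOO(\psi_j f)$ by commuting $\psi_j$ past the Riesz transform, with the near-diagonal Calder\'on--Zygmund bound $\mod{k_{\cRtauOO}}\leq C\,\vp\, d^{-n}$ (the paper's Lemma~\ref{l: char local Riesz II}, from Davies's small-time gradient heat kernel estimates) supplying the gain against the Lipschitz factor $\psi_j(y)-\psi_j(x)$. The one point where you diverge, and where you overstate what is available, is the treatment of the far parts: you commute $\psi_j$ with the full $\cRtau$ and assert pointwise bounds $\mod{k_{\cRtauOinfty}}+\mod{k_{\cRtauinfty}}\leq C\,\e^{-c'd}$ ``by standard gradient heat kernel estimates''. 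Global-in-time pointwise Gaussian bounds for $\mod{\nabla_x h_t^N}$ are \emph{not} standard under Ricci bounded below plus positive injectivity radius, and the paper deliberately avoids them: it controls $\cRtauOinfty$ by an $L^2$ duality trick ($\bignormto{\mod{\nabla_x h_t^N(\cdot,y)}}{2}{2}\leq\bignorm{\cL_x h_t^N(\cdot,y)}{2}\bignorm{h_t^N(\cdot,y)}{2}\leq C t^{-3/2}$) and $\cRtauinfty$ by Russ's integrated bound $\sup_y\int_N\mod{k_{\cRtauinfty}(x,y)}\wrt\nu(x)<\infty$, i.e.\ only $\lu{N}$-boundedness of these two operators, exactly as in Lemma~\ref{l: char local Riesz I}~\rmi--\rmii. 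Your argument survives this correction, because the commutator gains nothing at distances $\geq 1$ anyway and there only the integrated bounds are used ($\sum_j\mod{\psi_j(y)-\psi_j(x)}\leq 2$); so replace the pointwise claim for the far kernels by the $\lu{N}$-boundedness of $\cRtauOinfty$ and $\cRtauinfty$, and restrict the pointwise kernel estimate to $k_{\cRtauOO}$ on $\Upsilon_1$. With that repair the proof is complete and coincides in substance with the paper's.
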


\noindent 
In particular, the conclusion of Theorem~\ref{t: char local Riesz} holds provided that 
$\tau>\lambda_1^2/4c$ (this implies that $\tau>\be^2/c$, 
where $\beta$ and $c$ are as in \eqref{f: volume growth} and \eqref{f: assumptions on ht}), 
and is so large that Proposition \ref{p: local Riesz transform} holds.

\begin{remark}
We observe that the claim of Theorem \ref{t: char local Riesz} is invariant under rescaling of the Riemannian 
metric by a constant conformal factor, since the spaces $\ghuRtau{N}$ and $\ghu{N}$ are 
invariant, and their norms rescale by the same factor. Accordingly, instead of choosing $\si$ 
small enough depending on $\be$ (see \eqref{f: sigma}), one could have fixed $\si$ 
and rescaled the Riemannian metric of $N$ in order to make $\be$ small enough.  
\end{remark}

\noindent
The proof of Theorem~\ref{t: char local Riesz} occupies the rest of this section.
First we analyse the kernel of $\cRtauOO$. 

\begin{lemma} \label{l: char local Riesz II}
Under the same assumption as in Theorem~\ref{t: char local Riesz}, 
there exists a constant $C$ such that 
$$
\bigmod{k_{\cRtauOO}(x,y)} 
\leq C \, \vp(x,y) \, d(x,y)^{-n}
$$
off the diagonal.  
\end{lemma}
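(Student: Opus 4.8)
The plan is as follows.  Since $\vp$ is supported in $\Upsilon_1 = \{(x,y)\in N\times N: d(x,y)<1\}$ and $0\leq\vp\leq 1$, it suffices to prove that
\[
\int_0^1 t^{-1/2}\, \e^{-\tau t}\, \bigmod{\nabla_x h_t^N(x,y)} \wrt t \leq C\, d(x,y)^{-n}
\qquad\text{whenever }\ 0<d(x,y)<1 .
\]
The key ingredient is the following small-time Gaussian bound for the gradient of the heat kernel: there exist positive constants $C$ and $c'$ such that
\[
\bigmod{\nabla_x h_t^N(x,y)} \leq C\, t^{-(n+1)/2}\, \e^{-c'\, d(x,y)^2/t}
\quant t \in (0,1] \quant x,y\in N .
\]

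First I would establish this gradient estimate.  Fix $y\in N$; then $u(x,s):=h_s^N(x,y)$ solves the heat equation $\partial_s u=\Delta u$ on $N\times(0,\infty)$.  Since $N$ has bounded geometry, standard local parabolic regularity, applied on the parabolic cylinder $B_{\sqrt t/C_0}(x)\times\bigl(t(1-C_0^{-2}),t\bigr]$ for a suitable constant $C_0$ depending only on the geometry of $N$, yields
\[
\bigmod{\nabla_x h_t^N(x,y)} \leq \frac{C}{\sqrt t}\, \sup\bigl\{h_s^N(z,y): z\in B_{\sqrt t/C_0}(x),\ s\in\bigl(t(1-C_0^{-2}),t\bigr]\bigr\}
\quant t\in(0,1] .
\]
Inserting the Gaussian upper bound \eqref{f: assumptions on ht} (recall that $\ga(s)=s^{-n/2}$ for $s\leq 1$) and using $d(z,y)\geq d(x,y)-\sqrt t/C_0$, together with the elementary dichotomy between $d(x,y)\geq 2\sqrt t/C_0$ and $d(x,y)<2\sqrt t/C_0$, one bounds the supremum above by $C\, t^{-n/2}\,\e^{-c' d(x,y)^2/t}$, which gives the claim.

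Granted the gradient estimate, what remains is a one-variable computation.  Set $d:=d(x,y)\in(0,1)$ and split $\int_0^1 t^{-1/2}\e^{-\tau t}\bigmod{\nabla_x h_t^N(x,y)}\wrt t\leq C\int_0^1 t^{-(n/2+1)}\e^{-c'd^2/t}\wrt t$ at $t=d^2$.  On $(0,d^2)$ the substitution $u=d^2/t$ gives
\[
\int_0^{d^2} t^{-(n/2+1)}\,\e^{-c'd^2/t}\wrt t = d^{-n}\int_1^\infty u^{\,n/2-1}\,\e^{-c'u}\wrt u \leq C\, d^{-n},
\]
while on $(d^2,1)$ one bounds $\e^{-c'd^2/t}\leq 1$ and computes $\int_{d^2}^1 t^{-(n/2+1)}\wrt t=\tfrac{2}{n}\bigl(d^{-n}-1\bigr)\leq C\,d^{-n}$.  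Adding the two contributions and multiplying by $\vp(x,y)/\sqrt\pi$ gives the desired bound.  The main obstacle is the first step: the small-time Gaussian gradient estimate for $h_t^N$ is classical under bounded geometry, but making the reduction to interior parabolic regularity precise — uniform control of the metric on small balls, rescaling to unit scale, and feeding in \eqref{f: assumptions on ht} with appropriate constants — is the only genuine work; once it is in hand, the kernel bound follows immediately.
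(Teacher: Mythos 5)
Your proposal is correct and follows essentially the same route as the paper: the kernel bound is obtained from a small-time Gaussian estimate for $\bigmod{\nabla_x h_t^N(x,y)}$ of order $t^{-(n+1)/2}\e^{-c'd(x,y)^2/t}$, followed by the one-variable integration in $t$ that you carry out (and which is computed correctly). The only difference is that the paper simply cites Davies' pointwise bounds on the space derivatives of the heat kernel for the gradient estimate, whereas you sketch a derivation via interior parabolic regularity; that derivation can be made rigorous under the standing bounded-geometry hypotheses, but quoting the known result is the shorter path.
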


\begin{proof}
The proof is a straightforward consequence of the definition of $\cRtauOO$ and the pointwise
estimate \cite[Theorem~6, Case II]{Da2} for the gradient of the heat kernel on $N$.    
We leave the details to the interested reader.  
\end{proof}

\noindent
Denote by $\{\psi_j\}$ a \textit{locally uniformly finite partition of unity} on $N$ such that 
the following holds:
the support of $\psi_j$ is contained in the ball $B_j$ with radius $1$,  
$0\leq \psi_j \leq 1$, $\psi_j = 1$ on $(1/4) B_j$, and there exists a constant $C$,
independent of~$j$, such that 
\begin{equation} \label{f: uniform Lipschitz}
\bigmod{\psi_j(x) - \psi_j(y)}
\leq C \, d(x,y)
\quant x,y \in N.  
\end{equation}  
For the construction of such a partition of unity see, for instance, \cite[Lemma~1.1 and pp.~59--60]{He}. 
We recall the following norm estimate for the local Riesz 
transform on $N$, due to E.~Russ \cite[proof of Theorem 14]{Ru}; see also \cite[Theorem 8]{MVo}.

\begin{proposition} \label{p: local Riesz transform}
For every $\tau>0$ large enough there exists a constant $C$ such that 
$
\bignorm{\mod{\cRtau f}}{1} \leq C \bignorm{f}{\ghu{N}}
$
for every $f$ in $\ghu{N}$.
\end{proposition}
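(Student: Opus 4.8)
The plan is to combine the atomic decomposition of $\ghu{N}$ (by atoms at scale $1$, which we may take to be $2$-atoms) with the fact that $\cRtau$ is bounded on $\ld{N}$. The latter is immediate from the Green formula and the spectral theorem: $\bignorm{\mod{\cRtau f}}{2}^2 = \bigprodo{\cL\cL_\tau^{-1}f}{f} \leq \bignorm{f}{2}^2$, since $\cL\cL_\tau^{-1} \leq \cI$. Granted this, it suffices to prove a uniform bound $\bignorm{\mod{\cRtau a}}{1} \leq C$ over all atoms $a$ at scale $1$: for a finite sum $f = \sum_j \la_j a_j$ one has $\cRtau f = \sum_j \la_j \cRtau a_j$ (genuine $L^2$ identity), so $\bignorm{\mod{\cRtau f}}{1} \leq \big(\sum_j \mod{\la_j}\big)\sup_a\bignorm{\mod{\cRtau a}}{1}$, and a now-standard density argument (the finite atomic sums being dense in $\ghu{N}$, and $\cRtau$ being $\ld{N}$-bounded so that partial sums can be matched along an a.e.-convergent subsequence) extends the inequality to all of $\ghu{N}$, giving $\bignorm{\mod{\cRtau f}}{1}\leq C\bignorm{f}{\ghu{N}}$.

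For the atom estimate I would use the splitting $\cRtau = \cRtauOO + \cRtauOinfty + \cRtauinfty$ from \eqref{f: splitting cRtau I}. The two ``tail'' operators are bounded on $\lu{N}$ as soon as $\tau$ is large, hence contribute at most $C\bignorm{a}{1} \leq C$ to every atom (recall $\bignorm{a}{1}\leq \nu(B)^{1/2}\bignorm{a}{2}\leq 1$). Indeed, $k_{\cRtauOinfty}$ is bounded and supported in $\Upsilon_1$ — the inner integral $\int_1^\infty t^{-1/2}\e^{-\tau t}\nabla_x h_t^N\wrt t$ is uniformly bounded there by the Gaussian gradient estimate for $h_t^N$ — so $\cRtauOinfty$ is $\lu{N}$-bounded by local Ahlfors regularity \eqref{f: Ahlfors}; and the same gradient estimate together with the Laplace-method asymptotics underlying Lemma~\ref{l: asymp integral} yields $\mod{k_{\cRtauinfty}(x,y)} \leq C\,\e^{-2\sqrt{\tau c}\,d(x,y)}$ (the kernel being supported where $d(x,y)\geq 1/4$), which for $\tau$ so large that $2\sqrt{\tau c}>2\be$ is uniformly integrable in each variable by the volume bound \eqref{f: volume growth}; hence $\cRtauinfty$ is $\lu{N}$-bounded.

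It remains to handle $\cRtauOO$. By Lemma~\ref{l: char local Riesz II} its kernel is supported in $\Upsilon_1$ with $\mod{k_{\cRtauOO}(x,y)} \leq C\,d(x,y)^{-n}$, and $\cRtauOO = \cRtau - \cRtauOinfty - \cRtauinfty$ is bounded on $\ld{N}$ by the above. For a \emph{global} atom on $B = B_1(c_B)$, $\cRtauOO a$ is supported in $B_2(c_B)$, so Schwarz's inequality and \eqref{f: Ahlfors} give $\bignorm{\mod{\cRtauOO a}}{1} \leq C\,\nu\big(B_2(c_B)\big)^{1/2}\bignorm{a}{2} \leq C$. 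For a \emph{standard} atom on $B = B_r(c_B)$ with $r\leq 1$ and $\int_B a\wrt\nu = 0$, split $\bignorm{\mod{\cRtauOO a}}{1} = \bignorm{\mod{\cRtauOO a}}{\lu{2B}} + \bignorm{\mod{\cRtauOO a}}{\lu{N\setminus 2B}}$; the first term is $\leq C\,\nu(2B)^{1/2}\bignorm{a}{2}\leq C$ by local doubling, and for the second I would use the cancellation to write $\cRtauOO a(x) = \int_B\big[k_{\cRtauOO}(x,y) - k_{\cRtauOO}(x,c_B)\big]\,a(y)\wrt\nu(y)$ and invoke the H\"ormander-type bound $\int_{N\setminus 2B}\sup_{y\in B}\mod{k_{\cRtauOO}(x,y) - k_{\cRtauOO}(x,c_B)}\wrt\nu(x)\leq C$. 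Via the mean value theorem and the elementary estimate $\int_{2r\leq d(x,c_B)\leq 2} d(x,c_B)^{-n-1}\wrt\nu(x) \leq C\,r^{-1}$ (a consequence of \eqref{f: Ahlfors}), the latter reduces to a gradient bound $\mod{\nabla_y k_{\cRtauOO}(x,y)} \leq C\,d(x,y)^{-n-1}$ on the support of the kernel. Collecting all contributions gives $\sup_a\bignorm{\mod{\cRtau a}}{1}\leq C$, and hence the proposition.

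The main obstacle is exactly that last gradient bound on $k_{\cRtauOO}$: it rests on uniform second-order heat kernel estimates $\mod{\nabla_x\nabla_y h_t^N(x,y)} \leq C\,t^{-(n+2)/2}\,\e^{-c\,d(x,y)^2/t}$ for $t$ in $(0,1]$, which is where bounded geometry (the Ricci lower bound together with positive injectivity radius, hence uniform local parabolic regularity) is genuinely used, in the spirit of \cite[Theorem~6]{Da2}. A secondary technical point deserving care is the upgrade from the uniform estimate over atoms to boundedness on all of $\ghu{N}$, which is legitimate here precisely because $\cRtau$ is bounded on $\ld{N}$.
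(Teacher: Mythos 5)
The paper does not actually prove this proposition: it is cited from Russ \cite[proof of Theorem~14]{Ru} (see also \cite[Theorem~8]{MVo}), so there is no ``paper proof'' in the body of the article to compare against line by line. Against the standard of that literature, your argument is a recognisably different route. Russ (and likewise \cite{MVo}) obtains the $\ghu{N}\to L^1$ bound by the semigroup / Davies--Gaffney approach: for an atom $a$ on a ball $B$ of radius $r\leq 1$ one writes $a$ using a decomposition of the identity $I=(I-e^{-r^2\cL})^K+[I-(I-e^{-r^2\cL})^K]$, and the $L^1$ norm of $\cRtau a$ over the dyadic annuli $2^{j+1}B\setminus 2^jB$ is controlled via \emph{$L^2$ off-diagonal (Gaussian) estimates} for $\sqrt t\,\nabla e^{-t\cL}$, which follow from the heat kernel upper bound plus Bakry's inequality. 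That route never needs pointwise kernel bounds beyond what Bakry gives. You instead carry out a classical Calder\'on--Zygmund argument on the localised piece $\cRtauOO$ via a pointwise H\"ormander condition, which buys a shorter argument for the annular estimate but pays for it with a stronger pointwise hypothesis.

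That stronger hypothesis is the one point you should make explicit rather than leave as a remark at the end. The gradient bound $\bigmod{\nabla_y k_{\cRtauOO}(x,y)}\leq C\,d(x,y)^{-n-1}$ requires $\bigmod{\nabla_x\nabla_y h_t^N(x,y)}\leq C\,t^{-(n+2)/2}\e^{-c\,d(x,y)^2/t}$ for $t\in(0,1]$, but Davies \cite[Theorem~6, Case~II]{Da2} (which the paper invokes in Lemma~\ref{l: char local Riesz II}) gives the \emph{first-order} pointwise Gaussian bound $\bigmod{\nabla_x h_t^N(x,y)}\leq C\,t^{-(n+1)/2}\e^{-c\,d(x,y)^2/t}$ on $(0,1]$, not the mixed second derivative. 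The mixed bound does follow, but you need to say how: by Chapman--Kolmogorov, $\nabla_x\nabla_y h_{2t}^N(x,y)=\int_N \nabla_x h_t^N(x,z)\,\nabla_y h_t^N(z,y)\wrt\nu(z)$, and convolving the two first-order Gaussian bounds gives precisely the claimed second-order estimate. Without that observation the proof has a genuine gap, since ``uniform local parabolic regularity'' under only Ricci bounded below plus positive injectivity radius (hence $C^{1,\alpha}$ harmonic coordinates, not $C^{2,\alpha}$) does not by itself deliver pointwise mixed second derivatives of the heat kernel. The remaining steps — the $L^2(N)$ bound $\bigopnorm{\cRtau}{2;2}\leq 1$, the reduction to a uniform atom estimate via Meda--Sj\"ogren--Vallarino–type density plus $L^2$-boundedness, the splitting $\cRtau=\cRtauOO+\cRtauOinfty+\cRtauinfty$ and the $L^1$ boundedness of the two tails (Lemma~\ref{l: char local Riesz I}~\rmi,~\rmii\ in the paper), and the local Schwarz/Ahlfors argument on $2B$ — are all correct and in line with the cited sources.
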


\begin{lemma} \label{l: char local Riesz I}
Under the same assumption as in Theorem~\ref{t: char local Riesz}, the following hold:
	\begin{enumerate}
		\item[\itemno1]
			the operator $\cRtauinfty$ is bounded on $\lu{N}$;
		\item[\itemno2]
			the operator $\cRtauOinfty$ is bounded on $\lu{N}$.  
		\item[\itemno3]
			if $f$ is in $\ghuRtau{N}$, then $\bigmod{\cRtauOO f}$ is in $\lu{N}$;
		\item[\itemno4]
			for each $p$ such that $1\leq p< n/(n-1)$ there exists a constant $C$, independent of $j$, such that
			if $f$ is in $\ghuRtau{N}$, then 
			$$
			\bignorm{\mod{\cRtauOO(\psi_jf) - \psi_j\, \cRtauOO f}}{p}
			\leq C \bignorm{f}{\lu{2B_j}};
			$$
		\item[\itemno5]
			there exists a constant $C$, independent of $j$, such that 
			$$
			\bignorm{\mod{\cRtauOO(\psi_jf)}}{\lu{2B_j}}
			\leq C \, \big[\bignorm{f}{\lu{2B_j}} + \bignorm{\mod{\cRtauOO f}}{\lu{2B_j}} \big].  
			$$
	\end{enumerate}
\end{lemma}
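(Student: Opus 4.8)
The plan is to establish the five assertions in the order \rmi, \rmii, \rmiii, \rmiv, \rmv, since \rmiii\ is an immediate corollary of \rmi--\rmii\ and \rmv\ follows from \rmiv.

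For \rmi, I would first record a pointwise gradient heat kernel estimate: from a Davies-type bound as in the proof of Lemma~\ref{l: char local Riesz II} (cf.\ \cite[Theorem~6]{Da2}) one has $\bigmod{\nabla_x h_t^N(x,y)} \leq C\, t^{-(n+1)/2}\, \e^{\kappa^2 t}\, \e^{-c\, d(x,y)^2/t}$ for $t$ small, while for $t\geq 1$, writing $h_t^N(\cdot,y) = \cH_{t-1}^N\big[h_1^N(\cdot,y)\big]$ and combining Bakry's condition \eqref{f: Bakry} with the ultracontractivity estimate \eqref{f: ultracontractivity Ht}, one gets $\bigmod{\nabla_x h_t^N(x,y)} \leq C\, \ga(t-1)\, \e^{\kappa^2 t}$. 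The kernel of $\cRtauinfty$ is $(1-\vp)\, k_{\cRtau}$, and since $\vp = 1$ in $\Upsilon_{1/4}$ (Lemma~\ref{l: Laplacian cut-offs}~\rmii), it is supported where $d(x,y)\geq 1/4$. Inserting the above bounds into $k_{\cRtau}(x,y) = \pi^{-1/2}\ioty t^{-1/2}\,\e^{-\tau t}\,\nabla_x h_t^N(x,y)\wrt t$, splitting the integral at $t=1$ and using that $\tau$ is large (so that $\tau-\kappa^2$ is large), a routine computation yields $\bigmod{k_{\cRtauinfty}(x,y)} \leq C\, \e^{-a\, d(x,y)}$ for all $x,y$ with $d(x,y)\geq 1/4$, where $a$ can be taken larger than $2\be$ once $\tau$ is large enough. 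By the volume growth bound \eqref{f: volume growth}, $\ds\sup_{y\in N}\int_N \e^{-a\, d(x,y)}\wrt\nu(x) < \infty$, and Schur's test gives the $\lu{N}$-boundedness of $\cRtauinfty$.

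For \rmii, the kernel $k_{\cRtauOinfty}(x,y) = \pi^{-1/2}\,\vp(x,y)\int_1^\infty t^{-1/2}\,\e^{-\tau t}\,\nabla_x h_t^N(x,y)\wrt t$ involves only the range $t\geq 1$; inserting the large-time gradient bound above and using $\tau > \kappa^2$ shows that the time integral is bounded uniformly in $x,y$, whence $\bigmod{k_{\cRtauOinfty}(x,y)} \leq C\, \vp(x,y) \leq C\, \One_{\Upsilon_1}(x,y)$. Since $\nu(B_1(y))$ is uniformly bounded (bounded geometry), $\ds\sup_{y\in N}\int_N \One_{\Upsilon_1}(x,y)\wrt\nu(x) <\infty$ and Schur's test again gives the $\lu{N}$-boundedness of $\cRtauOinfty$. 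Assertion \rmiii\ is then immediate: $\cRtau = \cRtauOO + \cRtauOinfty + \cRtauinfty$, and $f\in\ghuRtau{N}$ means $f\in\lu{N}$ with $\mod{\cRtau f}\in\lu{N}$, so \rmi--\rmii\ force $\cRtauOinfty f,\ \cRtauinfty f\in\lu{N}$, hence $\cRtauOO f = \cRtau f - \cRtauOinfty f - \cRtauinfty f\in\lu{N}$.

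For \rmiv, I would note that the kernel of $\cRtauOO(\psi_j\,\cdot) - \psi_j\,\cRtauOO(\cdot)$ is $(x,y)\mapsto k_{\cRtauOO}(x,y)\,\big[\psi_j(y) - \psi_j(x)\big]$. By Lemma~\ref{l: char local Riesz II}, $\bigmod{k_{\cRtauOO}(x,y)} \leq C\, \vp(x,y)\, d(x,y)^{-n}$, and by \eqref{f: uniform Lipschitz}, $\bigmod{\psi_j(y) - \psi_j(x)} \leq C\, d(x,y)$; hence this kernel is dominated by $C\,\One_{\Upsilon_1}(x,y)\, d(x,y)^{1-n}$, which is locally integrable, so the commutator (which also serves to define $\cRtauOO(\psi_j f)$ for $f\in\ghuRtau{N}$) is a genuine absolutely convergent integral operator. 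Moreover $\psi_j(y) - \psi_j(x)$ vanishes unless $x$ or $y$ lies in $B_j$, and $\vp(x,y)\neq 0$ forces $d(x,y)<1$, so the commutator is supported in $2B_j$ and only the values of $f$ on $2B_j$ contribute. The bound then follows from Minkowski's integral inequality together with the fact that, by the local Ahlfors regularity \eqref{f: Ahlfors}, $\ds\sup_{x\in N}\int_{B_1(x)} d(x,y)^{(1-n)p}\wrt\nu(y) <\infty$ whenever $(n-1)(p-1)<1$, i.e.\ $1\leq p < n/(n-1)$, with constant independent of $j$. Finally, \rmv\ follows by writing $\cRtauOO(\psi_j f) = \big[\cRtauOO(\psi_j f) - \psi_j\,\cRtauOO f\big] + \psi_j\,\cRtauOO f$: the first summand is estimated in $\lu{2B_j}$ by \rmiv\ with $p=1$, while $\bignorm{\mod{\psi_j\,\cRtauOO f}}{\lu{2B_j}} \leq \bignorm{\mod{\cRtauOO f}}{\lu{2B_j}}$ because $0\leq\psi_j\leq 1$ and $\supp\psi_j\subset B_j\subset 2B_j$. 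The main obstacle is the derivation in \rmi--\rmii\ of the pointwise bounds on $k_{\cRtauinfty}$ and $k_{\cRtauOinfty}$, where one must balance the small-time Gaussian gradient bound against a large-time bound inside the $t$-integral and verify that the resulting spatial decay exceeds $2\be$ for $\tau$ large; the remaining steps are bookkeeping with Schur's test, the decomposition of $\cRtau$, and the Lipschitz partition of unity.
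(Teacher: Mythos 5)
Your parts \rmiii, \rmiv\ and \rmv\ are correct and follow essentially the same route as the paper (commutator kernel dominated by $C\,\One_{\Upsilon_1}(x,y)\,d(x,y)^{1-n}$ via Lemma~\ref{l: char local Riesz II} and \eqref{f: uniform Lipschitz}, then $\lu{2B_j}\to\lp{2B_j}$ boundedness uniformly in $j$). The problem is in \rmi. Your large-time gradient bound $\bigmod{\nabla_x h_t^N(x,y)}\leq C\,\ga(t-1)\,\e^{\kappa^2 t}$ carries \emph{no spatial decay}, so inserting it into $\int_1^\infty t^{-1/2}\e^{-\tau t}\,\bigmod{\nabla_x h_t^N(x,y)}\wrt t$ only yields a bound by a constant independent of $d(x,y)$, not the claimed $C\,\e^{-a\,d(x,y)}$. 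Since $k_{\cRtauinfty}$ lives on $\{d(x,y)\geq 1/4\}$ and $N$ is noncompact with volume growth \eqref{f: volume growth}, a constant bound gives $\sup_y\int_N\bigmod{k_{\cRtauinfty}(x,y)}\wrt\nu(x)=\infty$, and Schur's test collapses. To make your scheme work you would need a Gaussian-type gradient estimate valid for \emph{all} $t$, say $\bigmod{\nabla_x h_t^N(x,y)}\leq C\,\e^{Ct}\,\ga(t)^{a}\,\e^{-c'd(x,y)^2/t}$, so that the standard completion of the square $\tau t+c'd^2/t\geq 2d\sqrt{\tau c'}$ (as in Lemma~\ref{l: Riesz Bessel}~\rmi) produces decay $\e^{-2d\sqrt{(\tau-C)c'}}$ beating $2\be$ for $\tau$ large; you never state such an estimate, and it does not follow from Bakry plus ultracontractivity alone. (The paper avoids the issue entirely by citing Russ \cite[Theorem~14]{Ru} for the integrability of $k_{\cRtauinfty}$.) There is also a smaller slip in \rmii: with your bound the time integral contains $\ga(t-1)\sim(t-1)^{-n/2}$ near $t=1$, which is not integrable for $n\geq 2$; this is fixable (write $h_t^N=\cH_{t/2}^N h_{t/2}^N$, or argue in $L^2$), but as written the integral diverges.

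For comparison, the paper's proof of \rmii\ never uses pointwise gradient bounds: it applies Schwarz's inequality over the ball $B_1(y)$ (uniform ball size) to reduce to $\bignorm{\mod{\nabla_x h_t^N(\cdot,y)}}{2}$, which is controlled by $\big(\bignorm{\cL_x h_t^N(\cdot,y)}{2}\bignorm{h_t^N(\cdot,y)}{2}\big)^{1/2}\leq C\,t^{-3/4}$ via Green's formula and ultracontractivity; the factor $\e^{-\tau t}$ then makes the time integral converge. That $L^2$ route is both simpler and immune to the $t\to 1^+$ singularity, and it is worth adopting for your \rmii. For \rmi, either cite Russ as the paper does, or supply a genuine large-time Gaussian gradient estimate before claiming exponential off-diagonal decay.
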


\begin{proof}
First we prove \rmi.  By \cite[Theorem~14]{Ru},  $\ds\sup_{y\in N} \int_N  \, \bigmod{k_{\cRtauinfty}(x,y)} \wrt \nu(x) 
< \infty$.  Consequently the operator $\cRtauinfty$ is bounded on $\lu{N}$, as required.  

To prove \rmii\ observe that, at least formally,
$$
\cRtauOinfty f (x)  
= \frac{1}{\sqrt \pi} \, \int_1^\infty \!\! \frac{\wrt t}{t^{1/2}} \, \e^{-\tau t}\, 
   \int_N  \vp (x,y) \, \nabla_x h_t^N(x,y) \, f(y) \wrt \nu(y).  
$$ 
Therefore 
$$
\begin{aligned}
\bignorm{\mod{\cRtauOinfty f}}{1}
& \leq \frac{1}{\sqrt \pi} \, \int_1^\infty \!\! \frac{\wrt t}{t^{1/2}} \, \e^{-\tau t}\, 
   \int_N  \!\!\wrt \nu(y)\, \bigmod{f(y)} \int_N  \vp (x,y) \, \bigmod{\nabla_x h_t^N(x,y)} \wrt \nu(x) \\
& \leq \frac{1}{\sqrt \pi} \, \int_1^\infty \!\! \frac{\wrt t}{t^{1/2}} \, \e^{-\tau t}\, 
   \int_N  \!\!\wrt \nu(y)\, \bigmod{f(y)} \int_{B_{1}(y)}\bigmod{\nabla_x h_t^N(x,y)} \wrt \nu(x) \\
& \leq C\, \int_1^\infty \!\! \frac{\wrt t}{t^{1/2}} \, \e^{-\tau t}\, 
   \int_N  \bigmod{f(y)}  \bignorm{\mod{\nabla_x h_t^N(\cdot,y)}}{2} \wrt \nu(y),
\end{aligned}
$$ 
where the last inequality follows from Schwarz's inequality and the uniform ball size condition 
of $N$.  Observe that 
$$
\begin{aligned}
\bignormto{\mod{\nabla_x h_t^N(\cdot,y)}}{2}{2}
& =     \big(\nabla_x h_t^N(\cdot,y), \nabla_x h_t^N(\cdot,y) \big) \\ 
& =     \big(\cL_x h_t^N(\cdot,y), h_t^N(\cdot,y) \big) \\ 
& \leq  \bignorm{\cL_x h_t^N(\cdot,y)}{2} \,  \bignorm{h_t^N(\cdot,y)}{2}.
\end{aligned}
$$
Now, the ultracontractivity of the heat semigroup and \cite[Proposition~2.2]{MMV0} 
imply that the supremum with respect to $y$ in $N$ of the right hand side is dominated 
by a constant multiple of $t^{-3/2}$.  Therefore we may conclude that 
$$
\bignorm{\mod{\cRtauOinfty f}}{1}
\leq C\,  \int_1^\infty \!\! \frac{\wrt t}{t^{5/4}} \, \e^{-\tau t}\, \int_N  \bigmod{f(y)} \wrt \nu(y) 
\leq C \bignorm{f}{1},
$$
i.e., the operator $\cRtauOinfty$ is bounded on $\lu{N}$, as required.  

Next we prove \rmiii.  The assumption that $f$ is in $\ghuRtau{N}$ 
together with the decomposition \eqref{f: splitting cRtau I} and \rmi\ above yields that $\bigmod{\cRtauO f}$ in $\lu{N}$.   
Since $\cRtauOO f = \cRtauO f - \cRtauOinfty f$ and both $\bigmod{\cRtauO f}$ and $\bigmod{\cRtauOinfty f}$ 
are in $\lu{N}$ (by \rmi\ and \rmii), the same is true of $\bigmod{\cRtauOO f}$, as required.

To prove \rmiv\  observe that, at least formally, 
$$
\cRtauOO(\psi_jf)(x) - \psi_j(x)\, \cRtauOO(f) (x)
= \int_N \, k_{\cRtauOO}(x,y) \, \big[ \psi_j(y)-\psi_j(x)\big] \, f(y) \wrt \nu(y).
$$
If $x$ is not in $2B_j$, then $\psi_j(x)$ vanishes, and so does $k_{\cRtauOO}(x,y)$ as long as $y$
belongs to $B_j$.  Hence $\cRtauOO(\psi_jf) - \psi_j\, \cRtauOO f$ vanishes at $x$. In particular
$$
\bignorm{\cRtauOO(\psi_jf) - \psi_j\, \cRtauOO f}{p}=\bignorm{\cRtauOO(\psi_jf) - \psi_j\, \cRtauOO f}{\lp{2B_j}}
$$

If, instead, $x$ is in $2B_j$, then we use the estimates for $k_{\cRtauOO}$ in Lemma~\ref{l: char local Riesz II},
the uniform Lipschitz property of $\psi_j$, and 
conclude that there exists a constant $C$, independent of $j$, such that  
$$
\bigmod{\cRtauOO(\psi_jf)(x) - \psi_j(x)\, \cRtauOO f (x)}
\leq  C\, \int_{2B_j}  \, \frac{\vp(x,y)}{d(x,y)^{n-1}} \, \bigmod{f(y)} \wrt \nu(y).
$$
It is not hard to check that if $1/p > 1-1/n$, then the integral operator with kernel 
$\ds \One_{2B_j}(x) \, \frac{\vp(x,y)}{d(x,y)^{n-1}}\, \One_{2B_j}(y)$ is bounded from 
$\lu{2B_j}$ to $\lp{2B_j}$ uniformly in $j$.  Therefore there exists a constant $C$ such that  
$$
\bignorm{\cRtauOO(\psi_jf) - \psi_j\, \cRtauOO f}{\lp{2B_j}}
\leq C \bignorm{f}{\lu{2B_j}},  
$$
as required to conclude the proof of \rmiv.  

Now we prove \rmv.  %Write $f = \sum_j \, \psi_jf$, so that, at least formally,
Clearly 
$
\cRtauOO(\psi_jf) = \cRtauOO(\psi_jf) - \psi_j\cRtauOO f + \psi_j \cRtauOO f. 
$ 
By \rmiv, the function $\cRtauOO(\psi_jf) - \psi_j\cRtauOO(f)$ is in $\lp{2B_j}$
with norm $\leq C \bignorm{f}{\lu{2B_j}}$.  H\"older's inequality, together with local Ahlfors regularity, imply that 
$$
\bignorm{\cRtauOO(\psi_jf) - \psi_j\cRtauOO f}{\lu{2B_j}}
\leq C \bignorm{f}{\lu{2B_j}}.  
$$
Therefore 
$$
	\begin{aligned}
	\bignorm{\cRtauOO(\psi_jf)}{\lu{2B_j}}
		& \leq C \, \big[\bignorm{\cRtauOO(\psi_jf) - \psi_j\cRtauOO f}{\lu{2B_j}} 
		+ \bignorm{\cRtauOO(f)}{\lu{2B_j}} \big] \\
		& \leq C \, \big[\bignorm{f}{\lu{2B_j}} + \bignorm{\cRtauOO(f)}{\lu{2B_j}} \big],  
	\end{aligned}
$$
as required to conclude the proof of \rmv, and of the lemma.
\end{proof}

\begin{proof}[Proof of Theorem~\ref{t: char local Riesz}] 
The containment $\ghu{N}\subseteq \ghuRtau{N}$ is a direct consequence of Proposition \ref{p: local Riesz transform}.  

It remains to prove that $\ghuRtau{N}\subseteq \ghu{N}$.  
Suppose that $f$ is in $\ghuRtau{N}$.  By Lemma~\ref{l: char local Riesz I}~\rmiii, $\bigmod{\cRtauOO f}$
is in $\lu{N}$.  Then, by Lemma~\ref{l: char local Riesz I}~\rmv,
$$
\bignorm{\mod{\cRtauOO(\psi_jf)}}{\lu{2B_j}}
\leq C \, \big[\bignorm{f}{\lu{2B_j}} + \bignorm{\mod{\cRtauOO f}}{\lu{2B_j}} \big].  
$$
By Theorem~\ref{t: claim}, there exists a constant $C$, independent of $j$, such that 
$$
\bignorm{\psi_jf}{\ghu{N}}
\leq C \bignorm{\mod{\cRtauOO(\psi_jf)}}{1} + C \bignorm{\psi_jf}{1}.
$$
Then, using also Lemma~\ref{l: char local Riesz I}~\rmi,~\rmii~and~\rmiv,
$$
\begin{aligned}
\bignorm{f}{\ghu{N}} 
& \leq \sum_j \bignorm{\psi_jf}{\ghu{N}} \\
& \leq C \, \sum_j \bignorm{\mod{\cRtauOO(\psi_jf)}}{1} + C \, \sum_j \bignorm{\psi_jf}{1}\\ 
& \leq C \, \sum_j \bignorm{\psi_j\mod{\cRtauOO f}}{1} +
	C \, \sum_j \, \Big[\bignorm{\mod{\cRtauOO(\psi_jf)- \psi_j\cRtauOO f}}{1} + \bignorm{\psi_jf}{1}\Big]\\  
& \leq C  \bignorm{\mod{\cRtauOO f}}{1} + C \bignorm{f}{1}\\  
& \leq C  \, \big[\bignorm{\mod{\cRtau f}}{1} + \bignorm{\mod{\cRtau^{0,\infty} f}}{1}
	+\bignorm{\mod{\cRtau^\infty f}}{1}+  \bignorm{f}{1} \big]\\  
& \leq C  \, \big[\norm{\mod{\cRtau f}}{1} + \norm{f}{1}\big],
\end{aligned}
$$
as required.  
\end{proof}

\thanks{\textbf{Acknowledgements}.  The authors would like to thank Alessio Martini, Alberto Setti and Maria Vallarino for 
valuable conversations on the subject of this paper.}

\end{document}